\newcommand{\dataversione}{March 25, 2015}
\numberwithin{equation}{section}
\newtheoremstyle{mytheorem}
{}% measure of space to leave above the theorem. E.g.: 3pt
{}% measure of space to leave below the theorem. E.g.: 3pt
{\it}% name of font to use in the body of the theorem
{\parindent}% measure of space to indent
{\bf}% name of head font
{.}% punctuation between head and body
{ }% space after theorem head; " " = normal interword space
{\thmnumber{#2.~}\thmname{#1}\thmnote{~\rm#3}}% Manually specify head
\newtheoremstyle{myremark}
{}% measure of space to leave above the theorem. E.g.: 3pt
{}% measure of space to leave below the theorem. E.g.: 3pt
{\rm}% name of font to use in the body of the theorem
{\parindent}% measure of space to indent
{\bf}% name of head font
{.}% punctuation between head and body
{ }% space after theorem head; " " = normal interword space
{\thmnumber{#2.~}\thmname{#1}\thmnote{~\rm#3}}% Manually specify head
\newtheoremstyle{myparagraph}
{}% measure of space to leave above the theorem. E.g.: 3pt
{}% measure of space to leave below the theorem. E.g.: 3pt
{\rm}% name of font to use in the body of the theorem
{\parindent}% measure of space to indent
{\bf}% name of head font
{.}% punctuation between head and body
{ }% space after theorem head; " " = normal interword space
{\thmnumber{#2.~}\thmname{#1}\thmnote{#3}}% Manually specify head
\theoremstyle{mytheorem}
\newtheorem{theorem}[subsection]{Theorem}
\newtheorem{lemma}[subsection]{Lemma}
\newtheorem{corollary}[subsection]{Corollary}
\newtheorem{proposition}[subsection]{Proposition}
\theoremstyle{myremark}
\newtheorem{remark}[subsection]{Remark}
\theoremstyle{myparagraph}
\newtheorem{parag}[subsection]{}
\newtheorem*{parag*}{}
\def\@secnumfont{\sc}
\def\section{\@startsection{section}{1}%
\z@{1.5\linespacing\@plus .2\linespacing}{.7\linespacing}%
{\normalfont\sc\centering}}
\def\ps@headings{\ps@empty 
 \def\@evenhead{% 
  \setTrue{runhead}% 
  \normalfont\footnotesize 
  \rlap{\thepage}\hfil 
  \def\thanks{\protect\thanks@warning}% 
  \leftmark{}{}\hfil}% 
 \def\@oddhead{% 
  \setTrue{runhead}% 
  \normalfont\footnotesize\hfil 
  \def\thanks{\protect\thanks@warning}% 
  \rightmark{}{}\hfil \llap{\thepage}}% 
\let\@mkboth\markboth}
\renewenvironment{proof}[1][\proofname]{\par 
  \pushQED{\qed}% 
  \normalfont \topsep6\p@\@plus6\p@\relax 
  \trivlist 
  \itemindent\normalparindent 
  \item[\hskip\labelsep 
    \bfseries 
    #1\@addpunct{.}]\ignorespaces 
}{% 
  \popQED\endtrivlist\@endpefalse 
} 
\providecommand{\proofname}{Proof}
\newcommand{\Mass}{\mathbb{M}}
\newcommand{\R}{\mathbb{R}}
\newcommand{\D}{\mathscr{D}}
\newcommand{\F}{\mathscr{F}}
\newcommand{\G}{\mathscr{G}}
\newcommand{\Haus}{\mathscr{H}}
\newcommand{\Leb}{\mathscr{L}}
\newcommand{\K}{\mathscr{K}}
\newcommand{\M}{\mathscr{M}}
\newcommand{\X}{\mathscr{X}}
\newcommand{\bfi}{\mathbf{i}}
\newcommand{\bfj}{\mathbf{j}}
\newcommand{\Lip}{\mathrm{Lip}}
\newcommand{\Span}{\mathrm{span}}
\newcommand{\Tan}{\mathrm{Tan}}
\newcommand{\Int}{\mathrm{Int}}
\newcommand{\dist}{\mathrm{dist}}
\newcommand{\Gr}{\mathrm{Gr}}
\newcommand{\dgr}{d_{\mathrm{gr}}}
\newcommand{\wrt}{w.r.t.\ }
\newcommand{\bd}{\partial}
\newcommand{\eps}{\varepsilon}
\newcommand{\dV}{d_V\kern-1pt}
\newcommand{\clos}[1]{\smash{\overline{#1}}}
\newcommand{\barB}{{\clos{B}}}
\newcommand{\textfrac}[2]{{\textstyle\frac{#1}{#2}}}
\newcommand{\scal}[2]{\langle #1\, ; \, #2\rangle}
\newcommand{\bigscal}[2]{\big\langle #1 \, ; \, #2\big\rangle}
\DeclareMathOperator{\largewedge}{\mbox{\large$\wedge$}}
\DeclareMathOperator{\largewedgef}{\mbox{\small$\wedge$}}
\newcommand{\trait}[3]{\vrule width #1ex height #2ex depth #3ex}
\newcommand{\trace}{\mathchoice%
  {\mathbin{\trait{.12}{1.2}{.03}\trait{.8}{0.09}{0.03}}}
  {\mathbin{\trait{.12}{1.2}{.03}\trait{.8}{0.09}{0.03}}}
  {\mathbin{\hskip.15ex\trait{.09}{.84}{0.02}\trait{.56}{.07}{.02}}\hskip.15ex}
  {\mathbin{\trait{.07}{.6}{.01}\trait{.4}{.06}{.01}}}}
\newcommand{\footnoteb}[1]{\footnote{~#1}}
\newenvironment{itemizeb}
{\begin{itemize}\itemsep=2pt}{\end{itemize}}
\begin{document}

	% The following instructions defines the headings
	% for all pages except the first one
	% They can be safely removed
	%
\pagestyle{empty}
\pagestyle{myheadings}
\markboth%
{\underline{\centerline{\hfill\footnotesize%
\textsc{Giovanni Alberti and Andrea Marchese}%
\vphantom{,}\hfill}}}%
{\underline{\centerline{\hfill\footnotesize%
\textsc{Differentiability of Lipschitz functions}%
\vphantom{,}\hfill}}}

	% We do not use the maketitle command, and
	% in the next lines we define the headline for the 
	% first page, then title, authors' names, and so on...
	% acknowledgements are at the end of the introduction
	% affiliations are at the end of the paper
	%
\thispagestyle{empty}

~\vskip -1.1 cm

	% heading of first page
	%
{\footnotesize\noindent
[version:~\dataversione]%
\hfill
%to apper on\dots
%\hfill DOI~\href{http://dx.doi.org/***}{***} \par
}

\vspace{1.7 cm}

	% title
	%
{\large\bf\centering
On the differentiability of Lipschitz functions 
\\
with respect to measures in the Euclidean space
\\
}

\vspace{.6 cm}

	% authors' names
	%
\centerline{\sc Giovanni Alberti and Andrea Marchese}

\vspace{.8 cm}

{\rightskip 1 cm
\leftskip 1 cm
\parindent 0 pt
\footnotesize

	% abstract, keywords and MSC numbers
	%
{\sc Abstract.}
Rademacher theorem states that every Lipschitz function on 
the Euclidean space is differentiable almost everywhere, 
where ``almost everywhere'' refers to the Lebesgue measure.
In this paper we prove a differentiability result of  
similar type, where the Lebesgue measure is replaced 
by an arbitrary finite measure $\mu$.
In particular we show that the differentiability properties
of Lipschitz functions at $\mu$-almost every point are related 
to the decompositions of $\mu$ in terms of rectifiable 
one-dimensional measures. 
As a consequence we obtain a differentiability result for
Lipschitz functions with respect to 
(measures associated to) $k$-dimensional normal currents, 
which we use to extend certain formulas involving normal 
currents and maps of class $C^1$ to Lipschitz maps.

\par
\medskip\noindent
{\sc Keywords:} Lipschitz functions, differentiability, 
Rademacher theorem, normal currents.
\par
\medskip\noindent
{\sc MSC (2010):} 
26B05, 49Q15, 26A27, 28A75, 46E35.
\par
}

%%%%%%%%%%%%%%%%%%%%%%%%%%%%%%%%%%%%%%%%%%%%%%%%%%%%%%%%%%%%%%%%%
%
%	INTRODUCTION 
%
%%%%%%%%%%%%%%%%%%%%%%%%%%%%%%%%%%%%%%%%%%%%%%%%%%%%%%%%%%%%%%%%%

\section{Introduction}
\label{s1}
The study of the differentiability
properties of Lipschitz functions has a long
story, and many facets.
In recent years much attention has been devoted
to the differentiability of Lipschitz functions
on infinite dimensional Banach spaces 
(see the monograph by J.~Lindenstrauss, D.~Preiss 
and J.~Ti{\v s}er \cite{LPT})
and on metric spaces (we just mention here the 
works by J.~Cheeger~\cite{Cheeger}, 
S.~Keith~\cite{Keith} and D.~Bate~\cite{Bate}), 
but at about the same time it became 
clear that even Lipschitz functions on 
$\R^n$ are not completely understood, 
and that Rademacher theorem, which states
that every Lipschitz function on 
$\R^n$ is differentiable almost everywhere,%
\footnoteb{We use the expressions ``almost everywhere'' 
and ``null set'' without further 
specification to mean ``almost everywhere'' 
and ``null set'' with respect to the Lebesgue measure.
The same for ``absolutely
continuous measure'' and ``singular measure''.}
is not the end of story.

To this regard, the first fundamental contribution 
is arguably the paper \cite{Preiss}, where Preiss
proved, among other things, that there exist 
null sets $E$ in $\R^2$ such that every 
Lipschitz function on $\R^2$ is differentiable at
some point of $E$.%
\footnoteb{The construction of such sets 
has been variously improved in recent years, 
cf.~\cite{DoMa}.}
This result showed that Rademacher 
theorem is not sharp, in the sense that 
while the set of non-differentiability points
of a Lipschitz function is always contained
in a null set, the opposite inclusion does
not always hold.

Note that this construction 
does not give a null sets $E$ such that
every Lipschitz map%
\,\footnoteb
{As usual, we reserve the word ``function'' for real-valued maps.}
is differentiable at some point of $E$,
and indeed it was later proved by G.~Alberti, 
M.~Cs\"ornyei and D.~Preiss 
that every null set in $\R^2$ 
is contained in the non-differentiability
set of some Lipschitz map $f:\R^2 \to \R^2$ 
(this result was announced in \cite{ACP1}, \cite{ACP2} 
and appears in \cite{ACP3}%***
).
The situation in higher dimension has 
been clarified only very recently: 
first M.~Cs\"ornyei and P.W.~Jones announced 
(see \cite{CJ}) that every null 
set in $\R^n$ is contained in the non-differentiability 
set of a Lipschitz map $f:\R^n\to\R^n$,
and then D.~Preiss and G.~Speight~\cite{PS}
proved that there exist null sets
$E$ in $\R^n$ such that every Lipschitz
map $f:\R^n\to\R^m$ with $m<n$ is 
differentiable at some point of $E$.

\medskip
In this paper we approach the differentiability
of Lipschitz functions from a slightly different 
point of view. Consider again the statement of
Rademacher theorem: the ``almost everywhere'' 
there refers to the Lebesgue measure,  
and clearly the statement remains true if we 
replace the Lebesgue measure with a
measure $\mu$ which is absolutely continuous,
but of course it fails if $\mu$ is an arbitrary 
singular measure.

However in many cases it is clear how to modify the
statement to make it true. For example, if $S$ is a
$k$-dimensional surface of class $C^1$ 
contained in $\R^n$ and $\mu$ is the $k$-dimensional 
volume measure on $S$, then every Lipschitz function 
on $\R^n$ is differentiable at $\mu$-a.e.~$x\in S$ 
in all directions in the tangent space $\Tan(S,x)$. 
Furthermore this statement is optimal in the sense that
there are Lipschitz functions $f$ on $\R^n$ which, for 
every $x\in S$, are not differentiable at $x$ in any 
direction which is not in $\Tan(S,x)$ (the obvious example 
is the distance function $f(x):=\dist(x,S)$).

We aim to prove a statement of similar nature
for an arbitrary finite measure $\mu$ on $\R^n$.
More precisely, we want to identify
for $\mu$-a.e.~$x$ the largest
set of directions $V(\mu,x)$ such that every 
Lipschitz function on $\R^n$ is 
differentiable at $\mu$-a.e.~$x$ 
in every direction in $V(\mu,x)$.

\medskip
We begin with a simple observation:
let $\mu$ be a measure on $\R^n$ that 
can be decomposed as%
\,\footnoteb{The meaning of formula \eqref{e-1.1}
is that $\smash{\mu(E) = \int_I \mu_t(E) \, dt}$
for every Borel set $E$; the precise 
definition of integral of a measure-valued
map is given in \S\ref{s-measint}.
}
\begin{equation}
\label{e-1.1}
\mu = \int_I \mu_t \, dt
\end{equation}
where $I$ is the interval $[0,1]$ endowed with
the Lebesgue measure $dt$, and each 
$\mu_t$ is the length measure on some rectifiable 
curve $E_t$, and assume in addition that there 
exists a vectorfield $\tau$ on $\R^n$ such that 
for a.e.~$t$ and $\mu_t$-a.e.~$x\in E_t$ 
the vector $\tau(x)$ is tangent to $E_t$ at $x$.
Then every Lipschitz function $f$ on $\R^n$
is differentiable at $x$ in the direction 
$\tau(x)$ for $\mu$-a.e.~$x$.

Indeed by applying Rademacher theorem to the 
Lipschitz function $f\circ\gamma_t$, where
$\gamma_t$ is a parametrization of $E_t$ by
arc-length, we obtain that 
$f$ is differentiable at the point $\gamma(s)$ 
in the direction $\dot\gamma(s)$ for a.e.~$s$, 
which means that $f$ is differentiable at $x$ in 
the direction $\tau(x)$ for $\mu_t$-a.e.~$x$
and a.e.~$t$, and by formula \eqref{e-1.1}
``for $\mu_t$-a.e.~$x$ and a.e.~$t$'' is equivalent
to ``for $\mu$-a.e.~$x$''.

\medskip
This observation suggests that the set of 
directions $V(\mu,x)$ we are looking for should 
be related to the set of all decompositions 
of $\mu$, or of parts of $\mu$, of the type
considered in the previous paragraph.
Accordingly, we give the following ``provisional'' 
definition: consider all possible families of 
measures $\{\mu_t\}$ such that the measure
$\int_I	\mu_t \, dt$ is absolutely continuous 
\wrt $\mu$, and each $\mu_t$ is the restriction
of the length measure to a subset $E_t$ of a 
rectifiable curve, and for every $x\in E_t$ let
$\Tan(E_t,x)$ be the tangent line to this curve
at $x$ (if it exists); 
let then $V(\mu,x)$ be the smallest linear subspace
of $\R^n$ such that for every family $\{\mu_t\}$
as above there holds $\Tan(E_t,x) \subset V(\mu,x)$ 
for a.e.~$t$.
We call the map $x\mapsto V(\mu,x)$ the 
\emph{decomposability bundle} of $\mu$.

Even though this definition presents some flaws 
at close scrutiny, it should be sufficient to 
understand the following theorem, which is the main 
result of this paper; 
the ``final'' version of this definition is slightly 
different and more involved, and has been postponed 
to the next section (see \S\ref{s-decomp}).

\begin{theorem}\label{s-main}
Let $\mu$ be a finite measure on $\R^n$, and
let $V(\mu,\cdot)$ be the decomposability bundle
of $\mu$ (see \S\ref{s-decomp}). 
Then the following statements hold:
\begin{itemizeb}
\item[(i)]
Every Lipschitz function $f$ on $\R^n$
is differentiable at $\mu$-a.e.~$x$ with respect 
to the linear subspace $V(\mu,x)$, that is, there exists
a linear function from $V(\mu,x)$ to $\R$, denoted by
$\dV f(x)$, such that
\[
f(x+h) = f(x) + \dV f(x) \, h  + o(|h|)
\quad\text{for $h\in V(\mu,x)$.}%
\footnoteb{We use the Landau notation 
$o(|h|)$ with the understanding that $h$ tends to~$0$.}
\]
\item[(ii)]
The previous statement is optimal 
in the sense that there exists a Lipschitz
function $f$ on $\R^n$ such that for $\mu$-a.e.~$x$
and every $v\notin V(\mu,x)$ the derivative
of $f$ at $x$ in the direction $v$ does not exist.
\end{itemizeb}
\end{theorem}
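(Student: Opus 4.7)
\medskip
\textbf{Plan for part (i).} The strategy is to reduce differentiability on $V(\mu,x)$ to Rademacher's theorem applied to one-dimensional rectifiable curves. First I would check that $V(\mu,\cdot)$ is Borel measurable (as a map into the Grassmannian) and, by a standard maximality argument on its dimension, that it can be realized $\mu$-a.e.~as the span of finitely many Borel vectorfields $\tau_1,\dots,\tau_k$, each coming from a one-rectifiable decomposition $\mu \trace A_i = \int_I \mu_{i,t}\,dt$ with $\mu_{i,t}$ supported on a curve $E_{i,t}$ tangent to $\tau_i$. For each $i$, the observation already made in the introduction, applied to the Lipschitz map $f\circ\gamma_{i,t}$ with $\gamma_{i,t}$ an arclength parametrization of $E_{i,t}$, gives the existence of the partial derivative $\partial_{\tau_i} f(x)$ for $\mu$-a.e.~$x$.

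\medskip
\textbf{Upgrading to full differentiability.} The main obstacle is that the existence of directional derivatives along finitely many directions does \emph{not} in general imply differentiability along their span, even for Lipschitz $f$. To close this gap I would exploit the richness of the defining property of $V(\mu,\cdot)$: for $\mu$-a.e.~$x$ and \emph{every} direction $v\in V(\mu,x)$ there is a one-rectifiable decomposition of (a part of) $\mu$ whose tangent equals $v$ at $x$. Taking a countable dense set of directions $\{v_j\}\subset V(\mu,x)$, Rademacher along each family of curves yields a $1$-Lipschitz map $L\colon \{v_j\}\to\R$, $v_j\mapsto\partial_{v_j}f(x)$, which by $\Lip(f)$-Lipschitzness extends uniquely to a linear functional $\dV f(x)$ on $V(\mu,x)$ (linearity follows because directions obtained by superimposing decompositions must coincide with sums of the individual tangent vectors). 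The remainder estimate $f(x+h)-f(x)-\dV f(x)h=o(|h|)$ uniformly in $h\in V(\mu,x)$ would then follow from a Lebesgue-density argument along the curves, combined with the Lipschitz bound to interpolate between the countably many directions. I expect this linearity-plus-uniformity step to be the technically heaviest, since one must show that the decompositions available at $\mu$-a.e.~$x$ can be chosen compatibly in all the directions simultaneously.

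\medskip
\textbf{Plan for part (ii).} Here the sharpness of the bundle must be witnessed by an explicit Lipschitz function. By the very definition of $V(\mu,\cdot)$ as the \emph{smallest} subspace containing all tangents of one-rectifiable decompositions, directions $v\notin V(\mu,x)$ are characterized by the absence of any one-rectifiable structure of $\mu$ in direction $v$; equivalently, $\mu$-a.a.~$x$ is contained in a Borel set which meets every line in direction $v$ in a set of vanishing $\Haus^1$-measure (on the relevant portion of $\mu$). This is precisely the setting in which an Alberti–Cs\"ornyei–Preiss type construction produces a Lipschitz function that fails to have a directional derivative in direction $v$ on that set. I would apply such a construction to a countable dense family of pairs $(v, W)$ with $v\notin W$ and $W$ ranging over a dense set of candidate subspaces, and then sum the resulting functions with rapidly decaying weights to obtain a single Lipschitz $f$ whose non-differentiability set in direction $v$ covers $\{x : v\notin V(\mu,x)\}$ up to $\mu$-null sets. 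The subtlety here is the measurable selection needed to align these constructions with the $\mu$-a.e.~behaviour of $V(\mu,\cdot)$; this will rely on the structural description of $V(\mu,\cdot)$ developed for part~(i).
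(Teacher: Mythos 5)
Your outline for part~(i) stalls exactly at the step you yourself flag as "technically heaviest," and the fix you sketch does not close it. The definition of $V(\mu,\cdot)$ only guarantees that the tangents of the available decompositions \emph{span} $V(\mu,x)$ in the essential sense; it does not give you, at $\mu$-a.e.~$x$, decompositions tangent to a dense set of directions of $V(\mu,x)$ (that stronger fact is essentially Corollary~\ref{s-6.4}, proved much later via normal currents, and is not available here). So Rademacher along curves only yields directional derivatives along a spanning family $\tau_1,\dots,\tau_k$, and "superimposing decompositions" does not produce a curve tangent to $\tau_1(x)+\tau_2(x)$, so linearity of $v\mapsto D_vf(x)$ cannot be extracted this way; without linearity on a dense subset of $V(\mu,x)$, the equi-Lipschitz interpolation argument gives at best a (possibly non-linear) first-order expansion, not differentiability. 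The paper avoids the problem entirely by never working with individual directional derivatives: one selects measurably a \emph{maximal} subspace $V(x)\in\D^*(f,x)$ on which $f$ is already differentiable, and Proposition~\ref{s-4.3} (a Lusin--Egorov argument combining the uniform Taylor expansion on $V(x)$ with differentiability along a rectifiable set) shows that $f$ is then differentiable w.r.t.\ $V(x)\oplus\Tan(E,x)$ at $\Haus^1$-a.e.\ point of any $1$-rectifiable $E$; maximality forces $\Tan(E,x)\subset V(x)$, hence $x\mapsto V(x)$ lies in $\G_\mu$ and $V(\mu,x)\subset V(x)$ $\mu$-a.e. This "absorption into a maximal differentiability subspace" is the missing idea in your plan.

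For part~(ii) your two key steps are also not secured. First, the correct structural input is not that $\mu$-a.a.\ of the relevant set meets every \emph{line} in direction $v$ in an $\Haus^1$-null set (that is far too weak: by Fubini it only says the set is Lebesgue-null, and Preiss's theorem shows null sets need not carry non-differentiability); one needs that the set is null for every \emph{curve} whose tangent lies in a whole \emph{cone} around $v$, and obtaining such cone-null carriers from the hypothesis $V(\mu,x)\cap C=\{0\}$ requires a compactness/Rainwater-type argument (Proposition~\ref{s-rainwatercor4} and Lemma~\ref{s-rainwatercor2}). Second, summing the resulting functions with rapidly decaying weights does not obviously preserve non-differentiability: the gap $D^+_vf-D^-_vf$ contributed by the $i$-th summand is scaled by its weight, and the tail is a Lipschitz perturbation of comparable size, so it can cancel the gap. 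The paper resolves this by localizing (pairwise disjoint compact sets $E_i$), regularizing each $f_i$ off $E_i$ to second order (Lemma~\ref{s-regularize}) so that for $x\in E_i$ the complementary sum splits into a finite part differentiable at $x$ plus a tail of arbitrarily small Lipschitz constant, and by proving the non-differentiability of each $f_i$ on $E_i$ via a Baire-category/residuality argument in a suitable complete space $X$ rather than by a direct construction. Your sketch contains none of these safeguards, and without them the final summation step fails.
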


\begin{remark}
The differentiability part of this theorem, 
namely statement~(i), applies also
to Lipschitz maps $f:\R^n\to\R^m$
(because it applies to each component of $f$).
Note that for the non-differentiability part 
we are able to give a real-valued example.
In other words, the results on the 
differentiability of Lipschitz 
maps with respect to measures---at least
those presented in this paper---are 
not sensitive to the dimension of the 
codomain of the maps, unlike the results 
concerning the differentiability at every point
of a given set (see the discussion above).%
\footnoteb{The difference between measures and sets 
is partly explained by the fact that given a
Lipschitz map $f:\R^n\to\R^m$ which is 
non-differentiable $\mu$-a.e., it is relatively
easy to obtain a function $g:\R^n\to\R$ which is
not differentiable  $\mu$-a.e. For instance, 
if $f_i$ are the components of $f$,
the linear combination $g:=\sum \alpha_i f_i$
would do the job for (Lebesgue-) almost every choice of 
the vector of coefficients $(\alpha_1,\dots,\alpha_m)$.
This statement, which incidentally is not difficult 
to prove, has no counterpart for sets.}
\end{remark}

\begin{parag}[Relations with results in the literature]
Even though it was never written down explicitly, 
the idea that the differentiability properties of Lipschitz
functions \wrt a general measure $\mu$ are encoded
in the decompositions of $\mu$ in terms of rectifiable
measures was somewhat in the air (for example, it is 
clearly assumed as a starting point in \cite{Bate}, 
where this idea is extended to the context of 
metric spaces to give a characterization of 
Lipschitz differentiability spaces).  

Moreover the proof of Theorem~\ref{s-main}(ii), 
namely the construction of a Lipschitz function 
which is not differentiable in any direction 
which is not in $V(\mu,x)$, is a simplified version 
of a construction given in \cite{ACP3}.
Note that the original construction 
gives Lipschitz functions (actually maps) which 
are non-differentiable at any point of a given 
set, but if we need a function which is
non-differentiable $\mu$-a.e.\ we 
are allowed to discard $\mu$-null sets, 
and this possibility makes room for significant 
simplifications.

Finally, let us mention that the notion
of decomposability bundle is related to a 
notion of tangent space to measures
given in \cite{Alb} (see Remark~\ref{s-6.5}(v)).
\end{parag}

\begin{parag}[Computation of the decomposability bundle]
In certain cases the decomposability bundle $V(\mu,x)$ 
can be computed using Proposition~\ref{s-basic}. 
We just recall here that if $\mu$ is absolutely continuous \wrt 
the Lebesgue measure then $V(\mu,x)=\R^n$ for 
$\mu$-a.e.~$x$, and if $\mu$ is absolutely continuous \wrt 
the restriction of the Hausdorff measure $\Haus^k$ to
a $k$-dimensional surface $S$ of class $C^1$
(or even a $k$-rectifiable set $S$, cf.~\S\ref{s-rectif})
then $V(\mu,x)=\Tan(S,x)$ for $\mu$-a.e.~$x$
(Proposition~\ref{s-basic}(iii)). 
On the other hand, if $\mu$ is the canonical measure 
associated to well-known examples of self-similar 
fractals such as the snowflake curve and the 
Sierpi\'nski carpet, then $V(\mu,x)=\{0\}$ 
for $\mu$-a.e.~$x$ (see Remark~\ref{s-examples}).
\end{parag}

\begin{parag}[Application to the theory of currents]
\label{s-introcurr}
In Section~\ref{s3} we study the 
decomposability bundle of measures related 
to normal currents.
More precisely, given a measure $\mu$
and a normal $k$-current $T$, we denote by $\tau$ 
the Radon-Nikod\'ym density of $T$ \wrt $\mu$
(see \S\ref{s-repcurr}),
and show that the linear subspace of $\R^n$ spanned by 
the $k$-vector $\tau(x)$ is contained in $V(\mu,x)$ 
for $\mu$-a.e.~$x$
(see \S\ref{s-span} and Theorem~\ref{s-curr2}). 
We then use this result to give explicit formulas
for the boundary of the interior product of a normal 
$k$-current and a Lipschitz $h$-form 
(Proposition~\ref{s-boundary2})
and for the push-forward of a normal
$k$-current according to a Lipschitz map 
(Proposition~\ref{s-pushf2}).

In section~\ref{s6} we give a characterization of 
the decomposability bundle of a measure $\mu$ in terms
of $1$-dimensional normal currents (Theorem~\ref{s-6.3}); 
building on this result we obtain that a vectorfield $\tau$ 
on $\R^n$ can be written as the Radon-Nikod\'ym density 
of a $1$-dimensional normal current $T$ \wrt $\mu$ 
(cf.~\S\ref{s-repcurr}) if and only if $\tau(x)$ belongs 
to $V(\mu,x)$ for $\mu$-a.e.~$x$ (Corollary~\ref{s-6.4}).
\end{parag}

\begin{parag}[On the validity of Rademacher theorem]
%\label{s-rademacher}
%
It is natural to ask for which measures $\mu$ 
on $\R^n$ Rademacher theorem holds in the usual form, 
that is, every Lipschitz function (or map) on $\R^n$ 
is differentiable $\mu$-a.e.
Clearly the class of such measures contains all 
absolutely continuous measures, but does it contains 
any singular measure?

The answer turns out to be negative in every dimension
$n$, because a singular measure $\mu$ is supported on 
a null set $E$, and for every null 
set $E$ contained in $\R^n$ there exists a Lipschitz 
map $f:\R^n\to\R^n$ which is non-differentiable 
at every point of $E$. 
In dimension $n=1$ the existence of such map has 
been known for long time (see for instance \cite{Za});
a construction in general dimension is given 
\cite{ACP3} for a certain class of sets $E$, 
and it is proved that in dimension $n=2$ such class 
agrees with the class of all null sets;
the last result was recently proved for every $n\ge 2$ 
by Cs\"ornyei and Jones, as announced in \cite{CJ}.
\end{parag}

\begin{remark}
\label{s-remrade}
(i)~By Theorem~\ref{s-main},
Rademacher theorem holds for a measure $\mu$ if 
and only if $V(\mu,x) = \R^n$ for $\mu$-a.e.~$x$.
This fact allows us to rephrase the conclusions 
of the previous subsection as follows:
if $\mu$ is a singular measure, then 
$V(\mu,x) \ne \R^n$ for $\mu$-a.e.~$x$.%
\footnoteb{In view of Proposition~\ref{s-basic}(i) 
we can say (slightly) more: for every measure $\mu$ on 
$\R^n$ there holds $V(\mu,x) \ne \R^n$ for 
$\mu^s$-a.e.~$x$, where $\mu^s$ is the singular part 
of the measure $\mu$ \wrt the Lebesgue measure.}

(ii)~For $n=1$ it is easy to show directly 
that $V(\mu,x) = \{0\}$ for $\mu$-a.e.~$x$
when $\mu$ is singular:
indeed $\mu$ is supported on a null 
set $E$, null sets in $\R$ are
purely unrectifiable (\S\ref{s-rectif}), 
and in every dimension the decomposability bundle 
of a measure supported on a purely unrectifiable 
set is trivial (Proposition~\ref{s-basic}(iv)).

(iii)~For $n=2$, the fact that $V(\mu,x) \ne \R^2$ for 
$\mu$-a.e.~$x$ when $\mu$ is singular
follows also from a result proved in \cite{Alb} 
(see Remark~\ref{s-6.5}(vi) for more details).
\end{remark}

\begin{parag}[Higher dimensional decompositions]
\label{s-hddecomp}
For $k=1,\dots,n$ let $\F_k(\R^n)$ be the
class of all measure $\mu$ on $\R^n$ which are 
absolutely continuous \wrt a measure of the form
$\smash{\int_I \mu_t \, dt}$ where each
$\mu_t$ is the volume measure on a $k$-dimensional
surface $E_t$ of class $C^1$.%
\footnoteb{Or, equivalently, each $\mu_t$
is the restriction of the $k$-dimensional 
Hausdorff measure $\Haus^k$ to a $k$-rectifiable 
set $E_t$ (see \S\ref{s-rectif}).}
By Proposition~\ref{s-basic}(vi), for every $\mu$ in 
this class the decomposability bundle $V(\mu,x)$ has 
dimension at least $k$ at $\mu$-a.e.~$x$ 
and it is natural to ask whether the converse
is true, namely that $\dim(V(\mu,x))\ge k$
for $\mu$-a.e.~$x$ implies that 
$\mu$ belongs to $\F_k(\R^n)$.

The answer is positive for $k=1$ and $k=n$:
the case $k=1$ is trivial, while the case $k=n$ is 
equivalent to the statement mentioned in 
Remark~\ref{s-remrade}(i), namely that if 
$\dim(V(\mu,x))=n$ for $\mu$-a.e.~$x$ then 
the measure $\mu$ is absolutely continuous. 
Recently, A.~M\'ath\'e proved  in \cite{Mathe} that 
the answer is negative in all other cases.\end{parag}

\begin{parag}[Differentiability of Sobolev functions]
\label{s-introsob}
Since the continuous representatives of functions in 
the Sobolev space $W^{1,p}(\R^n)$ with 
$p>n$ are differentiable almost everywhere, 
it is natural to ask what differentiability result 
we have when the Lebesgue measure is replaced by 
a singular measure $\mu$. 
In \cite{ACP3} %***
an example is given of a continuous function 
in $W^{1,p}(\R^n)$ which is not differentiable 
in any direction at $\mu$-a.e.~point; 
it seems therefore that Theorem~\ref{s-main} 
admits no significant extension to 
(first order) Sobolev space.
\end{parag}

\medskip
The rest of this paper is organized as follows: 
in Section~\ref{s2} we give the precise definition of 
decomposability bundle and a few of its basic properties,
while Sections~\ref{s4} and \ref{s5} contain the proof 
of Theorem~\ref{s-main}. 

In Section~\ref{s3} we study the decomposability 
bundle of measures associated to normal currents,
and describe a few applications related to the theory 
of normal currents, 
while in Section~\ref{s6} we give a characterization 
of the decomposability bundle of a measure in terms of  
$1$-dimensional normal currents.

Note that sections~\ref{s3} and \ref{s6} are essentially 
independent of the rest of the paper (and of each other), 
and can be skipped by readers who are not specifically 
interested in the theory of currents.

In order to make the structure of the proofs
more transparent, we have postponed to the
appendix (Section~\ref{s7}) the proofs of several 
technical lemmas used in the rest of the paper.

%%%%%%%%%%%%%%%%%%%%%%%%%%%%%%%%%%%%%%%%%%%%%%%%%%%%%%%%%%%%%%%%%%%
%
%  ACKNOWLEDGEMENTS
%
%%%%%%%%%%%%%%%%%%%%%%%%%%%%%%%%%%%%%%%%%%%%%%%%%%%%%%%%%%%%%%%%%%%

\begin{parag*}[Acknowledgements]
This work has been partially supported by the Italian Ministry 
of Education, University and Research (MIUR) through the 2008 PRIN 
Grant ``Trasporto ottimo di massa, disuguaglianze geometriche 
e funzionali e applicazioni'', and by the European Research Council
(ERC) through the Advanced Grant ``Local Structure of Sets, 
Measures and Currents''.

We are particularly indebted to David Preiss for suggesting 
Proposition~\ref{s-4.3}, which lead to a substantial simplification
of the proof of Theorem~\ref{s-main}(i). We also thank
Ulrich Menne and Emanuele Spadaro for asking crucial questions.
\end{parag*}

%%%%%%%%%%%%%%%%%%%%%%%%%%%%%%%%%%%%%%%%%%%%%%%%%%%%%%%%%%%%%%%%%
%
%	SECTION 2 
%
%%%%%%%%%%%%%%%%%%%%%%%%%%%%%%%%%%%%%%%%%%%%%%%%%%%%%%%%%%%%%%%%%

\section{Decomposability bundle}
\label{s2}
We begin this section by recalling some 
definitions and notation used through the entire paper 
(more specific definitions and notation will be introduced 
when needed).
We then give the definition of decomposability bundle
$V(\mu,\cdot)$ of a measure $\mu$ (see \S\ref{s-decomp})
and prove a few basic properties 
(Proposition~\ref{s-basic}). 

\begin{parag}[General notation]
\label{s-not}
For the rest of this paper,
sets and functions are tacitly assumed 
to be Borel measurable, and measures are always 
defined on the appropriate Borel $\sigma$-algebra. 
Unless we write otherwise, measures are 
positive and finite.
We say that a measure $\mu$ on a space $X$ 
is supported on the Borel set $E$ if 
$|\mu|(X\setminus E)=0$.%
\footnoteb{Note that $E$ does not need to be closed, 
and hence it may not contain the support of $\mu$.} 

We say that a map $f:\R^n\to\R^m$ is 
differentiable at the point $x\in\R^n$ \wrt
a linear subspace $V$ of $\R^n$ if 
there exists a linear map $L:V\to\R^m$ such 
that the following first-order Taylor 
expansion holds
\[
f(x+h) = f(x) + L h + o(|h|)
\quad\hbox{for all $h\in V$.}
\]
The linear map $L$ is unique, 
it is called the derivative of $f$ at
$x$ \wrt $V$, and is denoted by $\dV f(x)$.%
\footnoteb{If $V=\R^n$ then $\dV f(x)$ is the usual 
derivative, and is denoted by $df(x)$. Note that 
if $V=\{0\}$ the notion of differentiability 
makes sense, but is essentially void (every map
is differentiable \wrt $V$ at every point).}

\smallskip
We add below a list of frequently used notations
(for the notations related to multilinear algebra 
and currents see \S\ref{s-not2}):
\begin{itemizeb}\leftskip 0.8 cm\labelsep=.3 cm
\item[$B(r)$]
closed ball with center $0$ and radius $r$ in $\R^n$;
\item[$B(x,r)$]
closed ball with center $x$ and radius $r$ in $\R^n$;
\item[$\dist(x,E)$]
distance between the point $x$ and the set $E$;
\item[$v\cdot w$]
scalar product of $v,w \in \R^n$;
\item[$C(e,\alpha)$]
convex closed cone in $\R^n$
with axis $e$ and angle $\alpha$ (see \S\ref{s-cones});
\item[$1_E$] 
characteristic function of a set $E$, defined on 
any ambient space and taking values $0$ and $1$;
\item[$\Gr(\R^n)$] 
set of all linear subspaces of $\R^n$, that is, the union 
of the Grassmannians $\Gr(\R^n,k)$ with $k=0,\dots,n$.
\item[$\dgr(V,V')$]  
distance between $V,V'\in\Gr(\R^n)$, 
defined as the maximum of $\delta(V,V')$ and $\delta(V',V)$,
where $\delta(V,V')$ is the smallest number $d$ such that 
for every $v\in V$ there exists $v'\in V'$ with
$|v-v'|\le d|v|$;%
\footnoteb{Note that $\dgr(V,V')$ agrees with the Hausdorff
distance between the closed sets $V\cap B(1)$ and $V'\cap B(1)$; 
this shows that $\dgr$ is indeed a metric on $\Gr(\R^n)$.}
\item[$\scal{L}{v}$]
(also written $Lv$)
action of a linear map $L$ on a vector $v$;
linear maps are always endowed with the operator 
norm, denoted by $|\cdot|$;
\item[$D_vf(x)$] 
derivative of a map $f:\R^n\to\R^m$ 
in the direction $v$ at a point $x$;
\item[$df(x)$] 
derivative of a map $f:\R^n\to\R^m$ at a point $x$, 
viewed as a linear map from from $\R^n$ to $\R^m$;
\item[$\dV f(x)$] 
derivative of a map $f:\R^n\to\R^m$ at a point $x$
\wrt a subspace $V$ (see at the beginning of this 
subsection);
\item[$\Tan(S,x)$]
tangent space to $S$ at a point $x$, where $S$ is a surface 
(submanifold) of class $C^1$ in $\R^n$ or a rectifiable set
(see \S\ref{s-rectif});
\item[$\Lip(f)$] 
Lipschitz constant of a map $f$ (between two 
metric spaces);
%
%\item[$\diam(E)$] 
%diameter of the set $E$;
%
%\item[$\conv(E)$] 
%convex hull of the set $E$;
%
%\item[$S^{d-1}$]
%$:=\{x\in\R^n: \, |x|=1\}$, unit sphere in $\R^n$;
%
\item[$\Leb^n$]
Lebesgue measure on $\R^n$;
\item[$\Haus^d$]
$d$-dimensional Hausdorff measure
(on any metric space $X$);
\item[$L^p$]
stands for $L^p(\R^n,\Leb^n)$; 
for the $L^p$ space on a different measured space 
$(X,\mu)$ we use the notation $L^p(\mu)$;
\item[$\|\cdot \|_p$]
norm in $L^p=L^p(\R^n,\Leb^n)$; 
we use $\|\cdot\|_\infty$ also to denote the supremum norm
of continuous functions;
\item[$1_E \, \mu $]
restriction of a measure $\mu$ to a set $E$;
\item[$f_\#\,\mu$]
push-forward of a measure $\mu$ on a space $X$
according to a map $f:X\to X'$, that is,
$[f_\#\,\mu](E):=\mu(f^{-1}(E))$
for every Borel set $E$ contained in $X'$;
\item[$|\mu|$] 
total variation measure associated to the real- or vector-valued 
measure $\mu$; thus $\mu$ can be written as $\mu=\rho\,|\mu|$ 
where the (real- or vector-valued) density $\rho$ satisfies
$|\rho|=1$ $\mu$-a.e.
\item[$\Mass(\mu)$]
$:=|\mu|(X)$, mass of the measure $\mu$;
\item[$\lambda\ll\mu$]
means that the measure $\lambda$ is absolutely continuous \wrt 
$\mu$, hence $\lambda = \rho\mu$ for 
a suitable density $\rho$;
%
%\item[$\mu \perp \lambda$]
%the measure $\mu$ is singular \wrt the measure $\lambda$;
%
\item[$\int_I \mu_t \, dt$]
integral of the measures $\mu_t$ with $t\in I$ 
with respect to a measure $dt$ 
(see \S\ref{s-measint}).
\end{itemizeb}
\end{parag}

\begin{parag}[Essential span of a family of vectorfields]
\label{s-essspan}
Given a measure $\mu$ on $\R^n$ and 
a family $\G$ of Borel maps from $\R^n$
to $\Gr(\R^n)$, we say that $V$ is a minimal
element of $\G$ if for every $V'\in\G$ there holds
$V(x) \subset V'(x)$ for $\mu$-a.e.~$x$,
and we say that $\G$ is closed 
under countable intersection if 
given a countable subfamily $\{V_i\} \subset\G$
the map $V$ defined by $V(x):=\cap_i V_i(x)$ 
for every $x\in\R^n$ belongs to $\G$.

Let now $\F$ be a family of Borel vectorfields 
on $\R^n$, and let $\G$ be the class of all Borel maps
$V:\R^n\to \Gr(\R^n)$ such that for every 
$\tau\in \F$ there holds
\[
\tau(x) \subset V(x)
\quad\text{for $\mu$-a.e.~$x$.}
\]
Since $\G$ is closed under 
countable intersection, 
by Lemma~\ref{e-minmap} below it admits 
a unique minimal element, which we call 
\emph{$\mu$-essential span} of $\F$.
\end{parag}

\begin{lemma}
\label{e-minmap}
Let $\G$ be a family of Borel maps from 
$\R^n$ to $\Gr(\R^n)$ which is closed under
countable intersection. Then 
$\G$ admits a minimal element $V$, which is unique
modulo equivalence $\mu$-a.e.%
\footnoteb{In other words, every other minimal element
$V'$ satisfies $V(x)=V'(x)$ for $\mu$-a.e.~$x$.}
\end{lemma}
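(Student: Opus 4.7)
The plan is to find a minimal element by minimizing a suitable real-valued functional on $\G$, the natural choice being the integrated dimension of the subspace-valued map. Concretely, I would set
\[
\Phi(V) := \int_{\R^n} \dim V(x) \, d\mu(x) \quad \text{for } V \in \G,
\]
which is well-defined and finite because $\dim$ takes values in $\{0,1,\dots,n\}$, is locally constant on $\Gr(\R^n)$, and $\mu$ is finite; in particular $0 \le \Phi(V) \le n\,\mu(\R^n)$. Let $d^* := \inf_{V \in \G} \Phi(V)$ and choose a minimizing sequence $\{V_i\} \subset \G$ with $\Phi(V_i) \to d^*$.

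Next I would produce the candidate minimal element by setting
\[
V(x) := \bigcap_{i} V_i(x).
\]
By the hypothesis that $\G$ is closed under countable intersection, $V \in \G$. Since $V(x) \subset V_i(x)$ everywhere, the monotonicity of dimension gives $\dim V(x) \le \dim V_i(x)$ pointwise, hence $\Phi(V) \le \Phi(V_i)$ for every $i$, so $\Phi(V) \le d^*$. Combined with $\Phi(V) \ge d^*$, this yields $\Phi(V) = d^*$.

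To verify minimality of this $V$, I would pick an arbitrary $V' \in \G$ and consider the map $W := V \cap V'$, which belongs to $\G$ (applying the closure hypothesis to the countable family $\{V, V'\}$). Since $W(x) \subset V(x)$, one has $\dim W(x) \le \dim V(x)$ with equality if and only if $V(x) \subset V'(x)$. Integrating,
\[
d^* \le \Phi(W) = \int \dim W(x) \, d\mu \le \int \dim V(x) \, d\mu = \Phi(V) = d^*,
\]
so equality holds $\mu$-a.e.\ in the integrand, which forces $V(x) \subset V'(x)$ for $\mu$-a.e.~$x$. This is exactly the defining property of a minimal element. Uniqueness modulo equivalence $\mu$-a.e.\ follows immediately: if $V$ and $V'$ are both minimal, then $V \subset V'$ and $V' \subset V$ hold $\mu$-a.e., so $V(x) = V'(x)$ for $\mu$-a.e.~$x$.

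The only genuinely delicate point is measurability. I need $x \mapsto \dim V(x)$ to be Borel measurable so that $\Phi$ makes sense, which follows since $\dim$ is locally constant on $\Gr(\R^n)$ and the elements of $\G$ are by definition Borel. I also need the countable intersection $V = \bigcap_i V_i$ to be Borel-measurable as a map into $\Gr(\R^n)$, but this is built into the closure hypothesis. Nothing else in the argument is subtle: the whole proof is simply the observation that $\Phi$ is monotone under pointwise inclusion and that the hypothesis allows intersections to stay within $\G$.
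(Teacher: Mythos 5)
Your proof is correct and follows essentially the same route as the paper: minimize the integrated dimension $\Phi(V)=\int \dim V(x)\,d\mu$ over $\G$, realize the infimum by intersecting a minimizing sequence, and deduce minimality from the fact that a further intersection with any $V'\in\G$ cannot lower $\Phi$. The only cosmetic difference is that you phrase the final step directly where the paper argues by contradiction.
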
 

\begin{proof}
Uniqueness follows immediately
from minimality. To prove existence, set 
\[
\Phi(V):=\int_{\R^n} \dim(V(x)) \, d\mu(x)
\]
for every $V\in \G$, then take a sequence $\{V_j\}$ 
in $\G$ such that $\Phi(V_j)$ tends to the infimum 
of $\Phi$ over $\G$, and let $V$ be the intersection 
of all $V_j$; thus $V$ belongs to $\G$ and is a minimum 
of $\Phi$ over $\G$, and we claim that $V$ is a minimal 
element of $\G$: if not, there would exists $V'\in\G$ such that 
$V''(x):= V(x)\cap V'(x)$ is
strictly contained in $V(x)$ for all $x$ in some
set of positive measure, thus $V''$ belongs 
to $\G$ and $\Phi(V'') <\Phi(V)$.
\end{proof}

\begin{parag}[Rectifiable and unrectifiable sets]
\label{s-rectif}
Given $k=1,2,\dots$
we say that a set $E$ contained in $\R^n$ is $k$-rectifiable 
if it has finite $\Haus^k$ measure and it can be covered, 
except for a $\Haus^k$-null subset, by countably many 
images of Lipschitz maps from $\R^k$ to $\R^n$, or, equivalently, 
by countably many $k$-dimensional surfaces (submanifolds) 
of class $C^1$.%
\footnoteb{See for instance \cite{Mo}, \S3.10 and 
Proposition~3.11, or \cite{KP}, Definition~5.4.1 
and Lemma~5.4.2. Note that the terminology and 
even the definition vary (slightly) depending on 
the author.}

In particular it is possible to define for 
$\Haus^k$-a.e.~$x\in E$
an approximate tangent space $\Tan(E,x)$.%
\footnoteb{See for example \cite{KP}, 
Theorem~5.4.6, or \cite{Mo}, Proposition~3.12.}
The tangent bundle is actually characterized by 
the property that for every 
$k$-dimensional surface $S$ of class $C^1$ 
there holds 
\begin{equation}
\label{e-tangent}
\Tan(E,x)=\Tan(S,x)
\quad\text{for $\Haus^k$-a.e.~$x\in E\cap S$.}
\end{equation}

Finally we say that a set $E$ in $\R^n$ is purely 
unrectifiable (or more precisely $1$-purely 
unrectifiable) if $\Haus^1(E\cap S)=0$
for every $1$-rectifiable set $S$, or equivalently
for every curve $S$ of class $C^1$.
\end{parag}

\begin{parag}[Integration of measures]
\label{s-measint}
Let $I$ be a locally compact, separable metric space
endowed with a finite measure $dt$, 
and for every $t\in I$ let $\mu_t$ be a 
measure on $\R^n$, possibly real- or vector-valued,
such that:
\begin{itemizeb}
\item[(a)]
the function $t\mapsto \mu_t(E)$ 
is Borel for every Borel set $E$ in $\R^n$; 
\item[(b)]
the function $t\mapsto \Mass(\mu_t)$ is Borel
and the integral $\int_I \Mass(\mu_t) \, dt$ is finite.
\end{itemizeb}
Then we denote by $\int_I \mu_t\, dt$ the measure on 
$\R^n$ defined by
\[
{\textstyle \big[ \int_I \mu_t\, dt \big]}(E)
:= \int_I \mu_t(E) \, dt
\quad\text{for every Borel set $E$ in $\R^n$.}
\]
\end{parag}

\begin{remark}
\label{s-2.4}
(i)~Assumption~(a) above is equivalent 
to say that $t\mapsto \mu_t$ is a Borel map
from $I$ to the space of finite (real- or vector-valued)
measures on $\R^n$ endowed with the weak* topology.%
\footnoteb{As dual of the appropriate Banach space of 
continuous functions on $\R^n$.}
Note that assumption~(a) and the definition of mass 
imply that the function $t\mapsto \Mass(\mu_t)$ 
is Borel, thus the first part of assumption~(b)
is redundant.

(ii)~Given $I$ and $dt$ as in the previous subsection,
there always exists a Borel map $\phi:[0,1]\to I$ 
such that the push-forward according to $\phi$ of the
Lebesgue measure agrees up to a constant factor 
with the measure $dt$ (see for instance \cite{BCM}, Theorem~A.3); 
therefore, by composing the map $t\mapsto \mu_t$ with $\phi$, 
one can always assume that in the expression 
$\int_I \mu_t \, dt$, $I$ is the interval $[0,1]$
and $dt$ is (a multiple of) the Lebesgue measure.
\end{remark}

\begin{parag}[Decomposability bundle]
\label{s-decomp}
Let $I$ be the interval $[0,1]$ 
endowed with the Lebesgue measure $dt$.
Given a measure $\mu$ on $\R^n$ we denote by
$\F_\mu$ the class of all families 
$\{\mu_t: \, t\in I\}$ such that
\begin{itemizeb}
\item[(a)]
each $\mu_t$ is the restriction of $\Haus^1$ to a
$1$-rectifiable set $E_t$;
\item[(b)]
$\{\mu_t\}$ satisfies the assumptions
(a) and (b) in \S\ref{s-measint};
\item[(c)]
the measure $\int_I \mu_t \, dt$ is 
absolutely continuous \wrt $\mu$.
\end{itemizeb}
Then we denote by $\G_\mu$ the class of all Borel maps
$V:\R^n\to \Gr(\R^n)$ such that for every $\{\mu_t\}\in\F_\mu$ 
there holds
\begin{equation}\label{test}
\Tan(E_t,x) \subset V(x)
\quad\text{for $\mu_t$-a.e.~$x$ and a.e.~$t\in I$.}
 \end{equation}
Since $\G_\mu$ is closed under countable intersection, 
by Lemma~\ref{e-minmap} it admits a minimal element
which is unique modulo equivalence $\mu$-a.e.
We call this map \emph{decomposability bundle} 
of $\mu$, and denote it by $x\mapsto V(\mu,x)$.
\end{parag}

\begin{remark}\label{s-2.5}
(i)~In view of Remark~\ref{s-2.4}(ii), 
nothing would change in the definition of 
decomposability bundle if we let $I$ range 
among all locally compact, separable metric 
spaces, and $dt$ range among all finite measures
on $I$. We tacitly use this fact in 
the following. 

(ii)~This definition of the 
decomposability bundle differs
from the one given in the Introduction in 
two respects: firstly, the minimality property
that characterizes $V(\mu,\cdot)$ is now precisely 
stated, and secondly the sets $E_t$ are now
$1$-rectifiable sets, and not just subsets 
of a rectifiable curve. This modification 
does not affect the definition, 
but it is convenient for technical reasons.

%(iii)~The decomposability bundle $V(\mu,x)$ is related to the 
%bundle $E(\mu,x)$ introduced in \cite{Alb}, Definition~2.1.
%The latter is the $\mu$-essential span of the family of 
%vectorfields given by the Radon-Nikod\'ym density of the 
%measure derivatives $Du$ with respect to $\mu$, where $u$ 
%ranges over all $BV$ functions on $\R^d$.%
%
%\footnoteb{Thus the property $E(\mu,x)\ne \{0\}$ for 
%$\mu$-a.e.~$x$ characterizes the measures $\mu$ such that 
%$\mu \ll|Du|$ for some $BV$ function $u$ on $\R^n$.}
%
%It is not difficult to prove that $x\mapsto E(\mu,x)$ is the 
%minimal element of the family of all maps $E:\R^n\to\Gr(\R^n)$ 
%which satisfy the following property: for every family of measures 
%$\{\mu_t: \, t\in I\}$ such that $\smash{\mu \ll \int_I \mu_t\, dt}$ 
%and each $\mu_t$ is the restriction of $\Haus^{d-1}$ to a 
%$(d-1)$-rectifiable set $E_t$, there holds 
%\[
%\Tan(E_t,x)^\perp \subset E(x)
%\quad\text{for $\mu_t$-a.e.~$x$ and a.e.~$t$.}
%\]
%Thus for $n=2$, $E(\mu,x)$ agrees with $V(\mu,x)$ rotated 
%by $90^\circ$ for $\mu$-a.e.~$x$.
%
%(iv)~It is proved in \cite{Alb}, Theorem~3.1, that if $\mu$ 
%is a singular measure on $\R^2$ then $E(\mu,x) \ne \R^2$ for 
%$\mu$-a.e.~$x$ and therefore the same holds for $V(\mu,x)$.
\end{remark}

Propositions~\ref{s-basic} and \ref{generator} contain
some relevant properties of the decomposability bundle 
(see also Remark~\ref{s-remrade}).

\begin{proposition}
\label{s-basic}
Let $\mu$, $\mu'$ be measures on $\R^n$. 
Then the following statements hold:
\begin{itemizeb}
\item[(i)](strong locality principle)
if $\mu' \ll \mu$ then $V(\mu',x) = V(\mu,x)$ 
for $\mu'$-a.e.~$x$; more generally, 
if $1_E \, \mu' \ll\mu$ for some set $E$,
then $V(\mu',x) = V(\mu,x)$ for $\mu'$-a.e.~$x\in E$;
\item[(ii)]
if $\mu$ is supported on a $k$-dimensional surface $S$ 
of class $C^1$ then $V(\mu,x) \subset \Tan(S,x)$ 
for $\mu$-a.e.~$x$;
\item[(iii)]
if $\mu \ll 1_E \, \Haus^k$ where $E$ is a 
$k$-rectifiable set, then $V(\mu,x) = \Tan(E,x)$ 
for $\mu$-a.e.~$x$; 
in particular if $\mu \ll \Leb^n$ then 
$V(\mu,x) =\R^n$ for $\mu$-a.e.~$x$; 
\item[(iv)]
$V(\mu,x) = \{0\}$ for $\mu$-a.e.~$x$
if and only if $\mu$ is supported on a 
purely unrectifiable set $E$;
\end{itemizeb}
Moreover, given a family of measures 
$\{\nu_s: \, s\in I\}$ as in \S\ref{s-measint},
\begin{itemizeb}
\item[(v)]
if $\int_I \nu_s \, ds \ll \mu$ then 
$V(\nu_s,x) \subset V(\mu,x)$ for 
$\nu_s$-a.e.~$x$ and a.e.~$s$;
\item[(vi)]
if $\mu \ll \int_I \nu_s \, ds$ and 
$\nu_s$ is of the form $\nu_s =1_{E_t} \, \Haus^k$ 
where $E_s$ is a $k$-rectifiable set for a.e.~$s$, 
then $V(\mu,x)$ has dimension at least $k$ 
for $\mu$-a.e.~$x$. 
\end{itemizeb}
\end{proposition}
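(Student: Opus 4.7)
My plan is to prove the six parts in sequence, since each later one can draw on the earlier ones. The single tool I use throughout is the minimality characterization of $V(\mu,\cdot)$ in $\G_\mu$ provided by Lemma~\ref{e-minmap}: upper bounds $V(\mu,x)\subset W(x)$ $\mu$-a.e.\ follow from exhibiting an explicit candidate $W\in\G_\mu$, while lower bounds require constructing families in $\F_\mu$ with prescribed tangent directions.

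For (i), the inclusion $V(\mu',\cdot)\subset V(\mu,\cdot)$ $\mu'$-a.e.\ is immediate from transitivity: the assumption $\mu'\ll\mu$ gives $\F_{\mu'}\subset\F_\mu$, hence $\G_\mu\subset\G_{\mu'}$, and then $V(\mu,\cdot)\in\G_{\mu'}$ together with minimality of $V(\mu',\cdot)$ in $\G_{\mu'}$ yields the inclusion. For the reverse direction I set $A:=\{d\mu'/d\mu>0\}$ and define $W(x):=V(\mu',x)$ on $A$, $W(x):=\R^n$ off $A$; given any $\{\mu_t\}\in\F_\mu$, the truncated family $\{1_A\mu_t\}$ lies in $\F_{\mu'}$ because $1_A\mu$ and $\mu'$ are mutually absolutely continuous, so applying \eqref{test} to $V(\mu',\cdot)$ shows $W\in\G_\mu$, and minimality of $V(\mu,\cdot)$ then gives $V(\mu,x)\subset V(\mu',x)$ for $\mu'$-a.e.~$x\in A$. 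The stronger local version with $1_E\mu'\ll\mu$ follows by applying the basic case twice to $\tilde\mu':=1_E\mu'$, once against $\mu$ and once against $\mu'$.

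Statements (ii) and (iii) combine the candidate-map strategy with explicit foliations. For (ii), any $\{\mu_t\}\in\F_\mu$ has $\int\mu_t\,dt$ supported on $S$, so $E_t\subset S$ up to $\Haus^1$-null sets for a.e.~$t$, and a local chart argument forces $\Tan(E_t,x)\subset\Tan(S,x)$ at $\Haus^1$-a.e.~$x\in E_t$; thus the map equal to $\Tan(S,\cdot)$ on $S$ and $\R^n$ off $S$ lies in $\G_\mu$. For (iii), I first use (i) to reduce to $\mu=1_E\Haus^k$. The upper bound then follows by covering $E$ by countably many $k$-dimensional $C^1$ surfaces $S_i$, applying (i) to $1_{S_i}\mu$ and then (ii), and invoking \eqref{e-tangent}. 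For the matching lower bound, on each $S_i$ I work through a $C^1$ chart $\phi:U\to S_i$ with $U\subset\R^k$ open: the $k$ coordinate foliations of $U$ push forward to $k$ families of $1$-rectifiable leaves on $S_i$, each yielding by Fubini a decomposition of $\Haus^k|_{S_i}$ as $\int\mu_t^{(\alpha)}\,dt$ with $\mu_t^{(\alpha)}$ the $\Haus^1$-measure on a leaf; after restriction to $E$ these families lie in $\F_\mu$, and their tangent directions $d\phi(e_\alpha)$ for $\alpha=1,\dots,k$ together span $\Tan(S_i,x)=\Tan(E,x)$ at $\Haus^k$-a.e.~$x\in S_i$, giving $\Tan(E,x)\subset V(\mu,x)$ $\mu$-a.e.

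For (iv), the ``if'' direction is immediate: if $\mu$ is supported on a purely unrectifiable set, then each $\{\mu_t\}\in\F_\mu$ has $E_t$ contained up to $\Haus^1$-null sets in this set, hence $\Haus^1(E_t)=0$ for a.e.~$t$, so $W\equiv\{0\}$ belongs to $\G_\mu$. The converse I prove by contrapositive: if $\mu(S)>0$ for some $1$-rectifiable $S$, the Lebesgue decomposition of $1_S\mu$ with respect to $\Haus^1|_S$ either has a nontrivial absolutely continuous part---in which case (iii) and (i) yield $V(\mu,x)=\Tan(S,x)\neq\{0\}$ on a set of positive $\mu$-measure---or $1_S\mu$ is singular and thus supported on a $\Haus^1$-null, hence purely unrectifiable, subset, and one exhausts the rectifiable portion of $\mu$ in this way. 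Statement (v) I prove by a combination argument: using a measurable selection I pick jointly Borel families $\{\mu_t^{(s)}\}_t\in\F_{\nu_s}$ for a.e.~$s$ that realize $V(\nu_s,\cdot)$ via a countable minimizing collection, and combine them into a single family indexed by $I\times I'$ whose total measure satisfies $\iint\mu_t^{(s)}\,dt\,ds\ll\int\nu_s\,ds\ll\mu$ and hence lies in $\F_\mu$; applying \eqref{test} to $V(\mu,\cdot)$ and Fubini yields the conclusion. Finally, (vi) is immediate from (iii) applied to each $\nu_s$ and (v). I expect the main technical obstacles to be ensuring joint Borel measurability in the parameter $s$, both in the foliation construction in the lower bound of (iii) and in the selection argument underlying (v).
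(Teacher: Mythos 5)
Most of your proposal tracks the paper's argument closely: parts (i), (ii), (iii), (v) and (vi) are essentially the paper's proof reformulated directly in terms of the minimality of $V(\mu,\cdot)$ in $\G_\mu$ rather than through the maximizing family of Proposition~\ref{generator}, and these parts are sound (the measurability issue you flag in (v) is acknowledged by the paper itself, and in (vi) you should make explicit the reduction via (i) to the case $\mu=\int\nu_s\,ds$, since (v) requires $\int\nu_s\,ds\ll\mu$ whereas (vi) assumes the reverse inclusion).

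The ``only if'' direction of (iv), however, has a genuine gap. Your argument reduces, via (iii) and (i), to the implication: \emph{if $1_S\mu$ is singular with respect to $\Haus^1\trace S$ for every $1$-rectifiable $S$, then $\mu$ is supported on a purely unrectifiable set}. This implication is false, and no exhaustion over rectifiable sets can repair it, because the hypothesis is a statement about each rectifiable set separately (an uncountable family), while the conclusion demands a \emph{single} Borel set of full $\mu$-measure meeting \emph{every} rectifiable set in an $\Haus^1$-null set. The measure $\mu=1_{[0,1]^2}\Leb^2$ makes the failure concrete: it vanishes on every $1$-rectifiable set, so it satisfies your intermediate condition vacuously, yet it is not supported on any purely unrectifiable set $E$ (the complement of $E$ would be Lebesgue-null, hence by Fubini would meet a.e.\ horizontal line in an $\Haus^1$-null set, so $E$ itself would contain almost all of a.e.\ horizontal line). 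Of course $\Leb^2$ does not satisfy $V(\mu,\cdot)=\{0\}$, but it shows that your route from the intermediate condition to the conclusion is invalid: a decomposition $\int_I\mu_t\,dt\ll\mu$ can be nontrivial even when every single rectifiable set is $\mu$-null, so the ``setwise'' singularity test you use cannot detect it. This is precisely the point of Lemma~\ref{s-rainwatercor3}, whose proof via Rainwater's lemma (a weak* compactness/convexity argument over families of curve measures) is the tool that bridges ``singular with respect to each curve'' to ``concentrated on a common $C$-null set''. Your proof of (iv) needs to invoke that dichotomy (or reprove it); it cannot be replaced by the Lebesgue decomposition plus exhaustion you describe.
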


\begin{remark}
\label{s-examples}
(i)~Many popular examples of self-similar fractals, 
including the Von~Koch snowflake curve, the Cantor set, 
and the so-called Cantor dust (a cartesian product of
Cantor sets) are purely unrectifiable, and therefore 
every measure $\mu$ supported on them satisfies
$V(\mu,x)=\{0\}$ for $\mu$-a.e.~$x$ 
(Proposition~\ref{s-basic}(iv)). 

(ii)~The Sierpi\'nski carpet is a self-similar fractal
that contains many segments, and therefore is not purely 
unrectifiable. However, the \emph{canonical} probability measure 
$\mu$ associated to this fractal is supported on a purely 
unrectifiable set, and therefore $V(\mu,x)=\{0\}$
for $\mu$-a.e.~$x$.
The same occurs to other fractals of similar 
nature, such as the Sierpi\'nski triangle and the 
Menger-Sierpi\'nski sponge. 
\end{remark}

Before stating Proposition~\ref{generator} we need a definition.
Fix a measure $\mu$ on $\R^n$ and a family $\{\mu_t\}\in\F_\mu$, and
consider the class of all Borel maps $V:\R^n\to\Gr(\R^n)$
such that the inclusion \eqref{test} holds;
since this class is closed under countable intersection, by Lemma~\ref{e-minmap} 
it admits a minimal element which is unique modulo equivalence $\mu$-a.e. 
We call this map \emph{tangent bundle} associated to the family $\{\mu_t\}$, 
and we denote it by $x\mapsto V(\{\mu_t\},x)$.

\begin{proposition}
\label{generator}
Let $\mu$ be a measure on $\R^n$. Then
\begin{itemizeb}
\item[(i)]
for every $\{\mu_t\} \in \F_\mu$ there holds
$V(\mu,x) \supset V(\{\mu_t\}, x)$ for $\mu$-a.e.~$x$;
\item[(ii)]
there exists $\{\tilde\mu_t\} \in \F_\mu$
such that $V(\mu,x) = V(\{\tilde\mu_t\}, x)$ 
for $\mu$-a.e.~$x$.
\end{itemizeb}
\end{proposition}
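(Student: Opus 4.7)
For part (i) the argument is immediate from minimality: by definition $V(\mu,\cdot) \in \G_\mu$, so $V(\mu,\cdot)$ itself satisfies the inclusion \eqref{test} for the given family $\{\mu_t\}$, and therefore lies in the class of Borel maps whose minimal element (modulo $\mu$-a.e.\ equivalence) is $V(\{\mu_t\},\cdot)$. Hence $V(\{\mu_t\},x) \subset V(\mu,x)$ for $\mu$-a.e.~$x$.

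For part (ii) the plan is a maximization argument driven by the functional
\[
\Phi(\{\mu_t\}) := \int_{\R^n} \dim V(\{\mu_t\},x) \, d\mu(x), \qquad \{\mu_t\} \in \F_\mu,
\]
which is bounded above by $n\,\mu(\R^n)$. The crucial technical ingredient is a \emph{countable union} operation on $\F_\mu$: given a sequence $\{\mu_t^{(n)}\}_{n\in\mathbb N} \subset \F_\mu$, one constructs a single family $\{\tilde\mu_s\} \in \F_\mu$ with
\[
V(\{\tilde\mu_s\},x) = \textstyle\sum_n V(\{\mu_t^{(n)}\},x) \qquad \text{for $\mu$-a.e.~}x,
\]
by partitioning the index interval into disjoint pieces $I_n$, choosing Borel bijections $\phi_n : I_n \to [0,1]$ that push Lebesgue measure forward to a multiple of Lebesgue measure on $[0,1]$, and setting $\tilde\mu_s := c_n \mu^{(n)}_{\phi_n(s)}$ for $s \in I_n$; the scaling constants $c_n>0$ are chosen small enough that the total mass is finite and condition (c) in the definition of $\F_\mu$ is preserved (countable sums of measures absolutely continuous \wrt $\mu$ are still so). The displayed identity then follows from the two minimality properties: $V(\{\mu_t^{(n)}\},\cdot) \subset V(\{\tilde\mu_s\},\cdot)$ because the restriction of $\{\tilde\mu_s\}$ to $I_n$ is (up to reparametrization and scaling) the family $\{\mu_t^{(n)}\}$, while the reverse inclusion of the sum into $V(\{\tilde\mu_s\},\cdot)$ is immediate from the defining property of $V(\{\tilde\mu_s\},\cdot)$.

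Armed with this, pick $\{\mu_t^{(n)}\}$ with $\Phi(\{\mu_t^{(n)}\}) \to \sup_{\F_\mu}\Phi$ and combine them into $\{\tilde\mu_t\} \in \F_\mu$; then $V(\{\tilde\mu_t\},\cdot) \supset V(\{\mu_t^{(n)}\},\cdot)$ for every $n$, so $\Phi(\{\tilde\mu_t\}) = \sup \Phi$. The claim is $V(\{\tilde\mu_t\},x) = V(\mu,x)$ $\mu$-a.e. The inclusion ``$\subset$'' is part (i). For the reverse, suppose the inclusion is strict on a $\mu$-positive set; then $V(\{\tilde\mu_t\},\cdot) \notin \G_\mu$ (since $V(\mu,\cdot)$ is the minimum of $\G_\mu$), so some $\{\mu_t'\} \in \F_\mu$ witnesses a failure of \eqref{test}, namely $\Tan(E_t',x) \not\subset V(\{\tilde\mu_t\},x)$ on a set of positive $dt\otimes\mu_t'$-measure. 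Using condition (c) applied to $\{\mu_t'\}$ (namely $\int_I \mu_t'\,dt \ll \mu$), projecting this exceptional set to $x$ produces a $\mu$-positive set on which $V(\{\mu_t'\},x) \not\subset V(\{\tilde\mu_t\},x)$. Combining $\{\tilde\mu_t\}$ and $\{\mu_t'\}$ via the countable-union construction then yields a family in $\F_\mu$ with strictly larger $\Phi$, contradicting maximality.

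The main hurdle is the countable-union construction and the identity $V(\{\tilde\mu_s\},x) = \sum_n V(\{\mu_t^{(n)}\},x)$; once this (and the accompanying lemma that $\{\tilde\mu_s\}$ genuinely lies in $\F_\mu$) is established, the rest is a clean maximization argument.
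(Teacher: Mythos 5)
Your proposal is correct and follows essentially the same route as the paper: the same functional $\Phi$, a maximizing sequence glued into a single family by partitioning the parameter interval, and the same minimality argument showing the maximizer's tangent bundle lies in $\G_\mu$ (the paper runs this last step directly rather than by contradiction, but the content is identical, and the full identity $V(\{\tilde\mu_s\},x)=\sum_n V(\{\mu^{(n)}_t\},x)$ is more than you need --- only the inclusion $\supset$ enters the maximization). One small repair: the scaling constants $c_n$ must not multiply the measures themselves, since condition~(a) of \S\ref{s-decomp} forces each $\tilde\mu_s$ to be exactly a restriction of $\Haus^1$; instead rescale the parameter measure $dt$ on each piece $I_n$ (permitted by Remark~\ref{s-2.5}(i)), which controls $\int\Mass(\tilde\mu_s)\,ds$ without affecting ``a.e.~$t$'' or condition~(c).
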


\begin{proof}
Through this proof the index space $I$ is the interval $[0,1]$
equipped with the Lebesgue measure (cf.~Remark~\ref{s-2.4}(ii)).

Statement~(i) is obvious, 
and to prove statement~(ii) it suffices to find $\{\tilde\mu_t\}$ such that
\begin{equation}
\label{e-2.3}
V(\mu,x) \subset V(\{\tilde\mu_t\}, x) 
\quad\text{for $\mu$-a.e.~$x$.}
\end{equation}
For every $\{\mu_t\} \in \F_\mu$ we set
\[
F(\{\mu_t\}) := \int_{\R^n} \dim V(\{\mu_t\}, x)\,d\mu(x)
\, .
\]
We claim that there exists a family $\{\tilde\mu_t\} \in \F_\mu$ 
which maximizes $F$ over $\F_\mu$, and that this family 
satisfies \eqref{e-2.3}.

To prove the existence, we take a sequence of families
$\{\mu_{j,t}: t\in I\}$ which is maximizing for $F$, 
and let $\{\tilde\mu_t: t\in I\}$ be the ``union'' of these families, 
which is defined by  
\[
\tilde\mu_{2^{-j}(1+t)} := \mu_{j,t}
\quad\text{for every $j=1,2,\dots$ and $t\in (0,1]$.}
\]
One easily checks that $\{\tilde\mu_t\} \in \F_\mu$, 
and that $F(\{\tilde\mu_t\}) \ge F(\{\mu_{j,t}\})$ for every $j$, 
which proves that $\{\tilde\mu_t\}$ maximizes $F$.
In turn, this implies that 
for every other family $\{\mu_t\}\in\F_\mu$
there holds
\[
V(\{\tilde\mu_t\},x) \supset V(\{\mu_t\},x)
\quad\text{for $\mu$-a.e.~$x$}
\]
(if this were not the case, taking the ``union'' of $\{\tilde\mu_t\}$ 
and $\{\mu_t\}$ we would obtain a new family for which $F$ has a larger
values than for $\{\tilde\mu_t\}$.)
This inclusion clearly proves that $x\mapsto V(\{\tilde\mu_t\},x)$ 
belongs to $\G_\mu$, which yields \eqref{e-2.3}.
\end{proof}

To prove Proposition~\ref{s-basic}
we need the following lemma, 
the proof of which is postponed to Section~\ref{s7}.

\begin{lemma}
\label{s-rainwatercor3}
For every measure $\mu$ on $\R^n$ one of the 
following (mutually incompatible) alternatives 
holds:
\begin{itemizeb}
\item[(i)]
$\mu$ is supported on a purely unrectifiable
set $E$ (see \S\ref{s-rectif});
\item[(ii)]
there exists a nontrivial measure of the form 
$\smash{\mu'=\int_I \, \mu_t\, dt}$ 
such that $\mu'$ is absolutely continuous 
\wrt $\mu$ and each $\mu_t$ is the restriction 
of $\Haus^1$ to some $1$-rectifiable set $E_t$.
\end{itemizeb}
\end{lemma}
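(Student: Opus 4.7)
The statement is a dichotomy, so I would first prove mutual exclusion of (i) and (ii), and then the substantive implication that if (i) fails then (ii) holds.

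\emph{Mutual exclusion.} Suppose both (i) and (ii) hold: let $E$ be a purely unrectifiable set with $\mu(E^c) = 0$, and let $\mu' = \int_I \Haus^1|_{E_t}\,dt$ be nontrivial with $\mu' \ll \mu$. Absolute continuity gives $\mu'(E^c) = 0$. On the other hand, $\Haus^1(E \cap E_t) = 0$ for every $t$, because $E$ is purely unrectifiable and $E_t$ is $1$-rectifiable, so
\[
\mu'(E^c) = \int_I \Haus^1(E_t \setminus E)\,dt = \int_I \Haus^1(E_t)\,dt = \mu'(\R^n).
\]
Hence $\mu'(\R^n) = 0$, contradicting nontriviality.

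\emph{Non-trivial direction via maximality.} Assume now that (i) fails. Let $\mathcal{A}$ be the class of positive sub-measures $\nu \leq \mu$ that can be written as $\int_I \Haus^1|_{E_t}\,dt$ in the sense of (ii). The plan is to show that $\mathcal{A}$ admits an element $\nu^*$ of maximal total mass and that $\nu^*$ is nontrivial. For the existence, given $\nu_j = f_j\mu \in \mathcal{A}$ with densities $f_j \in [0,1]$, the measure with density $\sup_j f_j$ lies in $\mathcal{A}$ by a measurable-selection argument: partition $\R^n$ according to which $f_j$ achieves $\sup_k f_k$, restrict each generating family $\{E_t^j\}$ to the corresponding piece, and concatenate the resulting families into one (using Remark~\ref{s-2.4}(ii) to reparameterize the index space). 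The supremum of $\nu(\R^n)$ over $\mathcal{A}$ is therefore attained by some $\nu^*$. If $\nu^* \neq 0$, it directly witnesses (ii), and we are done. Thus the heart of the proof is to show that $\mathcal{A} = \{0\}$ forces $\mu$ to be supported on a purely unrectifiable set.

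\emph{Main obstacle.} The naive approach — locating a single $1$-rectifiable set $S$ with $\mu|_S$ not singular with respect to $\Haus^1|_S$ — is insufficient, since $\mu = \Leb^n$ on $\R^n$ is not supported on any purely unrectifiable set but satisfies $\mu|_S \perp \Haus^1|_S$ for every $1$-rectifiable $S$ (because $\Leb^n(S) = 0$). One must build the family via a disintegration. For a Lipschitz projection $\pi : \R^n \to \R^{n-1}$ with line fibres $L_y$, disintegrate $\mu = \int \mu_y\,d(\pi_\#\mu)(y)$ and let $\rho_y \Haus^1|_{L_y}$ denote the $\Haus^1|_{L_y}$-absolutely continuous part of $\mu_y$. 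If $\rho_y \neq 0$ for a set of $y$ of positive $\pi_\#\mu$-measure, then the layer-cake identity $\rho_y \Haus^1|_{L_y} = \int_0^\infty \Haus^1|_{\{\rho_y > s\} \cap L_y}\,ds$ combined with integration in $y$ yields a nontrivial element of $\mathcal{A}$, contradicting $\mathcal{A} = \{0\}$. So $\mu_y \perp \Haus^1|_{L_y}$ on $\pi_\#\mu$-a.e.~fibre for every such $\pi$, meaning $\mu$ concentrates on a union of $\Haus^1$-null subsets of fibres of $\pi$. Combining this across projections in sufficiently many directions via a Besicovitch-style slicing argument produces a Borel set $N$ with $\mu(N^c) = 0$ and $\Haus^1(N \cap S) = 0$ for every $1$-rectifiable $S$, i.e., $N$ is purely unrectifiable. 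This final cross-direction step, converting singularity against every affine foliation into pure unrectifiability, is the technically delicate point and the main obstacle.
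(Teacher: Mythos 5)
Your mutual-exclusion argument and the maximality reduction are fine, but the substantive direction has a genuine gap, and the route you sketch for closing it cannot work. The fatal step is the last one: you reduce to showing that if $\mathcal{A}=\{0\}$ then $\mu$ is supported on a purely unrectifiable set, and you propose to deduce this from the fact that, for every linear projection $\pi$ with parallel line fibres $L_y$, the disintegration $\mu_y$ is singular with respect to $\Haus^1|_{L_y}$ for a.e.\ $y$. That deduction is false. Take $\mu=\Haus^1$ restricted to a circular arc $G$ in $\R^2$: for every direction, each line fibre meets $G$ in a finite set, so every $\mu_y$ is purely atomic and hence singular with respect to $\Haus^1|_{L_y}$ --- yet $\mu$ is the restriction of $\Haus^1$ to a rectifiable set, the opposite of purely unrectifiable (and indeed $\mathcal{A}\ne\{0\}$ for this $\mu$). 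The point is that straight-line foliations only detect the part of $\mu$ that is absolutely continuous along lines \emph{in a fixed direction}; they are blind to mass carried by curves whose tangent varies, which is precisely the generic rectifiable situation. No ``Besicovitch-style'' combination over directions repairs this, because the information you retain after the disintegration step is already insufficient.

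What is actually needed is to test $\mu$ against \emph{families of Lipschitz curves with tangents confined to a cone}, not against affine foliations. This is how the paper proceeds: for each cone $C$ in a finite family covering $\R^n\setminus\{0\}$, it considers the weak* compact set $\F_k$ of probability measures obtained as push-forwards of $\Leb^1$ under $1$-Lipschitz paths with $\dot\gamma\cdot e\ge\cos\alpha$, and applies Rainwater's lemma (Lemma~\ref{s-rainwater} / Corollary~\ref{s-rainwatercor1}): either $\mu$ is singular with respect to every convex combination of such measures, in which case it is supported on a single $C$-null set, or some average $\int(1_E\lambda)\,d\sigma(\lambda)$ is nontrivial and absolutely continuous with respect to $\mu$, which produces the family $\{\mu_t\}$ of alternative (ii). Intersecting the $C_i$-null sets over the finite cone family gives the purely unrectifiable support in case (i). Since there is no canonical disintegration along curved fibres, some functional-analytic dichotomy of this Choquet/Rainwater type (or an equivalent Hahn--Banach argument) appears unavoidable; your proposal is missing exactly this ingredient.
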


\begin{proof}
[Proof of Proposition~\ref{s-basic}]
We choose $\{\tilde\mu_t\} \in \F_\mu$ 
and $\{\tilde\mu'_t\} \in \F_{\mu'}$ 
by applying Proposition~\ref{generator}(ii)
to $\mu$ and $\mu'$ respectively.

\medskip
\textit{Statement~(i).}
If $\mu'\ll\mu$ then one easily checks that 
$\{\tilde\mu'_t\}\in\F_\mu$, and 
by Proposition~\ref{generator}
\[
V(\mu',x) 
= V(\{\tilde\mu'_t\},x) 
\subset V(\mu,x)
\quad\text{for $\mu'$-a.e.~$x$.}
\]
To prove the opposite inclusion, take a Borel set $F$ 
such that the restriction of $\mu$ to $F$ satisfies 
$1_F\mu\ll\mu'$.%
\footnoteb{For example, $F$ is the set where the 
Radon-Nikod\'ym density of $\mu'$ \wrt $\mu$
is strictly positive}
Then the family of all restrictions 
$\smash{ \{\mu'_t:=1_F\tilde\mu_t \} }$ 
belongs to $\F_{\mu'}$, and 
\[
V(\mu,x) 
=V(\{\tilde\mu_t\},x)
=V(\{\mu'_t\},x)
\subset V(\mu',x)
\quad\text{for $\mu$-a.e.~$x\in F$,}
\]
that is, for $\mu'$-a.e.~$x$.
The proof of the first part of statement~(i) is
thus complete.

By applying the first part of the statement~(i) to 
the measures $1_E\mu'$ and $\mu$, and then to 
the measures $1_E\mu'$ and $\mu'$ we obtain 
$V(1_E\mu',x)=V(\mu,x)=V(\mu',x)$ for $\mu'$-a.e.~$x\in E$, 
which proves the second part of statement~(i).

\medskip
\textit{Statement~(ii).}
Take $F_t$ such that $\tilde\mu_t$ is the restriction 
of $\Haus^1$ to $F_t$.
We observe that $\int \tilde\mu_t \, dt \ll\mu$ and 
$\mu(\R^n\setminus S)=0$ imply
\[
0 
=\int \tilde\mu_t(\R^n\setminus S) \, dt
=\int \Haus^1(F_t\setminus S) \, dt
\, , 
\]
which in turn implies that, for a.e.~$t$, the set 
$F_t$ is contained (up to an $\Haus^1$-null set) 
in $S$. Thus $\Tan(F_t,x)\subset \Tan(S,x)$ for 
$\mu_t$-a.e.~$x$, which implies that 
\[
V(\mu,x) 
= V(\{\tilde\mu_t\},x) 
\subset \Tan(S,x)
\quad\text{for $\mu$-a.e.~$x$.}
\]

\textit{Statement~(iii).}
Using statement~(i) and the definition of $k$-rectifiable 
set (see \S\ref{s-rectif}) we can reduce to the case 
$\mu=1_E\Haus^k$ where $E$ is a subset of a $k$-dimensional
surface $S$ of class $C^1$, and we can further assume 
that $S$ is parametrized by a diffeomorphism $g:A\to S$
of class~$C^1$, where $A$ is a bounded open set in $\R^k$.

We set $E':=g^{-1}(E)$ and $\smash{\mu':=1_{E'}\Leb^k}$.
Then we fix a nontrivial vector $e\in\R^k$,
and for every $t$ in the hyperplane $\smash{e^\perp}$
we let $E'_t$ be the intersection of the set $E'$ with the 
line $\{x'=t+he: \, h\in\R\}$, 
and set $\smash{\mu'_t:=1_{E'_t}\Haus^1}$.
By Fubini's theorem we have that $\smash{\mu'=\int \mu'_t\, dt}$
where $dt$ is the restriction of $\Haus^{k-1}$ to the
projection of $A$ onto~$\smash{e^\perp}$.

Next we set $E_t:=g(E'_t)$ and $\smash{\mu_t:=1_{E_t}\Haus^1}$.
Thus each $E_t$ is a $1$-rectifiable set whose tangent space 
at $x=g(x')$ is spanned by the vector $\tau(x) := dg(x') \, e$.
Moreover, taking into account that $g$ is a diffeomorphism, 
we get that $\int\mu_t \, dt$ and $\mu$ are mutually absolutely 
continuous. 
Therefore $\tau(x) \in V(\mu,x)$ for $\mu_t$-a.e.~$x$ 
and a.e.~$t$, that is, for $\mu$-a.e.~$x$.

Finally, we take vectors $e_1,\dots , e_k$ that span $\R^k$, 
thus the corresponding vectorfields $\tau_i(x):=dg(x') \, e_i$
span $\Tan(S,x)$ for every $x$, and we conclude that 
$\Tan(E,x) = \Tan(S,x) \subset V(\mu,x)$ for $\mu$-a.e.~$x$. 
The opposite inclusion follows from statement~(ii).

\medskip
\textit{Statement~(iv).}
We prove the ``if'' part first.
If $\mu$ is supported on a set $E$, 
then, arguing as in the proof
of statement~(ii), we obtain that for a.e.~$t$ the set
$F_t$ associated to $\tilde\mu_t$ is contained in $E$ 
up to an $\Haus^1$-null set. 
In particular, if $E$ is purely unrectifiable we
obtain that $\Haus^1(F_t)=0$, that is, $\tilde\mu_t=0$.
Hence $V(\mu,x)=V(\{\tilde\mu_t\},x)=\{0\}$ for $\mu$-a.e.~$x$.

The ``only if'' part follows
from Lemma~\ref{s-rainwatercor3}; indeed
the alternative~(ii) in this lemma is ruled 
out by the fact that $V(\mu,x)=\{0\}$ 
for $\mu$-a.e.~$x$.

\medskip
\textit{Statement~(v).}
For every $s\in I$ we choose a family
$\smash{ \{\tilde\nu_{s,t}: \, t\in I\} \in \F_{\nu_s} }$ 
according to Proposition~\ref{generator}(ii).
Then it is easy to check that the (two-parameter) 
family $\smash{ \{\tilde\nu_{s,t}: \, t,s\in I\} }$
belongs to $\F_\mu$,%
\footnoteb{On the parameter space $I\times I$ we consider
the product measure $dt\times ds$.}
and therefore Proposition~\ref{generator}(i)
implies
\[
V \big( \{\tilde\nu_{s,t}: \, s,t\in I\} , x \big) \subset V(\mu,x)
\quad\text{for $\mu$-a.e.~$x$}
\]
and therefore also for $\nu_s$-a.e.~$x$ and a.e.~$s$
(by assumption $\int \nu_s\, ds\ll\mu$).
On the other hand one can check that
\[
V(\nu_s,x) 
= V \big( \{\tilde\nu_{s,t}: \, t\in I\} , x \big) 
\subset V \big( \{\tilde\nu_{s,t}: \, s,t\in I\}, x \big)
\]
for $\nu_s$-a.e.~$x$ and a.e.~$s$, 
and statement~(v) is proved.%
\footnoteb{This proof is not correct as written, 
because the map $(s,t) \mapsto\tilde\nu_{s,t}$ is not 
necessarily Borel measurable in both variables 
(in the sense of Remark~\ref{s-2.4}(i)).
For a correct proof, the families $\{\tilde\nu_{s,t}:\, t\in I\}$
should be chosen for every $s\in I$ in a 
measurable fashion, and this can be achieved by means
of a suitable measurable selection theorem; 
since this statement is not essential for the rest
of the paper we omit the details.}

\medskip
\textit{Statement~(vi).}
By statement~(i) it suffices to prove the claim when
$\mu=\int \nu_s \, ds$. 
In this case statement~(v) implies that 
$V(\mu,x)$ contains $V(\nu_s,x)$
for $\nu_s$-a.e.~$x$ and a.e.~$s$, 
and $V(\nu_s,x)$ has dimension $k$
by statement~(iii). 
Thus $V(\mu,x)$ has dimension at least $k$
for $\nu_s$-a.e.~$x$ and a.e.~$s$, 
that is, for $\mu$-a.e.~$x$.
\end{proof}

%%%%%%%%%%%%%%%%%%%%%%%%%%%%%%%%%%%%%%%%%%%%%%%%%%%%%%%%%%%%%%%%%
%
%	SECTION 3 (ex 4)
%
%%%%%%%%%%%%%%%%%%%%%%%%%%%%%%%%%%%%%%%%%%%%%%%%%%%%%%%%%%%%%%%%%

\section{Proof of Theorem \ref{s-main}{\rm(i)}}
\label{s4}
We begin this section with a definition that is related 
to the notion of \emph{derivative assignment} introduced in 
\cite{MP}; note that Proposition~\ref{s-4.3} below---the key 
result in this section---can be viewed 
as a particular case of a general chain-rule for Lipschitz 
maps proved in that paper.

\begin{parag}
[Differentiability bundle]
\label{s-diffbund}
Given a Lipschitz $f:\R^n\to\R$ and a point $x \in\R^n$, 
we denote by $\D(f,x)$ the set of all subspaces $V\in\Gr(\R^n)$ 
such that $f$ is differentiable at $x$ \wrt $V$ 
(cf.~\S\ref{s-not}), and call the map $x\mapsto \D(f,x)$ 
\emph{differentiability bundle of $f$}.
We also denote by $\D^*(f,x)$ the set of all 
$V\in \D(f,x)$ with maximal dimension.
\end{parag}

\begin{remark}
(i)~Statement~(i) of Theorem~\ref{s-main}
can be restated by saying that for every measure
$\mu$ and every Lipschitz function $f$ the decomposability
bundle $V(\mu,x)$ belongs to $\D(f,x)$ for $\mu$-a.e.~$x$.

(ii)~Both $\D(f,x)$ and $\D^*(f,x)$ are 
closed and nonempty, and viewed as multifunctions 
in the variable $x$ are Borel measurable;%
\footnoteb{\label{f-4.1}
A~\emph{multifunction} from the set $X$ to the set 
$Y$ is a map that to every $x\in X$ associates a nonempty
subset of $Y$. For the definition and basic results
concerning (Borel) measurable multifunctions we refer 
to \cite{Sri}, Section~5.1. We just recall here that 
when $X$ is a topological space and $Y$ is a compact 
metric space, a closed-valued multifunction from 
$X$ to $Y$ is Borel measurable if it
is Borel measurable as a map from $X$ to the space of
non-empty closed subsets of $Y$, endowed with the 
Hausdorff distance (this case includes essentially all 
multifunctions considered in this paper).}
since these properties play only a minor role in this proof, 
we postpone the precise statement (Lemma \ref{s-meas2}) 
to Section~\ref{s7}.
Note that the set $\D^*(f,x)$ may contain more than 
one element.
\end{remark}

\begin{proposition}
\label{s-4.3}
Let $E$ be a $1$-rectifiable set in $\R^n$, and let 
$x\mapsto V(x)$ be a Borel map from $E$ to the space
metric space $\Gr(\R^n)$ (cf.~\S\ref{s-not})
such that $V(x)\in \D(f,x)$ for every $x\in E$. 
Then the following statements hold:
\begin{itemizeb}
\item[(i)]
$V(x) \oplus \Tan(E,x) \in \D(f,x)$ for $\Haus^1$-a.e.~$x\in E$;
\item[(ii)]
if in addition $V(x)\in\D^*(f,x)$ for every $x\in E$, 
then $\Tan(E,x) \subset V(x)$ for $\Haus^1$-a.e.~$x\in E$.
\end{itemizeb}
\end{proposition}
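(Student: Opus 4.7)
Part (ii) will follow almost trivially from part (i): if $V(x)$ has maximal dimension in $\D(f,x)$, then the inclusion $V(x) + \Tan(E,x) \in \D(f,x)$ granted by (i) forces $\dim(V(x) + \Tan(E,x)) \le \dim V(x)$, whence $\Tan(E,x) \subset V(x)$. So the bulk of the work is proving (i).

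For (i), my plan is to reduce to the case where $E$ is the image of a bi-Lipschitz map $\gamma\colon [0,1] \to \R^n$, which is legitimate because a $1$-rectifiable set splits (up to an $\Haus^1$-null part) as a countable union of such images. I would then apply a sequence of Lusin/Egoroff-type restrictions to produce nested Borel subsets $E_n \subset E$ with $\Haus^1(E \setminus E_n) \to 0$ on which the following hold simultaneously: (a) $y \mapsto V(y)$ is continuous into $(\Gr(\R^n), \dgr)$; (b) the extended derivative $y \mapsto L_y$, where $L_y(v) := \dV f(y)\bigl(\pi_{V(y)}(v)\bigr)$ for the orthogonal projection $\pi_{V(y)}$, is continuous as a map into the dual of $\R^n$ with its operator norm (note $\|L_y\| \le \Lip(f)$); (c) the differentiability estimate $|f(y+h) - f(y) - \dV f(y)h| \le \epsilon_n |h|$ holds uniformly over $y \in E_n$ and $h \in V(y)$ with $|h| \le \delta_n$, with $\epsilon_n \to 0$. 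For $\Haus^1$-a.e.\ $x_0 \in E$ I can then fix $n$ for which $x_0 \in E_n$, the preimage $s_0 := \gamma^{-1}(x_0)$ is a Lebesgue density point of $\gamma^{-1}(E_n) \subset [0,1]$, and both $\dot\gamma(s_0) = \tau$ and $(f\circ\gamma)'(s_0) = \alpha$ exist.

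At such a good $x_0$ I will verify that $f$ is differentiable \wrt $W := V(x_0) + \langle \tau \rangle$ with derivative $L(h + t\tau) := \dV f(x_0) h + \alpha t$ (a short compatibility check handles the case $\tau \in V(x_0)$, where $\alpha = \dV f(x_0)\tau$ follows from the differentiability \wrt $V(x_0)$ applied along the curve). Given small $w = h + t\tau$ with $h \in V(x_0)$, for an arbitrary $\eta > 0$ I would invoke density to choose $s^* \in \gamma^{-1}(E_n)$ with $|s^* - s_0 - t| \le \eta |t|$; setting $y := \gamma(s^*) \in E_n$ and $\tilde h := \pi_{V(y)}(h)$, I split
\[
f(x_0 + w) - f(x_0) = [f(y+h) - f(y)] + [f(y) - f(x_0)] + [f(x_0+w) - f(y+h)].
\]
The middle bracket is $\alpha t + O(\eta|t|) + o(|t|)$ by Rademacher applied to $f\circ\gamma$; the last is $O(\eta|t|) + o(|t|)$ from the Lipschitz bound combined with $\gamma(s^*) = x_0 + (s^*-s_0)\tau + o(|s^*-s_0|)$; the first becomes $\dV f(y)\tilde h + o(|h|) = L_y(h) + o(|h|)$ via uniform differentiability on $E_n$ (the $O(|h - \tilde h|)$ error is absorbed since $|h - \tilde h| \le \dgr(V(x_0), V(y))|h| = o(1)|h|$), and the continuity of $y \mapsto L_y$ on $E_n$ combined with $y \to x_0$ yields $L_y(h) = L_{x_0}(h) + o(1)|h| = \dV f(x_0) h + o(|h|)$. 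Summing and then taking $\eta \to 0$ after the $(h,t)\to 0$ analysis gives $f(x_0+w) - f(x_0) = \dV f(x_0) h + \alpha t + o(|h|+|t|)$, which is the required differentiability since on $W$ (for $\tau \notin V(x_0)$) $|h|+|t|$ is comparable to $|w|$.

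The main obstacle will be the infrastructure in the second paragraph: establishing Borel measurability of $y \mapsto L_y$ as a functional on \emph{all} of $\R^n$ and arranging conditions (a)--(c) simultaneously on a sequence of density-good subsets. This extension to all of $\R^n$ is what allows me to compare $\dV f(y)\tilde h$ to $\dV f(x_0) h$ despite the subspaces $V(y)$ and $V(x_0)$ being distinct (though close); without it, the projection step in the error bookkeeping would introduce an uncontrolled contribution. Once the infrastructure is in place, the computation of paragraph three is routine.
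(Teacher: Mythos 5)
Your proposal is correct and follows essentially the same route as the paper: Lusin--Egorov reduction to subsets where $V$, $\dV f$ are continuous and the differentiability is uniform, followed by approximating $x+\tau$ with a nearby point of the good subset and transferring the derivative back via continuity (your density-point argument along a bi-Lipschitz parametrization is just the paper's property that $x'=x+\tau+o_x(\tau)$ for some $x'\in E_j$, and your three-bracket split matches the paper's six-term decomposition). Part (ii) is deduced from (i) exactly as in the paper.
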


\begin{proof}[Proof of Proposition~\ref{s-4.3}]
Since $V(x)$ belongs to $\D(f,x)$, 
for every $x\in E$ we have that $f$ is differentiable
at $x$ \wrt $V(x)$, which means that 
\begin{equation}
\label{e-4.3}
f(x+h) - f(x) - \dV f(x)\, h = o_x(h)
\quad\text{for $h\in V(x)$.} 
\end{equation}
Using Lusin's and Egorov's theorems we can cover 
$\Haus^1$-almost all of $E$ with countably many 
subsets $E_j$ such that for every $j$
\begin{itemizeb}
\item[(a)]
$x\mapsto V(x)$ is continuous on $E_j$;
\item[(b)]
$x\mapsto \dV f(x)$ is continuous on $E_j$;%
\footnoteb{For this we need that the map $x\mapsto \dV f(x)$ 
is Borel measurable, see Lemma~\ref{s-meas1}.}
\item[(c)]
the remainder term $o_x(h)$ in \eqref{e-4.3} is uniform
over all $x\in E_j$, that is, for every $\eps>0$
there exists $r>0$ such that $|o_x(h)|\le \eps|h|$
for every $x\in E_j$ and every $h\in V(x)$ with $|h|\le r$.
\end{itemizeb}
Moreover, using the fact that $f$ is differentiable 
\wrt $\Tan(E,x)$ at $\Haus^1$-a.e.~$x$ in the
$1$-rectifiable set $E$,%
\footnoteb{This property is an immediate consequence
of Rademacher theorem for Lipschitz functions 
of one variable when $E$ is the image of 
a Lipschitz path, and can be extended to 
a general $1$-rectifiable set $E$ using the fact 
that by definition (see~\ref{s-rectif})
$E$ can be covered, up to an $\Haus^1$-null subset, 
by countably many curves of class $C^1$
(here we also need property \eqref{e-tangent}).}
and possibly replacing
each $E_j$ with a suitable subset,
we can further assume that 
\begin{itemizeb}
\item[(d)]
the approximate tangent space $\Tan(E_i,x)$ exists 
for all $x\in E_i$ and agrees with $\Tan(E,x)$,
which implies that for every $\tau\in\Tan(E,x)$
there exists $x'\in E_j$ such that
\begin{equation}
\label{e-4.4}
x' = x + \tau + o_x(\tau)
\, ;
\end{equation}
\item[(e)]
the function $f$ is differentiable 
\wrt $\Tan(E,x)=\Tan(E_i,x)$ at every $x\in E_j$, 
and we denote its derivative as $d_Tf(x)$.
\end{itemizeb}
We will prove next that $f$ is differentiable 
\wrt $V(x) \oplus \Tan(E,x)$ at every $x\in E_j$, 
and statement~(i) will follow from the fact that 
the sets $E_j$ cover $\Haus^1$-almost all of $E$, 
while statement~(ii) is an immediate consequence of
statement~(i) and the definition of $\D^*(f,x)$.

\medskip
For the rest of this proof we fix $j$ and $x\in E_j$.
We assume that $\Tan(E,x)$ is not contained 
in $V(x)$ (otherwise there is nothing to prove),
and we will show that
\begin{equation}
\label{e-4.5}
f(x+h+\tau) - f(x) - \dV f(x) \, h - d_T f(x) \, \tau 
= o(|h|+|\tau|) 
\end{equation}
for all $h\in V(x)$ and $\tau\in\Tan(E,x)$.
Here and in the rest of the proof the variables
are $h$ and $\tau$, and we use the Landau notation 
(for example, $o(|h|+|\tau|)$) with the understanding 
that $h$ and $\tau$ tend to~$0$.

For every $\tau\in\Tan(E,x)$ we choose $x'=x'(\tau)\in E_j$
such that \eqref{e-4.4} holds, and using assumption~(a)
above, for every $h\in V(x)$ we can find $h'=h'(\tau,h)\in V(x')$ 
such that
\begin{equation}
\label{e-4.6}
h'- h = o(1)
\, ,
\end{equation}
which implies 
\begin{equation}
\label{e-4.7}
|h'| \sim |h| 
\quad\text{as $h,\tau\to 0$.}
\end{equation}
We then obtain \eqref{e-4.5} by writing
\[
f(x+h+\tau) - f(x) - \dV f(x) \, h - d_T f(x) \, \tau
= I + I\!I + I\!I\!I + I\!V + V + V\!I
\]
where%
\,\footnoteb{%
The second estimate follows from \eqref{e-4.3}, assumption~(c), 
and \eqref{e-4.7}, the fourth one follows from
\eqref{e-4.7} and estimate
$\dV f(x') - \dV f(x)=o(1)$ (cf.~assumption~(b)),
the fifth one from \eqref{e-4.6}, and
the sixth one from assumption~(e). 
Since $f$ is Lipschitz, the third estimate follows 
from $|x'-(x+\tau)|= o(|\tau|)$ (cf.~\eqref{e-4.4}),
while the first one follows from
$|(x+h+\tau) - (x'+h')| \le |x+\tau-x'|+|h-h'| = o(|\tau|) + o(|h|)$.
}
\begin{align*}
  I 
& := f(x+h+\tau) - f(x'+h') = o(|\tau|) + o(|h|)
  \, , \\
  I\!I
& := f(x'+h') - f(x') - \dV f(x') \, h' = o(|h|) 
  \, , \\
  I\!I\!I 
& := f(x')-f(x+\tau) = o(|\tau|) 
  \displaybreak[1] \, , \\
  I\!V 
& := \big( \dV f(x') - \dV f(x) \big) h'  = o(|h|)
  \, , \\
  V 
& := \dV f(x) \, (h'-h) = o(|h|)
  \, , \\
  V\!I 
& := f(x+\tau) - f(x) - d_T f(x) \, \tau = o(|\tau|)
  \, . \qedhere
\end{align*}
\end{proof}

\begin{proof}[Proof of statement (i) of Theorem~\ref{s-main}]
Since $x\mapsto\D^*(f,x)$ is a Borel-measurable, 
close-valued multifunction from $\R^n$ to $\Gr(\R^n)$
(Lemma~\ref{s-meas2}), we can use Kuratowski and 
Ryll-Nardzewski's measurable selection theorem 
(see \cite{Sri}, Theorem~5.2.1) to find a Borel 
map $x\mapsto V(x)$ from $\R^n$ to $\Gr(\R^n)$ such that 
$V(x)\in\D^*(f,x)$ for every $x$.

Take now $\F_\mu$ and $\G_\mu$ as in \S\ref{s-decomp}
and let $\{\mu_t: t\in I\}$ be an arbitrary family in $\F_\mu$.
Then for every $t\in I$ the measure $\mu_t$ is the 
restriction of $\Haus^1$ to some rectifiable set $E_t$, 
and therefore $\Tan(E_t,x)$ is contained in $V(x)$ for 
$\mu_t$-a.e.~$x$ by statement~(ii) in Proposition~\ref{s-4.3}.
This implies that the map $x\mapsto V(x)$ belongs to 
the class $\G_\mu$, which means that $V(\mu,x)$ is contained 
in $V(x)$ for $\mu$-a.e.~$x$, and since $f$ is differentiable 
\wrt $V(x)$ for every $x$, it is also differentiable
\wrt $V(\mu,x)$ for $\mu$-a.e.~$x$.
\end{proof}

\begin{remark}
The proof above can be easily modified to obtain a  
stronger statement, namely that for $\mu$-a.e.~$x$ 
the linear space $V(\mu,x)$ is contained in $V$ 
for every $V\in\D^*(f,x)$.
In fact one can prove even slightly more: 
for $\mu$-a.e.~$x$ the linear space $V(\mu,x)$ is contained 
in every element of $\D(f,x)$ which is maximal with respect 
to inclusion.
\end{remark}

%%%%%%%%%%%%%%%%%%%%%%%%%%%%%%%%%%%%%%%%%%%%%%%%%%%%%%%%%%%%%%%%%
%
%	SECTION 4 (ex 5)
%
%%%%%%%%%%%%%%%%%%%%%%%%%%%%%%%%%%%%%%%%%%%%%%%%%%%%%%%%%%%%%%%%%

\section{Proof of Theorem \ref{s-main}{\rm(ii)}}
\label{s5}
Statement~(ii) of Theorem \ref{s-main}
is an immediate consequence of a more precise 
statement proved in Theorem~\ref{s-nondiff3} below.
To obtain this theorem we begin by proving a 
statement of similar nature under more restrictive 
assumptions (Proposition ~\ref{s-nondiff1}).

We begin this section with a few definitions,
then we give the main statements, 
and then the proofs.

Through this section 
$\mu$ is a measure on $\R^n$. 
Given a function $f$ on $\R^n$, 
a point $x\in\R^n$ and a vector $v\in\R^n$, 
we consider the upper and lower (one-sided)
directional derivatives
\begin{align*}
D^+_vf(x) & := \limsup_{h\to 0^+} \frac{f(x+hv)-f(x)}{h}
\, , \\
D^-_vf(x) & := \liminf_{h\to 0^+} \frac{f(x+hv)-f(x)}{h}
\, .
\end{align*}

\begin{parag}[The set $E$ and the space $X$]
\label{s-spacex}
For the rest of this section $E$ is a Borel set 
in $\R^n$ with the following property:
there exist an integer $d$ with $0< d \le n$, 
and continuous vectorfields $e_1,\dots,e_n$ 
on $\R^n$ such that
\begin{itemizeb}
\item
$e_1(x),\dots,e_n(x)$ form an orthonormal basis of
$\R^n$ for every $x\in\R^n$;
\item
$e_1(x),\dots,e_d(x)$ span $V(\mu,x)^\perp$ for every $x\in E$.
\end{itemizeb}
We write $D_j$ for the directional derivative $D_{e_j(x)}$, 
and denote by $X$ the space of all Lipschitz functions 
$f$ on $\R^n$ such that
\[
\big| D_j f(x) \big| \le 1
\quad\text{for $\Leb^n$-a.e.~$x$ and every $j=1,\dots,n$,}
\]
endowed $X$ with the supremum distance. 
It is then easy to show that $X$ is a complete metric space.
Note that this space depends on the measure $\mu$ and also
on the choice of the vectorfields $e_j$.
\end{parag}

\begin{proposition}
\label{s-nondiff1}
Given a vector $v\in\R^n$, 
let $N_v$ be the set of all $f\in X$ such 
that for $\mu$-a.e.~$x\in E$ there holds
\begin{equation}
\label{e-nondiff}
D^+_v f(x) - D^-_v f(x) 
\ge \frac{\dist(v,V(\mu,x))}{3\sqrt d}
\, .
\end{equation}
Then $N_v$ is residual in $X$, and in 
particular it is dense.%
\footnoteb{Recall that a set in a topological 
space is residual if it contains a countable
intersection of open dense sets; by Baire Theorem 
a residual set in a complete metric space is dense.}
\end{proposition}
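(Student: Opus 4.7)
The plan is to apply Baire category in the complete metric space $X$. First I will identify $N_v$ as a countable intersection of open sets by expressing the upper-minus-lower directional derivative through a monotone family of lower semicontinuous quantities. For every $\eps>0$ set
\[
\Delta_\eps(f,x) := \sup_{0<h_1,h_2\le \eps}\Bigl[\frac{f(x+h_1 v)-f(x)}{h_1}-\frac{f(x+h_2 v)-f(x)}{h_2}\Bigr],
\]
which, as a supremum of continuous functionals of $f$, is lower semicontinuous on $X$ for each fixed $x$, and which decreases monotonically to $D^+_v f(x)-D^-_v f(x)$ as $\eps\downarrow 0$. Writing $c(x):=\dist(v,V(\mu,x))/(3\sqrt d)$, the functional
\[
\Psi_\eps(f) := \int_E \min\bigl\{\Delta_\eps(f,x),\,c(x)\bigr\}\,d\mu(x)
\]
is also lower semicontinuous on $X$ (by Fatou) and bounded above by $M:=\int_E c(x)\,d\mu$; moreover $\Psi_\eps(f)\downarrow \int_E \min\{D^+_v f-D^-_v f,\,c\}\,d\mu$ by monotone convergence. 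Setting $U_k:=\{f\in X:\Psi_{1/k}(f)>M-1/k\}$, each $U_k$ is open, and a direct computation yields $\bigcap_k U_k=N_v$. Thus it suffices to prove each $U_k$ is dense.

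The core of the proof is the perturbation argument establishing this density. Given $f_0\in X$, $k$ and $\eta>0$, I will construct $f\in X$ with $\|f-f_0\|_\infty<\eta$ and $f\in U_k$. For each $x\in E$ decompose $v$ in the moving orthonormal frame as $v=\sum_{i=1}^n \alpha_i(x)\,e_i(x)$; since $e_1(x),\dots,e_d(x)$ span $V(\mu,x)^\perp$, one has $\dist(v,V(\mu,x))^2=\sum_{i=1}^d \alpha_i(x)^2$, so by pigeonhole there is a Borel map $x\mapsto j(x)\in\{1,\dots,d\}$ with $|\alpha_{j(x)}(x)|\ge \dist(v,V(\mu,x))/\sqrt d$ for every $x\in E$. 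Split $E$ into the Borel pieces $E^{(j)}:=\{x:j(x)=j\}$ and exhaust each $E^{(j)}$, up to a $\mu$-null set, by countably many small Borel cells on which the vectorfield $e_j$ is close (to any prescribed tolerance) to a fixed unit vector $e_j^\ast$.

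On each such cell I rewrite $f_0$ along the constant direction $e_j^\ast$: the result is a function $f$ such that along each affine line parallel to $e_j^\ast$ the one-dimensional restriction of $f$ is a $1$-Lipschitz function that is within $\eta$ in sup norm of the corresponding restriction of $f_0$, but whose derivative alternates between values close to $+1$ and $-1$ on scales much smaller than $1/k$. This is a classical one-dimensional sawtooth construction; since the modification acts only in the $e_j^\ast$-direction it does not affect the transverse derivatives, and after a cell-by-cell cutoff and gluing the function $f$ remains globally Lipschitz with $|D_i f|\le 1$ for every $i$, hence $f\in X$. At each $x$ in a cell, by choosing $h_1,h_2\in(0,1/k)$ so that the increments $h_\ell v$ fall on opposite branches of the sawtooth in the $e_j^\ast$-direction, the contribution of the $V(\mu,x)$-component of $v$ cancels in $\Delta_{1/k}(f,x)$ (it has not been modified) and we are left with
\[
\Delta_{1/k}(f,x) \ge 2\,|\alpha_{j(x)}(x)| - (\text{small errors}) \ge \frac{2\,\dist(v,V(\mu,x))}{\sqrt d} - (\text{small errors}) \ge 3\,c(x),
\]
provided the cell diameter, the angular deviation of $e_j$ from $e_j^\ast$, and the sawtooth scale are taken small enough. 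Summing over cells gives $\Psi_{1/k}(f)>M-1/k$, i.e.\ $f\in U_k$.

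The main obstacle is the slicewise construction of the previous paragraph, which must \emph{simultaneously} stay within $\eta$ of $f_0$ in sup norm, respect the rigid constraint $|D_i f|\le 1$ of the space $X$ for every $i$ (forcing one to overwrite $f_0$ along $e_j^\ast$ rather than adding to it, and to shrink amplitudes slightly to absorb the mismatch between $e_j(x)$ and $e_j^\ast$), and produce directional oscillations aligned with the \emph{moving} frame $\{e_i(x)\}$ which is only approximately constant on each cell. The factor $1/(3\sqrt d)$, rather than the $2/\sqrt d$ formally produced by the oscillation, provides the slack needed to absorb all these approximation errors; the Vitali-type decomposition and the gluing across cell boundaries are the delicate technical steps. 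Once density is established, Baire's theorem in the complete metric space $X$ shows that $N_v=\bigcap_k U_k$ is residual, hence dense.
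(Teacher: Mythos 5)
Your Baire-category framework (the lower semicontinuous functionals $\Psi_{1/k}$ and the open sets $U_k$ with $\bigcap_k U_k=N_v$) is a sound alternative to the paper's route, which instead shows that $U_\sigma$ is of Baire class~1 from $X$ to $L^1(\mu)$ and exploits the residuality of its continuity points; your reduction of the problem to the density of each $U_k$, the pigeonhole producing $j$ with $|v\cdot e_j(x)|\ge \dist(v,V(\mu,x))/\sqrt d$, and the covering by small cells all match the paper's Steps 2--3 and 7. The gap is in the perturbation itself. You propose to overwrite $f_0$ along every line parallel to $e_j^\ast$ by a sawtooth with slopes close to $\pm1$, claiming oscillation $\approx 2|\alpha_j(x)|$ at each point. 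This is impossible under the constraint $f\in X$: if $h$ is $1$-Lipschitz in one variable and $\|\tilde h-h\|_\infty\le\delta$ with $|\tilde h'|\le 1$, the achievable oscillation of the difference quotients of $\tilde h$ at a \emph{generic} point of the line is at most about $1-|\beta|$, where $\beta$ is the local slope of $h$; in particular, if $D_jf_0\equiv 1$ on the cell (which is allowed, since $f_0$ is an arbitrary element of $X$), every admissible sawtooth-ification of the restriction of $f_0$ to a line in direction $e_j^\ast$ is forced to be that restriction plus a constant, and \emph{no} oscillation is produced anywhere on the line. There is no room to ``oscillate with amplitude $2$'' inside the band $[-1,1]$ around a derivative that already saturates the bound.

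What makes the perturbation possible --- and what your construction never uses --- is that the set where the oscillation is needed is directionally null. The paper first shrinks and mollifies ($f'=(1-s)f*\rho_t$) to create slack $\|D_jf'\|_\infty\le 1-\eps'$, and then, crucially, invokes Proposition~\ref{s-rainwatercor4} (a consequence of Rainwater's lemma and the definition of $V(\mu,\cdot)$): since $V(\mu,x)\cap C=\{0\}$ for a cone $C$ around $e_k(\bar x)$, $\mu$-almost all of $E'_k\cap B(\bar x,r)$ lies in a compact set $K$ that meets every $C$-curve in $\Haus^1$-measure zero. Only for such a $K$ can one build (Lemma~\ref{s-perturb2}) a perturbation $g$ with $D_eg=1$ on $K$, $D_eg\ge-\eps$ elsewhere, controlled transverse derivative, and \emph{small sup norm} --- smallness of $\|g\|_\infty$ is exactly the integral along lines of $D_eg$, which is small because $K$ has length zero along them. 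Adding $\pm\frac12 g$ (sign chosen against $D_kf'(\bar x)$ to stay in $X$) then displaces $D_vf''$ by $\frac12|v\cdot e|$ at points of $K$ relative to the large-scale difference quotient, yielding oscillation $\frac12 d_v(x)/\sqrt d-o(1)$ --- whence the constant $3\sqrt d$, not the $2/\sqrt d$ you claim. Without the $C$-nullity input your argument proves too much: it would give, for any continuous unit vectorfield and any $f_0\in X$, oscillation at every point of the cell, which the example $f_0(x)=x\cdot e_j^\ast$ above already refutes. To repair the proof you need to replace the line-by-line sawtooth with an additive perturbation supported near a directionally null carrier of $\mu\llcorner E'$, i.e.\ essentially the content of Proposition~\ref{s-rainwatercor4} and Lemma~\ref{s-perturb2}.
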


\begin{proposition}
\label{s-nondiff2}
Let $N$ be the set of all $f\in X$ such that, 
for $\mu$-a.e.~$x\in E$, inequality 
\eqref{e-nondiff} holds for every $v\in\R^n$.
Then $N$ is residual in $X$, and in 
particular it is dense in $X$.
\end{proposition}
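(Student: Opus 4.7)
The plan is to reduce Proposition~\ref{s-nondiff2} to Proposition~\ref{s-nondiff1} via a density-plus-continuity argument in the variable $v$. First, I would fix a countable dense subset $\{v_k\}_{k\ge 1}$ of $\R^n$ and set $N' := \bigcap_k N_{v_k}$. Since each $N_{v_k}$ is residual in $X$ by Proposition~\ref{s-nondiff1}, and a countable intersection of residual sets is residual, $N'$ is residual. It will then suffice to prove $N' \subset N$, because any superset of a residual set is residual.

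Next, I would observe that every $f \in X$ is automatically $\sqrt{n}$-Lipschitz: at $\Leb^n$-a.e.\ point $x$ the gradient decomposes as $\nabla f(x) = \sum_j D_j f(x)\, e_j(x)$, with $|D_j f(x)|\le 1$ and $e_1(x),\dots,e_n(x)$ orthonormal, so $|\nabla f(x)|\le \sqrt{n}$. Passing to the $\limsup$ and $\liminf$ of difference quotients, one obtains for every $x$ and all $v,v'\in\R^n$ the estimate
\[
\bigl|D^{\pm}_v f(x) - D^{\pm}_{v'} f(x)\bigr| \le \sqrt{n}\,|v-v'|,
\]
so that both sides of \eqref{e-nondiff} are Lipschitz functions of $v$: the left-hand side with constant $2\sqrt{n}$, and the right-hand side with constant $1/(3\sqrt{d})$, since $v\mapsto \dist(v,V(\mu,x))$ is $1$-Lipschitz.

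Finally, I would show $N'\subset N$ as follows. Given $f\in N'$, for each $k$ pick a $\mu$-null Borel set $M_k \subset E$ such that \eqref{e-nondiff} holds at every $x\in E\setminus M_k$ for the vector $v_k$; then $M:=\bigcup_k M_k$ is still $\mu$-null, and at every $x\in E\setminus M$ the inequality holds simultaneously for all $v_k$. By the Lipschitz-in-$v$ dependence established above and the density of $\{v_k\}$ in $\R^n$, the inequality extends by continuity to every $v\in\R^n$, so $f\in N$. The only subtle point is the transfer from countably many fixed directions $v_k$ to arbitrary $v$, which is precisely what the Lipschitz estimate handles; beyond that, the argument is a direct Baire-category consequence of Proposition~\ref{s-nondiff1}, with no genuine obstacle.
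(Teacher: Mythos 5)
Your proposal is correct and follows essentially the same route as the paper: intersect the residual sets $N_v$ over a countable dense set of directions, then extend \eqref{e-nondiff} from that dense set to all $v$ using the Lipschitz dependence of $D^{\pm}_v f(x)$ and of $\dist(v,V(\mu,x))$ on $v$. The explicit $\sqrt{n}$ bound on $\Lip(f)$ for $f\in X$ is a harmless quantitative addition; the paper simply invokes the Lipschitz constant of $f$ itself.
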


\begin{theorem}
\label{s-nondiff3}
There exists a Lipschitz function $f$ on $\R^n$ such that, 
for $\mu$-a.e.~$x\in\R^n$, there holds 
$D^+_v f(x) - D^-_v f(x) >0$ for every $v\notin V(\mu,x)$.
\end{theorem}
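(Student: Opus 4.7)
The strategy is to deduce Theorem~\ref{s-nondiff3} from Proposition~\ref{s-nondiff2} by decomposing $\mu$-almost all of $\R^n$ into countably many Borel sets of the type considered in \S\ref{s-spacex}, and then combining the resulting local non-differentiability statements into a single function via a Baire category argument in a common space of Lipschitz functions on $\R^n$.

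\emph{Step 1: Countable decomposition.} Since $x\mapsto V(\mu,x)$ is a Borel map to $\Gr(\R^n)=\bigsqcup_d \Gr(\R^n,d)$, I would first partition $\R^n$ according to $\dim V(\mu,x)$, then use Lusin's theorem on each piece to reduce to compact sets on which $V(\mu,\cdot)$ is continuous, and finally refine further so that on each piece $V(\mu,x)^\perp$ stays uniformly close to a fixed $d$-plane $W_0$. On such a piece, orthogonal projection onto $W_0$ is an isomorphism from $V(\mu,x)^\perp$ to $W_0$, and Gram--Schmidt produces a continuous orthonormal basis of $V(\mu,x)^\perp$ at every $x$; this basis is extended continuously to $\R^n$ by Tietze's theorem and completed to a continuous orthonormal frame $e_1^k,\dots,e_n^k$ of $\R^n$ by a further Gram--Schmidt. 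The outcome is a disjoint countable family of Borel sets $E_k$ whose union is $\mu$-full, each equipped with a frame $\{e_j^k\}$ satisfying the assumptions of \S\ref{s-spacex}.

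\emph{Step 2: Simultaneous Baire argument in a common space.} Let $Y$ denote the complete metric space of $1$-Lipschitz functions $f:\R^n\to\R$ with $f(0)=0$, endowed with the topology of uniform convergence on compact subsets. Since $|D_{e_j^k}f|\le 1$ a.e.\ for every $f\in Y$ and every $j,k$, the space $Y$ sits continuously inside each $X_k$ of \S\ref{s-spacex}. For every $k$ let $M_k\subset Y$ be the set of $f$ satisfying inequality~\eqref{e-nondiff} (with $\sqrt{d}$ harmlessly replaced by $\sqrt{n}$) for $\mu$-a.e.~$x\in E_k$ and every $v\in\R^n$. Repeating the Baire category proof of Propositions~\ref{s-nondiff1} and~\ref{s-nondiff2} inside $Y$ in place of $X_k$ shows that each $M_k$ is residual in $Y$: the key point is that the sawtooth perturbations used there to prove density of the analogue of $N_v$ have arbitrarily small amplitude, hence arbitrarily small Lipschitz constant, and so remain in (and are dense in) $Y$.

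\emph{Step 3: Conclusion and main obstacle.} By Baire's theorem, $M:=\bigcap_k M_k$ is residual, hence non-empty, in $Y$. Any $f\in M$ satisfies $D^+_v f(x) - D^-_v f(x)>0$ for every $v\notin V(\mu,x)$ at $\mu$-a.e.~$x\in\bigcup_k E_k$, hence at $\mu$-a.e.~$x\in\R^n$, which proves the theorem. The principal technical obstacle is Step~1: because the orthonormal frame bundles over the Grassmannians $\Gr(\R^n,d)$ are not globally trivial, no single continuous frame on $\R^n$ is adapted to $V(\mu,\cdot)^\perp$ everywhere, so the decomposition must be fine enough to allow local trivializations that can then be extended continuously to $\R^n$. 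A naive alternative via a series $\sum c_k f_k$ of functions obtained directly from Proposition~\ref{s-nondiff2} on each $E_k$ would fail, since the oscillation $D^+_v-D^-_v$ is subadditive under sums and contributions from different $f_k$'s could cancel; the Baire category device circumvents this issue.
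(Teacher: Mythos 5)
Your Step~1 is essentially the paper's decomposition (Lusin plus a continuous local frame for $V(\mu,\cdot)^\perp$, extended by Tietze and Gram--Schmidt), but Step~2 contains a genuine gap. The claim that the perturbations ``have arbitrarily small amplitude, hence arbitrarily small Lipschitz constant'' is false: small supremum norm does not control the Lipschitz constant, and the perturbations of Lemma~\ref{s-perturb3} \emph{must} have Lipschitz constant bounded below (they satisfy $D_e g=1$ on the set $K$, so $\Lip(\bar g)\ge 1/2$), since a perturbation with small Lipschitz constant changes the oscillation $D^+_v-D^-_v$ by an arbitrarily small amount and could never force the lower bound \eqref{e-nondiff}. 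This is exactly why the paper works in the space $X$ defined by the \emph{coordinate-wise} constraint $|D_jf|\le 1$ rather than in the Euclidean Lipschitz ball $Y$: the sign trick in Step~4 of the proof of Lemma~\ref{s-perturb3} keeps $|D_k(f'+\bar g)|\le 1$ in the one direction being perturbed, at the price of possibly increasing $|d(f'+\bar g)|$. Concretely, if $df'$ is concentrated in a direction orthogonal to $e_k(\bar x)$ with $|df'|$ close to $1$, then adding $\bar g$ with $D_e\bar g=\pm\tfrac12$ gives $|df''|^2\approx |df'|^2+\tfrac14>1$, so $f''\notin Y$; making room by taking $f'=(1-s)f*\rho_t$ with $s$ bounded away from $0$ destroys the sup-norm closeness needed for the density/discontinuity argument. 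Hence the set of functions in $Y$ that admit such perturbations within $Y$ is not dense in $Y$, and the residuality of your sets $M_k$ in $Y$ does not follow by ``repeating'' the proof; the residuality of $N_v$ in $X_k$ also does not transfer to residuality of $N_v\cap Y$ in the subspace $Y$.

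The paper avoids a common function space altogether. It applies Proposition~\ref{s-nondiff2} separately in each $X_i$ to get $f_i$ with $\Lip(f_i)\le 1$, then uses Lemma~\ref{s-regularize} to replace $f_i$ by $g_i$ which agrees with $f_i$ on $E_i$, is \emph{smooth on $\R^n\setminus E_i$}, and satisfies $|g_i-f_i|\le \dist(\cdot,E_i)^2$ (so one-sided derivatives at points of $E_i$ are unchanged); finally it sets $f=\sum_i 2^{-i}g_i$. The cancellation you worry about in Step~3 is precisely what this regularization rules out: for $x\in E_i$ and $j\ne i$ the function $g_j$ is differentiable at $x$ (it is smooth there), so adding it shifts $D^+_v$ and $D^-_v$ by the same amount and leaves the oscillation intact, while the infinite tail splits, for every $\delta>0$, into a finite (differentiable at $x$) part plus a part of Lipschitz constant at most $\delta$, whose contribution to the oscillation is at most $2\delta|v|$. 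So the ``naive alternative'' you dismiss is in fact the paper's route, made to work by the key regularization lemma that your proposal is missing.
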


\begin{remark}
(i)~The function $f$  in 
Theorem~\ref{s-nondiff3} is not differentiable at $x$ in 
the direction $v$ for every $v\notin V(\mu,x)$
and for $\mu$-a.e.~$x$; this proves
statement~(i) of Theorem~\ref{s-main}.

(ii)~In Propositions~\ref{s-nondiff1} and \ref{s-nondiff2}
the class of non-differentiable functions 
under consideration is proved to be residual, and 
not just nonempty; from this point of view both 
statements are stronger than Theorem~\ref{s-nondiff3}, 
which we could not frame as a residuality result.

(iii)~Note that $N_{cv}=N_v$ for every $v\in\R^n$ 
and every $c>0$ (because both terms in 
inequality \eqref{e-nondiff} are $1$-homogeneous
in $v$), and therefore it suffices to prove
Proposition~\ref{s-nondiff1} under 
the additional assumption $|v|=1$.  
\end{remark}

Proposition~\ref{s-nondiff2} and
Theorem~\ref{s-nondiff3} follow easily from 
Proposition~\ref{s-nondiff1}, which is 
therefore the key result in the whole section.
To prove Proposition~\ref{s-nondiff1} 
we follow a general strategy 
for the proof of residuality results
devised by B.~Kirchheim (see for instance \cite{BK}).
The starting point is the following definition.

\begin{parag}[The 
operators $\smash{T^\pm_{\sigma,\sigma'}}$ and $U_\sigma$]
\label{s-5.4}
From now till the end of the proof of 
Lemma~\ref{s-5.9} we fix a vector $v$ in $\R^n$ with 
$|v|=1$, and for every $x\in\R^n$ we set
\[
d_v(x):= \dist(v, V(\mu,x))
\, .
\]
For every $\sigma>\sigma' \ge 0$,
and every function $f:\R^n\to\R$
we consider the functions  
$T^\pm_{\sigma,\sigma'}f$ and $U_\sigma f$
defined as follows for every $x\in\R^n$:
\begin{align*}
 T^+_{\sigma,\sigma'} f(x)
& := \sup_{\sigma'< h \le\sigma} \, \frac{f(x+hv) - f(x)}{h}
     \, , \\
 T^-_{\sigma,\sigma'} f(x)
& := \inf_{\sigma' < h \le\sigma} \, \frac{f(x+hv) - f(x)}{h}
     \, , \\
 U_\sigma f(x)
& := T^+_{\sigma,0}f(x) - T^-_{\sigma,0}f(x)
     \, .
\end{align*}
One readily checks that $\smash{T^+_{\sigma,0} f(x)}$ and
$\smash{T^-_{\sigma,0} f(x)}$ are respectively increasing and 
decreasing in $\sigma$, and therefore $\smash{U_\sigma f(x)}$
is increasing in $\sigma$. Moreover
\begin{equation}
\label{e-5.0}
D^+_vf(x) - D^-_vf(x) 
= \inf_{\sigma>0} \big( U_\sigma f(x) \big) 
= \inf_{m=1,2,\dots} \big( U_{1/m} f(x) \big) 
\, .
\end{equation}
Finally we notice that $\frac{1}{h}(f(x+hv)-f(x))$
and $D_v f(x)$ (if it exists) 
are both smaller than $\smash{T^+_{\sigma,0} f(x)}$ 
and larger than $\smash{T^-_{\sigma,0} f(x)}$
if $h\le \sigma$, 
which yields the following useful estimate:
\begin{equation}
\label{e-5.0.0}
U_\sigma f(x) 
\ge \left| D_v f(x) - \frac{f(x+h v)-f(x)}{h} \right| 
\quad\text{for every $0<h\le\sigma$.} 
\end{equation}

\end{parag}

\begin{proof}[Proof of Proposition~\ref{s-nondiff1}]
By Lemma~\ref{s-baireone} below 
$U_\sigma$ is of Baire~class~1 as a map 
from $X$ to $L^1(\mu)$ for every $\sigma>0$,%
\footnoteb{For the definition of maps of 
Baire~class~1 between two metrizable spaces see 
\cite{Kechris}, Definition~24.1.
We just recall here that this class contains 
(but does not always agrees with) the class 
of all pointwise limit of a sequences of 
continuous maps.}
and by Lemma~\ref{s-5.9} we have 
\begin{align}
\label{e-5.1}
U_\sigma f(x) \ge \frac{d_v(x)}{3\sqrt d}
\quad\text{for $\mu$-a.e.~$x\in E$}
\end{align}
whenever $f$ is a continuity point of $U_\sigma$.
Since the points of continuity of a map of 
Baire~class~1 are residual (see \cite{Kechris}, 
Theorem~24.14), it follows that the class 
$N_{v,\sigma}$ of all $f\in X$ which satisfy 
\eqref{e-5.1} is residual in $X$.
But then also the intersection of all
$N_{v,1/m}$ with $m$ a positive integer is residual,
and \eqref{e-5.0} implies that this intersection 
agrees with $N_v$.
\end{proof}

\begin{lemma}
\label{s-baireone}
For every $\sigma > \sigma' \ge 0$, the maps 
$\smash{T^\pm_{\sigma,\sigma'}}$ take $X$ into $L^1(\mu)$, 
are continuous for $\sigma'>0$, 
and are of Baire~class~1 for $\sigma'=0$.
Consequently $U_\sigma$ is a map from $X$ to $L^1(\mu)$
of Baire~class~1.
\end{lemma}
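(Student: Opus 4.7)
The idea is standard: the difference quotients are uniformly controlled by the Lipschitz bound automatically available on $X$, so boundedness and measurability are immediate; for $\sigma'>0$ we obtain even uniform (hence $L^1$) Lipschitz continuity in $f$, and for $\sigma'=0$ we approximate by monotone truncation of the interval and conclude via dominated convergence.

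First I would settle integrability and measurability. Every $f\in X$ satisfies $\Lip(f)\le\sqrt n$: the $e_j(x)$ form an orthonormal basis of $\R^n$ and each $|D_jf|\le 1$ a.e., so $|\nabla f|\le\sqrt n$ a.e. Since $|v|=1$, each difference quotient $(f(x+hv)-f(x))/h$ has absolute value at most $\sqrt n$. The map $(x,h)\mapsto (f(x+hv)-f(x))/h$ is jointly continuous on $\R^n\times(0,\sigma]$, so $T^+_{\sigma,\sigma'}f$ is lower semicontinuous in $x$ as a supremum of continuous functions, and $T^-_{\sigma,\sigma'}f$ is upper semicontinuous; both are Borel and bounded, and since $\mu$ is finite they belong to $L^\infty(\mu)\subset L^1(\mu)$.

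Next, for $\sigma'>0$, a one-line uniform estimate yields continuity: if $\|f-g\|_\infty\le\varepsilon$, then for every $x\in\R^n$ and every $h\in(\sigma',\sigma]$,
\[
\Big|\tfrac{f(x+hv)-f(x)}{h}-\tfrac{g(x+hv)-g(x)}{h}\Big|\le\frac{2\varepsilon}{\sigma'},
\]
and passing to sup or inf over $h$ gives $|T^\pm_{\sigma,\sigma'}f(x)-T^\pm_{\sigma,\sigma'}g(x)|\le 2\varepsilon/\sigma'$ uniformly in $x$. Hence $T^\pm_{\sigma,\sigma'}$ is Lipschitz from $X$ into $L^\infty(\mu)$, and a fortiori into $L^1(\mu)$ since $\mu$ is finite.

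For $\sigma'=0$ the key observation is that the interval $(\sigma',\sigma]$ expands monotonically as $\sigma'\searrow 0$, so for every $f\in X$ and every $x\in\R^n$ the sequence $T^+_{\sigma,1/m}f(x)$ is non-decreasing and converges to $T^+_{\sigma,0}f(x)$, while $T^-_{\sigma,1/m}f(x)$ is non-increasing and converges to $T^-_{\sigma,0}f(x)$. All these quantities are uniformly bounded by $\sqrt n$ and $\mu$ is finite, so dominated convergence gives convergence in $L^1(\mu)$. By the preceding step each $T^\pm_{\sigma,1/m}$ is continuous from $X$ to $L^1(\mu)$, hence $T^\pm_{\sigma,0}$ is the pointwise limit of a sequence of continuous maps and therefore of Baire class~$1$; the same holds for the difference $U_\sigma=T^+_{\sigma,0}-T^-_{\sigma,0}$, since Baire~class~$1$ maps into a topological vector space are closed under linear combinations. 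The only delicate points to check are the uniform Lipschitz bound on $X$ (so that the truncated quotients are boundedly dominated) and the finiteness of $\mu$ (so that sup-norm bounds transfer to $L^1$-bounds); both are already in hand, so no substantive obstacle arises.
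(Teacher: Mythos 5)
Your proof is correct and follows essentially the same route as the paper's: uniform boundedness of the difference quotients for integrability, the $2\varepsilon/\sigma'$ estimate for continuity when $\sigma'>0$, and pointwise-plus-dominated convergence of $T^\pm_{\sigma,1/m}$ to $T^\pm_{\sigma,0}$ for the Baire class~1 claim. The only (harmless) cosmetic differences are that you make the uniform bound $\Lip(f)\le\sqrt n$ on $X$ explicit where the paper just uses $\Lip(f)$, and that you spell out the semicontinuity argument for Borel measurability, which the paper leaves implicit.
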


\begin{proof}
The functions $\smash{T^+_{\sigma,\sigma'}f}$ 
belong to $L^1(\mu)$ for every 
$\sigma > \sigma' \ge 0$
and every $f\in X$ 
because they are bounded, 
and more precisely 
\begin{equation}
\label{e-5.2}
\big| T^+_{\sigma,\sigma'}f(x) \big| \le \Lip(f)
\quad\text{for every $x\in\R^n$.}
\end{equation}

Concerning the continuity of 
$\smash{T^+_{\sigma,\sigma'}}$, 
one readily checks that given $\sigma'>0$
and $f,f'\in X$ there holds
\[
\big| T^+_{\sigma,\sigma'} f'(x) 
     -T^+_{\sigma,\sigma'} f(x) \big| 
\le \frac{2}{\sigma'}\|f' - f\|_\infty 
\quad\text{for every $x\in\R^n$,}
\]
and therefore 
\[
\big\| T^+_{\sigma,\sigma'} f' 
      -T^+_{\sigma,\sigma'} f \big\|_{L^1(\mu)} 
\le \frac{2}{\sigma'} \Mass(\mu) \, \|f' - f\|_\infty 
\, .
\]

To prove that $\smash{T^+_{\sigma,0}}$ is of
of Baire~class~1 it suffices to notice that it
is the pointwise limit of the continuous maps
$\smash{T^+_{\sigma,\sigma'}}$ as $\sigma'\to 0$.
Indeed, it follows from the definition that, 
as $\sigma'$ tends to $0$,  
$\smash{T^+_{\sigma,\sigma'} f(x)}$ converges 
to $\smash{T^+_{\sigma,0} f(x)}$ for every 
$f\in X$ and every $x\in\R^n$, 
and then $\smash{T^+_{\sigma,\sigma'} f}$ 
converges to $\smash{T^+_{\sigma,0} f}$
in $L^1(\mu)$ by the dominated convergence 
theorem (a domination is given by estimate 
\eqref{e-5.2}).

The rest of the statement can be proved in a similar
way.
\end{proof}

\begin{lemma}
\label{s-perturb3}
Let $\eps, \sigma$ be positive real numbers, 
$f$ a function in $X$, and $E'$ a Borel subset of $E$.
Then there exist a smooth function $f''\in X$ 
and a compact set $K$ contained in $E'$ such that 
\begin{itemizeb}
\item[(i)]
$\|f''-f\|_\infty \le 2\eps$;
\item[(ii)]
$\mu(K) \ge \mu(E')/(4d)$;
\item[(iii)]
$U_\sigma f''(x) \ge d_v(x)/(3\sqrt d)$
for every $x\in K$.
\end{itemizeb}
\end{lemma}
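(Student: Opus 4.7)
The plan is to exploit the orthonormal frame $\{e_1(\cdot),\dots,e_n(\cdot)\}$ from \S\ref{s-spacex} to build a perturbation of $f$ that oscillates (on scales at most $\sigma$) in a direction nearly perpendicular to $V(\mu,\cdot)$. We may assume $|v|=1$ as already noted. For every $x\in E$ the decomposition $v=v_\parallel+v_\perp$ with $v_\perp\in V(\mu,x)^\perp=\Span\{e_1(x),\dots,e_d(x)\}$ gives $d_v(x)^2=|v_\perp|^2=\sum_{j=1}^d (v\cdot e_j(x))^2$, so there exists an index $j(x)\in\{1,\dots,d\}$ with $|v\cdot e_{j(x)}(x)|\ge d_v(x)/\sqrt d$. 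Partitioning $E'$ according to the value of $j(x)$ and keeping the most populous piece yields a Borel subset $E'_j\subset E'$ with $\mu(E'_j)\ge\mu(E')/d$. Using continuity of $e_j$ and Lusin's theorem applied to $d_v$, combined with inner regularity of $\mu$, I pass to a compact subset $K'\subset E'_j$ with $\mu(K')\ge\mu(E')/(2d)$ on which $e_j(x)$ stays within an arbitrarily small angle of a fixed unit vector $\bar e$ and $d_v(x)$ lies in some dyadic interval $[d_0,2d_0]$.

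Next I construct the oscillating perturbation. Fix a $C^\infty$ function $\phi:\R\to\R$ with $\|\phi\|_\infty\le 1$, $\|\phi'\|_\infty\le 1$, designed so that on any interval of length at least $10$ the difference quotient $[\phi(\xi+s)-\phi(\xi)]/s$ takes values close to $+1$ and to $-1$; a smoothed sawtooth does the job. For a small parameter $\lambda>0$ to be chosen I set
\[
g(x) := \eps\,\phi(\bar e\cdot x/\lambda),
\]
so that $\|g\|_\infty\le\eps$ and $\nabla g(x)=(\eps/\lambda)\phi'(\bar e\cdot x/\lambda)\,\bar e$. For $x\in K'$ and $h\in(0,\sigma]$ a direct computation rewrites the difference quotient as
\[
\frac{g(x+hv)-g(x)}{h}
= (\bar e\cdot v)\;\frac{\phi\bigl(\bar e\cdot x/\lambda + h(\bar e\cdot v)/\lambda\bigr) - \phi(\bar e\cdot x/\lambda)}{h(\bar e\cdot v)/\lambda},
\]
and choosing $\lambda$ small enough relative to $\sigma d_0/\sqrt d$ makes the inner quotient range over values close to $\pm 1$ as $h$ varies over $(0,\sigma]$. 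Combined with the bound $|\bar e\cdot v|\ge d_v(x)/\sqrt d$ (up to the small angular error), this gives $U_\sigma g(x)\ge (2-o(1))\,d_v(x)/\sqrt d$ uniformly on $K'$.

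The main obstacle is ensuring that $f''\in X$, i.e.\ $|D_k f''|\le 1$ Lebesgue-a.e.\ for every $k$: since $|D_j g|$ is of order $1$ where it matters, naively setting $f''=f+g$ may violate this constraint. My strategy is to first create some ``Lipschitz slack'' in $f$ on a thin tubular neighbourhood of $K'$ before adding~$g$. Concretely one convolves $f$ in the direction $\bar e$ at a scale $r\ll\lambda$ to obtain an auxiliary $\tilde f$ with $\|\tilde f-f\|_\infty\le\eps$, and then subtracts an appropriate smooth tilt so that $|D_k\tilde f|\le 1$ still holds with enough margin to accommodate $g$; a final mollification of $\tilde f+g$ at a scale much smaller than $\lambda$ produces a smooth $f''\in X$ with $\|f''-f\|_\infty\le 2\eps$, and essentially preserves the oscillations of the difference quotients of $g$ in direction~$v$. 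Applying \eqref{e-5.0.0} and discarding at most half of the $\mu$-measure of $K'$ to absorb the various error terms then yields a compact $K\subset K'$ with $\mu(K)\ge\mu(E')/(4d)$ on which $U_\sigma f''(x)\ge d_v(x)/(3\sqrt d)$, which is conclusion~(iii).
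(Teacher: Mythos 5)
Your proposal has a genuine gap at exactly the point you flag as "the main obstacle": keeping $f''$ in $X$. A periodic (sawtooth-type) perturbation $g$ with difference-quotient oscillation of order $d_v(x)/\sqrt d\sim 1$ necessarily has $|D_{\bar e}\,g|$ of order $1$ on a set of \emph{full Lebesgue measure} (half up-slopes, half down-slopes). No amount of "Lipschitz slack" can absorb this: the only admissible way to create slack without moving $f$ by more than $\eps$ in sup norm is to rescale, $f'=(1-s)f*\rho_t$ with $s\|f\|_\infty<\eps$, which yields slack of size only $s=O(\eps/\|f\|_\infty)$; subtracting a genuine "tilt" $c\,\bar e\cdot x$ with $c$ of order $1$ changes $f$ by an unbounded amount. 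So $f+g$ will violate $|D_k f''|\le 1$ wherever $D_kf$ is already close to $\pm1$, and making the sawtooth one-sided (so that its derivative can be sign-adjusted against $D_kf'$) forces its up-slopes to occupy a Lebesgue-small fraction of each line in direction $\bar e$ — and then there is no reason the set of points sitting on an up-slope has large $\mu$-measure.

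This is precisely where the paper's proof uses the structure of the decomposability bundle, and your argument never does. The paper chooses, near each $\bar x\in E'_k$, a cone $C$ transversal to $V(\mu,\cdot)$ on $E\cap B(\bar x,r)$ and invokes Proposition~\ref{s-rainwatercor4} (a consequence of Rainwater's lemma and the very definition of $V(\mu,\cdot)$) to find a \emph{$C$-null} compact set $K_{\bar x,r}$ carrying at least half the $\mu$-measure of $E'_k\cap B(\bar x,r)$. Lemma~\ref{s-perturb2} then produces a perturbation with $D_e g=1$ exactly on $K_{\bar x,r}$, $D_eg\in[-\eps',1]$ everywhere, and $\|g\|_\infty\le\eps'$: the $C$-nullity is what lets $g$ increase at unit rate on a set of large $\mu$-measure while accumulating total increase at most $\eps'$ along every $C$-curve. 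The one-signedness of $D_eg$, together with the choice $\bar g=\pm\frac12 g$ with sign opposite to $D_kf'(\bar x)$, is what keeps $f'+\bar g$ in $X$. Without this mechanism (or a substitute for it), conclusion~(iii) on a set of $\mu$-measure $\ge\mu(E')/(4d)$ cannot be reached. Two smaller points: your partition of $K'$ into dyadic levels of $d_v$ has countably many pieces, so "keeping the most populous piece" does not give a definite fraction of $\mu(E')$; and for a fixed base point the oscillation of the sawtooth's difference quotient is only bounded below by about $1$ (not $2$) times $|\bar e\cdot v|$ — harmless quantitatively, but the claimed factor $2-o(1)$ is not correct.
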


This is the key step in the proof of 
Proposition~\ref{s-nondiff1}; for the proof 
we need the following definition and results
(the proofs of which are postponed to Section~\ref{s7}).

\begin{parag}[Cones and cone-null sets]
\label{s-cones}
Given a unit vector $e$ in $\R^n$ and a real number 
$\alpha \in (0,\pi/2)$ we denote by
$C(e,\alpha)$ the closed cone of axis $e$ 
and angle $\alpha$ in $\R^n$, that is, 
\[
C(e,\alpha) 
:= \big\{ v\in\R^n: \ v \cdot e \ge \cos\alpha\cdot |v| \big\}
\, .
\]
Given a cone $C = C(e,\alpha)$ in $\R^n$, 
we call \emph{$C$-curve} any set of the form
$\gamma(J)$ where $J$ is a compact interval in $\R$ 
and $\gamma:J\to\R^n$ is a Lipschitz path 
such that
\[
\dot\gamma(s) \in C
\quad\text{for $\Leb^1$-a.e.~$s\in J$.}
\]
Following~\cite{ACP3}, we say that a set $E$ in $\R^n$ 
is \emph{$C$-null} if 
\[
\Haus^1 \big( E\cap G \big) = 0
\quad\text{for every $C$-curve $G$.}
\]
\end{parag}

\begin{proposition}
\label{s-rainwatercor4}
Let be given a Borel set $F$ in $\R^n$ 
and a cone $C=C(e,\alpha)$ in $\R^n$ such that
\[
V(\mu,x) \cap C =\{0\}
\quad\text{for $\mu$-a.e.~$x\in F$.}
\]
Then there exists a $C$-null set $F'$ 
contained in $F$ such that $\mu(F')=\mu(F)$.
\end{proposition}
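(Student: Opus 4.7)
The natural plan is to argue by contradiction via a maximality argument, reducing to Lemma~\ref{s-rainwatercor3}. Since the Borel $C$-null subsets of $F$ form a $\sigma$-ideal (a countable union of $C$-curves is still a countable collection of $C$-curves, so a countable union of $C$-null sets is $C$-null), and $\mu$ is finite, there exists a Borel $C$-null set $F_0\subset F$ of maximal $\mu$-measure. Suppose towards contradiction that $F_1:=F\setminus F_0$ has $\mu(F_1)>0$; then by maximality every Borel $C$-null subset of $F_1$ has $\mu$-measure zero.

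I would then apply Lemma~\ref{s-rainwatercor3} to the restricted measure $\mu_1:=1_{F_1}\mu$. In alternative~(i), $\mu_1$ is supported on a purely unrectifiable Borel set $E$; since every $C$-curve is $1$-rectifiable, $\Haus^1(E\cap G)=0$ for every $C$-curve $G$, so $E\cap F_1$ is a Borel $C$-null subset of $F_1$ of $\mu$-measure $\mu(F_1)>0$, contradicting the maximality of $F_0$.

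In alternative~(ii) there is a nontrivial measure $\nu=\int\mu_t\,dt\ll\mu_1$ with $\mu_t=1_{E_t}\Haus^1$ and each $E_t$ a $1$-rectifiable Borel set. Combining Proposition~\ref{s-basic}(i),(iii),(v) with the hypothesis $V(\mu,x)\cap C=\{0\}$ on $F$ yields $\Tan(E_t,x)\cap C=\{0\}$ for $\mu_t$-a.e.~$x$ and a.e.~$t$. For any $C$-curve $G=\gamma(J)$, the tangent to $G$ at an $\Haus^1$-a.e.~point is spanned by a nonzero element of $C$, hence differs from $\Tan(E_t,x)$ on the intersection $E_t\cap G$; by the standard fact that two $1$-rectifiable sets with distinct approximate tangents meet in an $\Haus^1$-null set, $E_t$ is $C$-null for a.e.~$t$.

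The remaining step, and the main obstacle, is to assemble from the uncountable family $\{E_t\}$ a \emph{single} Borel $C$-null set $B\subset F_1$ with $\mu(B)>0$; equivalently, a Borel $C$-null set on which $\nu$ is concentrated, since then $\nu\ll\mu$ and $\nu(B)>0$ force $\mu(B)>0$, contradicting the maximality of $F_0$. The difficulty is that an uncountable union of $C$-null sets need not be $C$-null and may fail to be Borel. The natural way around it is a measurable selection: the map $t\mapsto\mu_t$ is Borel by the definition of $\F_\mu$, so one can select Lipschitz parametrizations $\gamma_t\colon[0,1]\to\R^n$ of (almost all of) $E_t$ depending measurably on $t$; then $B$ is taken to be (a Borel hull of) the image of the Borel map $(s,t)\mapsto\gamma_t(s)$, and a Fubini argument on $[0,1]\times I$, together with the $C$-nullity of each $E_t$, shows that $B$ has $\Haus^1$-null intersection with every $C$-curve. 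This measurability point is the technical heart of the argument and is the kind of step naturally deferred to the appendix.
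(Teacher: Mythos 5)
Your maximality setup, the treatment of alternative~(i), and the deduction that $\Tan(E_t,x)\cap C=\{0\}$ for $\mu_t$-a.e.~$x$ and a.e.~$t$ (hence that a.e.\ $E_t$ is $C$-null) are all sound. The gap is in the final gluing step, and the fix you sketch does not close it: from the $C$-nullity of each individual $E_t$ one cannot conclude that some Borel set carrying $\nu=\int_I\mu_t\,dt$ is $C$-null. Any such set must contain $\Haus^1$-almost all of $E_t$ for a.e.~$t$, i.e.\ it is essentially the union $\bigcup_t E_t$, and an uncountable union of $C$-null sets need not be $C$-null. Concretely, in $\R^2$ with $C=C(e_1,\alpha)$ and $E_t=\{t\}\times[0,1]$, every $E_t$ is $C$-null (a $C$-curve meets each vertical line in at most one point), while $\bigcup_t E_t=[0,1]^2$ meets $C$-curves in sets of positive length. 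The Fubini argument you invoke only shows that $\Leb^2\bigl(\{(s,t):\gamma_t(s)\in G\}\bigr)=\int_I\Leb^1(\gamma_t^{-1}(G))\,dt=0$ for every $C$-curve $G$; this controls the preimage of $G$ in parameter space, not $\Haus^1(B\cap G)$, and in the example above that preimage is a Lebesgue-null graph which nevertheless maps onto a full-length piece of $G$. (The square does not satisfy the hypothesis of the proposition, since $V(\Leb^2,\cdot)=\R^2$; the point is only that your gluing implication is false in general, so the contradiction is never reached.)

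The paper avoids this entirely by replacing the general rectifiability dichotomy (Lemma~\ref{s-rainwatercor3}) with a cone-adapted one (Lemma~\ref{s-rainwatercor2}): Rainwater's lemma is applied to the weak* compact family of normalized length measures $\lambda_\gamma$ on $C$-curves contained in a large ball, and it yields directly that either $\mu$ is supported on a single Borel set which is $\lambda_\gamma$-null for every such $\gamma$ --- hence $C$-null --- or some nontrivial $\mu'\ll\mu$ decomposes as $\int\mu_t\,dt$ with $\Tan(E_t,x)\subset C\cup(-C)$; the latter is exactly what the hypothesis $V(\mu,x)\cap C=\{0\}$ rules out, since the tangent is a nonzero line. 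In other words, the $C$-null set is produced by Rainwater's lemma itself, not assembled a posteriori from the pieces $E_t$. To repair your argument you must substitute this cone-specific dichotomy for Lemma~\ref{s-rainwatercor3}; the conclusion of Lemma~\ref{s-rainwatercor3} alone does not yield the required $C$-null set.
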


\begin{lemma}
\label{s-perturb2}
Let be given a closed ball $B=B(\bar x,r)$ in $\R^n$, 
a cone $C=C(e,\alpha)$ in $\R^n$, 
and a $C$-null compact set $K$ contained in $B$. 
Then for every $\eps>0$ and every $r'>r$ there exists a 
smooth function $g:\R^n\to\R$ such that
\begin{itemize}
\item[(i)]
$\|g\|_\infty \le \eps$ and the support of $g$ is contained 
in $B':=B(\bar x,r')$;
\end{itemize}
and for every $x\in B'$,
\begin{itemize}
\item[(ii)]
$-\eps\le D_eg(x) \le 1$, 
and $D_eg(x) = 1$ if $x\in K$;
\item[(iii)]
$|d_W g(x)|\le 2/\tan\alpha$ where $W:=e^\perp$.%
\footnoteb{Here $\dV g(x)$ is the derivative of $g$ at $x$ \wrt $V$
(see \S\ref{s-not}), and $|\dV g(x)|$ is its operator norm.}
\end{itemize}
\end{lemma}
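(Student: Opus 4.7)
The plan is to construct $g$ as a smoothed ``ramp'' in the $e$-direction that rises by a controlled amount precisely on $K$, then decreases back to zero inside the annular cap $B' \setminus B$.

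First, I would use the $C$-null property of $K$ together with its compactness to find, for any small parameter $\delta > 0$ to be fixed later, an open neighborhood $U \supset K$ with $U \subset B'$ which is ``thin along the cone'', in the sense that $\Haus^1(G \cap U) < \delta$ for every $C$-curve $G$ contained in $B'$. The starting observation is that every line in the direction $e$ is itself a $C$-curve, hence $\Leb^1(L \cap K) = 0$ for every such line $L$; uniformity over all $C$-curves can be extracted from a compactness argument on the family of $C$-curves of bounded length lying in $B'$ (which is compact in the Hausdorff distance), together with a standard outer-regularity argument.

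Second, with $U$ at hand, I would let $\chi : \R^n \to [0,1]$ be a smooth function equal to $1$ on a neighborhood of $K$, supported in $U$, and chosen so that its level sets are cone-shaped of aperture at least $\alpha$; more precisely, I would take $\chi = \psi \circ \rho$, where $\rho$ is a smooth cone distance to $K$ satisfying $D_e \rho \ge 0$ and $|d_W \rho| \le (\tan\alpha)^{-1} D_e\rho$, and $\psi$ is a standard cutoff. Then define
\[
g(x) := \int_{-\infty}^{\pi(x)} \chi\bigl(P_W(x) + s\,e\bigr)\,ds - \Psi(x),
\]
where $\pi(x) := x \cdot e$, $P_W$ is the orthogonal projection onto $W = e^\perp$, and $\Psi$ is a smooth compensator supported in the forward cap of $B' \setminus B$ (the half where $\pi(x) > \pi(\bar x) + r$), selected so that $g$ vanishes outside $B'$, $0 \le \partial_e \Psi \le \eps$, and $\Psi$ is zero on a neighborhood of $B$.

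Third, I would verify the three properties. For (i), $|g(x)|$ is bounded by $\int_{\text{line}} \chi \le \delta$ plus $\|\Psi\|_\infty$, which can be made $\le 2\eps$ by choosing $\delta$ small. For (ii), $D_e g(x) = \chi(x) - \partial_e \Psi(x)$; since $\Psi$ vanishes on a neighborhood of $B \supset K$, $\partial_e\Psi = 0$ and $\chi = 1$ on $K$, giving $D_e g = 1$ on $K$, while globally $D_e g \in [-\eps, 1]$. For (iii), we differentiate in direction $u \in W$ to obtain $D_u g(x) = \int_{-\infty}^{\pi(x)} D_u \chi(P_W(x) + s e)\,ds - D_u \Psi(x)$; the cone-adapted construction of $\chi$ forces $|D_u \chi| \le (\tan\alpha)^{-1} D_e\chi$ pointwise, so an application of the fundamental theorem of calculus in the $s$-variable along the line replaces the $W$-integral of $D_u \chi$ by a telescoping bound of order $(\tan\alpha)^{-1} \|\chi\|_\infty \le (\tan\alpha)^{-1}$, and the compensator $\Psi$ contributes an additional $O(1/\tan\alpha)$ term that can be absorbed into the factor $2$.

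The main obstacle is the $W$-derivative control in (iii). A naive mollification of $\mathbf{1}_U$ gives $|d_W\chi|$ of order $1/\text{(scale)}$, which when integrated over the line intersection with $U$ would yield a bound that does not depend only on $\alpha$. The crucial point is therefore to choose $\chi$ (and the neighborhood $U$) adapted to the geometry of the cone $C$, so that its $W$-Lipschitz constant is controlled by its $e$-derivative in the ratio $\cot\alpha$, and to verify that this cone-shaped $U$ can still be taken thin along all $C$-curves, uniformly in $G$.
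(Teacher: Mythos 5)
Your Step~1 (a uniformly thin open neighborhood $U\supset K$ with $\Haus^1(G\cap U)<\delta$ for every $C$-curve $G$, obtained by compactness of the family of $C$-curves) is correct and is exactly Step~1 of the paper's construction. The gap is in the core of the argument, namely the existence of the ``cone-adapted'' cutoff $\chi$. You require (via $\rho$) that $\chi$ be smooth, equal to $1$ on $K$, supported in the thin set $U$, and satisfy $|d_W\chi|\le(\tan\alpha)^{-1}D_e\chi$ pointwise; the last inequality forces $D_e\chi\ge 0$ everywhere. But then, integrating along the $e$-line through any point $x_0\in K$, compact support gives $\int_{\R}D_e\chi(x_0+se)\,ds=0$ with a nonnegative integrand, hence $D_e\chi\equiv 0$ and $\chi$ constant $=0$ on that line --- contradicting $\chi(x_0)=1$. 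The same obstruction appears for the claimed ``cone distance'' $\rho$: a function vanishing on $K$ cannot be monotone along $e$-lines when $K$ meets a line in more than one point (the forward cone distance to $K$ genuinely decreases as one approaches a further component of $K$ along the line). Replacing the inequality by $|d_W\chi|\le(\tan\alpha)^{-1}|D_e\chi|$ does not help either: your telescoping bound then becomes $\cot\alpha$ times the total variation of $\chi$ along the line, which is not controlled for general $K$ (the line may meet $U$ in many components). So property (iii) is not established, and neither is the corresponding $W$-derivative bound for the compensator $\Psi$, which is a primitive of the same uncontrolled quantity.

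The paper avoids this by never building a cone-adapted bump function at all. It sets
$g(x):=\sup_{G\in\G_x}\bigl(\Haus^1(A\cap G)-|x_G-x|\bigr)$, where $\G_x$ is the family of $C$-curves whose endpoint lies on the forward half-line $x+\R_{\ge 0}e$. The value lies in $[0,\eps]$ by the thinness of $A$; the increments along $e$ satisfy $0\le g(x+se)-g(x)\le s$ with equality when $[x,x+se]\subset A$ (which holds near $K$); and the $W$-Lipschitz bound $|g(x+v)-g(x)|\le|v|/\tan\alpha$ comes from appending to any competitor curve a short arc inside the cone joining the shifted base points --- the cone geometry enters through the class of curves in the supremum, not through a pointwise gradient inequality on an auxiliary function. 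The resulting Lipschitz $g$ is then mollified and multiplied by a cutoff adapted to $B\subset B'$, which is where the factor $2$ in (iii) and the $-\eps$ in (ii) come from. If you want to keep your ``primitive along $e$-lines'' picture, you would need to replace your $\chi$ by this sup-type construction (an inf-convolution with the cone norm), since a smooth compactly supported $\chi$ with the stated gradient constraint does not exist.
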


\begin{proof}
[Proof of Lemma~\ref{s-perturb3}]
The idea is to take a smooth function $f'$ 
close to $f$, and then modify it into
a function $f''$ so to get $U_\sigma f''(x)$ large
enough for sufficiently many $x\in E'$.
This modification will be obtained by adding to $f'$ 
a finite number of smooth ``perturbations'' 
with small supremum norms and small, disjoint  supports,
but with large derivatives in at least one direction.

\medskip
\textit{Step~1. 
We take a smooth function 
$f'$ on $\R^n$ such that
\begin{itemizeb}
\item[(a)]
$\|f'-f\|_\infty \le \eps$;
\item[(b)]
$\|D_jf'\|_\infty <1$
for $j=1,\dots,n$, and in particular $f'$ belongs to $X$
(cf.~\S\ref{s-spacex}).
\end{itemizeb}}
\indent
We take $s>0$ such that $s\|f\|_\infty<\eps$ and set
\[
f':=(1-s) f*\rho_t
\]
where $\rho_t(x)=t^{-n}\rho(x/t)$ 
and $\rho$ is a mollifier with compact support.

Using that $f$ is uniformly continuous one can easily check 
that $f'$ converges uniformly to $(1-s) f$ as $t \to 0$, 
then $\|f'-f\|_\infty$ converges to 
$s\|f\|_\infty <\eps$, which implies that (a) holds
for $t$ small enough.

Since the vectorfield $e_j$ that defines the
partial derivative $D_j$ is continuous, it is not
difficult to show that $\|D_jf'\|_\infty$ converges
as $t\to 0$ to $(1-s)\|D_jf\|_\infty <1$ 
(recall that $\|D_jf\|_\infty\le 1$ because $f\in X$)
and therefore also (b) holds for $t$ small enough.

\medskip
\textit{Step~2. Construction of the set $E'_k$.}
\\ 
\indent
For every integer $k$ with $1 < k \le d$ we set 
\begin{equation}
\label{e-5.4}
E'_k 
:= \Big\{ x\in E' \, : \
      |v\cdot e_k(x)| \ge \frac{d_v(x)}{\sqrt d}
   \Big\}
\, .
\end{equation}
Now, for every $x\in E$ we have that $V(\mu,x)^\perp$ is spanned 
by the orthonormal basis $e_1(x),\dots, e_d(x)$ 
(see \S\ref{s-spacex}) and therefore
\[
d_v(x) 
= \dist(v,V(\mu,x))
= \bigg[ \sum_{k=1}^d ( v\cdot e_k(x) )^2 \bigg]^{1/2}
\le \sqrt{d} \, \sup_{1 \le k\le d} |v\cdot e_k(x)|
  \, .
\]
This implies that every $x\in E'$ 
must belong to $E'_k$ for at least one $k$,
that is, the sets $E'_k$ cover $E'$.
In particular there exists at least one
value of $k$ such that
\begin{equation}
\label{e-5.5}
\mu(E'_k)\ge \frac{\mu(E')}{d}
\, .
\end{equation}
For the rest of the proof $k$ is assigned this 
specific value.

\medskip
For the next four steps we fix 
a point $\bar x\in E'_k$ and positive number $r,r'$ 
such that 
\begin{equation}
\label{e-5.5.1}
0<r<\sigma/3 
\, , \quad
r< r'\le 2r
\, .
\end{equation}

\medskip
\textit{Step~3. Construction of the sets $E_{\bar x,r}$.}
\\ 
\indent
Let $\alpha(\bar x,r)$ be the supremum of the 
angle between $V(\mu,x)$ and $V(\mu,\bar x)$ as $x$ varies 
in $E\cap B(\bar x,r)$.%
\footnoteb{The angle between two $d$-dimensional 
planes $V$ and $V'$ in $\R^n$ is $\arcsin(\dgr(V,V'))$, where 
$\dgr(V,V')$ is defined in \S\ref{s-essspan}.}
Since $V(\mu,x)$ is continuous in $x\in E$
(cf.~\S\ref{s-spacex}), we have that 
\begin{equation}
\label{e-5.6}
\alpha(\bar x,r) \to 0 
\quad\text{as $r\to 0$.}
\end{equation}
Moreover the cone 
\[
C(\bar x,r) 
:= C \big( e_k(\bar x), \pi/2 - 2\, \alpha(\bar x,r) \big)
\]
satisfies
\[
C(\bar x,r) \cap V(\mu,x)=\{0\}
\quad\text{for all $x\in E\cap B(\bar x,r)$,}
\]
and since $E'_k$ is contained in $E$, 
we can apply Proposition~\ref{s-rainwatercor4} 
to the set $F:=E'_k\cap B(\bar x,r)$
and find a $C(\bar x,r)$-null set $F'$ 
contained in $F$ that $\mu(F')=\mu(F)$.
Then we can take a compact set $K_{\bar x,r}$ contained
in $F'$ such that 
\begin{equation}
\label{e-5.7}
\mu(K_{\bar x,r}) 
\ge \frac{1}{2} \mu(F')
= \frac{1}{2} \mu (E'_k\cap B(\bar x,r)) 
\, .
\end{equation}
Note that $K_{\bar x,r}$ is $C(\bar x,r)$-null because it is contained 
in $F'$.

\medskip
\textit{Step~4. Construction of the perturbations $\bar g_{\bar x,r,r'}$.}
\\ 
\indent
We set
\[
\eps' := \min\big\{
   \eps, \, r(r'-r), \, 1-\|D_jf'\|_\infty~\text{with $j=1,\dots,n$}
\big\}
\]
Since $K_{\bar x,r}$ is $C(\bar x,r)$-null and $\eps'>0$,%
\footnoteb{The number $\eps'$ is strictly positive 
because $r'>r$ and because of statement~(b) in Step~1.}
we can use Lemma~\ref{s-perturb2} 
to find a function $g_{\bar x,r,r'}$ such that
\begin{itemize}
\item[(c)]
$\|g_{\bar x,r,r'}\|_\infty \le \eps'$ 
and the support of $g_{\bar x,r,r'}$ 
is contained in $B(\bar x,r')$;
\end{itemize}
and for every $x\in B(\bar x,r')$, 
\begin{itemize}
\item[(d)]
$-\eps'\le D_e g_{\bar x,r,r'}(x) \le 1$ 
and $D_e g_{\bar x,r,r'}(x) = 1$ if $x\in K_{\bar x,r}$, 
where $e:=e_k(\bar x)$;
\item[(e)]
$|d_W g_{\bar x,r,r'}(x)|\le 2\tan\!\big( 2\, \alpha(\bar x,r) \big)$ 
where $W := e^\perp = e_k(\bar x)^\perp$.
\end{itemize}
Finally we set
\[
\bar g_{\bar x,r,r'} := \pm \frac{1}{2} g_{\bar x,r,r'}
\]
where $\pm$ means $+$ if $D_k f'(\bar x) \le 0$ and $-$ otherwise.

\medskip
\textit{Step~5. There exists 
$r_0=r_0(\bar x) > 0$ such that for $r<r_0$ there holds}
\begin{equation}
\label{e-5.8}
U_\sigma (f'+ \bar g_{\bar x,r,r'})(x) 
\ge \frac{d_v(x)}{3\sqrt d}
\quad\textit{for every $x\in K_{\bar x,r}$.}
\end{equation}
\indent
In the following, 
given a quantity $m$ depending on $\bar x, r, r'$ 
and $x\in B(\bar x,r)$, 
write $m=o(1)$ to mean that, for every $\bar x$, 
$m$ tends to $0$ as $r\to 0$, uniformly in all 
remaining variables.%
\footnoteb{In other words, for every $\bar x$ and every 
$\eps>0$ there exists $\bar r>0$ such that $|m|\le \eps$ 
if $r\le \bar r$.}
To simplify the notation, we write $g$ and 
$\bar g$ for $g_{\bar x,r,r'}$ and $\bar g_{\bar x,r,r'}$.

For every $x\in K_{\bar x,r} \subset B(\bar x,r)$ 
we take $h=h(x)>0$ 
such that $x+hv$ belongs to $\bd B(\bar x, r')$. 
Then, taking into account that $|v|=1$ 
and \eqref{e-5.5.1}, we have
\[
r'-r \le h\le r+r' \le 3r \le \sigma
\, .
\] 
We can then apply estimate \eqref{e-5.0.0} 
to the function $f'':=f'+ \bar g$, and  
taking into account that $\bar g =\pm\frac{1}{2}g$ 
and $g(x+hv)=0$,%
\footnoteb{The support of $g$ is contained in $B(\bar x,r')$
by statement~(c) in Step~4.}
we get
\begin{align}
U_\sigma f''(x) 
& \ge \Big| D_v f'' (x) - \frac{f''(x+hv)-f''(x)}{h} \Big| 
  \nonumber \\
& = \Big| D_v \bar g(x) + D_vf'(x) - \frac{f'(x+hv)-f'(x)}{h}
        + \frac{\bar g(x)}{h} \Big|
  \nonumber \\
& \ge \frac{1}{2} \big| D_v g(x) \big| 
        - \Big| D_vf'(x) - \frac{f'(x+hv)-f'(x)}{h} \Big|
        - \frac{|g(x)|}{2h}
        \, .
  \label{e-5.9}
\end{align}
Since $f'$ is of class $C^1$, we clearly have
\begin{equation}
\label{e-5.11}
\Big| D_vf'(x) - \frac{f'(x+hv)-f'(x)}{h} \Big| 
= o(1)
\, .
\end{equation}
Using statement~(c) in Step~4, the inequality $r'-r<h$ given above, 
and the choice of $\eps'$, we get
\begin{equation}
\label{e-5.12}
\frac{|g(x)|}{h} \le \frac{\eps'}{r'-r} 
\le r = o(1)
\, .
\end{equation}
Finally, to estimate $|D_v g(x)|$ we decompose 
$v$ as $v = (v\cdot e)e + w$ with $e:=e_k(\bar x)$
and $w\in W:=e^\perp$. Then 
\[
D_v g(x)
= (v\cdot e) \, D_e g(x)  + \scal{d_W g(x)}{w}
\]
and therefore%
\,\footnoteb{The second inequality follows from 
statements (d) and (e) in Step~4
and the fact that $x\in K_{\bar x,r}$;
for the third one we used that $|v|=1$ and $e=e_k(\bar x)$;
the fourth follows from \eqref{e-5.6} and the fact 
$e_k(x)$ is continuous in $x$, and the last inequality
follows from \eqref{e-5.4} and the fact that 
$x\in K_{\bar x,r} \subset E'_k$.}
\begin{align}
|D_v g(x)|
& \ge |v\cdot e| \, |D_e g(x)| - |d_W g(x)| 
  \nonumber \\
& \ge |v\cdot e| 
    - 2 \tan\!\big( 2\,\bar\alpha(\bar x,r)\big)
  \nonumber \\
& \ge |v\cdot e_k(x)| - |e_k(x)-e_k(\bar x)|
    - 2 \tan\!\big( 2\, \bar\alpha(\bar x,r)\big)
  \nonumber \\
& \ge |v\cdot e_k(x)| - o(1) 
  \ge d_v(x) /\sqrt d - o(1)
  \, .
  \label{e-5.13}
\end{align}
Putting estimates \eqref{e-5.9}, 
\eqref{e-5.11}, \eqref{e-5.12}, \eqref{e-5.13}
together we get 
\[
U_\sigma (f'+\bar g)(x) 
= U_\sigma f''(x) 
\ge  \frac{d_v(x)}{2\sqrt d} - o(1)
\]
which clearly implies the claim in Step~5.

\medskip
\textit{Step~6. There exists $r_1=r_1(\bar x)>0$ such that
$f'+\bar g_{\bar x,r,r'} \in X$ if $r<r_1$.}
\\ 
\indent
Since $\bar g$ is supported in 
$B(\bar x,r')$ and $f'$ belongs to $X$ (Step~1), 
to prove that $f'+\bar g$ belongs to $X$ it suffices
to show that
\begin{equation}
\label{e-5.14}
\big| D_j(f'+\bar g_{\bar x,r,r'})(x) \big| \le 1
\quad\textit{for every $x\in B(\bar x,r')$ and $j=1,\dots,n$.}
\end{equation}
We begin with the case $j=k$.
Recalling the definition of $D_k$
(in~\S\ref{s-spacex}) and the identities 
$\bar g=\pm\frac{1}{2}g$, $e=e_k(\bar x)$, 
we obtain
\begin{align*}
  D_k \bar g(x) 
& = D_e\bar g(x) + \scal{d\bar g(x)}{e(x)-e} 
  = \pm \frac{1}{2} D_e g(x) + o(1) 
  \, , \\
  D_k f'(x) 
& = D_kf'(\bar x) + \big( D_kf'(x)-D_kf'(\bar x) \big)
  = D_kf'(\bar x) + o(1)
  \, , 
\end{align*}
and therefore 
\begin{equation}
\label{e-5.16}
\big| D_k (f'+\bar g)(x) \big| 
= \Big| D_kf'(\bar x)\pm \frac{1}{2} D_e g(x) \Big| 
  +o(1)
\, .
\end{equation}
Recall now that $-\eps'\le D_eg(x) \le 1$ 
(statement~(d) above), that the sign $\pm$ means $+$ 
when $D_kf'(\bar x)\le 0$ and $-$ otherwise, 
and that we chose $\eps'$ so that 
$\|D_kf'\|_\infty \le 1-\eps'$.
Using these facts we get the estimate
\[
\Big| D_kf'(\bar x)\pm \frac{1}{2} D_e g(x) \Big| 
\le 1 - \eps'/2
\, , 
\]
which, together with \eqref{e-5.16}, clearly implies that 
\eqref{e-5.14} holds for $r$ small enough.

To prove \eqref{e-5.14} for $j\ne k$ is actually simpler:
recall indeed that $\|D_jf'\|_\infty<1$ 
(statement~(b) above) and note that%
\,\footnoteb{For the second inequality we use 
statement~(e) in Step~4 and the fact that 
$\bar g=\pm\frac{1}{2}g$.}
\begin{align*}
  |D_j \bar g(x)|
& \le \big| \scal{d \bar g(x)}{e_j(\bar x)} \big| 
   +  \big| \scal{d \bar g(x)}{e_j(x)-e_j(\bar x)} \big| \\
& \le \tan\!\big( 2\, \alpha(\bar x,r)\big) 
   + |d \bar g(x)| \, |e_j(x)-e_j(\bar x)|
  =o(1) \, .
\end{align*}

\medskip
\textit{Step~7. Construction of the function $f''$ and the set $K$.}
\\ 
\indent
We consider the family $\G$ of all closed balls $B(\bar x,r)$
with $\bar x\in E'_k$ and $r>0$ such that the conclusions 
of Step~5 and Step~6 hold (that is, 
$r$ smaller than $r_0(\bar x)$ and $r_1(\bar x)$).
By a standard corollary of Besicovitch covering
theorem (see for example \cite{KP}, Proposition~4.2.13) we 
can extract from $\G$ finitely many disjoint balls 
$B_i=B(\bar x_i,r_i)$ such that
\begin{equation}
\label{e-5.18}
\sum_i \mu( E'_k \cap B_i) 
\ge \frac{1}{2} \mu (E'_k)
\, .
\end{equation}
Since the balls $B_i=B(\bar x_i,r_i)$ are closed and disjoint, 
for every $i$ we can find $r'_i>r_i$ 
such that the enlarged balls $B'_i:=B(\bar x_i,r'_i)$ are
still disjoint.
Finally, for every $i$ we set $\bar g_i:=\bar g_{\bar x_i,r_i,r'_i}$, 
$K_i:=K_{\bar x_i,r_i}$, 
and
\[
f'' := f' +\sum_i \bar g_i
\, , \quad
K:=\bigcup_i K_i
\, .
\]
We now check that $f''$ and $K$ satisfy all requirements.

The function $f''$ is smooth because so are $f'$ 
and $\bar g_i$, and the set $K$ is compact 
because so are the sets $K_i$.
 
Note that the supports of the functions $\bar g_i$ 
are disjoint (because they are contained in the balls $B'_i$), 
and therefore at every point $x\in\R^n$ the derivative 
of $f''$ agrees either with the derivative of $f'$ 
or with that of $f'+\bar g_i$ for some $i$.
Recalling the definition of $X$ in 
\S\ref{s-spacex}, we then deduce that $f''$ belongs 
to $X$ by the fact that $f'$ belongs to 
$X$ (Step~1) and $f'+\bar g_i$
belongs to $X$ for every $i$ (Step~6).

Statement~(i), namely that $\|f''-f\|\le 2\eps$, follows from 
statements~(a) in Step~1 and (c) in Step~4, and the fact that the functions 
$g_i$ have disjoint supports.

Statement~(ii), namely that $\mu(K) \ge \mu(E')/(4d)$,
follows from estimates \eqref{e-5.7}, 
\eqref{e-5.18}, and \eqref{e-5.5}. 

Consider now $x \in K_i$ for some $i$.
By Step~5, $U_\sigma(f'+\bar g_i)(x)\ge d_v(x)/(3\sqrt d)$. 
Moreover the proof of this estimates involves 
only the restriction of $f'+\bar g_i$
to the ball $B'_i$, where $f'+\bar g_i$ agrees with $f''$. 
Thus the same estimates holds for $U_\sigma f''(x)$ as well, 
which means that statement~(iii) holds.
\end{proof}

\begin{lemma}
\label{s-5.9}
Take $f\in X$ and $\sigma>0$.
If $U_\sigma$ is continuous at $f$ 
(as a map from $X$ to $L^1(\mu)$)
then \eqref{e-5.1} holds.
\end{lemma}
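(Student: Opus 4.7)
The plan is to argue by contradiction, using Lemma \ref{s-perturb3} as the only substantive tool. Suppose \eqref{e-5.1} fails; since $U_\sigma f(x)\ge 0$ and $d_v(x)\ge 0$, the set $\{x\in E: U_\sigma f(x) < d_v(x)/(3\sqrt d)\}$ has positive $\mu$-measure, so by exhaustion there exist $\delta>0$ and a Borel set $E'\subset E$ with $\mu(E')>0$ such that
\[
U_\sigma f(x) \le \frac{d_v(x)}{3\sqrt d} - \delta
\quad\text{for every } x\in E'.
\]

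Now, for every $\eps>0$ I would apply Lemma \ref{s-perturb3} to $f$, $E'$, $\eps$, $\sigma$ to obtain a smooth $f_\eps\in X$ and a compact $K_\eps\subset E'$ with $\|f_\eps-f\|_\infty\le 2\eps$, $\mu(K_\eps)\ge \mu(E')/(4d)$, and $U_\sigma f_\eps(x)\ge d_v(x)/(3\sqrt d)$ for every $x\in K_\eps$. Combining these two bounds on $K_\eps$ yields
\[
U_\sigma f_\eps(x) - U_\sigma f(x)
\ge \frac{d_v(x)}{3\sqrt d} - \Big(\frac{d_v(x)}{3\sqrt d} - \delta\Big)
= \delta,
\]
so integrating over $K_\eps$ gives
\[
\|U_\sigma f_\eps - U_\sigma f\|_{L^1(\mu)}
\ge \int_{K_\eps} \delta \, d\mu
\ge \frac{\delta\,\mu(E')}{4d} > 0.
\]

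Letting $\eps\to 0$, we produce a sequence $f_\eps \to f$ in $X$ (i.e., in the supremum norm on $X$) whose images under $U_\sigma$ stay bounded away from $U_\sigma f$ in $L^1(\mu)$ by the fixed positive constant $\delta\mu(E')/(4d)$. This contradicts the continuity of $U_\sigma$ at $f$, proving the lemma. The only genuinely delicate ingredient here is Lemma \ref{s-perturb3}, which has already been proved; the rest is a routine contradiction argument, so no real obstacle is expected.
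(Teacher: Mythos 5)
Your proposal is correct and follows essentially the same route as the paper's proof: contrapositive, extraction of $E'$ and $\delta>0$ where the inequality fails by a fixed margin, application of Lemma~\ref{s-perturb3} to produce approximants $f_\eps\to f$ in $X$ with $U_\sigma f_\eps$ bounded away from $U_\sigma f$ in $L^1(\mu)$ by $\delta\mu(E')/(4d)$. No issues.
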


\begin{proof}
We assume that \eqref{e-5.1} fails and prove that 
$U_\sigma$ is not continuous at $f$.
Indeed, if \eqref{e-5.1} does not hold, we can find 
a set $E'$ contained in $E$ with $\mu(E')>0$
and $\delta>0$ such that
\[
U_\sigma f(x) \le \frac{d_v(x)}{3\sqrt d} -\delta
\quad\text{for every $x\in E'$.}
\]
Then we use Lemma~\ref{s-perturb3} to construct
a sequence of smooth functions $f_h\in X$ and of compact 
sets $K_h$ contained in $E'$ such that 
$f_h \to f$ uniformly as $h\to+\infty$, 
and for every $h$ there 
holds $\mu(K_h) \ge \mu(E')/(4d)$ and 
\[
U_\sigma f_h(x) \ge \frac{d_v(x)}{3\sqrt d} 
\quad\text{for every $x\in K_h$.}
\]
Thus $U_\sigma f_h$ does not converge 
to $U_\sigma f$ in the $L^1(\mu)$-norm, 
and more precisely 
\[
\big\| U_\sigma f_h - U_\sigma f \big\|_{L^1(\mu)}
\ge \int_{K_h} \big| U_\sigma f_h - U_\sigma f \big|\, d\mu
\ge \delta \mu(K_h) \ge \frac{\delta}{4d}\mu(E')
\, .
\qedhere
\]
\end{proof}

\begin{proof}
[Proof of Proposition~\ref{s-nondiff2}]
Let $D$ be a countable dense subset of $\R^n$, 
and let $N'$ be the intersection of all sets $N_v$ 
defined in Proposition~\ref{s-nondiff1} with $v\in D$.
By Proposition~\ref{s-nondiff1} the sets $N_v$ are 
residual in $X$, and then also $N'$ is residual.

Let now be given $f\in N'$. 
One readily checks that 
for $\mu$-a.e.~$x\in E$ inequality \eqref{e-nondiff} 
holds for every $v\in D$, and we deduce that 
it actually holds for every $v\in\R^n$ using
the fact that both sides of \eqref{e-nondiff} 
are continuous in $v$ (and $D$ is dense in $R^n$); 
notice indeed that the directional upper and lower 
derivatives $D_v^\pm f(x)$ are Lipschitz in $v$
(with the same Lipschitz constant as $f$).

We have thus proved that $f$ belongs to $N$, 
thus $N$ contains $N'$, and therefore it is
residual.
\end{proof}

For the proof of Theorem~\ref{s-nondiff3}
we need the following lemma, the proof of 
which is postponed to Section~\ref{s7}.

\begin{lemma}
\label{s-regularize}
Let $f$ be a Lipschitz function on $\R^n$, $K$ a 
compact set in $\R^n$, and $\phi$ an increasing, 
strictly positive function on $(0,+\infty)$. 
Then for every $\eps > 0$ there exists a Lipschitz 
function $g:\R^n\to\R$ such that
\begin{itemizeb}
\item[(i)]
$g$ agrees with $f$ on $K$ and is smooth in $\R^n\setminus K$;
\item[(ii)]
$| g(x) - f(x) | \le \phi(\dist(x,K))$ for every 
$x\in \R^n$;
\item[(iii)]
$\Lip(g) \le \Lip(f)+\eps$.
\end{itemizeb}
\end{lemma}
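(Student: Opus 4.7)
The plan is to construct $g$ by a Whitney-type regularization: keep $g=f$ on $K$, and on $U:=\R^n\setminus K$ replace $f$ by a locally finite convex combination of mollifications at scales that vanish as one approaches $K$. First I would take a Whitney covering of $U$ by balls $B_i:=B(x_i,r_i)$ with $r_i:=\dist(x_i,K)/8$; by standard constructions this cover is locally finite with a uniform overlap bound $M=M(n)$ and admits a smooth partition of unity $\{\psi_i\}$ subordinate to $\{B_i\}$ satisfying $|d\psi_i|\le C/r_i$ for a dimensional constant $C$. For each $i$, let $\rho_{t_i}$ be a standard mollifier of radius $t_i\in(0,r_i)$, set $f_i:=f*\rho_{t_i}$, and define
\[
g(x):=\begin{cases} f(x) & \text{if $x\in K$,}\\ \displaystyle\sum_i \psi_i(x)\,f_i(x) & \text{if $x\in U$.}\end{cases}
\]
The parameters $t_i$ will be chosen small at the end depending on $\eps$, $\Lip(f)$, $\phi$ and the constants $C$, $M$.

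Smoothness on $U$ and the identity $g=f$ on $K$ are immediate from the construction, giving (i) as soon as we verify that $g$ is continuous across $\bd K$. For (ii) I would estimate, for $x\in U$,
\[
|g(x)-f(x)|\le \sum_i \psi_i(x)\,|f_i(x)-f(x)|\le \Lip(f)\,\max_{i\,:\,x\in\mathrm{supp}\,\psi_i} t_i,
\]
using that $\Lip(f_i)\le \Lip(f)$. Since $x\in \mathrm{supp}\,\psi_i$ forces $\tfrac{7}{8}\dist(x_i,K)\le\dist(x,K)\le\tfrac{9}{8}\dist(x_i,K)$, imposing $\Lip(f)\,t_i\le \phi\bigl(\tfrac{7}{8}\dist(x_i,K)\bigr)$ (possible since $\phi>0$) and using the monotonicity of $\phi$ yields $|g(x)-f(x)|\le\phi(\dist(x,K))$ on $U$; this also forces $g(x)\to f(x)$ as $\dist(x,K)\to0$, closing the continuity gap needed for (i).

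The main obstacle is (iii). On $U$, Leibniz's rule gives
\[
dg(x)=\sum_i \psi_i(x)\,df_i(x) + \sum_i f_i(x)\,d\psi_i(x).
\]
The first sum is a convex combination of differentials with $|df_i|\le \Lip(f)$, hence it has norm $\le\Lip(f)$. For the second, the key trick is that $\sum_i \psi_i\equiv 1$ implies $\sum_i d\psi_i\equiv 0$, so I can subtract $f(x)\sum_i d\psi_i(x)=0$ and bound
\[
\Big|\sum_i f_i(x)\,d\psi_i(x)\Big|=\Big|\sum_i(f_i(x)-f(x))\,d\psi_i(x)\Big|\le M\cdot \frac{C}{r_i}\cdot \Lip(f)\,t_i,
\]
where the maximum is taken over the $\le M$ indices with $x\in\mathrm{supp}\,\psi_i$. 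Choosing $t_i\le \eps r_i/(CM\,\Lip(f))$ in addition to the previous constraint makes this bounded by $\eps$, so $|dg|\le \Lip(f)+\eps$ pointwise on $U$. To promote this to the global Lipschitz estimate I would connect any two points $x,y\in\R^n$ by the straight segment: on the (closed) portion of the segment in $K$, $g=f$ has Lipschitz constant $\Lip(f)$, and on each connected (open) component of the portion in $U$, $g$ is smooth with $|dg|\le \Lip(f)+\eps$; since $g$ is continuous across $\bd K$ by (ii), integrating the derivative along the segment gives $|g(x)-g(y)|\le(\Lip(f)+\eps)|x-y|$, which is (iii).
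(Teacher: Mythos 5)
Your construction is correct and essentially the same as the paper's: the paper uses dyadic shells $A_k=\{1/(k+1)<\dist(x,K)<1/(k-1)\}$ in place of a Whitney ball covering and obtains the global Lipschitz bound by passing to the pointwise limit of the globally Lipschitz partial sums rather than by the segment argument, but the mechanism---mollification at scales tied to $\dist(\cdot,K)$, plus the cancellation $\sum_i d\psi_i=0$ to absorb the Leibniz terms---is identical. One small correction: since $\phi$ need not tend to $0$ at $0^+$, continuity of $g$ across $\partial K$ does not follow from (ii) as you claim; it does, however, follow from your second constraint $t_i\le\eps r_i/(CM\,\Lip(f))$, because $r_i=\dist(x_i,K)/8\to 0$ as $x_i$ approaches $K$.
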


\begin{proof}
[Proof of Theorem~\ref{s-nondiff3}]
The strategy is simple: 
we cover $\R^n$ with
a countable family of pairwise disjoint sets $E_i$ 
which satisfy the assumption in \S\ref{s-spacex}, then 
we use Proposition~\ref{s-nondiff2} to find
functions $f_i$ which satisfy \eqref{e-nondiff}
for every $v$ and $\mu$-a.e.~$x\in E_i$, 
and we regularize these functions out of the 
set $E_i$ using Lemma~\ref{s-regularize}; 
finally we take as $f$ a weighted sum of these
modified functions.

\medskip
For every $x\in \R^n$ let $d(x)$ be the dimension 
of $V(\mu,x)^\perp$, and let $F_0$ be the
set of all $x$ such that $d(x)>0$.

\medskip
\textit{Step~1. For every (Borel) set $F$ contained 
in $F_0$ with $\mu(F)>0$ there exists a compact set 
$E \subset F$ with $\mu(E)>0$ which satisfies the 
assumption in \S\ref{s-spacex}.}
\\ 
\indent
Since the map $x\mapsto V(\mu,x)^\perp$, viewed as a 
closed-valued multifunction from $E$ to $\R^n$, is Borel 
measurable we can use Kuratowski and Ryll-Nardzewski's 
measurable selection theorem (see \cite{Sri}, Theorem~5.2.1) 
to choose Borel vectorfields $e_1,\dots, e_n$ on $\R^n$
so that
\begin{itemizeb} 
\item[(a)]
$e_1(x),\dots, e_n(x)$ form an orthonormal basis of 
$\R^n$ for every $x\in\R^n$; 
\item[(b)]
$\smash{e_1(x), \dots, e_{d(x)}(x)}$ span $\smash{V(\mu,x)^\perp}$ 
for every $x\in F$.
\end{itemizeb}

Then we use Lusin's theorem to find a compact set 
$E\subset F$ with $\mu(E)>0$ such that the restrictions 
of the function $d$ and the vectorfields $e_j$ to $E$
are continuous; thus $d$ is locally constant on $E$, 
and possibly replacing $E$ with a smaller subset we 
can further assume that $d$ is constant on $E$ and 
that the restrictions of each $e_j$ to $E$ 
takes values in the ball $B_j:=B(e_j(\bar x),\delta)$
for some $\bar x\in E$ and some (small) $\delta>0$. 

To conclude the proof we modify the vectorfields 
$e_j$ in the complement of $E$ so to they become 
continuous on the whole $\R^n$ and still satisfy
assumption~(a) above. 
This last step is achieved by first extending the 
restriction of each $e_j$ to $E$ to a continuous 
map from $\R^n$ to $B_j$ (using Tietze extension 
theorem) and then applying the Gram-Schmidt 
orthonormalization process to the resulting 
vectorfields (note that if $\delta$ is small enough 
these vectorfields are linearly independent at 
every point).

\medskip
\textit{Step~2. There exist a countable collection
$(E_i)$ of pairwise disjoint compact sets that
satisfy the assumption in \S\ref{s-spacex}
and $\mu(E_i)>0$, and their union contains 
$\mu$-a.e.~point.}
\\
\indent
Let $\G$ be the class of all countable collections
$(E_i)$ that satisfy all requirements with the possible
exception of the last one (the union contains
$\mu$-a.e.~point). The class $\G$ is nonempty 
and admits an element which is maximal with respect to
inclusion. Using Step~1 it is easy to prove that this
maximal element must also satisfy the last requirement.

\medskip
\textit{Step~3. Construction of the functions $f_i$ and $g_i$.}
\\
\indent
We take $(E_i)$ as in Step~2, and for every $i$ 
we use Proposition~\ref{s-nondiff2} to find 
a Lipschitz function $f_i$ with $\Lip(f_i) \le 1$
such that for $\mu$-a.e.~$x\in E_i$ 
\begin{equation}
\label{e-5.19}
D^+_v f_i(x) - D^-_v f_i(x) >0
\quad\text{for every $v\notin V(\mu,x)$.}
\end{equation}

Next we apply Lemma~\ref{s-regularize} to each 
$f_i$ to find a Lipschitz function $g_i$ with 
$\Lip(g_i) \le 2$ which agrees with $f_i$ on $E_i$, 
is smooth on $\R^n\setminus E_i$, and satisfies
\[
|g_i(x)-f_i(x)| \le \big(\dist(x,E_i)\big)^2
\quad\text{for every $x\in\R^n$.}
\]
This implies in particular that for every 
$x\in E_i$ and every $v\in\R^n$ there holds
\[
g_i(x+hv)=f_i(x+hv)+O(|h|^2)
\quad\text{for every $h\in\R$,}
\]
which yields $D^\pm_v g_i(x) = D^\pm_v f_i(x)$;
then \eqref{e-5.19} implies that
for $\mu$-a.e.\ $x\in E_i$ 
\begin{equation}
\label{e-5.20}
D^+_v g_i(x) - D^-_v g_i(x) >0
\quad\text{for every $v\notin V(\mu,x)$.}
\end{equation}

\medskip
\textit{Step~4. Construction of the function $f$.}
\\
\indent
We finally set
%
%\,\footnoteb{Here we assume that the indexes $i$ are natural numbers.}
%
\[
f(x) := \sum_{i} \frac{g_i(x)}{2^i} 
\quad\text{for every $x\in\R^n$.}
\]
The function $f$ is clearly Lipschitz with $\Lip(f)\le 4$
(because $\Lip(g_i)\le 2$ for every $i$), and we claim that 
for $\mu$-a.e.~$x$ there holds 
$D^+_v f_i(x) - D^-_v f_i(x) >0$ for every $v\notin V(\mu,x)$.
Taking into account \eqref{e-5.20} and the fact that the 
union of the sets $E_i$ contains $\mu$-a.e.~$x$, it suffices 
to prove that for every $i$ and every $x\in E_i$ the function 
\[
\hat g_i := \sum_{j\ne i} \frac{g_j(x)}{2^j}
\]
is differentiable at $x$. 
If the sum is finite this is an immediate consequence 
of the fact that the functions $g_j$ are differentiable 
at $x$ for every $j\ne i$.%
\footnoteb{Recall that $g_j$ is smooth out of the set $E_j$, 
which does not intersect $E_i$, and then $x\notin E_j$.}
If the sum is infinite, 
this is an immediate consequence of the fact that for 
every $\delta>0$ the function $\hat g_i$ can be 
written as $\hat g_i=g_i'+g_i''$ where 
$g_i'$ differentiable at $x$ and $\Lip(g_i'')\le\delta$.%
\footnoteb{Let $g'_i$ and $g''_i$ be the sums 
of $g_j$ over all $j\ne i$ such that $j\le j_0$ and $j> j_0$,
respectively. For $j_0$ large enough there holds
$\Lip(g''_i) \le \sum_{j>j_0} \Lip(g_i)/2^j \le \delta$.}
\end{proof}

%%%%%%%%%%%%%%%%%%%%%%%%%%%%%%%%%%%%%%%%%%%%%%%%%%%%%%%%%%%%%%%%%
%
%	SECTION 5 (ex 3)
%
%%%%%%%%%%%%%%%%%%%%%%%%%%%%%%%%%%%%%%%%%%%%%%%%%%%%%%%%%%%%%%%%%

\section{Measures related to normal currents}
\label{s3}
In the main result of this section (Theorem~\ref{s-curr2}) 
we establish a connection between the decomposability 
bundle of a measure $\mu$ and the Radon-Nikod\'ym density 
of a normal current \wrt to $\mu$. 
Then we consider a few well-known formulas related to normal 
currents and smooth functions (or forms), and use 
the previous result to extend these formulas to the case 
of Lipschitz functions (or forms).
More precisely, we prove formulas for the action 
of the boundary of a normal current on a Lipschitz 
form (Proposition~\ref{s-boundary3}),
for the boundary of the interior product of a normal 
current and a Lipschitz form (Proposition~~\ref{s-boundary2}).
and for the push-forward of a normal current according 
to a Lipschitz map (Proposition~\ref{s-pushf2}).

\begin{parag}[Notation related to currents]
\label{s-not2}
We list here the notation from multilinear algebra and 
the theory of currents that is used in this section:
\begin{itemizeb}\leftskip 0.8 cm\labelsep=.3 cm
\item[$\largewedge_k(V)$]
space of $k$-vectors in the vector space $V$;
\item[$\largewedge^k(V)$]
space of $k$-covectors on the vector space $V$;%
\footnoteb{When $V=\R^n$ we endow $\largewedgef_k(V)$ 
and $\largewedgef^k(V)$ with the euclidean norms
determined by the canonical basis of these spaces.
Note that the choice of the norms on these 
spaces has no relevant consequence in what follows.}
\item[$v\wedge w$]
exterior product of the multi-vectors 
(or multi-covectors) $v$ and $w$;
\item[$\scal{\alpha}{v}$]
duality pairing of the $k$-covector $\alpha$ and the $k$-vector $v$, 
also written as $\scal{v}{\alpha}$;
\item[$v\trace\alpha$]
interior product of the $k$-vector $v$ and the 
$h$-covector $\alpha$ (\S\ref{s-intprod});
\item[$\scal{T}{\omega}$]
duality pairing of the $k$-current $T$ and the $k$-form 
$\omega$ (\S\ref{s-normcurr});
\item[$T\trace\omega$]
interior product of the $k$-current $T$ and the 
$h$-form $\omega$ (\S\ref{s-intprod});
\item[$d\omega$]
exterior derivative of the $k$-form $\omega$;
\item[$d_T\omega$]
exterior derivative of the $k$-form $\omega$ \wrt
the current $T$ (\S\ref{s-tandiff});
\item[$\bd T$]
boundary of the current $T$ (\S\ref{s-normcurr});
\item[$\Mass(T)$]
mass of the current $T$ (\S\ref{s-normcurr});

\item[{$[E,\tau,m]$}]
current associated to a rectifiable set $E$, 
an orientation $\tau$, and a multiplicity
$m$ (\S\ref{s-intcurr});

\item[$\Span(v)$]
span of the $k$-vector $v$ (\S\ref{s-span});
\item[$f^\# \omega$]
pull-back of the form $\omega$ according to
the map $f$ (\S\ref{s-pullb}).
\item[$\smash{ f_T^\# \omega }$]
restriction of $f^\# \omega$ to the tangent
bundle of $T$ (\S\ref{s-pullb}).
\item[$f_\# \, T$]
push-forward of the current $T$ according to
the map $f$ (\S\ref{s-pushf}).
\end{itemizeb}
\end{parag}

\begin{parag}[Currents and normal currents]
\label{s-normcurr}
We recall here the basic notions and terminology from
the theory of currents; introductory presentations 
of this theory can be found for instance
in \cite{KP}, \cite{Mo}, and \cite{Si}; 
the most complete reference remains \cite{Fe}.
A \emph{$k$-dimensional current} (or $k$-current) 
in $\R^n$ is a continuous linear functional on the 
space of $k$-forms on $\R^n$ which are smooth and 
compactly supported. 

The boundary of a $k$-current $T$ is the $(k-1)$-current 
$\bd T$ defined by $\scal{\bd T}{\omega} := \scal{T}{d\omega}$
for every smooth and compactly supported 
$(k-1)$-form $\omega$ on $\R^n$.
The \emph{mass} of $T$, denoted by 
$\Mass(T)$, is the supremum of $\scal{T}{\omega}$ over
all forms $\omega$ such that $|\omega|\le 1$
everywhere.%
\footnoteb{In the context of this paper the choice of 
the norm on the space of covectors, whether the Euclidean 
one or the co-mass, is not relevant.}
A current $T$ is called \emph{normal} if both $T$ 
and $\bd T$ have finite mass.
\end{parag}

\begin{parag}[Representation of currents with finite mass]
\label{s-repcurr}
By Riesz theorem a current $T$ with finite mass
can be represented as a finite measure with 
values in the space $\largewedge_k(\R^n)$
of $k$-vectors in $\R^n$,
and therefore it can be written in the form
$T=\tau\mu$ where $\mu$ is a finite positive measure
and $\tau$ is a $k$-vectorfield such that 
$\int |\tau|d\mu < +\infty$.
In particular the action of $T$ on a form
$\omega$ is given by
\[
\scal{T}{\omega} 
= \int_{\R^n} \scal{\omega(x)}{\tau(x)} \, d\mu(x)
\, ,
\]
and the mass $\Mass(T)$ is the total mass of $T$ as a 
measure, that is, $\Mass(T)=\int |\tau| d\mu$.%

In the following, whenever we write a current $T$ as $T=\tau\mu$ 
we tacitly assume that $\tau(x)\ne 0$ for $\mu$-a.e.~$x$;
in this case we say that $\mu$ is a measure
\emph{associated} to the current $T$.%
\footnoteb{Note that the measure $\mu$ and the 
$k$-vectorfield $\tau$ in the decomposition $T=\tau\mu$
are unique under the additional assumption that
$|\tau(x)|=1$ for $\mu$~a.e.$x$.}

Moreover, if $T$ is a $k$-current with finite mass 
and $\mu$ is an arbitrary measure, we can
write $T$ as $T=\tau\mu+\nu$ where $\tau$
is  $k$-vectorfield $\tau$ in $L^1(\mu)$, 
called the Radon-Nikod\'ym density of $T$ 
\wrt $\mu$, and $\nu$ is a measure 
with valued in $k$-vectors which is singular 
\wrt~$\mu$.
\end{parag}

\begin{parag}[Integral currents]
\label{s-intcurr}
Let $E$ be a $k$-rectifiable set.
An \emph{orientation} of $E$ is a $k$-vectorfield $\tau$ 
on $\R^n$ such that the $k$-vector $\tau(x)$ is \emph{simple},  
has norm $1$, and spans the approximate tangent 
space $\Tan(E,x)$ for $\Haus^k$-a.e.~$x\in E$.%
\footnoteb{The span of a simple $k$-vector 
$v=v_1\wedge\dots\wedge v_k$ in $\R^n$ is the 
linear subspace of $\R^n$ generated by the factors 
$v_1,\dots, v_k$, see also \S\ref{s-span}.}
A \emph{multiplicity} on $E$ is any integer-valued 
function $m$ such that $\int_E m \, d\Haus^k < +\infty$.
For every choice of $E, \tau, m$ as above we
denote by $[E,\tau,m]$ the $k$-current 
defined by $[E,\tau,m] := m\tau 1_E \, \Haus^k $, that is, 
\[
\bigscal{[E,\tau,m]}{\omega}
:= \int_E \scal{\omega}{\tau} \, m \, d\Haus^k
\, .
\]
Currents of this type are called 
\emph{integer-multiplicity rectifiable currents}.
A current $T$ is called \emph{integral} if both $T$ and $\bd T$ 
can be represented as integer-multiplicity rectifiable currents.
\end{parag}

The next statement contains a decomposition for 
normal $1$-currents which is strictly related to 
a decomposition given in \cite{Smirnov}.

\begin{theorem}
\label{s-decompcurr}
Let $T=\tau\mu$ be a normal $1$-current
with $|\tau(x)|=1$ for $\mu$-a.e.~$x$.
Then there exists a family of integral $1$-currents
$\big\{ T_t:=[E_t,\tau_t,1] : \ t\in I \big\}$, 
such that
\begin{itemizeb}
\item[(i)]
$T$ can be decomposed as $T=\int_I T_t\, dt$
(in the sense of \S\ref{s-measint}) and
\begin{equation}
\Mass(T) 
= \int_I \Mass(T_t) \, dt
= \int_I \Haus^1(E_t) \, dt
  \, ; 
\label{e-7.2}
\end{equation}
\item[(ii)]
$\tau_t(x)=\tau(x)$ for $\Haus^1$-a.e.~$x\in E_t$ and for a.e.~$t$;
\item[(iii)]
$\mu$ is decomposed as $\smash{\mu=\int_I \mu_t\, dt}$
(in the sense of \S\ref{s-measint})
where each $\mu_t$ is the restriction of $\Haus^1$
to the set $E_t$.
\end{itemizeb}
\end{theorem}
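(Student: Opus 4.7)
The plan is to derive Theorem~\ref{s-decompcurr} from Smirnov's decomposition theorem for normal $1$-currents (the ``main reference'' cited as \cite{Smirnov}). Smirnov's result provides, for any normal $1$-current $T$ in $\R^n$, a representation $T=\int_I T_t\,dt$ where each $T_t$ is a \emph{simple} integral $1$-current of the form $[E_t,\tau_t,1]$ (with $E_t$ the image of an injective Lipschitz path, or a simple loop, parametrized by arc-length so that $|\tau_t|=1$ on $E_t$), and such that the mass identity $\Mass(T)=\int_I\Mass(T_t)\,dt$ holds together with the analogous identity for the boundaries. Applying this to our $T=\tau\mu$ already yields statement~(i), since $\Mass(T_t)=\Haus^1(E_t)$ is immediate from the arc-length parametrization.

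Next I would derive (iii) from (i) as follows. Write the identity $T=\int_I T_t\,dt$ as an identity between $\largewedge_1(\R^n)$-valued measures, then pass to total variations. Since $|\tau|=1$ $\mu$-a.e.\ we have $|T|=\mu$, and since $|\tau_t|=1$ $\Haus^1$-a.e.\ on $E_t$ we have $|T_t|=1_{E_t}\Haus^1=\mu_t$. Total-variation subadditivity gives
\[
\mu=|T|\le \int_I |T_t|\,dt=\int_I\mu_t\,dt
\]
as positive measures. But by (i) both sides have the same total mass, so the inequality is an equality; this gives (iii).

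To obtain (ii) I would exploit the fact that equality in the total-variation subadditivity above forces the vector-valued measures $T_t$ to have ``aligned directions'' with $T$. Concretely, writing $T=\tau\mu$ and $\int_I T_t\,dt=\tau\int_I\mu_t\,dt$ (using (iii) and $T=\int T_t\,dt$), a standard disintegration argument (or: apply the above total-variation equality after first multiplying both sides of $\int_I T_t\,dt=T$ by the scalar function $\bfi\cdot\tau$ where $\bfi$ is an arbitrary constant covector, then use the case of equality in the triangle inequality $|\int f\,d\nu|\le \int |f|\,d|\nu|$) yields that $\tau_t(x)$ and $\tau(x)$ are parallel at $\mu_t$-a.e.\ $x$ for a.e.\ $t$; since both have unit length, they must be equal, which is exactly (ii).

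The main obstacle is the careful invocation of Smirnov's theorem: his original statement is sometimes formulated for $1$-currents without boundary or in terms of ``elementary solenoids'' rather than compact simple arcs, so some care is needed to extract a decomposition in which each $T_t$ is an integral current $[E_t,\tau_t,1]$ with multiplicity exactly~$1$ and with the full mass-balance $\Mass(T)=\int\Mass(T_t)\,dt$ rather than just $\Mass(T)\le \int\Mass(T_t)\,dt$. Once this decomposition is in hand in the precise form stated above, statements (ii) and (iii) drop out by the measure-theoretic manipulations sketched; the non-trivial analytic content is entirely contained in Smirnov's theorem.
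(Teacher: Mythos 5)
Your proposal is correct and follows essentially the same route as the paper: statement (i) is imported from a Smirnov-type decomposition (the paper cites Paolini--Stepanov, Corollary~3.3, which delivers exactly the form with multiplicity one and the full mass balance, thereby resolving the obstacle you flag about Smirnov's original formulation), and (ii)--(iii) are then extracted by equality-case arguments. The only real difference is the order and packaging: the paper pairs $T=\int_I T_t\,dt$ with the vectorfield $\tau$ to get $\Mass(T)\le\int_I\Haus^1(E_t)\,dt$ with equality forcing $\scal{\tau}{\tau_t}=1$, hence (ii), and then obtains (iii) by multiplying the vector identity by $\tau$, whereas you get (iii) first from total-variation subadditivity plus the equality of total masses and then (ii) from the alignment forced by the same equality.
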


\begin{proof}
The existence of a family $\{T_t\}$ satisfying the decomposition 
in statement (i) and \eqref{e-7.2} can be found for instance 
in \cite{PaS}, Corollary~3.3.

To prove statement~(ii), we integrate the vectorfield $\tau$ 
against $T$, viewed as a vector measure, and using the 
decomposition of $T$ we obtain
\begin{align*}
  \Mass(T) 
  = \int_{\R^n} 1 \, d\mu(x)
& = \int_{\R^n} \scal{\tau(x)}{ dT(x) } \\
& = \int_I \bigg[ \int_{\R^n} \scal{\tau(x)}{ dT_t(x) } \bigg] \, dt \\
& = \int_I \bigg[ \int_{E_t} \scal{\tau(x)}{\tau_t(x)} \, d\Haus^1(x) \bigg] \, dt \\
& \le \int_I  \Haus^1(E_t) \, dt
  = \int_I  \Mass(T_t) \, dt
\end{align*}
where the inequality follows from the fact that $\tau(x)$ and $\tau_t(x)$ 
are unit vectors. Now \eqref{e-7.2} implies that this inequality
is actually an equality, which means that the vectors $\tau(x)$ and $\tau_t(x)$
agree for $\Haus^1$-a.e.~$x\in E_t$ and a.e.~$t$.

Finally, the identity of scalar measures $\mu=\int_I \mu_t \, dt$ in statement~(iii)
is obtained by multiplying the identity of vector measures $T=\int_I T_t \, dt$
by the vectorfield~$\tau$.
\end{proof}

A consequence of Theorem~\ref{s-decompcurr}
is the following.

\begin{proposition}\label{s-curr1}
Let $\mu$ be a positive measure and let $\tau$ be the 
Radon-Nikod\'ym density of a $1$-dimensional normal 
current $T$ \wrt $\mu$.
Then 
\begin{equation}
\label{e-curr1}
\Span(\tau(x)) \subset V(\mu,x) 
\quad\text{for $\mu$-a.e.~$x$.}
\end{equation}
\end{proposition}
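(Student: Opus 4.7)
The plan is to reduce the statement to Theorem~\ref{s-decompcurr} by separating the singular part of $|T|$ with respect to $\mu$. First I would write $T=\tau_0 \sigma$ where $\sigma := |T|$ and $\tau_0$ is a $1$-vectorfield with $|\tau_0(x)|=1$ for $\sigma$-a.e.~$x$. Decomposing $\sigma$ as $\sigma = \sigma^{ac}+\sigma^s$ with $\sigma^{ac}\ll\mu$ and $\sigma^s\perp\mu$, the Radon–Nikod\'ym density of $T$ with respect to $\mu$ is $\tau(x)=\tau_0(x)\,\rho(x)$ where $\rho := d\sigma^{ac}/d\mu$. In particular $\Span(\tau(x))=\Span(\tau_0(x))$ on the set $A:=\{\rho>0\}$ and $\tau(x)=0$ (so $\Span(\tau(x))=\{0\}$, trivially contained in $V(\mu,x)$) on its complement. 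Thus it suffices to prove that $\Span(\tau_0(x))\subset V(\mu,x)$ for $\mu$-a.e.~$x\in A$, equivalently for $\sigma^{ac}$-a.e.~$x$.

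Next I would apply Theorem~\ref{s-decompcurr} to $T=\tau_0\sigma$, obtaining integral $1$-currents $T_t=[E_t,\tau_t,1]$ such that $T=\int_I T_t\,dt$, $\sigma = \int_I \mu_t\,dt$ with $\mu_t = 1_{E_t}\Haus^1$, and $\tau_t(x)=\tau_0(x)$ for $\Haus^1$-a.e.~$x\in E_t$ and a.e.~$t$. To isolate the absolutely continuous part I would pick a Borel set $F$ with $\mu(F)=0$ and $\sigma^s(\R^n\setminus F)=0$, and define $\tilde\mu_t := 1_{\R^n\setminus F}\,\mu_t$, so that $\tilde\mu_t$ is the restriction of $\Haus^1$ to the $1$-rectifiable set $\tilde E_t := E_t\setminus F$, and
\[
\int_I \tilde\mu_t\,dt \;=\; 1_{\R^n\setminus F}\,\sigma \;=\; \sigma^{ac} \;\ll\;\mu.
\]
Consequently the family $\{\tilde\mu_t:t\in I\}$ belongs to the class $\F_\mu$ from \S\ref{s-decomp}.

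By the defining property of the decomposability bundle, $\Tan(\tilde E_t,x)\subset V(\mu,x)$ for $\tilde\mu_t$-a.e.~$x$ and a.e.~$t$. Since $\tilde E_t\subset E_t$, the approximate tangent spaces agree $\tilde\mu_t$-a.e., and by Theorem~\ref{s-decompcurr}(ii) they are spanned by $\tau_t(x)=\tau_0(x)$. Therefore $\Span(\tau_0(x))\subset V(\mu,x)$ for $\tilde\mu_t$-a.e.~$x$ and a.e.~$t$, i.e.\ for $\sigma^{ac}$-a.e.~$x$, which is exactly what was needed.

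I expect no serious obstacle: the only subtle point is the bookkeeping between $\mu$, $\sigma$, $\sigma^{ac}$, and $1_A\mu$, in particular verifying that an exceptional set which is null for one of these measures is null for the others in the appropriate sense; this is routine once the decomposition $T=\tau_0\sigma = \tau\mu + (\tau_0\sigma^s)$ is carefully set up.
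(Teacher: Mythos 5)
Your proof is correct and follows essentially the same route as the paper's: both rest on Theorem~\ref{s-decompcurr} applied to $T$ written as $\tau_0\,|T|$, together with the observation that the tangent lines of the decomposing curves are spanned by $\tau_0$, so that the defining property of $V(\mu,\cdot)$ applies. The only (harmless) difference is in the bookkeeping for the singular part: you restrict the sets $E_t$ to the complement of a $\mu$-null carrier of $|T|^s$ so that the resulting family lies directly in $\F_\mu$, whereas the paper works with the family in $\F_{|T|}$ and then transfers the conclusion from $V(|T|,\cdot)$ to $V(\mu,\cdot)$ on the set $\{\tau\ne 0\}$ via the strong locality principle, Proposition~\ref{s-basic}(i).
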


\begin{proof} 
We write $T$ in the form 
$T=\tau'\mu'$ with $|\tau'(x)|=1$ for $\mu'$-a.e.~$x$,
and consider the decomposition $\mu'=\int \mu_t \, dt$ 
given in Theorem~\ref{s-decompcurr}:
for $\mu_t$-a.e.~$x$ and a.e.~$t$ we have that
$\Span(\tau'(x))$ agrees with $\Tan(E_t,x)$
which in turn is contained in $V(\mu',x)$
(by the definition of decomposability bundle), 
and this means that
\begin{equation}
\Span(\tau'(x)) \subset V(\mu',x) 
\quad\text{for $\mu'$-a.e.~$x$.}
\label{e-curr1.1}
\end{equation}
Let $E$ be the set of all $x$ such that $\tau(x)\ne 0$. 
Thus $1_E\mu\ll\mu'$, and therefore Proposition~\ref{s-basic}(i)
yields $V(\mu',x) = V(\mu,x)$ for $\mu$-a.e.~$x\in E$.
Moreover $\tau'(x)=\tau(x)/|\tau(x)|$
for $\mu$-a.e.~$x\in E$.
These facts together with \eqref{e-curr1.1} yield
that $\Span(\tau(x)) \subset V(\mu,x)$
for $\mu$-a.e.~$x\in E$, and since this inclusion
is trivially true for $x\notin E$, the proof
of \eqref{e-curr1} is complete.
\end{proof}

In order to extend Proposition~\ref{s-curr1} 
to currents with arbitrary dimension, we need 
some additional notions.

\begin{parag}[Interior product]
\label{s-intprod}
Let $h,k$ be integers with $0\le h\le k$.
Given a $k$-vector $v$ and an $h$-covector
$\alpha$ on $V$, the 
\emph{interior product} $v\trace\alpha$
is the $(k-h)$-vector uniquely defined 
by the duality pairing
\[
\scal{v\trace\alpha}{\beta} = \scal{v}{\alpha\wedge\beta}
\quad\text{for every $\beta\in\largewedge^{k-h}(V)$.}
\]
Accordingly, given a $k$-current $T$ in $\R^n$
and a smooth $h$-form $\omega$ on $\R^n$, 
the \emph{interior product} $T\trace\omega$ 
is the $(k-h)$-current defined by
\begin{equation}
\label{e-intprod}
\scal{T\trace\omega}{\sigma} = \scal{T}{\omega\wedge\sigma}
\end{equation}
for every smooth $(h-k)$-form $\sigma$ with compact support 
on $\R^n$. In this case a simple computation gives%
\,\footnoteb{Start from the definition of boundary
and apply identity \eqref{e-intprod} and then formula 
$d(\omega\wedge\sigma)= d\omega\wedge\sigma 
+ (-1)^h \omega\wedge d\sigma$.}
\begin{equation}
\label{e-intprodbdry}
\bd(T\trace\omega) 
= (-1)^h \big[ (\bd T) \trace\omega - T \trace d\omega \big]
\, .
\end{equation}

Note that if $T$ has finite mass and $\omega$ is bounded 
and continuous then formula \eqref{e-intprod} still makes 
sense, $T\trace\omega$ is a current with finite mass,
and given a representation $T=\tau\mu$ there holds
$T\trace\omega = (\tau\trace\omega)\,\mu$.
Along the same line, if $T$ is a normal current 
and $\omega$ is of class $C^1$, bounded and with bounded 
derivative, then $T\trace\omega$ is a normal current and
formula \eqref{e-intprodbdry} holds.
\end{parag}

\begin{parag}[Span of a $k$-vector and 
tangent bundle of a current]
\label{s-span}
Given vector space $V$ and a $k$-vector $v$ in $V$, 
we denote by $\Span(v)$ the smallest linear subspace 
$W$ of $V$ such that 
$v$ belongs to $\largewedge_k(W)$.%
\footnoteb{If $W$ is a linear subspace of $V$ 
then every $k$ vector on $W$ is canonically identified
with a $k$-vector on $V$ (and accordingly every 
$k$-covector on $V$ defines by restriction a 
$k$-covector on $W$).
Assuming this identification we have that
$\largewedgef_k(W) \cap \largewedgef_k(W') 
= \largewedgef_k(W\cap W')$
for every $W,W'$ subspaces of $V$, and therefore
the definition of $\Span(v)$ is well-posed.}

If $T=\tau\mu$ is a $k$-current with finite mass, 
we call $x\mapsto \Span(\tau(x))$ the \emph{tangent bundle}
of $T$. Note that $\Span(\tau(x))$ does not really depend
of the particular decomposition $T=\tau\mu$ we consider, 
in the sense that given another decomposition $T=\tau'\mu'$
then $\mu$ and $\mu'$ are absolutely continuous \wrt each 
other, and $\Span(\tau(x))=\Span(\tau'(x))$ for $\mu$-a.e.$x$.
\end{parag}

A few relevant properties of the span are given in 
the statement below, the proof of which is 
postponed to Section~\ref{s7}.

\begin{proposition}
\label{s-spancar}
Taken $v$ and $\Span(v)$ as above, we have that 
\begin{itemizeb}
\item[(i)]
if $v=0$ then $\Span(v)=\{0\}$;
\item[(ii)]
if $v\ne 0$ then $\Span(v)$ has dimension at least $k$;
\item[(iii)]
if $\Span(v)$ has dimension $k$ then $v$ is simple, that is, 
it can be written as $v=v_1\wedge\dots\wedge v_k$ with 
$v_1,\dots,v_k\in V$;
\item[(iv)]
if $v\ne 0$  and $v=v_1\wedge\dots\wedge v_k$ 
with $v_1,\dots,v_k\in V$ then $\Span(v)$ is the linear subspace 
of $V$ generated by the vectors $v_1,\dots, v_k$;
\item[(v)]
$\Span(v)$ consists of all vectors of the form 
$v\trace\alpha$ with $\alpha\in \largewedge^{k-1}(V)$.
\end{itemizeb}
\end{proposition}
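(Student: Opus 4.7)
The plan is to dispose of (i) and (ii) from the definition of $\Span(v)$, establish (v) as the main step via a duality argument in exterior algebra, and then derive (iii) and (iv) as immediate corollaries.

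Statement (i) is immediate because $0 \in \largewedge_k(\{0\})$. Statement (ii) follows from the standard fact that $\largewedge_k(W) = \{0\}$ whenever $\dim W < k$, so a nonzero $k$-vector cannot lie in $\largewedge_k(W)$ unless $\dim W \ge k$; applied to $W = \Span(v)$ this gives the bound.

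The heart of the proof is (v). Set $W := \Span(v)$ and $S := \{v \trace \alpha : \alpha \in \largewedge^{k-1}(V)\}$, the latter being a linear subspace of $V$ as the image of a linear map. For the inclusion $S \subset W$, pick a basis $e_1,\dots,e_n$ of $V$ adapted to $W$ with $e_1,\dots,e_m$ spanning $W$; since $v \in \largewedge_k(W)$, $v$ is a linear combination of basis wedges $e_I$ with $I \subset \{1,\dots,m\}$, and a direct computation shows that the interior product of any such $e_I$ with a $(k-1)$-covector is a linear combination of the $e_i$ with $i \in I$, hence lies in $W$. For the reverse inclusion $W \subset S$, by minimality of $W=\Span(v)$ it suffices to show $v \in \largewedge_k(S)$; equivalently, $\scal{v}{\omega} = 0$ for every $k$-covector $\omega$ that annihilates $\largewedge_k(S)$. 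Choosing a basis of $V$ adapted to $S$ shows that any such $\omega$ is a sum of wedges $\alpha \wedge \beta$ with $\alpha$ in the annihilator $S^0 := \{\alpha \in V^* : \alpha|_S = 0\}$ and $\beta \in \largewedge^{k-1}(V^*)$. The defining identity
\[
\scal{v}{\alpha \wedge \beta} = (-1)^{k-1} \scal{v \trace \beta}{\alpha}
\]
together with $v \trace \beta \in S$ and $\alpha|_S = 0$ yields $\scal{v}{\omega} = 0$, as required.

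Given (v), statement (iv) follows by noting that if $v = v_1 \wedge \dots \wedge v_k \ne 0$ then the $v_i$ are linearly independent, and their span $W_0$ satisfies $v \in \largewedge_k(W_0)$, giving $\Span(v) \subset W_0$; the reverse inclusion follows from (ii), which forces $\dim \Span(v) \ge k = \dim W_0$. Statement (iii) is then immediate: if $\dim \Span(v) = k$, the space $\largewedge_k(\Span(v))$ is one-dimensional and spanned by any simple wedge $w_1 \wedge \dots \wedge w_k$ built from a basis of $\Span(v)$, so $v$ is a scalar multiple of this simple $k$-vector. The main obstacle is the inclusion $W \subset S$ in (v), which rests on the identification of the annihilator of $\largewedge_k(S)$ in $\largewedge^k(V^*)$ as the span of wedges $\alpha \wedge \beta$ with $\alpha \in S^0$; once this standard piece of exterior algebra is in hand, the rest is quick bookkeeping via the defining identity of $\trace$.
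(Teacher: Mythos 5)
Your proposal is correct and follows essentially the same route as the paper: (i)--(iii) from the standard facts about $k$-vectors in low-dimensional subspaces, (iv) by the dimension count via (ii), and (v) as the main step, with the inclusion $S\subset\Span(v)$ by an adapted-basis computation and the inclusion $\Span(v)\subset S$ by testing $v$ against the annihilator of $\largewedge_k(S)$ and invoking the defining identity of the interior product. The only cosmetic difference is that you phrase the annihilator of $\largewedge_k(S)$ coordinate-freely as sums of $\alpha\wedge\beta$ with $\alpha\in S^0$, whereas the paper exhibits the same covectors explicitly as dual basis wedges $e^*_{\bfj'}\wedge e^*_j$ with $j$ beyond the chosen basis of $S$.
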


\begin{theorem}
\label{s-curr2}
Let $\mu$ be a positive measure and let 
$\tau$ be the Radon-Nikod\'ym density of a $k$-dimensional 
normal current $T$ \wrt $\mu$.
Then $\Span(\tau(x))$ is contained in $V(\mu,x)$ for $\mu$-a.e.~$x$.
\end{theorem}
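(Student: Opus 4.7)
The plan is to reduce to the one-dimensional case already established in Proposition~\ref{s-curr1} by sliced interior products of $T$ with constant $(k-1)$-covectors. The key algebraic input is Proposition~\ref{s-spancar}(v), which states that $\Span(\tau(x))$ consists \emph{exactly} of the vectors $\tau(x)\trace\alpha$ with $\alpha\in\largewedge^{k-1}(\R^n)$. Since $V(\mu,x)$ is a linear subspace and $\largewedge^{k-1}(\R^n)$ is finite-dimensional, it will be enough to show, for each fixed $\alpha$, that the $1$-vectorfield $x\mapsto\tau(x)\trace\alpha$ lies in $V(\mu,x)$ for $\mu$-a.e.~$x$, and then to intersect the countably many null exceptional sets associated to a basis of $\largewedge^{k-1}(\R^n)$.

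Fix then $\alpha\in\largewedge^{k-1}(\R^n)$, viewed as a constant $(k-1)$-form on $\R^n$. I would first verify that $T\trace\alpha$ is a normal $1$-current: it has finite mass because $\alpha$ is bounded, and since $d\alpha=0$ the identity \eqref{e-intprodbdry} collapses to
\[
\bd(T\trace\alpha)=(-1)^{k-1}(\bd T)\trace\alpha,
\]
which has finite mass because $\bd T$ does. Next I would identify the Radon-Nikod\'ym density of $T\trace\alpha$ with respect to $\mu$: writing the Lebesgue decomposition $T=\tau\mu+\nu$ with $\nu$ singular with respect to $\mu$, the bilinearity of the interior product (see the last paragraph of \S\ref{s-intprod}) gives
\[
T\trace\alpha=(\tau\trace\alpha)\,\mu+\nu\trace\alpha,
\]
and $\nu\trace\alpha$ remains singular with respect to $\mu$; hence the density in question is $\tau\trace\alpha$.

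At this point Proposition~\ref{s-curr1} applied to the normal $1$-current $T\trace\alpha$ yields $\Span(\tau(x)\trace\alpha)\subset V(\mu,x)$ for $\mu$-a.e.~$x$, and because $\tau(x)\trace\alpha$ is a $1$-vector this is the same as the membership $\tau(x)\trace\alpha\in V(\mu,x)$. Choosing a basis $\alpha_1,\dots,\alpha_N$ of $\largewedge^{k-1}(\R^n)$ and discarding the union of the corresponding $\mu$-null sets, the linearity of $\alpha\mapsto\tau(x)\trace\alpha$ gives the inclusion for \emph{every} $\alpha\in\largewedge^{k-1}(\R^n)$ at $\mu$-a.e.~$x$, and Proposition~\ref{s-spancar}(v) then identifies $\Span(\tau(x))$ with the collection of these vectors, completing the proof. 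I do not foresee a serious obstacle: the computations are routine once the interior product is set up, and the only mildly delicate point is the identification of the Radon-Nikod\'ym density of $T\trace\alpha$, which however is forced by the bilinearity of $\trace$ and the singularity of $\nu$.
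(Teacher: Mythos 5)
Your proposal is correct and follows essentially the same route as the paper: interior products $T\trace\alpha$ with constant $(k-1)$-covectors are normal $1$-currents with density $\tau\trace\alpha$, Proposition~\ref{s-curr1} gives $\tau(x)\trace\alpha\in V(\mu,x)$ for a spanning set of $\alpha$'s, and Proposition~\ref{s-spancar}(v) identifies the resulting span with $\Span(\tau(x))$. The extra details you supply (normality of $T\trace\alpha$ via $d\alpha=0$, and the Lebesgue decomposition argument for the density) are exactly the routine facts the paper delegates to \S\ref{s-intprod}.
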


\begin{remark}
%\label{s-3.12}
Note that $V(\mu,x)$ has dimension at least $k$ for $\mu$-a.e.~$x$ 
such that $\tau(x)\ne 0$ (cf.~Proposition~\ref{s-spancar}(ii)), 
and that every Lipschitz function $f$ on $\R^n$ is differentiable 
\wrt $\Span(\tau(x))$ at $\mu$-a.e.~$x$, that is, $\Span(\tau(x)) 
\in \D(f,x)$ for $\mu$-a.e.~$x$.
\end{remark}

\begin{proof}
For every $\alpha\in \largewedge^{k-1}(\R^n)$,  
$T\trace\alpha$ is a normal $1$-current whose 
Radon-Nikod\'ym density \wrt $\mu$ is $\tau\trace\alpha$
(see \S\ref{s-intprod}), and therefore
the vector $\tau(x) \trace \alpha$ belongs to
$V(\mu,x)$ for $\mu$-a.e.~$x$ (Proposition~\ref{s-curr1}).
In particular, taken a finite set $\{\alpha_j: \, j\in J\}$ 
that spans $\largewedge^{k-1}(\R^n)$, for $\mu$-a.e.~$x$ 
there holds
\begin{equation}
\label{e-3.6}
\tau(x) \trace \alpha_j \in V(\mu,x)
\quad\text{for every $j\in J$.}
\end{equation}
Moreover the vectors $\tau(x) \trace \alpha_j$ span 
$\{\tau(x) \trace \alpha: \, \alpha\in \largewedge^{k-1}(\R^n)\}$, 
which by Proposition~\ref{s-spancar}(v) agrees with $\Span(\tau(x))$.
This fact and \eqref{e-3.6} imply the claim.
\end{proof}

\medskip
In the rest of this section we give some applications 
of Theorem~\ref{s-curr2}.
We begin with a simple remark.

\begin{parag}[Exterior derivative of Lipschitz forms]
\label{s-tandiff}
Let $\mu$ be a positive measure on $\R^n$
and $\omega$ a Lipschitz $h$-form on $\R^n$. 
Then the (pointwise) exterior derivative 
$d\omega(x)$ is defined at $\Leb^n$-a.e.~$x$
but in general not at $\mu$-a.e.~$x$.
However, since the coefficients of $\omega$ \wrt any 
basis of $\largewedge^h(\R^n)$ are Lipschitz functions, 
they are differentiable \wrt $V(\mu,x)$ at $\mu$-a.e.~$x$, 
and therefore it is possible to define the exterior 
derivative of $\omega$ relative to $V(\mu,x)$ at 
$\mu$-a.e.~$x$, which we denote by $d_\mu\omega(x)$. 

The precise construction is the following:
given a basis $\{\alpha_i\}$ of
$\largewedge^h(\R^n)$, we denote by $\omega_i$ 
the coefficients of $\omega$ \wrt this basis, 
so that $\omega(x) = \sum_i \omega_i(x) \, \alpha_i$
for every $x\in\R^n$.
Then, given a point $x$ such that the functions 
$\omega_i$ are all differentiable at $x$ 
\wrt to $V=V(\mu,x)$, we chose a basis $\{e_j\}$ 
of $V$ and denote by $d_\mu\omega(x)$
the $(h+1)$-covector on $V$
defined by
\[
d_\mu\omega(x) 
:= \sum_{i,j} D_{e_j} \omega_i (x) \, e^*_j\wedge \alpha_i
\]
where $\{e^*_j\}$ is the dual basis associated to
$\{e_j\}$.%
\footnoteb{\label{f-3.1}
That is, the basis of the dual space $V^*$ defined by
$\scal{e^*_i}{e_j} = \delta_{ij}$ for every 
$1\le i,j \le n$, where $\delta_{ij}:=1$ if $i=j$ 
and $\delta_{ij}:=0$ if $i\ne j$, as usual.}

Assume now that $T=\tau\mu$ is a normal $k$-current on $\R^n$.
By Theorem~\ref{s-curr2}, $\Span(\tau(x))$ is contained
in $V(\mu,x)$ for $\mu$-a.e.~$x$, and therefore we can 
define the exterior derivative of $\omega$ \wrt $\Span(\tau(x))$
at $\mu$-a.e.~$x$, which we denote by $d_T\omega(x)$.%
\footnoteb{That is, $d_T\omega(x)$ is the $(h+1)$-covector 
on $\Span(\tau(x))$ given by the restriction of $d_\mu\omega(x)$.}
Note that $d_T\omega(x)$ is actually independent of the 
specific decomposition of $T=\tau\mu$, because so is 
$\Span(\tau(x))$ (see~\S\ref{s-span}).
\end{parag}

Now we turn our attention to the identity that defines 
the boundary of a $k$-current $T$, namely 
$\scal{\bd T}{\omega} = \scal{T}{d\omega}$ for every 
smooth $(k-1)$-form $\omega$ with compact support.
If $T$ is a normal current then both terms in this 
identity can be represented as integrals; therefore they 
make sense even when $\omega$ is a form of class $C^1$ with 
$\omega$ and $d\omega$ bounded, and a simple 
approximation argument proves that they agree.

The next result shows that the same is true for 
Lipschitz forms, having made the necessary changes.

\begin{proposition}
\label{s-boundary3}
Let $T=\tau\mu$ be a normal $k$-current on $\R^n$, 
$\omega$ a bounded Lipschitz $(k-1)$-form on $\R^n$.
Then 
\begin{equation}
\label{e-boundary3}
\scal{\bd T}{\omega}= \int_{\R^n}\scal{d_T\omega(x)}{\tau(x)} d\mu(x)
\, ,
\end{equation}
where $d_T\omega$ is taken as in \S\ref{s-tandiff}.%
\footnoteb{Note that, for $\mu$-a.e.~$x$, $d_T\omega(x)$ 
is a $k$-covector on $\Span(\tau(x))$ and therefore the 
duality pairing $\scal{d_T\omega(x)}{\tau(x)}$ 
is well-defined.}
\end{proposition}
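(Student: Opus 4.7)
The plan is to reduce the identity to its one-dimensional specialization by taking interior products with a constant basis of $(k-1)$-covectors, and then prove that case using the decomposition Theorem~\ref{s-decompcurr} together with the classical Rademacher theorem applied to the rectifiable curves it produces.

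Reduction to $k=1$. Fix a constant basis $\{\alpha_j\}$ of $\largewedge^{k-1}(\R^n)$, write $\omega = \sum_j \omega_j\,\alpha_j$ with each $\omega_j$ a bounded Lipschitz function, and set
\[
v_j(x):=\tau(x)\trace\alpha_j, \qquad S_j:=T\trace\alpha_j.
\]
Since the $\alpha_j$ are constant and $T$ is normal, each $S_j$ is a normal $1$-current admitting the representation $S_j = v_j\,\mu$, and by \eqref{e-intprodbdry} combined with $d\alpha_j=0$ one has $\bd S_j = (-1)^{k-1}\,\bd T\trace\alpha_j$. Choosing an orthonormal basis $\{e_i\}$ of $V(\mu,x)$ and using the general identity $\scal{\beta\wedge\alpha_j}{\tau(x)} = (-1)^{k-1}\scal{\tau(x)\trace\alpha_j}{\beta}$ for a $1$-covector $\beta$, a direct computation gives at $\mu$-a.e.\ point
\[
\scal{d_T\omega(x)}{\tau(x)}
= (-1)^{k-1}\sum_j D_{v_j(x)}\omega_j(x),
\]
while on the other side
\[
\scal{\bd T}{\omega}
= \sum_j \scal{\bd T\trace\alpha_j}{\omega_j}
= (-1)^{k-1}\sum_j \scal{\bd S_j}{\omega_j}.
\]
Thus it suffices to establish the proposition separately for each $j$ in the case $k=1$, i.e.\ to show, for every normal $1$-current $S=v\mu$ and every bounded Lipschitz function $f$, that
\[
\scal{\bd S}{f} = \int_{\R^n} D_{v(x)}f(x)\,d\mu(x). \qquad(\star)
\]

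Proof of $(\star)$. Write $S=\sigma|S|$ with $|\sigma|=1$ $|S|$-a.e.\ and apply Theorem~\ref{s-decompcurr} to obtain a decomposition $S=\int_I S_t\,dt$ into integral currents $S_t=[E_t,\tau_t,1]$, with $\tau_t=\sigma$ $\Haus^1$-a.e.\ on $E_t$ and $|S|=\int_I 1_{E_t}\Haus^1\,dt$. Each $S_t$ admits an arc-length parametrization $\gamma_t:[a_t,b_t]\to\R^n$, so $\bd S_t=\delta_{\gamma_t(b_t)}-\delta_{\gamma_t(a_t)}$ has mass at most $2$. First for smooth test functions the duality identity yields $\bd S=\int_I \bd S_t\,dt$ in the sense of vector measures, and then uniform smooth approximation together with the bound $\Mass(\bd S_t)\le 2$ extends this to $\scal{\bd S}{f}=\int_I \scal{\bd S_t}{f}\,dt$ for every bounded Lipschitz $f$. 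By Proposition~\ref{s-basic}(iii) we have $V(1_{E_t}\Haus^1,x)=\Tan(E_t,x)$, so Theorem~\ref{s-main}(i) ensures that $D_{\tau_t(x)}f(x)$ exists for $\Haus^1$-a.e.\ $x\in E_t$; the standard identification of $(f\circ\gamma_t)'(s)$ with $D_{\dot\gamma_t(s)}f(\gamma_t(s))$ a.e.\ on $[a_t,b_t]$ combined with classical one-dimensional Rademacher gives
\[
\scal{\bd S_t}{f} = f(\gamma_t(b_t))-f(\gamma_t(a_t)) = \int_{E_t} D_{\tau_t(x)}f(x)\,d\Haus^1(x).
\]
Integrating in $t$, using the decomposition of $|S|$, and exploiting $D_{\sigma(x)}f(x)\,d|S|(x)=D_{v(x)}f(x)\,d\mu(x)$ (homogeneity of directional derivatives), one obtains $(\star)$.

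Main obstacle. The most delicate point is the passage $\scal{\bd S}{f}=\int_I \scal{\bd S_t}{f}\,dt$ when $f$ is only Lipschitz: Theorem~\ref{s-decompcurr} gives the decomposition of $S$ (and of $|S|$), but not directly of $\bd S$. One has to combine the weak identity $\bd S=\int_I \bd S_t\,dt$ obtained against smooth test forms with dominated convergence based on the uniform bound $\Mass(\bd S_t)\le 2$ and the finite mass of $\bd S$, and then verify that for $\mu$-a.e.\ $t$ the directional derivative $D_{\tau_t(x)}f(x)$ coincides with the almost everywhere derivative of the Lipschitz function $f\circ\gamma_t$, which is where Theorem~\ref{s-main}(i) enters essentially.
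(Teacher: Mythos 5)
Your argument is correct in outline but takes a genuinely different route from the paper. The paper proves \eqref{e-boundary3} by pure approximation: Lemma~\ref{s-approx}, applied with $V(x)=\Span(\tau(x))$ to the coefficients of $\omega$, produces smooth forms $\omega_j$ with $\omega_j\to\omega$ uniformly, $d\omega_j$ uniformly bounded and $d_T\omega_j\to d_T\omega$ $\mu$-a.e.; one then writes $\scal{\bd T}{\omega_j}=\scal{T}{d\omega_j}$ and passes to the limit by dominated convergence. This keeps the dimension $k$ arbitrary throughout and never touches the curve decomposition. Your reduction to $k=1$ via interior products with a constant basis of $\largewedge^{k-1}(\R^n)$ is clean and the sign bookkeeping checks out, and the ensuing proof of $(\star)$ by decomposing $S$ into curves and applying the one-dimensional fundamental theorem of calculus along each curve is a legitimate alternative that makes visible exactly where Theorem~\ref{s-main}(i) enters. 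The paper's route is shorter once Lemma~\ref{s-approx} is in hand; yours is more geometric and ties the formula directly to the decomposability bundle.

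There is, however, one genuine gap: the step where you assert that each $S_t$ admits an arc-length parametrization $\gamma_t:[a_t,b_t]\to\R^n$ with $\bd S_t=\delta_{\gamma_t(b_t)}-\delta_{\gamma_t(a_t)}$ and $\Mass(\bd S_t)\le 2$. Theorem~\ref{s-decompcurr} as stated only produces integral $1$-currents $[E_t,\tau_t,1]$ with $E_t$ a $1$-rectifiable set; it says nothing about $E_t$ being the image of a single Lipschitz path, nothing about $\bd S_t$, and gives no integrability of $t\mapsto\Mass(\bd S_t)$. Without those facts the passage from the distributional identity $\scal{\bd S}{\varphi}=\int_I\scal{\bd S_t}{\varphi}\,dt$ (valid for smooth compactly supported $\varphi$) to the same identity for a bounded Lipschitz $f$ --- which, as you correctly identify, is the crux --- cannot be closed by dominated convergence. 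The stronger decomposition you need (into subcurrents carried by Lipschitz curves, with $\Mass(\bd T)=\int_I\Mass(\bd T_t)\,dt$ after splitting off the cyclic part, for which $\bd T_t=0$) is available in \cite{Smirnov} and \cite{PaS}, but it must be invoked explicitly; it is not a consequence of the version recorded in Theorem~\ref{s-decompcurr}.
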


For the proof we need the following approximation
lemma, the proof of which is postponed 
to Section~\ref{s7}.

\begin{lemma}
\label{s-approx}
Let $f$ be a Lipschitz function on $\R^n$, 
$\mu$ a measure on $\R^n$,
and $x\mapsto V(x)$ a Borel map from $\R^n$ to $\Gr(\R^n)$
such that $V(x)$ belongs to the differentiability bundle
$\D(f,x)$ for $\mu$-a.e.~$x$ (see~\S\ref{s-diffbund}).

Then there exists a sequence 
of smooth functions $f_j:\R^n\to\R$ 
such that the following statements 
hold (as $j \to +\infty$):
\begin{itemizeb}
\item[(i)]
the functions $f_j$ converge to $f$ uniformly;
\item[(ii)]
$\Lip(f_j)$ converge to $\Lip(f)$;
\item[(iii)]
$\dV f_j(x) \to \dV f(x)$ for $\mu$-a.e.~$x$.%
\footnoteb{Here $\dV f_j(x)$ is the restriction 
of the linear function $df_j(x)$ to the subspace $V(x)$; 
and convergence is intended in the sense 
of the operator norm for linear functions on $V$.}
\end{itemizeb}
\end{lemma}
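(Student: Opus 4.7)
The plan is to build $f_j$ by a Lusin--Whitney type construction followed by a gentle mollification, rather than by direct convolution $f\ast\rho_{1/j}$: the latter satisfies (i) and (ii) trivially but in general fails (iii) for singular~$\mu$ (one can check this on $\mu=\delta_0$ in $\R^2$ with $f(x,y):=y\sin(x/y)$ and $V(0):=\R\,e_1$, where $\dV f(0)\,e_1=0$ while $D_{e_1}(f\ast\rho_{1/j})(0,0)$ tends to a nonzero constant).

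For each $k\in\mathbb{N}$, applying Lusin's and Egorov's theorems I would first extract a compact set $K_k\subset\R^n$ with $\mu(\R^n\setminus K_k)<2^{-k}$ on which the maps $x\mapsto V(x)$ and $x\mapsto\tilde L(x):=\dV f(x)\circ\pi_{V(x)}$ (the extension of $\dV f(x)$ by zero on $V(x)^\perp$, so $|\tilde L(x)|\le\Lip(f)$) are continuous, and the Taylor remainder $f(x+h)-f(x)-\dV f(x)h=o(|h|)$ along $V(x)$ is uniform for $x\in K_k$. I would then construct a $C^1$ function $h_k$ on $\R^n$ with $h_k=f$ and $\nabla h_k=\tilde L$ on $K_k$, $\|h_k-f\|_\infty\le 2^{-k}$, and $\Lip(h_k)\le\Lip(f)+2^{-k}$, via a Whitney-type extension: over a Whitney decomposition of $\R^n\setminus K_k$ into cubes $Q_i$ with reference points $\xi_i\in K_k$ close to $Q_i$, glue together the affine pieces $y\mapsto f(\xi_i)+\tilde L(\xi_i)(y-\xi_i)$ using a smooth partition of unity. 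Finally, a mollification $f_k:=h_k\ast\rho_{\eps_k}$ at sufficiently small scale produces a smooth approximation satisfying the analogous uniform and Lipschitz bounds, with $\nabla f_k\to\nabla h_k=\tilde L$ uniformly on a compact subset $K'_k\subset K_k$ with $\mu(K_k\setminus K'_k)<2^{-k}$ (using continuity of $\nabla h_k$ in a neighborhood of $K_k$).

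The sequence $(f_k)$ then satisfies (i) and (ii) by construction, and (iii) follows from the Borel--Cantelli lemma applied to the sets $\R^n\setminus K'_k$: $\mu$-a.e.~$x$ belongs to $K'_k$ for all sufficiently large~$k$, and at such a point $\dV f_k(x)$ is the restriction of $\nabla f_k(x)$ to $V(x)$, so it converges to $\tilde L(x)|_{V(x)}=\dV f(x)$. The main obstacle is the $C^1$ extension step: Whitney's classical compatibility condition $f(y)-f(x)-\tilde L(x)(y-x)=o(|y-x|)$ on $K_k\times K_k$ is only available when $y-x\in V(x)$, so one has to shrink $K_k$ to a further $\mu$-large compact subset on which the geometry of $K_k$ near each of its points and the oscillation of $\tilde L$ jointly ensure that the contributions from the $V(x)^\perp$-directions to the Whitney interpolation are absorbed into the $2^{-k}$ tolerance on the Lipschitz constant.
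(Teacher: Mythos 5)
Your construction has a genuine gap at the Whitney extension step, and it is not one that can be repaired by shrinking $K_k$. You ask for a $C^1$ function $h_k$ with $h_k=f$ \emph{and} $\nabla h_k=\tilde L$ on $K_k$, where $\tilde L(x)$ is $\dV f(x)$ extended by zero on $V(x)^\perp$. For any $C^1$ function $h_k$, uniform continuity of $\nabla h_k$ on compact sets forces the Whitney compatibility condition
\[
f(y)-f(x)-\tilde L(x)(y-x)=o(|y-x|)
\quad\text{uniformly for $x,y\in K_k$,}
\]
so this condition is \emph{necessary}, not merely the hypothesis of the extension theorem. It fails on \emph{every} set of positive $\mu$-measure in the following legitimate instance of the lemma: $n=2$, $\mu=1_{[0,1]^2}\Leb^2$, $f(x_1,x_2):=x_2$, and $V(x):=\R e_1$ for all $x$ (one checks $\R e_1\in\D(f,x)$ with $\dV f(x)=0$, hence $\tilde L\equiv 0$). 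Any compact $K_k$ of positive Lebesgue measure has a density point $x$, and for every small $r$ the ball $B(x,r)$ contains points $y\in K_k$ with $|y_2-x_2|\ge |y-x|/2$; since $f(y)-f(x)-\tilde L(x)(y-x)=y_2-x_2$, the compatibility condition is violated at all scales, so no such $h_k$ exists. The source of the problem is that you are prescribing the \emph{full} gradient: the true gradient of $f$ has components on $V(x)^\perp$ about which the hypothesis gives no information, and setting them to zero is incompatible with $h_k=f$ on $K_k$. Your closing remark about absorbing the $V(x)^\perp$ contributions by further shrinking $K_k$ cannot succeed here, because the obstruction persists on every subset of positive measure.

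The conclusion of the lemma only requires convergence of the \emph{restriction} of $df_j(x)$ to $V(x)$, and the paper exploits exactly this freedom. In Lemma~\ref{s-approx2} one partitions a $\mu$-large compact set into finitely many disjoint compacta $K_i$ on which $V(\cdot)$ and $\dV f(\cdot)$ oscillate by less than $\eps$, and on each piece one takes $f_i:=f*\rho_i$ with an \emph{anisotropic} kernel $\rho_i=c\,1_{B'\times B''}$, where $B'\subset V_i$ has radius $r'$ and $B''\subset V_i^\perp$ has the much smaller radius $r''=\eps r'/L$; the derivative of $f_i$ along $V_i$ is then an average of difference quotients of $f$ in $V_i$-directions (where the uniform differentiability is available), the thin transversal directions contributing only an error $Lr''=\eps r'$, while the derivative of $f_i$ in $V_i^\perp$-directions is left uncontrolled beyond the Lipschitz bound. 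The pieces are glued by a partition of unity equal to $1$ near each $K_i$, and one then mollifies and applies a Borel--Cantelli argument as you propose. Your opening observation that an isotropic convolution fails (iii) is correct and is precisely why the kernel must be adapted to $V_i$; if you replace the Whitney step by this anisotropic mollification on pieces where $V$ is nearly constant, the rest of your scheme goes through.
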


\begin{proof}[Proof of Proposition~\ref{s-boundary3}]
We apply Lemma~\ref{s-approx} with $V(x):=\Span(\tau(x))$ 
to the coefficients of $\omega$ \wrt some basis of 
$\largewedge^{k-1}(\R^n)$ and construct a sequence of smooth 
$(k-1)$-forms $\omega_j$ which are uniformly bounded,
have uniformly bounded derivatives $d\omega_j$, 
converge to $\omega$ uniformly, 
and satisfy 
\begin{equation}
\label{e-3.8}
\lim_{j\to+\infty} d_T\omega_j(x) = d_T\omega(x) 
\quad\text{for $\mu$-a.e.~$x$.}
\end{equation}
Then%
\,\footnoteb{The first equality follows from the fact
that $\omega_j$ converge to $\omega$ uniformly, 
the fourth one from \eqref{e-3.8} and Lebesgue's 
dominated convergence theorem using the domination
$| \scal{d_T\omega_j(x)}{\tau(x)} | 
\le |d\omega_j(x)| \, |\tau(x)| \le m|\tau(x)|$ 
(recall that the forms $d\omega_j$ are uniformly bounded
by some constant $m$ and $\tau$ belongs to $L^1(\mu)$).}
\begin{align*}
  \scal{\bd T}{\omega} 
  =\lim_{j\to\infty}\scal{\bd T}{\omega_j}
& =\lim_{j\to\infty}\scal{T}{d\omega_j} \\
& =\lim_{j\to\infty}\int_{\R^n}
      \scal{d_T\omega_j(x)}{\tau(x)} d\mu(x) \displaybreak[1] \\
& =\int_{\R^n}\scal{d_T\omega(x)}{\tau(x)} d\mu(x)
  \, .
  \qedhere
\end{align*}
\end{proof}

\medskip
Next we consider the interior product $T\trace\omega$
of a $k$-current and a bounded Lipschitz $h$-form, 
and prove a variant of formula \eqref{e-intprodbdry}
for the boundary of $T\trace\omega$.

\begin{proposition}
\label{s-boundary2}
Let $T=\tau\mu$ be a normal $k$-current on $\R^n$ 
and $\omega$ a bounded Lipschitz $h$-form on $\R^n$
with $0\le h < k$.
Then $T\trace\omega = (\tau\trace\omega)\,\mu$ 
is a normal $(k-h)$-current with boundary
\begin{equation}
\label{e-prodbdry2}
\bd(T\trace\omega) 
= (-1)^h \big[ (\bd T)\trace\omega 
     - (\tau\trace d_T \omega) \, \mu \big]
\, ,
\end{equation}
where $d_T \omega$ is taken as in \S\ref{s-tandiff}.%
\footnoteb{For $\mu$-a.e.~$x$, $d_T\omega(x)$ 
is a $k$-covector on $\Span(\tau(x))$ and therefore the 
interior product $\tau(x) \trace d_T\omega(x)$ 
is a well-defined $(k-h-1)$-vector in $\Span(\tau(x))$, 
and therefore also an $(k-h-1)$-vector in $\R^n$.}
\end{proposition}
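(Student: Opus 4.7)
The strategy is to approximate $\omega$ by smooth forms $\omega_j$ to which the classical formula \eqref{e-intprodbdry} applies, and then pass to the limit in the distributional sense. The plan mirrors the proof of Proposition~\ref{s-boundary3}, but with an extra subtlety arising from the fact that the exterior derivatives $d\omega_j$ need not converge (only their restrictions to the tangent bundle of $T$ do). First I would note that $T\trace\omega = (\tau\trace\omega)\,\mu$ has finite mass since $|\tau\trace\omega|\le|\tau|\,\|\omega\|_\infty$ and $\tau\in L^1(\mu)$; then I would introduce the candidate boundary $S := (-1)^h\bigl[(\bd T)\trace\omega - (\tau\trace d_T\omega)\,\mu\bigr]$, which has finite mass because $\bd T$ does, because $\omega$ is bounded, and because $|d_T\omega|$ is bounded by the Lipschitz constants of the coefficients of $\omega$. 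It then suffices to verify the distributional identity $\scal{T\trace\omega}{d\sigma}=\scal{S}{\sigma}$ for every smooth, compactly supported $(k-h-1)$-form $\sigma$.

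To construct the approximations I would apply Lemma~\ref{s-approx} coefficient-by-coefficient to $\omega$ with respect to a fixed basis of $\largewedge^h(\R^n)$, choosing $V(x) := \Span(\tau(x))$. By Theorem~\ref{s-curr2}, $\Span(\tau(x))\subset V(\mu,x)$ for $\mu$-a.e.\ $x$, so every Lipschitz coefficient of $\omega$ is differentiable with respect to $V(x)$ at $\mu$-a.e.\ $x$, and the lemma applies. This produces smooth forms $\omega_j$ that converge to $\omega$ uniformly, have uniformly bounded sup-norms and Lipschitz constants (so that $\sup_j\|d\omega_j\|_\infty<\infty$), and satisfy $d\omega_j(x)|_{\Span(\tau(x))}\to d_T\omega(x)$ for $\mu$-a.e.\ $x$. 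For each such $\omega_j$, formula \eqref{e-intprodbdry} combined with $T\trace d\omega_j=(\tau\trace d\omega_j)\,\mu$, tested against $\sigma$, reads
\[
\scal{T}{\omega_j\wedge d\sigma}=(-1)^h\Bigl[\scal{\bd T}{\omega_j\wedge\sigma}-\int_{\R^n}\scal{\tau\trace d\omega_j}{\sigma}\,d\mu\Bigr].
\]

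The limit as $j\to\infty$ is taken termwise. The terms $\scal{T}{\omega_j\wedge d\sigma}$ and $\scal{\bd T}{\omega_j\wedge\sigma}$ converge to $\scal{T\trace\omega}{d\sigma}$ and $\scal{(\bd T)\trace\omega}{\sigma}$ by uniform convergence of $\omega_j$ and finiteness of the masses of $T$ and $\bd T$. The challenging term, which I expect to be the main obstacle, is the integral $\int\scal{\tau\trace d\omega_j}{\sigma}\,d\mu$, because $d\omega_j(x)$ itself need not converge at any given point. The observation that resolves this is purely algebraic: since $\tau(x)\in\largewedge_k(\Span(\tau(x)))$, the pairing $\scal{\tau(x)\trace\beta}{\gamma}=\scal{\tau(x)}{\beta\wedge\gamma}$ depends only on the restriction of $\beta$ to $\Span(\tau(x))$, so $\tau(x)\trace d\omega_j(x)=\tau(x)\trace\bigl(d\omega_j(x)|_{\Span(\tau(x))}\bigr)$ converges $\mu$-a.e.\ to $\tau(x)\trace d_T\omega(x)$; dominated convergence with dominant $|\tau|\cdot\sup_j\|d\omega_j\|_\infty\in L^1(\mu)$ then yields convergence to $\int\scal{\tau\trace d_T\omega}{\sigma}\,d\mu=\scal{(\tau\trace d_T\omega)\,\mu}{\sigma}$. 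Assembling the three limits gives $\scal{T\trace\omega}{d\sigma}=\scal{S}{\sigma}$, which both identifies $\bd(T\trace\omega)$ with $S$ and shows that $T\trace\omega$ is normal.
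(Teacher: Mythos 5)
Your proof is correct and follows essentially the same route as the paper's: approximate $\omega$ by the smooth forms $\omega_j$ of Lemma~\ref{s-approx} with $V(x)=\Span(\tau(x))$, apply \eqref{e-intprodbdry} to each $\omega_j$, and pass to the limit using the key algebraic fact that $\tau\trace d\omega_j=\tau\trace\bigl(d\omega_j|_{\Span(\tau)}\bigr)$ together with dominated convergence. The only cosmetic difference is that you verify the boundary identity by testing against forms $\sigma$, whereas the paper phrases the same limits as convergence of currents in mass norm.
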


\begin{remark}
In the special case $h=0$ Proposition~\ref{s-boundary2}
can be restated as follows: if $T=\tau\mu$ is a normal 
$k$-current on $\R^n$ and $f$ a bounded Lipschitz function
on $\R^n$, then $fT = f\tau\mu$ is a normal $k$-current 
with boundary
\begin{equation}
\label{e-prodbdry}
\bd(fT) = f\, \bd T + (\tau\trace d_T f) \mu
\, .
\end{equation}
\end{remark}

\begin{proof}
We take a sequence of smooth forms $\omega_j$ exactly 
as in the proof of Proposition~\ref{s-boundary3}. Since 
each $\omega_j$ is smooth, we know that the currents 
$T\trace\omega_j$ are normal (cf.~\S\ref{s-intprod})
and it is easy to see that as $j\to+\infty$ they converge 
to $T\trace\omega$ in the mass norm. 
Moreover formula \eqref{e-intprodbdry} yields
\begin{equation}
\label{e-3.11}
\bd(T\trace\omega_j) 
= (-1)^h \big[ (\bd T) \trace\omega_j - T \trace d\omega_j \big]
\, ,
\end{equation}
which, together with the fact that the forms $\omega_j$ 
and the derivatives $d\omega_j$ are uniformly bounded, 
implies that the masses of $\bd(T\trace\omega_j)$ are 
also uniformly bounded.
Thus $T\trace\omega$ is a normal current.

To prove formula~\eqref{e-prodbdry2} we pass to the limit
in \eqref{e-3.11}, and the only delicate point is to show 
the convergence of $T \trace d\omega_j$ to $T \trace d_T\omega$.
To this end, we use that
\[
T \trace d\omega_j 
= (\tau\trace d\omega_j)\mu 
= (\tau\trace d_T\omega_j)\mu 
\, , \quad
T \trace d_T\omega 
= (\tau\trace d_T\omega)\mu 
\, , 
\]
and that the forms $d_T\omega_j$ are uniformly bounded and converge 
$\mu$-a.e.\ to $d_T\omega$ by assumption \eqref{e-3.8}.
\end{proof}

\medskip
We conclude this section by proving a formula for the 
push-forward of a normal current according to a 
Lipschitz map (Proposition~\ref{s-pushf2}).

\begin{parag}[Pull-back of a form]
\label{s-pullb}
Given a map $f:\R^n\to\R^m$ of class $C^1$ and 
a continuous $k$-form $\omega$ on $\R^m$, the 
pull-back of $\omega$ according to $f$ is the 
continuous $k$-form $f^\#\omega$ on $\R^m$ 
defined by
\begin{equation}
\label{e-3.12}
\bigscal{(f^\#\omega)(x)}{v_1\wedge\dots\wedge v_k}
:= \bigscal{\omega(f(x))}{(df(x)\,v_1)\wedge\dots\wedge (df(x)\,v_k)}
\end{equation}
for every $v_1,\dots,v_k\in\R^n$.

This definition clearly shows that when $f$ is a 
Lipschitz map we can only define $(f^\#\omega)(x)$ 
as a the $k$-covector on $\R^n$ at the points $x$ 
where $f$ is differentiable, that is, 
at $\Leb^n$-a.e.~$x$. But in general we cannot
define it at $\mu$-a.e.~$x$ when $\mu$ is an arbitrary 
measure on $\R^n$.
However, since $f$ is differentiable \wrt $V(\mu,x)$ 
at $\mu$-a.e.~$x$, we can still use formula \eqref{e-3.12} to
define the restriction of $(f^\#\omega)(x)$ to $V(\mu,x)$,
which is therefore a $k$-covector on $V(\mu,x)$
denoted by $\smash{ (f_\mu^\#\omega)(x) }$.

Finally, given a normal current $T=\tau\mu$ on $\R^n$, we use
that the tangent bundle $\Span(\tau(x))$ is contained in 
$V(\mu,x)$ (Theorem~\ref{s-curr2}) to define 
$\smash{ (f_T^\#\omega)(x) }$ as the 
restriction of $\smash{ (f_\mu^\#\omega)(x) }$ 
to $\Span(\tau(x))$ for $\mu$-a.e.~$x$.
\end{parag}

\begin{parag}[Push-forward of currents]
\label{s-pushf}
Given a smooth map $f:\R^n\to\R^m$ and 
a $k$-current $T$ in $\R^n$ with compact support, 
the \emph{push-forward} of $T$ according to $f$ is 
the $k$-current $f_\#T$ in $\R^m$ defined by
\begin{equation}
\label{e-pushf}
\scal{f_\#T}{\omega} := \scal{T}{f^\#\omega}
\end{equation}
for every smooth $k$-form $\omega$ on $\R^m$.%
\footnoteb{Since $T$ has compact support, 
$\scal{T}{\sigma}$ is well-defined for every 
smooth $k$-form $\sigma$ on $\R^n$, even without 
compact support, and in particular it is defined
for $\sigma:= f^\#\omega$ (see for instance \cite{Mo}, 
Section~4.3A).
The assumption that $T$ has compact support can be removed
if one assumes that $f$ is proper (see for instance 
\cite{KP}, Section~7.4.2.)}
If in addition $T$ has finite mass then identity 
\eqref{e-pushf} can be extended to all continuous
$k$-forms $\omega$ and can be used to define $f_\#T$ 
for all maps $f$ of class $C^1$.

When the map $f$ is only Lipschitz the right-hand 
side of formula \eqref{e-pushf} does not make sense 
because the form $f^\#\omega$ is not defined, 
not even $\mu$-a.e.
However, if $T$ is normal, it can be proved that, 
for every every sequence of smooth maps $f_j:\R^n\to\R^m$ 
that are uniformly Lipschitz and converge to $f$ uniformly, 
the push-forwards $(f_j)_\#T$ converge in the sense of currents
to the same normal current, which is then taken as definition of 
the push-forward $f_\#T$ (see \cite{Fe}, \S4.1.14, or 
\cite{KP}, Lemma~7.4.3).

In the next statement we show that a suitable modification 
of formula \eqref{e-pushf} holds even in this case.
\end{parag}

\begin{proposition}
\label{s-pushf2}
Let $T=\tau\mu$ be a normal $k$-current on $\R^n$ 
with compact support, let $f:\R^n\to\R^m$ be a Lipschitz map, 
and let $f_\#T$ be the push-forward of $T$ according to $f$. 
Then 
\begin{equation}
\label{e-pushf2}
\scal{f_\#T}{\omega} 
= \scal{T}{f_T^\#\omega}
=\int_{\R^n} \bigscal{(f_T^\#\omega)(x)}{\tau(x)} \, d\mu(x)
\end{equation}
for every continuous $k$-form $\omega$ on $\R^m$.%
\footnoteb{In this case $(f_T^\#\omega)(x)$ is a well-defined 
$k$-covector on $\Span(\tau(x))$ for $\mu$-a.e.~$x$, and therefore
the duality pairing $\bigscal{(f_T^\#\omega)(x)}{\tau(x)}$ makes 
sense.}
\end{proposition}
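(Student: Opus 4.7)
The plan is to reduce to the smooth case by a careful approximation of $f$ tailored to the tangent bundle of $T$, then pass to the limit on both sides of the identity. The linchpin is Lemma~\ref{s-approx}, applied to each component of $f$ with the Borel map $x\mapsto V(x):=\Span(\tau(x))$. To justify this choice of $V$, I first note that Theorem~\ref{s-curr2} gives $\Span(\tau(x))\subset V(\mu,x)$ for $\mu$-a.e.~$x$, while Theorem~\ref{s-main}(i) (applied componentwise, cf.~the remark following it) gives $V(\mu,x)\in\D(f_i,x)$ for each component $f_i$; hence $\Span(\tau(x))\in\D(f_i,x)$ for $\mu$-a.e.~$x$, so the hypothesis of Lemma~\ref{s-approx} is satisfied. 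The lemma then produces smooth maps $f_j:\R^n\to\R^m$ such that $f_j\to f$ uniformly, $\sup_j\Lip(f_j)\le\Lip(f)+1$, and the restriction of $df_j(x)$ to $\Span(\tau(x))$ converges, for $\mu$-a.e.~$x$, to the restriction of $d_{V(x)}f(x)$.

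For each smooth $f_j$ the classical identity \eqref{e-pushf} applies, giving
\[
\scal{(f_j)_\# T}{\omega}
=\scal{T}{f_j^\#\omega}
=\int_{\R^n}\bigscal{(f_j^\#\omega)(x)}{\tau(x)}\,d\mu(x).
\]
Because $\tau(x)\in\largewedge_k(\Span(\tau(x)))$, the pairing in the integrand depends only on the restriction of $f_j^\#\omega(x)$ to $\Span(\tau(x))$, so it equals $\bigscal{(f_j)_T^\#\omega(x)}{\tau(x)}$. As $j\to\infty$ this integrand converges pointwise $\mu$-a.e.\ to $\bigscal{f_T^\#\omega(x)}{\tau(x)}$: indeed the wedge factors $df_j(x)v_i$ (with $v_i$ spanning $\Span(\tau(x))$) converge to $df(x)v_i$ by Lemma~\ref{s-approx}(iii), and $\omega(f_j(x))\to\omega(f(x))$ by continuity of $\omega$ combined with uniform convergence $f_j\to f$. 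The integrand is dominated by $C\,|\tau(x)|$, where $C$ depends on $\sup_j\Lip(f_j)^k$ and on the supremum of $|\omega|$ over a compact neighborhood of $f(\operatorname{supp}(T))$ (which contains all $f_j(\operatorname{supp}(T))$ for $j$ large, since $\Lip(f_j)$ is uniformly bounded). As $\tau\in L^1(\mu)$, dominated convergence yields convergence of the right-hand side to $\int\bigscal{f_T^\#\omega(x)}{\tau(x)}\,d\mu(x)$.

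It remains to show $\scal{(f_j)_\# T}{\omega}\to\scal{f_\# T}{\omega}$ for the given continuous $\omega$. By the construction of $f_\# T$ recalled in \S\ref{s-pushf}, the normal currents $(f_j)_\# T$ converge to $f_\# T$ in the sense of currents. The push-forwards have uniformly bounded mass, $\Mass((f_j)_\# T)\le(\sup_j\Lip(f_j))^k\Mass(T)$, and are all supported in a common compact subset of $\R^m$. Standard uniform approximation of $\omega$ by smooth compactly supported forms on this compact set, combined with the mass bound, then upgrades the convergence against smooth test forms to convergence against~$\omega$. Comparing the two limits gives the identity \eqref{e-pushf2}.

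The main technical difficulty is not the limit passage on the integral side, which is a routine dominated-convergence argument once Lemma~\ref{s-approx} is in hand, but rather the correct choice of approximation: one must approximate $f$ so that not merely $f_j\to f$ uniformly but also the \emph{restriction} of $df_j$ to the tangent bundle of $T$ converges; this is precisely what Lemma~\ref{s-approx} guarantees, and it is where Theorem~\ref{s-curr2} enters in an essential way.
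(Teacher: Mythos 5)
Your proposal is correct and follows essentially the same route as the paper: choose the approximating smooth maps $f_j$ in the definition of $f_\#T$ via Lemma~\ref{s-approx} with $V(x)=\Span(\tau(x))$ (which is legitimate precisely because of Theorem~\ref{s-curr2}), apply the smooth identity \eqref{e-pushf} to each $f_j$, and pass to the limit by dominated convergence on the integral side and by the definition of $f_\#T$ on the other. The only cosmetic difference is that the paper first establishes the identity for smooth compactly supported $\omega$ and then extends to continuous $\omega$ by approximation, whereas you handle continuous $\omega$ directly using the uniform mass bound and common compact support of the currents $(f_j)_\#T$; both are fine.
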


\begin{proof}
We use Lemma~\ref{s-approx} to choose the 
approximating maps $f_j$ used to define $f_\#T$
so that for $\mu$-a.e.~$x$ the linear maps 
$d_T f_j(x)$ converge to $d_T f(x)$.

Therefore, for every smooth $k$-form
$\omega$ on $\R^m$ with compact support
we have that $\smash{ f_j^\# \omega(x)}$, 
viewed as a $k$-covectors on $\Span(\tau(x))$, 
converge to $f_\mu^\# \omega(x)$
for $\mu$-a.e.~$x$ (cf.~\S\ref{s-pullb}), 
and in particular
\begin{equation}
\label{e-3.15}
\lim_{j\to+\infty} 
\bigscal{f_j^\# \omega(x)}{\tau(x)} 
= \bigscal{f_T^\# \omega(x)}{\tau(x)}  
\quad\text{for $\mu$-a.e.~$x$.}
\end{equation}
Hence%
\,\footnoteb{The first equality follows from the fact
that $(f_j)_\#T$ converge to $f_\#T$ in the sense of currents, 
the second one follows from \eqref{e-pushf}, 
the third one follows from \eqref{e-3.15} and Lebesgue's dominated 
convergence theorem using the domination
$\big| \bigscal{f_j^\# \omega(x)}{\tau(x)} \big| 
\le |df_j(x)|^k |\tau(x)| \le L^k |\tau(x)|$ where $L$ is
a constant such that $\Lip(f_j)\le L$ for every $j$ 
(recall that $\tau$ belongs to $L^1(\mu)$).}
\begin{align*}
  \scal{f_\#T}{\omega} 
& =\lim_{j\to+\infty} \scal{(f_j)_\#T}{\omega} 
   \displaybreak[1] \\
& =\lim_{j\to+\infty} 
   \int_{\R^n} \bigscal{(f_j^\# \omega)(x)}{\tau(x)} \, d\mu(x) 
   \\
& =\int_{\R^n} \bigscal{(f^\# \omega)(x)}{\tau(x)} \, d\mu(x) 
  \, .
\end{align*}
We have thus proved identity \eqref{e-pushf2} for every $\omega$ 
which is smooth and compactly supported, and we extend it to 
every continuous $\omega$ by a simple approximation argument.
\end{proof}

%%%%%%%%%%%%%%%%%%%%%%%%%%%%%%%%%%%%%%%%%%%%%%%%%%%%%%%%%%%%%%%%%
%
%	SECTION 6
%
%%%%%%%%%%%%%%%%%%%%%%%%%%%%%%%%%%%%%%%%%%%%%%%%%%%%%%%%%%%%%%%%%

\section{A characterization of the decomposability bundle.}
\label{s6}
In this section we give a characterization of the 
decomposability bundle of a measure $\mu$ in terms of
normal $1$-currents (Theorem~\ref{s-6.3}), and more precisely
we show that $V(\mu,x)$ agrees for $\mu$-a.e.~$x$ with
the space $N(\mu,x)$ defined in the next subsection.
Building on this result we give a precise description of
the vectorfields $\tau$ on $\R^n$, $n\ge 2$, which can 
be obtained as the Radon-Nikod\'ym density of a $1$-dimensional 
normal current \wrt $\mu$ (Corollary~\ref{s-6.4}).

Through this section $\mu$ denotes a measure on $\R^n$, 
and $V(\mu,\cdot)$ is the corresponding decomposability bundle.

\begin{parag}[The auxiliary bundle $N(\mu,x)$]
\label{s-tanbund}
For every point $x$ in the support of $\mu$, 
we denote by $N(\mu,x)$ the set 
of all vectors $v\in\R^n$ such that there exists a normal 
$1$-current $T$ in $\R^n$ with $\bd T=0$ such that%
\,\footnoteb{In this section we view $1$-currents with 
finite mass on $\R^n$ as $\R^n$-valued measures; 
thus $|T-v\mu|$ denotes the total 
variation of the $\R^n$-valued measure $T-v\mu$.}
\begin{equation}
\label{e-tanbund}
\lim_{r\to 0} \frac{ |T-v\mu|(B(x,r)) }{ \mu(B(x,r)) }
= 0
\, .
\end{equation}
In the following we refer to condition \eqref{e-tanbund} 
by saying that $T$ is \emph{asymptotically equivalent} to 
$v\mu$ at the point $x$.

It is sometimes convenient to extend the definition of 
$N(\mu,x)$ to the points $x$ which are not in the 
support of $\mu$ by setting, for example, $N(\mu,x):=\{0\}$.
\end{parag}

The main results of this section are the following.

\begin{theorem}
\label{s-6.2}
Let $\tau$ be a Borel vectorfield on $\R^n$, $n\ge 2$, which belongs 
to $L^1(\mu)$ and satisfies 
$\tau(x) \in N(\mu,x)$ for $\mu$-a.e.~$x$.
Then there exists a normal $1$-current $T$ on $\R^n$ such that
\begin{itemizeb}
\item[(i)]
the Radon-Nikod\'ym density of $T$ \wrt $\mu$ agrees with $\tau$;
\item[(ii)]
$\bd T=0$ and and $\Mass(T) \le C\|\tau\|_{L^1(\mu)}$ where 
the $C$ depends only on $n$.
\end{itemizeb}
\end{theorem}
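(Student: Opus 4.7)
The strategy is a local-to-global covering-and-patching argument exploiting the definition of $N(\mu,x)$: for $\mu$-a.e.~$x$ with $\tau(x)\ne 0$ there exists a closed normal $1$-current $T^x$ asymptotically equivalent to $\tau(x)\mu$ at $x$. By a standard reduction (truncation of $\tau$ in both magnitude and support, and a measurable selection $x\mapsto T^x$ via Kuratowski--Ryll-Nardzewski \cite{Sri}, followed by Lusin and Egorov), one arranges $\tau$ bounded, $\mu$ compactly supported, and for every $\eps>0$ a compact $K\subset\{\tau\ne 0\}$ with $\mu(K)\ge(1-\eps)\mu(\{\tau\ne 0\})$ and a radius $r_\eps>0$ such that $|T^x-\tau(x)\mu|(B(x,r))\le\eps\,\mu(B(x,r))$ for all $x\in K$ and all $0<r\le r_\eps$.

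\textbf{Construction at scale $\eps$.} Fix $\eps>0$. Use Besicovitch's theorem to extract a countable family of disjoint closed balls $B_i=B(x_i,r_i)$ with $x_i\in K$, $r_i\le r_\eps/2$, whose union covers $\mu$-almost all of $K$. For each $i$, the slicing theorem for normal $1$-currents \cite{Fe} yields a radius $\rho_i\in[r_i,2r_i]$ such that $S_i:=T^{x_i}\trace B(x_i,\rho_i)$ has $\bd S_i$ a finite signed $0$-current of total mass zero, supported on $\bd B(x_i,\rho_i)$, with
\[
\Mass(\bd S_i)\le \frac{C_n}{r_i}\,\Mass(T^{x_i}\trace B(x_i,2r_i))\le \frac{C_n\bigl(|\tau(x_i)|+\eps\bigr)\mu(B(x_i,2r_i))}{r_i}.
\]
Since $n\ge 2$ the sphere $\bd B(x_i,\rho_i)$ is connected, so the linear isoperimetric inequality there (pair the positive and negative parts of $\bd S_i$ and join them by geodesic arcs on the sphere) produces a normal $1$-current $R_i$ supported on $\bd B(x_i,\rho_i)$ with $\bd R_i=-\bd S_i$ and $\Mass(R_i)\le \pi\rho_i\,\Mass(\bd S_i)$. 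Setting $\widetilde T_i:=S_i+R_i$, the sum $T^\eps:=\sum_i\widetilde T_i$ is a closed normal current, and the disjointness of the $B_i$ together with a bounded-overlap estimate on the doubled balls gives $\Mass(T^\eps)\le C_n\|\tau\|_{L^1(\mu)}+C_n\eps\mu(\R^n)$.

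\textbf{Passage to the limit.} Let $\eps_j\to 0$. By Federer's compactness theorem \cite{Fe} for normal currents with uniformly bounded mass and boundary mass, a subsequence of $T^{\eps_j}$ converges weakly to a closed normal current $T$ with $\Mass(T)\le C_n\|\tau\|_{L^1(\mu)}$. To identify the Radon--Nikod\'ym density of $T$ with respect to $\mu$ as $\tau$, one checks that on each $B_i$ the piece $S_i$ differs from $\tau(x_i)\mu\trace B(x_i,\rho_i)$ by a vector measure of total variation at most $\eps\,\mu(B(x_i,2r_i))$; an additional slicing allows one to require $\mu(\bd B(x_i,\rho_i))=0$ (possible for a.e.\ $\rho_i$ since $\mu$ is finite), so that the filling $R_i$, being supported on $\bd B(x_i,\rho_i)$, is singular with respect to $\mu$. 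A Lebesgue-differentiation argument at $\mu$-a.e.~$x$ then shows that the absolutely continuous part of $T$ with respect to $\mu$ is exactly $\tau\mu$.

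\textbf{Main obstacle.} The crucial point is the design of the fillings $R_i$: their mass is comparable to $|\tau(x_i)|\mu(B_i)$ and does not shrink as $\eps\to 0$, so one must arrange by construction that they contribute only to the singular part of $T$. Placing $R_i$ on the sphere $\bd B(x_i,\rho_i)$ and choosing $\rho_i$ so that this sphere is $\mu$-negligible is what makes this work, and it is precisely this geodesic filling that requires $n\ge 2$ (in dimension one the sphere is two points and cannot carry a connecting $1$-current). The auxiliary technical issue of obtaining a measurable selection $x\mapsto T^x$ from the (a priori only pointwise) definition of $N(\mu,x)$ is handled by standard descriptive set-theoretic tools \cite{Sri}.
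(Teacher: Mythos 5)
Your construction at a fixed scale $\eps$ is essentially sound and runs parallel to the paper's Lemma~\ref{s-6.7}: you cover $\mu$-almost all of a good compact set by disjoint Besicovitch balls, restrict the local currents $T^{x_i}$ to well-chosen balls, and close them up by a correction supported on the ($\mu$-null) bounding spheres. Your spherical geodesic filling is a legitimate alternative to the paper's radial-retraction construction (Lemma~\ref{s-6.6}), and the mass and boundary-mass estimates you sketch are of the right order. The genuine gap is the final limit passage. You let $\eps_j\to 0$, extract a weak* limit $T$ of the $T^{\eps_j}$, and assert that ``the absolutely continuous part of $T$ with respect to $\mu$ is exactly $\tau\mu$.'' Weak* convergence of vector measures does not preserve the Lebesgue decomposition \wrt $\mu$: writing $T^{\eps_j}=\tau^{(j)}\mu+\nu^{(j)}$ with $\nu^{(j)}\perp\mu$ the sum of the fillings, you have $\tau^{(j)}\to\tau$ in $L^1(\mu)$, but $\Mass(\nu^{(j)})$ is comparable to $\|\tau\|_{L^1(\mu)}$ and does \emph{not} tend to $0$ (as you yourself note), and the weak* limit $\nu$ of the $\nu^{(j)}$ can perfectly well acquire a nontrivial part absolutely continuous \wrt $\mu$ (the spheres carrying $\nu^{(j)}$ accumulate on the support of $\mu$ as $\eps_j\to 0$; think of uniform measures on finer and finer spheres equidistributing towards $\mu$). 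No Lebesgue-differentiation argument applied to the limit $T$ can rule this out, because the scale-by-scale information is lost in the limit.

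The paper circumvents exactly this obstacle by never taking a multi-scale weak limit. Its Lemma~\ref{s-6.7} stops at a \emph{single} scale and records only that the resulting closed normal current has Radon--Nikod\'ym density $\tilde\tau$ \wrt $\mu$ with $\|\tilde\tau-\tau\|_{L^1(\mu)}\le\frac12\|\tau\|_{L^1(\mu)}$ and mass $\le C\|\tau\|_{L^1(\mu)}$ (the corrections stay on finitely many fixed $\mu$-null spheres, so they only feed the singular part). Theorem~\ref{s-6.2} is then obtained by iterating this approximation on the residuals $\tau_j=\tau_{j-1}-\tilde\tau_j$ and summing the currents $T_j$ in the \emph{mass norm}: the errors decay geometrically, a countable sum of $\mu$-singular measures with summable masses is still $\mu$-singular, and the densities $\tilde\tau_j$ sum to $\tau$ in $L^1(\mu)$. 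Your scale-$\eps$ construction can serve as the approximation lemma in this scheme (so the work you did is not wasted), but you must replace the weak* limit by this iterative correction to conclude.
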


\begin{theorem}
\label{s-6.3}
For $n\ge 2$ there holds $V(\mu,x)=N(\mu,x)$ for $\mu$-a.e.~$x$.
\end{theorem}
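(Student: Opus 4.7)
The plan is to prove the two inclusions $N(\mu,x)\subset V(\mu,x)$ and $V(\mu,x)\subset N(\mu,x)$ separately. The first is essentially immediate from Theorem~\ref{s-6.2} combined with Proposition~\ref{s-curr1}, while the second---the genuinely geometric direction---requires constructing normal $1$-currents with vanishing boundary out of the rectifiable decomposition of~$\mu$.

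First I would record two preliminary facts: (a) for every $x$, $N(\mu,x)$ is a linear subspace of $\R^n$, since linear combinations of normal $1$-currents with vanishing boundary are again normal with vanishing boundary, and asymptotic equivalence at $x$ is preserved under the triangle inequality for total variations; (b) the multifunction $x\mapsto N(\mu,x)$ is Borel measurable, a measurability lemma I would defer to Section~\ref{s7}. For the inclusion $N(\mu,x)\subset V(\mu,x)$, (b) combined with Kuratowski and Ryll-Nardzewski's selection theorem produces bounded Borel vector fields $w_1,\dots,w_n$ whose values span $N(\mu,x)$ at every~$x$; each $w_i\in L^1(\mu)$ satisfies $w_i(x)\in N(\mu,x)$ for $\mu$-a.e.~$x$, so Theorem~\ref{s-6.2} yields a normal $1$-current $T_i$ with $\bd T_i=0$ and Radon--Nikod\'ym density $w_i$ with respect to~$\mu$. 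Proposition~\ref{s-curr1} then gives $w_i(x)\in V(\mu,x)$ for $\mu$-a.e.~$x$, hence $N(\mu,x)=\Span(w_1(x),\dots,w_n(x))\subset V(\mu,x)$.

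For the reverse inclusion $V(\mu,x)\subset N(\mu,x)$, I would invoke Proposition~\ref{generator}(ii) to pick a maximising $\{\tilde\mu_t\}\in\F_\mu$ with $V(\mu,x)=V(\{\tilde\mu_t\},x)$ for $\mu$-a.e.~$x$; each $\tilde\mu_t=\Haus^1\trace E_t$ for a $1$-rectifiable $E_t$ with tangent orientation $\tau_t$, and up to $\Haus^1$-null sets each $E_t$ splits into countably many Lipschitz arcs. Orienting the arcs yields integer-multiplicity currents whose boundaries consist of the endpoint Dirac masses. The crux is to add, for each $t$, a compensating normal $1$-current $S_t$ whose boundary cancels this endpoint boundary, with $|S_t|$ mutually singular with $\mu$ and with total mass controlled by $\Haus^1(E_t)$; using $n\ge 2$ one can route the compensating chains through a set of $\mu$-measure zero (for instance by pairing endpoints through polygonal connectors whose interiors, after a generic choice, avoid a prescribed set of full $\mu$-measure). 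Then $T_t:=\tau_t\Haus^1\trace E_t+S_t$ is normal with vanishing boundary, and $T:=\int_I T_t\,dt$ is a normal $1$-current with $\bd T=0$ whose Radon--Nikod\'ym density with respect to~$\mu$ coincides with the tangent vector field $\tau$ of the decomposition. Lebesgue's differentiation theorem for $\R^n$-valued measures then implies that $T$ is asymptotically equivalent to $\tau(x)\mu$ at $\mu$-a.e.~$x$, so $\tau(x)\in N(\mu,x)$ for $\mu$-a.e.~$x$. Varying the choice of tangent within a countable subfamily of $\F_\mu$ that generates $V(\mu,\cdot)$, and using the linearity of $N(\mu,x)$ from~(a), yields $V(\mu,x)\subset N(\mu,x)$.

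The main obstacle is the construction of the compensating currents $S_t$, which must kill the endpoint boundary of each arc without polluting the $\mu$-density of $T_t$. This is precisely where the hypothesis $n\ge 2$ enters, and it is morally the same geometric construction that underpins Theorem~\ref{s-6.2}; however, one cannot simply invoke Theorem~\ref{s-6.2} for this step, since its hypothesis $\tau(x)\in N(\mu,x)$ $\mu$-a.e.\ is exactly what the reverse inclusion is claiming.
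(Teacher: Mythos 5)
Your first inclusion, $N(\mu,x)\subset V(\mu,x)$, is essentially the paper's argument (the paper phrases it as a contradiction with a single selected vectorfield rather than a spanning family, but the content --- measurability of $N(\mu,\cdot)$, measurable selection, Theorem~\ref{s-6.2}, then Proposition~\ref{s-curr1} --- is identical). The logical frame of your second inclusion is also the paper's: it suffices to show $x\mapsto N(\mu,x)$ belongs to $\G_\mu$, i.e.\ that tangents of any decomposition land in $N(\mu,x)$. The gap is in how you produce the boundaryless normal current.

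The first problem is at the very start of your construction: for a general $1$-rectifiable set $E_t$, the current $\tau_t\, 1_{E_t}\Haus^1$ need \emph{not} have a boundary of finite mass, and its boundary does not ``consist of endpoint Dirac masses.'' Covering $E_t$ by Lipschitz arcs only expresses $E_t$ as a union of measurable \emph{subsets} of arcs; if such a subset is, say, a fat Cantor set inside a segment, the boundary of the associated current is an infinite sum of Diracs with infinite total variation. Then no normal compensating current $S_t$ can cancel it, since $\bd S_t$ would have finite mass. The paper sidesteps this in Lemma~\ref{s-6.10} by integrating the currents of \emph{entire} $C$-curves $\gamma_t(J)$ (chosen measurably in $t$ via von~Neumann selection), accepting that the resulting $T$ is only guaranteed to be \emph{not mutually singular} with $\sigma$ rather than to have density exactly $\tau$ --- and this weaker conclusion suffices because the whole argument is run by contradiction after a cone separation: one first extracts a cone $C$ and a nontrivial $\sigma\ll\mu$ with $\Tan(E_t,x)\in\Int(C)$ and $N(\mu,x)\cap C=\{0\}$ $\sigma$-a.e., and then a normal boundaryless current with density in $C\setminus\{0\}$ on a set of positive $\sigma$-measure already contradicts $N(\mu,x)\cap C=\{0\}$ via Remark~\ref{s-6.5}(ii). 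Your direct approach instead needs the exact density $\tau$, which is essentially Corollary~\ref{s-6.4} --- the statement being proved. The second problem is the boundary-killing step: ``generic polygonal connectors avoiding a set of full $\mu$-measure'' is unsubstantiated (measurability in $t$, mass control, and above all the fact that $\int_I|S_t|\,dt$ being singular w.r.t.\ $\mu$ does not follow from each $|S_t|$ being $\mu$-null); the paper instead localizes to a ball $B$ with $\mu(\bd B)=0$ and cones the boundary off onto $\bd B$ via the retraction of Lemma~\ref{s-6.6}, which changes the current only on the $\mu$-null sphere and so does not disturb the density. Without these two devices your construction does not go through as written.
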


Putting together Theorems~\ref{s-6.2} and \ref{s-6.3}
and Proposition~\ref{s-curr1} we obtain the following corollary.

\begin{corollary}
\label{s-6.4}
Let $\tau$ be a vectorfield on $\R^n$, $n\ge 2$, 
which belongs to $L^1(\mu)$.
Then $\tau$ can be written as the Radon-Nikod\'ym 
density of a $1$-dimensional normal current $T$ \wrt $\mu$
if and only if $\tau(x) \in V(\mu,x)$ for $\mu$-a.e.~$x$.
\end{corollary}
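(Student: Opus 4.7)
The plan is to combine the three results that precede the statement, one for each direction of the equivalence. For the ``only if'' direction, suppose $\tau$ is the Radon-Nikod\'ym density of a normal $1$-current $T$ with respect to $\mu$. Proposition~\ref{s-curr1} yields $\Span(\tau(x)) \subset V(\mu,x)$ for $\mu$-a.e.~$x$, and since for an ordinary vector $v\in\R^n$ the subspace $\Span(v)$ is simply the line $\R v$ (or $\{0\}$ if $v=0$), this is exactly the condition $\tau(x)\in V(\mu,x)$ for $\mu$-a.e.~$x$.

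For the ``if'' direction, assume $\tau\in L^1(\mu)$ and $\tau(x)\in V(\mu,x)$ for $\mu$-a.e.~$x$. The plan is to first translate the hypothesis into one about the auxiliary bundle $N(\mu,\cdot)$ defined in \S\ref{s-tanbund}: applying Theorem~\ref{s-6.3}, which identifies $V(\mu,x)=N(\mu,x)$ for $\mu$-a.e.~$x$ (in dimension $n\ge 2$), we obtain $\tau(x)\in N(\mu,x)$ for $\mu$-a.e.~$x$. Then Theorem~\ref{s-6.2} applies directly and produces a normal $1$-current $T$ on $\R^n$ with $\bd T=0$, $\Mass(T)\le C\|\tau\|_{L^1(\mu)}$, and Radon-Nikod\'ym density with respect to $\mu$ equal to $\tau$, which is the required current.

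No step here is a genuine obstacle, since the corollary is essentially a packaging of the three cited results; the only points worth checking are (a) the reduction of $\Span(v)$ to $\R v$ for $1$-vectors, so that Proposition~\ref{s-curr1} gives membership in $V(\mu,x)$ rather than just containment of spans, and (b) the role of Theorem~\ref{s-6.3} as the bridge allowing the existence statement of Theorem~\ref{s-6.2} (whose hypothesis is phrased via $N$) to be used under an assumption phrased via $V$. Together these give the ``if and only if'' equivalence claimed.
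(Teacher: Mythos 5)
Your proof is correct and is exactly the argument the paper intends: the ``only if'' direction follows from Proposition~\ref{s-curr1} together with the observation that $\Span(v)=\R v$ for a $1$-vector, and the ``if'' direction chains Theorem~\ref{s-6.3} (to pass from $V(\mu,\cdot)$ to $N(\mu,\cdot)$) with the existence statement of Theorem~\ref{s-6.2}. Nothing further is needed.
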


Before moving to proofs, we add some comments on the 
previous definition and results.

\begin{remark}
\label{s-6.5}
(i)~The set $N(\mu,x)$ is clearly a vector subspace 
of $\R^n$, and its definition is completely pointwise.
By contrast, the decomposability bundle $V(\mu,x)$ is only 
defined up to $\mu$-negligible subsets of $x$, and the linear
structure is imposed in the definition.

(ii)~If $\tau$ is the Radon-Nikod\'ym density of 
a normal $1$-current $T$ \wrt $\mu$, then $\tau(x)$ belongs 
to $N(\mu,x)$ for $\mu$-a.e.~$x$. More precisely, 
if we write $T=\tau\mu+\nu$ with $\nu$ singular \wrt $\mu$, 
then $\tau(x) \in N(\mu,x)$ for every $x$ where 
$\tau$ is $L^1(\mu)$-approximately continuous and 
the Radon-Nikod\'ym density of $|\nu|$ \wrt $\mu$ is $0$.

(iii)~We prove in Lemma~\ref{s-6.8} below that the 
map $x\mapsto N(\mu,x)$ agrees outside a suitable 
$\mu$-negligible Borel set with a Borel 
map from $\R^n$ to $\Gr(\R^n)$.
We actually believe that the map $x\mapsto N(\mu,x)$ itself
is Borel measurable, but the only proof we could devise 
is rather involved, and since this result is not strictly
needed in the following, we preferred to omit it.

(iv)~In dimension $n=1$, the only normal $1$-current $T$
with $\bd T=0$ is the trivial one, and therefore 
$N(\mu,x)=\{0\}$ for every $x$ and every $\mu$. 
Thus Theorem~\ref{s-6.3} does not hold, 
while Theorem~\ref{s-6.2} holds but is devoid of any meaning.
Corollary~\ref{s-6.4} does not hold either, 
because normal $1$-currents on $\R$ are of the form
$u\Leb^1$ where $u$ is a $BV$ function.

(v)~In dimension $n=2$ the bundle $N(\mu,\cdot)$ is closely related
to the bundle $E(\mu,\cdot)$ introduced in \cite{Alb}, 
Definition~2.1. More precisely $E(\mu,x)$ is the set of all 
vectors $v$ such that $v\mu$ is asymptotically equivalent at 
$x$ to the (distributional) gradient of a $BV$ function on $\R^2$.
Now, if $\lambda$ is a positive measure on $\R^2$ and $\tau$ is a 
vectorfield in $L^1(\lambda)$, then the vector measure $\tau\lambda$ 
is the gradient of a $BV$ function on $\R^2$ if and only if 
$\tau^\perp\lambda$ is a normal $1$-current without boundary
(here $v^\perp$ denotes the rotation of the vector $v$ 
by $90^\circ$ counterclockwise). This means that 
$N(\mu,x)$ is the set of all $v^\perp$ with $v\in E(\mu,x)$.

(vi)~Let $\mu$ be a singular measure on $\R^2$. 
The main result in \cite{Alb}, Theorem~3.1, states that 
in this case $E(\mu,x)$ has dimension at most $1$ for 
$\mu$-a.e.~$x$, and therefore, taking into account the 
previous remark and Theorem~\ref{s-6.3}, for $n=2$ 
we have that also $N(\mu,x)=V(\mu,x)$ has dimension 
at most $1$ for $\mu$-a.e.~$x$ (cf.~Remark~\ref{s-remrade}).

(vii)~There are many possible variants of the definition
of $N(\mu,x)$. Among these, the one given above 
imposes the strongest requirements on the elements of 
$N(\mu,x)$. Going to the opposite extreme, we may
consider the set $N'(\mu,x)$ of all $v\in\R^n$ 
such that there exist a sequence of positive real
numbers $r_j$ that converge to $0$ and a sequence of 
normal $1$-currents $T_j$ such that
\[
\lim_{j\to+\infty}
\frac{ |T_j-v\mu| (B(x,r_j)) }{ \mu(B(x,r_j)) }
=0
\, .
\]
Clearly $N'(\mu,x)$ contains $N(\mu,x)$ for every $x$, 
and it is easy to show that this inclusion may be strict. 
However, it should be possible to prove
that $N'(\mu,x) = V(\mu,x)$ for $\mu$-a.e.~$x$, 
which in view of Theorem~\ref{s-6.3} yields that 
$N'(\mu,x) = N(\mu,x)$ for $\mu$-a.e.~$x$
(we will not pursue this matter here).

(viii)~It is natural to ask if Corollary~\ref{s-6.4}
can be extended to normal $k$-currents with $k>1$.
More precisely, we know from Theorem~\ref{s-curr2} 
that if $\tau$ is the Radon-Nikod\'ym 
density of a normal $k$-current \wrt~$\mu$,
then $\Span(\tau(x)) \subset V(\mu,x)$ 
for $\mu$-a.e.~$x$, and we ask if the converse is true, 
that is, if every $k$-vectorfield $\tau$ in $L^1(\mu)$
that satisfies this inclusion can be obtained as 
the Radon-Nikod\'ym density of normal $k$-current \wrt~$\mu$.
This question is related to the issue raised 
in \S\ref{s-hddecomp}, and the answer is not clear.
\end{remark}

The rest of this section is devoted to the proofs of
Theorems~\ref{s-6.2} and \ref{s-6.3}.

Through these proofs we use the letter $C$ to denote every 
constant that depends only on the dimension $n$ (and the 
value may change at every occurrence).

For the proof of Theorem~\ref{s-6.2} we need the following lemmas. 

\begin{lemma}
\label{s-6.6}
Let $T$ be a normal $k$-current in $\R^n$, $n>k>0$, and 
let $B$ an open ball in $\R^n$ which does not intersect 
the support of $\bd T$. 
Then there exists a normal $k$-current $U$ in $\R^n$ such that
\begin{itemizeb}
\item[(i)]
the support of $U$ is contained in the closure $\barB$ of $B$; 
\item[(ii)]
the currents $U$ and $T$ agree on $B$, that is, $1_B \, U = 1_B \, T$;
\item[(iii)]
$\Mass(U) \le C \, |T|(\barB)$;
\item[(iv)]
$\bd U=0$.
\end{itemizeb}
\end{lemma}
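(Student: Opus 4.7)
The plan is to realize $U$ in the form $U := 1_B T + V$, where $V$ is a $k$-current supported on the sphere $\bd B$. Under this ansatz, conditions~(i) and~(ii) are automatic, and (iv) reduces to the distributional identity $\bd V = -\bd(1_B T)$. The first key fact is that $\bd(1_B T)$, as a distribution, is supported on $\bd B$: for any smooth $(k-1)$-form $\omega$ with compact support in $B$, $\scal{\bd(1_B T)}{\omega} = \scal{T}{d\omega} = \scal{\bd T}{\omega} = 0$ by the hypothesis $B \cap \mathrm{supp}(\bd T) = \emptyset$, and $\mathrm{supp}\,\bd(1_B T) \subset \barB$, so the support lies in $\barB \setminus B = \bd B$.

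To quantify $V$, I would use the slicing theorem for $\phi(x) := |x - x_0|$ (with $x_0$ the center of $B$): for a.e.\ $s \in (0, r)$ the slice $S_s := \langle T, \phi, s\rangle$ is a normal $(k-1)$-current supported on $\bd B(x_0, s)$, with the coarea bound $\int_0^r \Mass(S_s)\,ds \le |T|(\barB)$, and $\bd S_s = 0$ since $\bd T$ vanishes on $B$. A mean-value argument then selects a good $s_* \in (r/2, r)$ with $\Mass(S_{s_*}) \le 4\,|T|(\barB)/r$. Since $k - 1 < n - 1$ (using $k < n$ and $n \ge 2$), the cycle $S_{s_*}$ on $\bd B(x_0, s_*) \cong S^{n-1}$ is null-homologous; in the marginal case $k = 1$, the degree-zero condition follows by pairing with the constant $1$ and applying Stokes.

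The filling is then built in two stages. First, extend $S_{s_*}$ radially through the annular shell $\{s_* \le |x - x_0| \le r\}$ via the push-forward $W := \Phi_\#(S_{s_*} \times [s_*, r])$ under $\Phi(y, s) := x_0 + s(y - x_0)/s_*$; since $\mathrm{Lip}(\Phi) \le r/s_* \le 2$, one obtains $\Mass(W) \le 2^k(r - s_*)\Mass(S_{s_*}) \le C\,|T|(\barB)$, and the boundary of $W$ carries $S_{s_*}$ onto a cycle $\tilde S_r$ on $\bd B$. Second, fill $\tilde S_r$ on the sphere $\bd B \cong rS^{n-1}$ by the isoperimetric inequality, producing a $k$-current supported on $\bd B$ with mass $\le C r\Mass(\tilde S_r) \le C\,|T|(\barB)$. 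Assembling these corrections with $1_B T$ yields $U$ with support in $\barB$, agreement with $T$ on $B$, vanishing boundary, and $\Mass(U) \le C'\,|T|(\barB)$.

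The main obstacle will be to justify the ansatz $U = 1_B T + V$ rigorously when $\bd(1_B T)$ fails to be a finite-mass current, which can occur for ``bad'' radii $r$ where $T$ accumulates on $\bd B$ (so that the slicing formula at exactly $r$ degenerates). The standard remedy is regularization: either smooth $T$ by convolution $T * \rho_\epsilon$ (making every slice nice) and perform the above construction for the smoothed currents before passing to $\epsilon \to 0$, or choose a sequence of admissible radii $r_j \uparrow r$ and extract a weak limit of the corresponding constructions. In either case, the weak compactness of normal currents with uniformly bounded mass and zero boundary, together with the nested support inclusions, delivers a limit $U$ still satisfying (i)--(iv).
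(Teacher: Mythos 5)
Your route (slicing plus a filling on the sphere) is genuinely different from the paper's, which instead chooses a generic center $x_0\in B$ by a Fubini argument, builds Lipschitz approximations $p_\eps$ of the radial retraction $p$ of $\R^n\setminus\{x_0\}$ onto $\R^n\setminus B$, and sets $U:=T-T'$ with $T':=\lim (p_\eps)_\#T$; there the mass bound comes from $|dp(x)|\le C/|x-x_0|$ together with $\int_{\barB}|x-x_0|^{-k}\,d|T|(x)\le C\,|T|(\barB)$ for a good $x_0$. Your approach could in principle work, but as written it has a genuine gap in the assembly. With a fixed good radius $s_*<r$ the pieces do not close up: $\bd W=\tilde S_r-S_{s_*}$ and $\bd V'=-\tilde S_r$, so $\bd\bigl(1_BT+W+V'\bigr)=\bd(1_BT)-S_{s_*}\ne 0$, since $\bd(1_BT)$ is the (possibly infinite-mass) slice at radius exactly $r$ while $S_{s_*}$ lives on $\bd B(x_0,s_*)$. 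The object that \emph{is} a cycle is $1_{B(x_0,s_*)}T+W+V'$, but that violates~(ii), because $T$ has been replaced by $W$ on the annulus $\{s_*\le|x-x_0|<r\}\subset B$. Moreover $W$ is supported on that annulus, not on $\bd B$, contradicting your own ansatz that $V$ be supported on the sphere. The only viable version is the limiting one you mention in passing: take good radii $s_j\uparrow r$, form the cycles $U_j:=1_{B(x_0,s_j)}T+W_j+V'_j$ with uniformly bounded mass, pass to a weak limit $U$, and then verify (ii) by observing that $1_{B(x_0,s_j)}T\to 1_BT$ in mass while $W_j$ and $V'_j$ are supported in closed annuli shrinking to $\bd B$, so their limits do not charge the open ball $B$. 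None of this verification is carried out, and it is the heart of the matter, not a routine afterthought. (The convolution alternative is vaguer still and does not obviously resolve the same difficulty.)

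The second gap is the spherical filling itself: you invoke ``the isoperimetric inequality'' to fill the normal $(k-1)$-cycle $\tilde S_r$ by a $k$-current supported \emph{on} $\bd B$ with mass $\le Cr\,\Mass(\tilde S_r)$. The Federer--Fleming isoperimetric inequality is a statement about integral currents; your slices are merely normal, and the ordinary cone construction fills through the interior of $B$ (which would again destroy~(ii)). A filling of a normal cycle \emph{within} the sphere with a linear mass bound is true but requires an argument --- e.g.\ a geodesic cone from a generically chosen point $p\in\bd B$, where one must average over $p$ to control $\int \mathrm{dist}(x,-p)^{-(k-1)}\,d|\tilde S_r|(x)$. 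That is precisely the averaging trick the paper uses once, for the retraction center; your route would need it (or an equivalent) anyway, on top of the slicing theory and the compactness step. So the proposal identifies a workable alternative strategy but leaves unproved both the step that makes $U$ a cycle satisfying~(ii) and the key quantitative filling lemma.
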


\begin{proof}
First of all, we notice that it suffices to prove
the statement when $B$ is the open ball with center 
$0$ and radius $1$.

We begin with an outline of the construction of $U$.
We choose a point $x_0\in B$, and construct a 
retraction $p$ of $\R^n\setminus\{x_0\}$
onto $\R^n\setminus B$ as follows: 
for $x\notin B$ we let $p(x):=x$, and for $x\in B$ 
we let $p(x)$ be the point at the intersection 
of the sphere $\bd B$ and the half-line which starts
in $x_0$ and pass through $x$. Thus
\[
p(x):=x_0+tw
\quad\text{where}\quad
w:=\frac{x-x_0}{|x-x_0|}
\]
and $t>0$ is chosen so that $|p(x)|=1$, that is,
\[
\label{e-goofy2}
t:=\sqrt{ 1 + \scal{x_0}{w}^2 - |x_0|^2 } -\scal{x_0}{w}
\, .
\]

We then denote by $T'$ the push-forward of $T$ according to the map $p$, 
that is, $T'=p_\#T$.
Thus $T'=0$ in $B$ and $T'=T$ in $\R^n\setminus\barB$.
Moreover $\bd T'=p_\#(\bd T)$, and since $\bd T$ is supported in 
the complement of $B$, where $p$ agrees with the identity, 
we have that $\bd T'=\bd T$.

Finally we set $U:= T-T'$. Then $U=T$ in $B$, 
$U=0$ in $\R^n\setminus\barB$, and $\bd U=0$.

\smallskip
The main problem of this construction is that 
the map $p$ is discontinuous at $x_0$,  
and therefore the push-forward
$p_\#T$ is not well-defined.
Note that the same problem is met in the proof
of the polyhedral deformation theorem
(see for instance \cite{KP}, Section~7.7), 
and the solution given there works here as well. 
The idea is to approximate $p$ with a suitable
sequence of Lipschitz maps $p_\eps$ and define
$T'$ as the limit of the currents $(p_\eps)_\# T$,
and one of the key steps is the choice of the point $x_0$.

\medskip
\textit{Step~1. There exists $x_0\in B$ such that
\begin{equation}
\label{e-goofy3}
\int_\barB \frac{ d|T|(x) }{ |x-x_0|^k }
\le C \, |T|(\barB)
\, .
\end{equation}
}

We actually prove that the integral of the left-hand side 
of \eqref{e-goofy3} over all $x_0\in B$ (\wrt the Lebesgue 
measure) is bounded by $C\, |T|(\barB)$; this will imply 
that \eqref{e-goofy3} holds for a set of positive measure
of $x_0$. Indeed%
\,\footnoteb
{As usual, we write $dx_0$ for $d\Leb^n(x_0)$. 
For the first equality we apply Fubini's Theorem
and the change of variable $y=x-x_0$; for the first
inequality we use that the ball $B(-x_0,1)$ is contained
in $B(0,2)=B(2)$, the last equality follows from the
fact that $\smash{\int_{B(2)} dy/|y|^k}$ is finite
and does not depend on $x_0$.}
\begin{align*}
  \int_B \bigg[ 
      \int_\barB \frac{ d|T|(x) }{ |x-x_0|^k }
    \bigg] \, dx_0
& = \int_\barB \bigg[ 
      \int_{B(-x_0,1)} \frac{ dy }{ |y|^k }
    \bigg] \, d|T|(x) \\
& \le \int_\barB \bigg[ 
      \int_{B(2)} \frac{ dy }{ |y|^k }
    \bigg] \, d|T|(x)
  = C \, |T|(\barB)
  \, .
\end{align*}

\textit{Step~2. Construction of the approximating currents $T_\eps$.}
\\
\indent
Let $\varphi:[0,+\infty)\to [0,1]$ be a smooth function 
such that $\varphi(s)=0$ for $s\le 1/2$ and 
$\varphi(s)=1$ for $s\ge 1$.
Then for every $\eps>0$ we consider 
the map $p_\eps:\R^n\to\R^n$ given by 
\[
p_\eps(x) := \varphi\big(|x|/\eps\big) \, p(x)
\, .
\]
One easily checks that the map $p_\eps$ is Lipschitz,%
\footnoteb{The map $p_\eps$ is actually smooth in the 
complement of $\bd B$.}
and agrees with $p$ outside the ball $B(\eps)$.
Moreover, starting from the definition of $p$,
a straightforward computation gives 
\[
|dp(x)| \le C/|x-x_0|
\quad\text{for every $x\in\barB\setminus\{x_0\}$,}
\] 
and using this estimate we obtain 
\begin{equation}
\label{e-goofy4}
|dp_\eps(x)| \le \frac{C}{|x-x_0|} 
\quad\text{for every $x\in\barB$.}
\end{equation}
Finally, for every $\eps>0$ we set $T_\eps:=(p_\eps)_\# T$.

\medskip
\textit{Step~3. For every $\eps>0$, $T_\eps$ is a 
normal $k$-current. Moreover
\begin{itemizeb}
\item[(a)]
$T_\eps=T$ on $\R^n\setminus\barB$;
\item[(b)] 
$\Mass(1_\barB T_\eps) \le C \, |T|(\barB)$;
\item[(c)] 
$\Mass(1_B T_\eps) \to 0$ as $\eps\to 0$;
\item[(d)]
$\bd T_\eps=\bd T$.
\end{itemizeb}
}

Since the map $p_\eps:\R^n\to\R^n$ is Lipschitz and 
proper, and $T$ is a normal $k$-current, so is $T_\eps$.
Since $p_\eps$ maps $\barB$ into itself, and maps
$\R^n\setminus\barB$ into itself identically, then
\[
1_{\R^n\setminus\barB} T_\eps 
= (p_\eps)_\#(1_{\R^n\setminus\barB} T) = 1_{\R^n\setminus\barB} T
\, , \quad
1_\barB T_\eps 
= (p_\eps)_\#(1_\barB T) 
\, .
\]
The first identity amounts to statement~(a), 
while using the second identity we obtain statement~(b):%
\footnoteb{The first inequality follows by a standard estimate
for the mass of the push-forward, the second one follows from
\eqref{e-goofy4}, and the third one from \eqref{e-goofy3}.}
\begin{align*}
  \Mass(1_\barB T_\eps) 
& = \Mass\big( (p_\eps)_\#(1_\barB T) \big) \\
& \le \int_\barB |dp_\eps|^k d|T| 
  \le C \int_\barB \frac{ d|T|(x) }{ |x-x_0|^k }
  \le C\, |T|(\barB)
  \, .
\end{align*}

To prove statement~(c) note that $p_\eps$ maps 
$\R^n\setminus B(\eps)$ into $\R^n\setminus B$; 
then arguing as before we get 
\[
\Mass(1_B T_\eps) 
\le \Mass\big( (p_\eps)_\#(1_{B(\eps)} T) \big) 
\le \int_{B(\eps)} |dp_\eps|^k d|T| 
\le C \int_{B(\eps)} \frac{ d|T|(x) }{ |x-x_0|^k }
\, , 
\]
and the last integral converges to $0$ as $\eps\to 0$ 
by the dominated convergence theorem (note that this 
integral is finite when $\eps=1$ by \eqref{e-goofy3}). 

Finally, statement~(d) follows by the identity
$\bd T_\eps =(p_\eps)_\#(\bd T)$
and the fact that the support of $\bd T$ is contained 
in $\R^n\setminus B$, and on this set $p_\eps$ agrees 
with the identity map. 

\medskip
\textit{Step~4. Possibly passing to a subsequence, 
the currents $T_\eps$ converge to some normal current 
$T'$ as $\eps\to 0$.
Moreover the following statements hold:
\begin{itemizeb}
\item[(e)]
$T'=T$ on $\R^n\setminus\barB$;
\item[(f)]
$T'=0$ on $B$;
\item[(g)] 
$\Mass(1_\barB T') \le C \, |T|(\barB)$;
\item[(h)]
$\bd T'=\bd T$.
\end{itemizeb}
}

Using statements (a), (b) and (d) we easily obtain
that the currents $T_\eps$ and $\bd T_\eps$ have uniformly
bounded masses, and a standard compactness result for normal
currents yields the existence of $T'$.
Then statement~(e) follows from statement~(a);
statement~(f) follows from statement~(c);
statement~(g) follows from statements~(a) and (b), 
and statement~(h) follows from statement~(d). 

\medskip
\textit{Step~5. Completion of the proof.}
\\
\indent
As anticipated, we take $U:=T-T'$. Then statements~(i), (ii), (iii)
and (iv) follow respectively from statements (e), (f), (g) and (h).
\end{proof}

\begin{lemma}
\label{s-6.7}
Let $\tau$ be a Borel vectorfield on $\R^n$ which belongs 
to $L^1(\mu)$ and satisfies $\tau(x) \in N(\mu,x)$ for 
$\mu$-a.e.~$x$.
Then there exists a normal $1$-current $T$ on $\R^n$ 
such that, denoting by $\tilde\tau$ the Radon-Nikod\'ym density 
of $T$ \wrt $\mu$,
\begin{itemizeb}
\item[(i)]
$\| \tilde\tau-\tau\|_{L^1(\mu)} \le \frac{1}{2} \| \tau \|_{L^1(\mu)}$;
\item[(ii)]
$\bd T=0$ and $\Mass(T) \le C \| \tau \|_{L^1(\mu)}$.
\end{itemizeb}
\end{lemma}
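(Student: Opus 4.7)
The plan is to build $T = \sum_i T_i$ as a sum of currents, each supported in a small closed ball, divergence-free, and obtained from the model currents supplied by the definition of $N(\mu,\cdot)$. Lemma~\ref{s-6.6} is the key technical ingredient: it cuts off a normal current to a ball while preserving the zero-boundary condition and bounding its mass by the mass of the original on that ball. The local pieces are then glued along a Vitali--Besicovitch disjoint-ball covering of (most of) $\R^n$.

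\textbf{Setup.} Assume $\|\tau\|_{L^1(\mu)} > 0$ (otherwise take $T=0$). Fix $\eps \in (0, 1/10)$ and $\delta > 0$ with $\delta\, \mu(\R^n) \le \eps\, \|\tau\|_{L^1(\mu)}$, and let $A := \{x : |\tau(x)| > \delta\}$, so that $\int_{A^c}|\tau|\,d\mu \le \eps\,\|\tau\|_{L^1(\mu)}$. For $\mu$-a.e.~$x \in A$, the hypothesis $\tau(x) \in N(\mu,x)$ yields a normal $1$-current $S_x$ with $\partial S_x = 0$ and $|S_x - \tau(x)\mu|(B(x,r)) = o(\mu(B(x,r)))$ as $r\to 0$; using $|\tau(x)| > \delta$ together with the $L^1(\mu)$-approximate continuity of $\tau$, one can pick $r_x > 0$ so that for all $r \in (0, r_x]$
\begin{align*}
|S_x - \tau(x)\mu|(B(x,r)) &\le \eps\, |\tau(x)|\, \mu(B(x,r)), \\
\int_{B(x,r)}|\tau - \tau(x)|\,d\mu &\le \eps \int_{B(x,r)}|\tau|\,d\mu,
\end{align*}
with $\mu(\partial B(x,r)) = |S_x|(\partial B(x,r)) = 0$ (only countably many radii are excluded).

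\textbf{Covering and construction.} A Vitali--Besicovitch argument applied to the fine cover $\{B(x,r) : x \in A,\, r \le r_x\}$ yields pairwise disjoint closed balls $B_i = B(x_i, r_i)$ with $\mu(A \setminus \bigcup_i B_i) = 0$. Apply Lemma~\ref{s-6.6} to each $S_{x_i}$ and $B_i$ to obtain a normal $1$-current $T_i$ supported in $\overline{B_i}$, agreeing with $S_{x_i}$ on $B_i$, with $\partial T_i = 0$ and $\Mass(T_i) \le C\,|S_{x_i}|(\overline{B_i})$. Set $T := \sum_i T_i$; disjointness of the $\overline{B_i}$ together with the finite mass bound below makes this a well-defined normal current with $\partial T = 0$, whose Radon--Nikod\'ym density $\tilde\tau$ equals $dS_{x_i}/d\mu$ on $B_i$ and vanishes $\mu$-a.e.~off $\bigcup_i B_i$.

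\textbf{Verification and main obstacle.} On each $B_i$, the triangle inequality and the two estimates above give $\int_{B_i}|\tilde\tau - \tau|\,d\mu \le \eps|\tau(x_i)|\mu(B_i) + \eps\int_{B_i}|\tau|\,d\mu$; approximate continuity also yields $|\tau(x_i)|\mu(B_i) \le 2\int_{B_i}|\tau|\,d\mu$ for $\eps$ small, so summing and adding the tail $\int_{A^c}|\tau|\,d\mu \le \eps\|\tau\|_{L^1(\mu)}$ establishes~(i) once $\eps$ is chosen small enough. Similarly $|S_{x_i}|(\overline{B_i}) \le (1+\eps)|\tau(x_i)|\mu(B_i)$, whence $\Mass(T) \le C'\|\tau\|_{L^1(\mu)}$ with $C'$ depending only on $n$. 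The principal obstacle is the localization step itself, addressed precisely by Lemma~\ref{s-6.6}, which allows one to restrict each $S_x$ to a prescribed ball without destroying the zero-boundary condition; measurability of the selection $x \mapsto (S_x, r_x)$ is a non-issue because the Vitali--Besicovitch theorem only demands arbitrarily small admissible balls at each point of a full-measure set, not a globally measurable selection.
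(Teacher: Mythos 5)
Your proposal is correct and follows essentially the same route as the paper's proof: pick model currents from the definition of $N(\mu,\cdot)$ at approximate-continuity points, extract pairwise disjoint Besicovitch balls, localize each model current with Lemma~\ref{s-6.6}, and sum. The only (harmless) difference is bookkeeping — you threshold on $A=\{|\tau|>\delta\}$ and use relative errors $\eps|\tau(x)|$, while the paper covers $\mu$-a.e.\ point with the single absolute tolerance $m=\|\tau\|_{L^1(\mu)}/(4\Mass(\mu))$.
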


\begin{proof}
We can clearly assume that $\tau$ is nontrivial (otherwise we take $T:=0$)
and then we set
\begin{equation}
m := \frac{ \|\tau\|_{L^1(\mu)} } {4 \, \Mass(\mu)}
\label{e-6.2.0}
\end{equation}
We begin with two well-known facts:
for all $x\in\R^n$ and all $r>0$ except at most 
countably many there holds
\begin{equation}
\mu(\bd B(x,r)) =0
\, , 
\label{e-6.2}
\end{equation}
and for $\mu$-a.e.~$x$ (and precisely for every $x$ where 
$\tau$ is $L^1(\mu)$-approximately continuous) 
and for $r>0$ small enough there holds
\begin{equation}
\int_{B(x,r)} |\tau - \tau(x)| \, d\mu 
\le m \, \mu(B(x,r)) 
\, .
\label{e-6.3}
\end{equation}
Moreover, by the definition of $N(\mu,x)$, 
for $\mu$-a.e.~$x$ (and precisely for every $x$ 
in the support of $\mu$ such that $\tau(x)\in N(\mu,x)$), 
there exists a normal $1$-current $T_x$
with $\bd T_x=0$ such that, for $r>0$ small enough,
\begin{equation}
| T_x- \tau(x) \, \mu| (B(x,r)) 
\le m \, \mu((B(x,r)) 
\, .
\label{e-6.4}
\end{equation}

Consider now the family of all closed balls $B(x,r)$ 
that satisfy \eqref{e-6.2}, \eqref{e-6.3} and \eqref{e-6.4}:
by a standard corollary of Besicovitch covering
theorem (see for example \cite{KP}, Proposition~4.2.13) 
we can extract from this family countably many balls 
$B_i=B(x_i,r_i)$ which are pairwise disjoint 
and cover $\mu$-almost every point.

For every $i$ we set $T_i:=T_{x_i}$, and use Lemma~\ref{s-6.6} 
to find a current $U_i$ with $\bd U_i=0$ which agrees with $T_i$ 
in the interior of $B_i$, is supported on $B_i$, and satisfies
\begin{equation}
\Mass(U_i) 
\le C \, |T_i|(B_i) 
\, ,
\label{e-6.5}
\end{equation}
and finally we set
\[
T := \sum_i U_i
\, . 
\]

We first show that $T$ is well-defined and satisfies statement~(ii).
Since the currents $U_i$ satisfy $\bd U_i=0$, it suffices to
show that $\sum_i \Mass(U_i) \le C \|\tau\|_{L^1(\mu)}$. 
And indeed%
\,\footnoteb
{The first inequality follows from \eqref{e-6.5}; 
the second one is obtained by writing the measure $T_i$ 
as sum of the measures $T_i-\tau(x_i)\mu$, $\tau(x_i)\mu-\tau\mu$
and $\tau\mu$;
the third one follows from \eqref{e-6.3} and \eqref{e-6.4};
the fourth one follows from the fact that the balls $B_i$ are 
pairwise disjoint, and finally the fifth one follows from 
\eqref{e-6.2.0}.}
\begin{align*}
  \sum_i \Mass(U_i)
& \le C \, \sum_i |T_i|(B_i) \\
& \le C \, \sum_i |T_i-\tau(x_i)\mu|(B_i) 
                + |\tau(x_i)\mu-\tau\mu|(B_i) + |\tau\mu|(B_i) \\
& \le C \, \sum_i m \, \mu(B_i) + m \, \mu(B_i) 
                + \int_{B_i} |\tau| \, d\mu \\
& \le C \big( 2m \, \Mass(\mu) + \|\tau\|_{L^1(\mu)} \big)
  \le 2C \|\tau\|_{L^1(\mu)}
  \, .
\end{align*}

Now we prove that $T$ satisfies statement~(i).
Since the balls $B_i$ are pairwise disjoint, 
in the interior of each $B_i$ the current $T$ agrees 
with $U_i$, which in turn agrees with $T_i$;
therefore $\tilde\tau$ agrees ($\mu$-a.e.) with $\tau_i$ 
in the interior of each $B_i$, or equivalently on $B_i$
(because the boundary of $B_i$ is $\mu$-negligible, 
cf.~\eqref{e-6.2}). 
Then%
\,\footnoteb
{The first equality follows by the fact the balls $B_i$ 
cover $\mu$-almost every point, 
for the second inequality we used \eqref{e-6.3}, 
the third one follows from \eqref{e-6.4}, 
the fourth one follows from the fact that the balls $B_i$ 
are disjoint, 
and finally the last equality follows from \eqref{e-6.2.0}.}
\begin{align*}
\| \tau - \tilde\tau \|_{L^1(\mu)}
& =\sum_i \int_{B_i} |\tau - \tau_i| \, d\mu \\
& \le \sum_i \int_{B_i} |\tau - \tau(x_i)| \, d\mu 
           + \int_{B_i} |\tau_i - \tau(x_i)| \, d\mu \\
& \le \sum_i m \, \mu(B_i) + |T_i-\tau(x_i) \, \mu| (B_i) \\
& \le \sum_i 2m \, \mu(B_i) 
  \le 2m \, \Mass(\mu) 
  = \textfrac{1}{2}\| \tau\|_{L^1(\mu)}
  \, .
  \qedhere
\end{align*}
\end{proof}

\begin{proof}[Proof of Theorem~\ref{s-6.2}]
We set $\tau_0:=\tau$
and then construct currents $T_j$ 
and vectorfields $\tilde\tau_j$, $\tau_j$
for $j=1,2,\dots$ according to 
the following inductive procedure:
we apply Lemma~\ref{s-6.7} to $\tau_{j-1}$ to obtain a normal 
$1$-current $T_j$ such that $\bd T_j=0$ and
\begin{equation}
\| \tau_{j-1} - \tilde\tau_j \|_{L^1(\mu)} 
\le \textfrac{1}{2} \| \tau_{j-1} \|_{L^1(\mu)}
\, , \quad
\Mass(T_j) 
\le C \| \tau_{j-1} \|_{L^1(\mu)}
\, ,
\label{e-6.7}
\end{equation}
where $\tilde\tau_j$ is the Radon-Nikod\'ym density of 
$T_j$ \wrt $\mu$; we then set $\tau_j:=\tau_{j-1}-\tilde\tau_j$.
We finally set 
\[
T:=\sum_{j=1}^\infty T_j
\, .
\]

We first prove that $T$ is well-defined 
and satisfies statement~(ii).
Since the currents $T_j$ satisfy $\bd T_j=0$, it suffices
to show that $\smash{ \sum_j \Mass(T_j) \le C\|\tau\|_{L^1(\mu)} }$.
To this regard, note that the first estimates in \eqref{e-6.7} 
can be rewritten as
$\smash{ \| \tau_j \|_{L^1(\mu)} 
\le \frac{1}{2} \| \tau_{j-1} \|_{L^1(\mu)} }$
and therefore, recalling that $\tau_0=\tau$, 
\begin{equation}
\| \tau_j \|_{L^1(\mu)} 
\le \textfrac{1}{2^j} \| \tau \|_{L^1(\mu)}
\, .
\label{e-6.8}
\end{equation}
Then, using the second estimates in \eqref{e-6.7},
\[
\sum_{j=1}^\infty \Mass(T_j) 
\le \sum_{j=1}^\infty C \| \tau_j \|_{L^1(\mu)}
\le \sum_{j=1}^\infty \frac{C}{2^j} \| \tau \|_{L^1(\mu)}
= C \| \tau \|_{L^1(\mu)}
\, .
\]

\smallskip
Next we show that $T$ satisfies statement~(i).
Since $\tilde\tau_j$ is the Radon-Nikod\'ym density 
of $T_j$ \wrt $\mu$, it suffices to show that the series
of all $\tilde\tau_j$ converge in $L^1(\mu)$ to $\tau$.
Since $\tau_0=\tau$ and 
$\tilde\tau_j=\tau_{j-1}-\tau_j$ for every $j$,
we have that 
\[
\tilde\tau_1+\cdots+\tilde\tau_j=\tau-\tau_j
\, , 
\]
and we conclude the proof by noticing that $\tau_j$ 
converge to $0$ in $L^1(\mu)$ by \eqref{e-6.8}.
\end{proof}

For the proof of Theorem~\ref{s-6.3}
we need the following lemmas.

\begin{lemma}
\label{s-6.8}
The map $x\mapsto N(\mu,x)$ is \emph{universally measurable}%
\,\footnoteb{That is, measurable \wrt any finite measure on $\R^n$.}
as a map from $\R^n$ to $\Gr(\R^n)$, and in particular 
it agrees outside a suitable $\mu$-negligible Borel set $E_0$ 
with a Borel map from $\R^n$ to $\Gr(\R^n)$.
\end{lemma}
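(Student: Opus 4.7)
The plan is to realize the graph of $x\mapsto N(\mu,x)$ as an analytic subset of $\R^n\times\R^n$, from which universal measurability of the multifunction will follow by standard projection arguments.

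First I would equip the set $\mathcal{N}$ of normal $1$-currents $T$ on $\R^n$ with $\bd T=0$ with a Polish space structure. Writing $\mathcal{N}=\bigcup_{m=1}^{\infty}\mathcal{N}_m$ where $\mathcal{N}_m$ consists of those $T$ with $\Mass(T)\le m$, and viewing $1$-currents with finite mass as $\R^n$-valued Radon measures, each $\mathcal{N}_m$ is weak*-compact and metrizable (the conditions $\bd T=0$ and $\Mass(T)\le m$ are weak*-closed), so $\mathcal{N}$ is $\sigma$-compact Polish.

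Next I would introduce the function $\Phi:\R^n\times\R^n\times\mathcal{N}\to[0,+\infty]$ defined by
\[
\Phi(x,v,T):=\limsup_{\substack{r\to 0^+\\ r\in\mathbb{Q}}}\frac{|T-v\mu|(B(x,r))}{\mu(B(x,r))},
\]
with the convention $0/0=0$ and $c/0=+\infty$ for $c>0$. Borel measurability of $\Phi$ reduces, via the countable limsup, to Borel measurability of the maps $(x,v,T)\mapsto|T-v\mu|(B(x,r))$ and $x\mapsto\mu(B(x,r))$ for each fixed rational $r>0$, which is standard. The set
\[
A:=\{(x,v,T)\in\R^n\times\R^n\times\mathcal{N}:\Phi(x,v,T)=0\}
\]
is then Borel, and the graph $\Gamma:=\{(x,v):v\in N(\mu,x)\}$ agrees, after a trivial Borel modification accounting for the convention $N(\mu,x):=\{0\}$ outside $\mathrm{supp}(\mu)$, with the projection of $A$ onto $\R^n\times\R^n$. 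Since the continuous image of a Borel set in a Polish space is analytic, $\Gamma$ is analytic.

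Finally, since $N(\mu,x)$ is a linear subspace of $\R^n$ (Remark~\ref{s-6.5}(i)) and hence closed, for every open $U\subset\R^n$ the set $\{x:N(\mu,x)\cap U\ne\emptyset\}$ is the $x$-projection of $\Gamma\cap(\R^n\times U)$, therefore analytic, and hence universally measurable (cf.~\cite{Sri}, Chapter~4). By the standard characterization of universally measurable closed-valued multifunctions, $x\mapsto N(\mu,x)$ is a universally measurable map from $\R^n$ into $\Gr(\R^n)$. The second assertion of the lemma then follows from Lusin's theorem applied to this $\mu$-measurable map into the compact metric space $\Gr(\R^n)$: outside a $\mu$-negligible Borel subset it coincides with a Borel map.

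The main obstacle is the verification that $\Phi$ is Borel measurable. Beyond the limsup over a continuous parameter, one must handle that $T\mapsto T(B(x,r))$ is in general only semicontinuous in the weak* topology and that $\bd B(x,r)$ may carry positive $\mu$- or $|T-v\mu|$-measure; these points are dealt with by choosing the countable family of radii carefully (so that the corresponding boundaries are negligible for a suitable dominating Radon measure) and invoking standard regularity results. While routine, this is the most delicate technical step of the argument.
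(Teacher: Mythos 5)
Your argument is essentially the paper's own proof: both realize the graph of $x\mapsto N(\mu,x)$ as the projection of the Borel zero set of the function $(x,v,T)\mapsto\limsup_{r\to0}|T-v\mu|(B(x,r))/\mu(B(x,r))$ on a product with a compact metrizable space of boundaryless normal $1$-currents, reduce the limsup to countably many radii, and conclude analyticity and hence universal measurability. Your only deviation is exhausting the currents by mass bounds $\Mass(T)\le m$ instead of normalizing to $\Mass(T)\le 1$, which neatly sidesteps the normalization issue the paper relegates to a footnote; otherwise the two arguments coincide.
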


\begin{proof}
[Sketch of proof]
Let $K$ be the support of $\mu$, and let $G$ be the graph 
of the restriction of $x\mapsto N(\mu,x)$ to $K$, that is, 
the set of $(x,v)$ such that $x\in K$ and $v\in N(\mu,x)$.
It suffices to prove that the set $G$ is \emph{analytic}
(cf.~\cite{Sri}, Chapter~4).

Let $N$ be the space of all normal $1$-currents 
$T$ on $\R^n$ with $\Mass(T)\le 1$ and $\bd T=0$, 
endowed with the weak* topology of currents
(as dual of smooth forms with compact support);
note that $N$ is compact and metrizable.
Now, for every $x\in K$, $v\in\R^n$, $T\in N$
we set
\begin{equation}
\label{e-ultima}
\psi(x,v,T) := \limsup_{r\to 0} \frac{ |T-v\mu|(B(x,r)) }{ \mu(B(x,r)) }
\, .
\end{equation}

It is easy to check that $v$ belongs to $N(\mu,x)$ if and 
only if there exists $T\in N$ such that $\psi(x,v,T)=0$,%
\footnoteb{Since we ask that $T$ belongs to $N$, 
we require that $\Mass(T)\le 1$; one should 
prove that this additional condition does not affect 
the definition of $N(\mu,x)$.}
which means that $G=p(\psi^{-1}(0))$ where $p$ 
is the projection of $K\times\R^n\times N$ on $K\times\R^n$.

It is also easy to see that the ratio at the right-hand side
of \eqref{e-ultima} is right-continuous in the variable $r$, 
and therefore the value of $\psi$ does not change if we assume 
that $r$ belongs to a given countable dense subset of $(0,+\infty)$. 
Moreover the ratio is Borel in the variables $x,v,T$, and 
therefore $\psi$ is Borel as well.
This implies that the set $\psi^{-1}(0)$ is Borel, 
and therefore $G=p(\psi^{-1}(0))$ is analytic.
\end{proof}

\begin{lemma}
\label{s-6.9}
Let $\{\sigma_t: \, t\in I\}$ be a family of measures 
on $\R^n$ which is Borel regular in $t$ (in the sense of
Remark~\ref{s-2.4}(i)). 
Assume that each $\sigma_t$ is the restriction of $\Haus^1$ 
to a $1$-rectifiable set $E_t$, and denote by
$D$ the set of all $(t,x)\in I\times\R^n$ such that 
the approximate tangent line $\Tan(E_t,x)$ exists. 

Then $D$ is a Borel set and 
$(t,x) \mapsto \Tan(E_t,x)$ is a Borel measurable map
from $D$ to $\Gr(\R^n)$.
\end{lemma}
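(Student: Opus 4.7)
The plan is to encode the existence and value of $\Tan(E_t,x)$ through the weak$^*$ convergence of the rescaled measures $\nu^{t,x}_r := r^{-1}(\eta_{x,r})_\#\sigma_t$, where $\eta_{x,r}(y):=(y-x)/r$. Recall that $\Tan(E_t,x)=L$ precisely when $\nu^{t,x}_r$ converges weakly$^*$ as $r\to 0^+$ to $\theta\cdot 1_L\Haus^1$ for some density $\theta\ge 0$ (which equals $1$ at $\Haus^1$-a.e.\ point of $E_t$, but may be any nonnegative value at exceptional points). Since this weak$^*$ convergence is metrizable on bounded sets of Radon measures, and $\Gr(\R^n,1)$ is a compact metric space, the entire question reduces to the joint Borel measurability of a countable family of integral functionals.

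Concretely, for each $\phi\in C_c(\R^n)$ I would set
\[
F_\phi(t,x,r)
:= \int_{\R^n}\phi\,d\nu^{t,x}_r
= \frac{1}{r}\int_{\R^n}\phi\Big(\tfrac{y-x}{r}\Big)\,d\sigma_t(y)
\]
and verify that $F_\phi$ is jointly Borel on $I\times\R^n\times(0,+\infty)$: for fixed $(x,r)$ the map $t\mapsto F_\phi(t,x,r)$ is Borel by the assumed Borel regularity of $t\mapsto\sigma_t$ (Remark~\ref{s-2.4}(i)) applied to the bounded continuous function $y\mapsto\phi((y-x)/r)$, while for fixed $t$ the map $(x,r)\mapsto F_\phi(t,x,r)$ is continuous. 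Joint measurability then follows by a standard Carath\'eodory-type argument. Note also that each $\sigma_t$ is a finite measure since $E_t$ is $1$-rectifiable, so the integrands above are integrable.

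Fix now a countable family $\{\phi_k\}\subset C_c(\R^n)$ dense for uniform convergence on compact sets, so that it separates Radon measures supported in any fixed ball, and distinguish one index $k_0$ for which $\phi_{k_0}$ is radial (hence $\int_L\phi_{k_0}\,d\Haus^1$ equals a positive constant independent of $L\in\Gr(\R^n,1)$). The identity $\Tan(E_t,x)=L$ is then equivalent to the countable system of conditions
\[
\lim_{r\to 0^+,\,r\in\mathbb Q_+} F_{\phi_k}(t,x,r)
= \theta(t,x)\int_L\phi_k\,d\Haus^1
\quad\text{for every }k,
\]
where $\theta(t,x)\ge 0$ is forced by the distinguished index $k_0$, and where the map $L\mapsto\int_L\phi_k\,d\Haus^1$ is continuous on $\Gr(\R^n,1)$. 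Combining these requirements as countable $\limsup$'s and $\liminf$'s over $r\in\mathbb Q_+$ produces a single nonnegative jointly Borel function $G(t,x,L)$ on $I\times\R^n\times\Gr(\R^n,1)$ vanishing precisely where $\Tan(E_t,x)=L$.

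Finally, the zero locus $D':=G^{-1}(0)$ is Borel, and its section over each $(t,x)$ is either empty or the singleton $\{\Tan(E_t,x)\}$. By the Lusin--Novikov uniformization theorem for Borel sets with countable vertical sections (see e.g.\ \cite{Sri}), the projection $D$ of $D'$ onto $I\times\R^n$ is Borel and admits a Borel section, which is necessarily the map $(t,x)\mapsto \Tan(E_t,x)$. The main subtle point is the measurability of the density $\theta(t,x)$, which is precisely what the normalization through $\phi_{k_0}$ resolves; everything else is a routine consequence of the joint Borel measurability of the $F_{\phi_k}$ established above.
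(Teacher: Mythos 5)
Your proof follows essentially the same route as the paper's: both characterize the existence of $\Tan(E_t,x)$ via the weak* convergence of the rescaled measures $r^{-1}(\eta_{x,r})_\#\sigma_t$ to a multiple of $\Haus^1$ restricted to a line, and reduce everything to the joint Borel measurability of the rescaling map (the paper phrases the last step as closedness of the set of line measures in $\M^+$ rather than testing against a countable family of functions and invoking Lusin--Novikov, but the substance is identical). One small correction: you must require $\theta(t,x)>0$, since a zero blow-up does not determine a tangent line and would make the $(t,x)$-section of $G^{-1}(0)$ all of $\Gr(\R^n,1)$, breaking both your singleton claim and the applicability of Lusin--Novikov (whose hypothesis is countable sections).
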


\begin{proof}[Sketch of proof]
Let $\M^+$ denote the space of all positive, locally finite measures
on $\R^n$, endowed with the weak* topology.%
\footnoteb{As subset of the dual of the space of 
continuous functions with compact support in $\R^n$.}
Given a measure $\sigma\in\M^+$, a point $x\in\R^n$, 
and $r>0$, consider the rescaled measure $\sigma_{x,r}$ 
given by $\smash{\sigma_{x,r}(F) := \frac{1}{r} \sigma(x+rF)}$
for every Borel set $F$ in $\R^n$. 
Then the (one-dimensional) blow-up of $\sigma$ at $x$, denoted by
$\sigma_x$, is defined as the limit (in $\M^+$) of $\sigma_{x,r}$ 
as $r\to 0$, if such limit exists.

Let now $\sigma$ be the restriction of $\Haus^1$ 
to a $1$-rectifiable set $E$. 
The key point is that $E$ admits an approximate
tangent line at $x$ if and only if the blow-up 
$\sigma_x$ exists and belongs to the set $L$ of all 
measure in $\M^+$ given by the restriction of 
$\Haus^1$ to a line.

Now, since $(\sigma,x,r)\mapsto\sigma_{x,r}$ 
is a continuous map from $\M^+\times\R^n\times(0,1]$
in $\M^+$, it is easy to see that the set of all 
$(\sigma,x)\in \M^+\times\R^n$ such that $\sigma_x$ 
exists is Borel, and that $(\sigma,x)\mapsto\sigma_{x,r}$
is a Borel map from this set to $\M^+$. 
Since moreover $L$ is a closed subset of $\M^+$, 
then also the set of all $(\sigma,x)$ such that 
$\sigma_x$ exists and belongs to $L$ is Borel.

Using these facts and recalling that 
$t\mapsto\sigma_t$ is Borel we easily 
conclude the proof.
\end{proof}

\begin{lemma}
\label{s-6.10}
Let $C=C(e,\alpha)$ be a closed convex cone in 
$\R^n$ (cf.~\S\ref{s-cones}) and let $\Int(C)$ 
be the set of all lines spanned by vectors in the 
interior of $C$.
Let $\sigma$ be a non-trivial measure on $\R^n$ which 
can be decomposed as $\smash{\sigma=\int_I \sigma_t \, dt}$ 
where each $\sigma_t$ is the restriction of $\Haus^1$ 
to a $1$-rectifiable set $E_t$ such that
$\Tan(E_t,x)$ belongs to $\Int(C)$ for 
$\Haus^1$-a.e.~$x\in E_t$.

Then there exists a normal $1$-current $T$ 
with $\bd T=0$ whose 
Radon-Nikod\'ym density \wrt $\sigma$
belongs to $C$ for $\sigma$-a.e.\ point and 
is nonzero in a set of positive $\sigma$-measure (that is, 
the measures $|T|$ and $\sigma$ are not 
mutually singular).
\end{lemma}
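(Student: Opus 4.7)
The plan is in three steps: first reduce to a nice sub-family of $\{E_t\}$, then extend each $E_t$ by short straight segments so that the associated rectifiable current has boundary outside a large ball containing $\mathrm{supp}\,\sigma$, and finally apply Lemma~\ref{s-6.6} to kill the boundary while preserving the current inside the ball.

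\textit{Step 1 (Reductions).} Using Egorov's and Lusin's theorems, a coarea/slicing argument based on the function $x\mapsto x\cdot e$ (whose derivative on each $E_t$ is bounded below because $\Tan(E_t,x)\in\Int(C)$), and measurable selection, I reduce to the case in which $\sigma$ is finite with $\mathrm{supp}\,\sigma\subset B(0,R_0)$ for some $R_0>0$, there is a strictly smaller cone $C'$ with $\overline{C'}\setminus\{0\}\subset\Int(C)$ such that $\Tan(E_t,x)\in\Int(C')$ for every $x\in E_t$, each $E_t$ is a finite union of Lipschitz arcs with uniformly bounded total number of endpoints depending measurably on $t$, and the reduced $\sigma$ is still non-trivial. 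I then fix a measurable unit orientation $\tau_t$ of $E_t$ with $\tau_t(x)\in\Int(C')$.

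\textit{Step 2 (Extension).} Fix $R>R_0$ and $L>R+R_0$. At every endpoint $p$ of every arc of $E_t$, extend $E_t$ by the straight segment of length $L$ starting at $p$ and going in the direction $\tau_t(p)$; call the resulting rectifiable set $\hat E_t$, oriented by the natural extension $\hat\tau_t$ of $\tau_t$, which still takes values in $\Int(C')$. Define
\[
\hat S \;:=\; \int_I [\hat E_t,\hat\tau_t,1]\,dt.
\]
By the uniform bound in Step~1, $\hat S$ is a normal $1$-current, and by the choice of $L$ the set $\mathrm{supp}(\bd\hat S)$ lies in the complement of $\overline{B(0,R)}$.

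\textit{Step 3 (Closing up).} Choose $R$ among all but countably many admissible values so that $\sigma(\bd B(0,R))=0$, and set $B:=B(0,R)$. By Step~2, $B\cap\mathrm{supp}(\bd\hat S)=\emptyset$, so Lemma~\ref{s-6.6} applied to $\hat S$ and $B$ produces a normal $1$-current $U$ with $\mathrm{supp}(U)\subset\overline{B}$, $1_B U=1_B\hat S$, $\bd U=0$, and $\Mass(U)\le C\,|\hat S|(\overline{B})$. Set $T:=U$. Because $\mathrm{supp}\,\sigma\subset B$ and $\sigma(\bd B)=0$, the Radon--Nikod\'ym density of $T$ with respect to $\sigma$ coincides $\sigma$-a.e.\ with that of $\hat S$; at $\sigma$-a.e.\ $x$ this density is a convex combination of vectors $\hat\tau_t(x)\in\Int(C')\subset C$ for those $t$ with $x\in\hat E_t$, hence lies in $C$ by convexity, and is non-zero because each $\hat\tau_t$ has positive $e$-component. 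Thus $T$ fulfills all the requirements.

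The main obstacle is Step~1: reducing a general $1$-rectifiable $E_t$---which may a priori consist of countably many Lipschitz arcs with many endpoints---to a measurable family whose members have a uniformly bounded number of endpoints, while preserving a positive fraction of $\sigma$. This is achieved by cutting each $E_t$ between two generic hyperplanes transverse to $e$ (using the coarea formula to guarantee finitely many intersections at each slicing level) and then applying Egorov together with measurable selection; it is routine but technically delicate.
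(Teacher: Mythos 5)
Your overall architecture is essentially the paper's: capture a positive $\Haus^1$-fraction of each $E_t$ by curves whose tangents lie in $C$, integrate the associated integral $1$-currents over $t$ (with a measurable selection of the curves), and invoke Lemma~\ref{s-6.6} on a ball containing the support of $\sigma$ to kill the boundary. The only structural difference is that you push the curve endpoints \emph{outside} the ball by straight extensions, whereas the paper constrains its paths to end \emph{on} $\bd B$; both are compatible with Lemma~\ref{s-6.6}.

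The genuine gap is in Step~1. The reduction ``each $E_t$ is a finite union of Lipschitz arcs'' cannot be obtained by restricting $E_t$ via Egorov/Lusin or by cutting between hyperplanes with the coarea formula: a $1$-rectifiable set may be totally disconnected with positive length --- e.g.\ a fat Cantor subset of a segment parallel to $e$ --- and then every slice $\{x\cdot e=s\}$ meets $E_t$ in at most one point, yet $E_t$ is not a union of arcs, the current $1_{E_t}\tau_t\,\Haus^1$ has \emph{infinite} boundary mass, and there are no ``endpoints'' at which to attach your extension segments. So Step~2 does not get off the ground for such $E_t$, and no amount of slicing repairs this. The correct move is to \emph{enlarge} rather than restrict: by the definition of rectifiability find a $C^1$ curve $G$ with $\Haus^1(E_t\cap G)>0$, pick a density point $x_0$ of $E_t\cap G$ (where $\Tan(G,x_0)=\Tan(E_t,x_0)\in\Int(C)$), and take the current of a \emph{full} subarc $G'$ of $G$ around $x_0$ whose tangents all lie in $C$; this is exactly the paper's Step~1, followed by a Borel-graph argument and the von Neumann selection theorem to choose $G'_t$ measurably in $t$. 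The set $E_t$ then enters only through the lower bound $|T|\ge\cos\alpha\cdot\int_I 1_{E_t\cap G'_t}\,\Haus^1\,dt$, obtained by pairing $T$ with $e$; your assertion that ``the density is non-zero because each $\hat\tau_t$ has positive $e$-component'' must be routed through this comparison, since at $\sigma$-a.e.\ point not captured by any arc the density is simply $0$ (which is why the conclusion is only non-mutual-singularity, not a $\sigma$-a.e.\ nonvanishing density). With Step~1 corrected in this way, the rest of your argument goes through.
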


\begin{proof}
We first construct a current $T$ that satisfies all
requirements except $\bd T=0$, and only at the end 
we explain how to modify $T$ so that $\bd T=0$.

The idea for the construction of $T$ is quite simple: for 
every $t$ we choose a $C$-curve $G_t$ (cf.~\S\ref{s-cones})
such that $\Haus^1(E_t\cap G_t)>0$; we then denote by
$T_t$ the $1$-current associated to $G_t$,
and set $\smash{T := \int T_t \, dt}$.
However, some care must be taken with measurability
issues (for example, $G_t$ should be chosen in a Borel 
measurable fashion \wrt~$t$).

%
%We denote by $\lambda$ the measure on the space $I$ 
%(denoted so far by $dt$), 
%

Before starting with the construction of $T$, we 
note that, possibly replacing $I$ with a suitable Borel 
subset, we can assume that $\Haus^1(E_t)>0$ for every
$t\in I$.

Moreover we denote by $\X$ the class of all paths 
$\gamma:J:=[-1,1]\to\R^n$ such that $\Lip(\gamma)\le 1$ 
and $\dot\gamma(s)\in C$ for a.e.~$s\in J$,%
\footnoteb{Through this proof the interval $J$ is endowed 
with the Lebesgue measure, which we do not write explicitly. 
Note that each $\gamma(J)$ is a $C$-curve 
(cf.~\S\ref{s-cones}).}
and we endow $\X$ with the supremum distance.

The rest of the proof is divided in several steps.

\medskip
\textit{Step~1. For every $t\in I$ there exists $\gamma\in \X$  
such that}
\begin{equation}
\label{e-4old.1}
\Haus^1(E_t \cap \gamma(J)) = \sigma_t(\gamma(J)) > 0
\, .
\end{equation}
\indent
Since the set $E_t$ is rectifiable and $\Haus^1(E_t)>0$, 
we can find a curve $G$ of class $C^1$ such that 
$\Haus^1(E_t\cap G)>0$. We take a point
$x_0\in G$ such that $E_t\cap G$ has density $1$ at $x_0$.
Then $\Tan(G,x_0)$ agrees with $\Tan(E_t,x_0)$ and belongs 
to $\Int(C)$, which implies that $\Tan(G,x)$ is contained 
in $C$ for all $x$ in a suitable subarc $G'$ of $G$ 
that contains $x_0$, and clearly $\Haus^1(E_t\cap G')>0$.
We then take as $\gamma$ a suitable
parametrization of $G'$.

\medskip
\textit{Step~2. The set $F$ of all $(t,\gamma)\in I\times \X$
such that \eqref{e-4old.1} holds is Borel.}
\\ 
\indent
It suffices to show that $(t,\gamma) \mapsto \sigma_t(\gamma(J))$
is a Borel function on $I\times \X$, and this is an immediate 
consequence of the following facts:
\begin{itemizeb}
\item
$t\mapsto\sigma_t$ is a Borel map from $I$ to the space
$\M^+$ of finite positive Borel measures on $\R^n$ 
endowed with the weak* topology (cf.~Remark~\ref{s-2.4}(i));
\item
$\gamma\mapsto \gamma(J)$ is a Borel map from $\X$ to the space
$\K$ of compact subsets of $\R^n$ endowed
with the Hausdorff distance;
\item
$(K,\sigma)\mapsto \sigma(K)$ is a Borel function on $\K\times\M^+$.
\end{itemizeb}

\medskip
\textit{Step~3. For every $t\in I$ we can choose
$\gamma_t\in \X$ so that \eqref{e-4old.1} holds
and $t\mapsto \gamma_t$ agrees with a Borel map 
in a subset $I'$ with full measure in $I$.}
\\ 
\indent
The set $F$ defined in Step~2 is a Borel 
subset of $I\times \X$, and by Step~1 its projection on $I$
agrees with $I$ itself. 
Thus we can use the von~Neumann measurable selection theorem 
(see \cite{Sri}, Theorem~5.5.2),
to choose $\gamma_t \in \X$ for every $t\in I$
so that $(t,\gamma_t)$ belongs to $F$ (that is, $\gamma_t$
satisfies \eqref{e-4old.1}) and the map $t\mapsto\gamma_t$ 
is \emph{universally measurable}, and in particular it
agrees with a Borel map in a subset $I'$ with full 
measure in $I$. 

\medskip
\textit{Step~4. Construction of the normal current $T$.}
\\ 
\indent
We let $T$ be the integral (over $t\in I'$) of the $1$-currents
canonically associated to the paths $\gamma_t$, that is,
\begin{equation}
\label{e-4old.2}
\scal{T}{\omega}:= 
\int_{I'} \bigg[ 
   \int_J \scal{\omega(\gamma_t(s))}{\dot\gamma_t(s)} \, ds
\bigg] \, dt
\end{equation}
for every smooth $1$-form $\omega$ on $\R^n$ with compact support.%
\footnoteb{The integral in this formula is well-defined because 
$t\mapsto \gamma_t$ is a Borel map from $I'$ to $\X$ (Step~3),
and then $t\mapsto \dot\gamma_t$ is a Borel map from $I'$ to 
$L^1(J;\R^n)$.}
A simple computation shows that
\begin{equation}
\label{e-4old.3}
\scal{\bd T}{\varphi}
= \scal{T}{d\varphi}
= \int_{I'} \big[ \varphi(\gamma_t(1)) - \varphi(\gamma_t(-1)) \big] \, dt
\end{equation}
for every smooth $0$-form (or function) $\varphi$ on $\R^n$ 
with compact support.
It follows immediately from \eqref{e-4old.2} and \eqref{e-4old.3} that 
both $T$ and $\bd T$ have finite mass, and therefore $T$ is normal.

\medskip
\textit{Step~5. The Radon-Nikod\'ym density of $T$ \wrt $\sigma$ 
takes values in $C$.}
\\ 
\indent
It suffices to show that $T$, viewed as a measure, takes values 
in $C$. Take indeed a Borel set $E$ in $\R^n$: formula \eqref{e-4old.2} 
yields
\begin{equation}
\label{e-4old.4}
T(E) = 
\int_{I'} \bigg[ 
   \int_{\gamma_t^{-1}(E)} \dot\gamma_t(s) \, ds 
\bigg] \, dt
\, ,
\end{equation}
and since $\dot\gamma_t(s)$ belongs to the cone $C$, 
which is closed and convex, so does $T(E)$.

\medskip
\textit{Step~6. The measures $\sigma$ and $|T|$ are 
not mutually singular.}
\\ 
\indent
For every $t\in I'$ let $\sigma'_t$ be the restriction 
of $\Haus^1$ to $E_t\cap \gamma_t(J)$, or equivalently 
the restriction of $\sigma_t $ to $\gamma_t(J)$, and set
$\sigma':=\int_{I'} \sigma'_t \, dt$.

Note that the measure $\sigma'$ is a nontrivial (because 
of the choice of $\gamma_t$) and therefore we can prove 
the claim by showing that $\sigma'\le \sigma$ and 
$\sigma'\le m |T|$ with $m:=1/\cos\alpha$. 
The first inequality is immediate. 
Concerning the second one, for every Borel set 
$E$ in $\R^n$ we have that%
\,\footnoteb{The first equality follows from \eqref{e-4old.4};
the first inequality follows from the fact that 
$\dot\gamma_t(s)$ belongs to $C=C(e,\alpha)$ and that every 
$v\in C$ satisfies $v \cdot e \ge m |v|$; the second inequality 
follows from the area formula, and the last one from the definition 
of $\sigma'$.} 
\begin{align*}
  |T| (E)
  \ge T(E) \cdot e 
& = \int_{I'} \bigg[ 
     \int_{\gamma_t^{-1}(E)} \dot\gamma_t(s) \cdot e \, ds 
  \bigg] \, dt \nonumber \\
& \ge \int_{I'} \bigg[ 
     \int_{\gamma_t^{-1}(E)} \cos\alpha \, |\dot\gamma_t(s)| \, ds
  \bigg] \, dt \nonumber \\
& \ge \cos\alpha \int_{I'} \Haus^1(\gamma_t(J) \cap E) \, dt
  \ge \cos\alpha \, \sigma'(E)
  \, . 
\end{align*}

\medskip
\textit{Step~7. How to modify $T$ to obtain $\bd T=0$.}
\\ 
\indent
We choose an open ball $B$ such that $\sigma(B)>0$.
Then, possibly replacing $\sigma$ with its restriction
to $B$, we can assume that $\sigma$ is supported 
in $B$, which means the set $E_t$ is contained 
in $B$ up to an $\Haus^1$-negligible subset
for (almost) every $t$.

We then proceed with the construction of $T$ shown 
above, with the only difference that $\X$ is now the
class of all paths $\gamma$ from $J=[-1,1]$ to the 
closure of $B$ such that the endpoints $\gamma(\pm 1)$ 
belong to $\bd B$, 
$\Lip(\gamma)\le r/\cos\alpha$ where $r$ is the 
radius of $B$, and $\dot\gamma(s)\in C$ 
for a.e.~$s\in J$, as before.%
\footnoteb{The only modification in the proof
occurs in step~1, where the path $\gamma$ must be 
suitably extended so that the endpoints belongs 
to $\bd B$.}

We thus obtain a current $T$ that satisfies
the same properties as before, and in addition 
its boundary is supported on $\bd B$ 
(see \eqref{e-4old.3}).
Finally we apply Lemma~\ref{s-6.6} to the current
$T$ and the ball $B$ to obtain a current $U$ without 
boundary that agrees with $T$ in $B$.
Using this property and the fact that $\sigma$ is 
supported in $B$ we easily obtain that 
Radon-Nikod\'ym density of $U$ \wrt $\mu$
agrees with that of $T$.
We conclude the proof by replacing $T$ by~$U$. 
\end{proof}

\begin{proof}[Proof of Theorem~\ref{s-6.3}]
We first prove that $N(\mu,x)\subset V(\mu,x)$ for 
$\mu$-a.e.~$x$.

We argue by contradiction, and assume that this inclusion 
does not hold. Then, using the Kuratowski and 
Ryll-Nardzewski's measurable selection theorem 
(see \cite{Sri}, Theorem~5.2.1),
we can find a bounded Borel vectorfield $\tau$ on $\R^n$
such that $\tau(x) \in N(\mu,x) \setminus V(\mu,x)$
for every $x$ in a set of positive $\mu$-measure
(here we need Lemma~\ref{s-6.8}).

Then Theorem~\ref{s-6.2} yields 
a normal $1$-current $T$ whose Radon-Nikod\'ym density
\wrt $\mu$ agrees with $\tau$, and Proposition~\ref{s-curr1}
implies that $\tau(x) \in V(\mu,x)$ for $\mu$-a.e.~$x$, 
in contradiction with the choice of~$\tau$.

\medskip
We now prove that $V(\mu,x)\subset N(\mu,x)$ for 
$\mu$-a.e.~$x$. 

First of all, we use Lemma~\ref{s-6.8} to modify 
the map $x\mapsto N(\mu,x)$ in a $\mu$-negligible set 
and make it Borel measurable.

By the definition of $V(\mu,\cdot)$ 
it suffices to show that the map $x\mapsto N(\mu,x)$ belongs to
the class $\G_\mu$ (see \S\ref{s-decomp}).
In other words, given a measure 
$\mu'$ of the form $\smash{\mu'=\int_I \mu_t \, dt}$ 
such that $\mu'\ll\mu$ and each $\mu_t$ is the restriction 
of $\Haus^1$ to a $1$-rectifiable set $E_t$, 
we must show that 
\begin{equation}
\label{e-4old.7}
\Tan(E_t,x) \subset N(\mu,x)
\quad\text{for $\mu_t$-a.e.~$x$ and a.e.~$t$.}
\end{equation}

We now argue by contradiction, and assume that \eqref{e-4old.7}
does not hold. 

\medskip
\textit{Step~1. There exist a cone $C=C(e,\alpha)$ and a 
non-trivial measure $\sigma$ such that
\begin{itemizeb}
\item[(a)]
$C$ and $\sigma$ satisfy the assumptions in  
Lemma~\ref{s-6.10};
\item[(b)]
$\sigma\ll\mu'\ll\mu$;
\item[(c)]
$N(\mu,x) \cap C=\{0\}$ for $\sigma$-a.e.~$x$.
\end{itemizeb}
}

Let $\mu''$ be the measure on $I\times\R^n$ 
given by $\mu'' := \int_I (\delta_t \times \mu_t) \, dt$
where $\delta_t$ is the Dirac mass at $t$, and let $F$
be the set of all $(t,x)\in I\times\R^n$ 
such that $\Tan(E_t,x)$ exists and is not contained 
in $N(\mu,x)$.%
\footnoteb{Using Lemma~\ref{s-6.9} one can prove that
$F$ is Borel.}
Then the assumption that \eqref{e-4old.7} does not
hold can be restated by saying that $\mu''(F)>0$.

Now, let $\F$ be a family of cones $C=C(e,\alpha)$
with $e$ ranging in a given countable dense subset of
the unit sphere in $\R^n$ and $\alpha$ ranging in a given 
countable dense subset of $(0,\pi/2)$, and for 
every $C\in\F$ let $F_C$ be the subset of 
$(t,x) \in F$ such that $\Tan(E_t,x)$ and $N(\mu,x)$
are separated by $C$, that is,
$\Tan(E_t,x) \in \Int(C)$ and $N(\mu,x) \cap C=\{0\}$.%
\footnoteb{The set $F_C$ is Borel, and this follows 
from the Borel measurability of 
the set $F$ and of the maps $(t,x)\mapsto\Tan(E_t,x)$ 
and $x\mapsto N(\mu,x)$.}

Then the sets $F_C$ with $C\in\F$ form a countable
cover of $F$, and since $\mu''(F)>0$ there exists 
at least one $C\in\F$ such that $\mu''(F_C)>0$.

We then take $\sigma$ equal to the push-forward
according to $p$ of the restriction of $\mu''$ 
to the set $F_C$, where $p$ is the projection of
$I\times\R^n$ on $\R^n$.
Note that $\sigma=\int_I \sigma_t \, dt$
where $\sigma_t$ is the restriction of $\mu_t$ 
to the set of all $x$ such that $(t,x)\in F_C$.

\medskip
\textit{Step~2. Completion of the proof.}
\\ 
\indent
By applying Lemma~\ref{s-6.10} to the cone $C$
and the measure $\sigma$ constructed in Step~1
we obtain a normal $1$-current $T$ with $\bd T=0$
whose Radon-Nikod\'ym density \wrt $\sigma$
belongs to $C$ $\sigma$-a.e., and 
is nonzero on a set of positive 
$\sigma$-measure.

Since $\sigma\ll\mu$ (statement~(b) above) 
we deduce that also the Radon-Nikod\'ym density of 
$T$ \wrt $\mu$, which we denote by $\tau$, 
belongs to $C$ $\sigma$-a.e.\ and 
is nonzero on a set of positive $\sigma$-measure.

Moreover we have that $\tau(x) \in N(\mu,x)$ 
for $\mu$-a.e.~$x$ (cf.\ Remark~\ref{s-6.5}(ii))
and therefore also for $\sigma$-a.e.~$x$.
Therefore $N(\mu,x) \cap C \ne \{0\}$
for a set of positive $\sigma$-measure of $x$, 
in contradiction with statement~(c) above.
\end{proof}

%%%%%%%%%%%%%%%%%%%%%%%%%%%%%%%%%%%%%%%%%%%%%%%%%%%%%%%%%%%%%%%%%
%
%	SECTION 7 
%
%%%%%%%%%%%%%%%%%%%%%%%%%%%%%%%%%%%%%%%%%%%%%%%%%%%%%%%%%%%%%%%%%

\section{Appendix: proofs of technical results}
\label{s7}
In this appendix we prove several 
technical results used in the previous sections, 
in the following order: Lemma~\ref{s-regularize}, 
Proposition~\ref{s-spancar}, 
Lemma~\ref{s-rainwatercor3},  
Proposition~\ref{s-rainwatercor4}, 
Lemma~\ref{s-perturb2}, 
Lemma~\ref{s-approx}.

\begin{proof}[Proof of Lemma~\ref{s-regularize}]
We let $L:=\Lip(f)$ and for every $k=1,2,\dots$ we set%
\,\footnoteb{For $k=1$ we convene that $1/0=+\infty$.}
\[
A_k := 
\Big\{ x\in\R^n: \ 
   \frac{1}{k+1} < \dist(x,K) < \frac{1}{k-1} \Big\}
\, .
\] 
Then $\{A_k\}$ is an open cover of the open set 
$A:=\R^n\setminus K$, and therefore we can take 
a smooth partition of unity $\{\sigma_k\}$ of $A$ 
subject to this cover.%
\footnoteb{This means that each $\sigma_k$ is 
a non-negative smooth function on $A$ with support
contained in $A_k$ and that every $x\in A$ admits 
a neighbourhood where $\sigma_k$ vanishes for all
but finitely many $k$, and $\sum_k \sigma_k(x)=1$.
\label{f-7.1}}

Next we choose a decreasing sequence of positive 
real numbers $r_k$ such that for every $k$
there holds%
\,\footnoteb{Each $\| d\sigma_k \|_\infty$ 
is finite because $d\sigma_k$ 
is continuous and compactly supported in $A$.}
\begin{equation}
\label{e-pippo1}
L \, \| d\sigma_k \|_\infty r_k \le 2^{-k}\eps 
\quad\text{and}\quad
Lr_k \le \phi\Big(\frac{1}{k+1}\Big)
\, ,
\end{equation}
and  a sequence of positive mollifiers $\rho_k$ 
with support contained in the ball $B(r_k)$.
Finally we set
\begin{equation}
\label{e-pippo2}
g:= f + \sum_{k=1}^{+\infty} \sigma_k(f*\rho_k-f)
\, .
\end{equation}

To prove statement~(i) 
note first that $g$ agrees with $f$ on $K$ 
because every $x\in K$ does not belong to $A_k$ 
for every $k$, and then $\sigma_k(x)=0$.
On the other hand $g$ is well-defined and smooth 
on the open set $A$ because the functions in the sum 
in \eqref{e-pippo2} are locally null for all but 
finitely many indexes $k$ (recall footnote~\ref{f-7.1}) 
and $g$ can be rewritten as
\[
g = \sum_{k=1}^{+\infty} \sigma_k (f*\rho_k)
\, .
\]

Let us now prove statement~(ii).
Since the support of $\rho_k$ is contained 
in the ball $B(r_k)$ and $f$ has Lipschitz
constant $L$, a simple computation
shows that for every $x\in\R^n$ there holds
\begin{equation}
\label{e-pippo2.1}
|f*\rho_k(x) - f(x)| \le Lr_k 
\, .
\end{equation}
Therefore, given $x\in\R^n\setminus K$
and denoting by $k(x)$ the smallest
$k$ such that $x\in A_k$, we have%
\,\footnoteb{For the first inequality we use 
that $x\notin A_k$ (and then $\sigma_k(x)=0$) 
for $k<k(x)$;
the second inequality follows from \eqref{e-pippo2.1};
the third one follows from the fact 
that the sum of all $\sigma_k(x)$ is $1$ 
and $r_k(x) \ge r_k$ for every $k\ge k(x)$;
the fourth one follows from the second
inequality in \eqref{e-pippo1}, the fifth one
from the fact that $x$ belongs to $A_{k(x)}$ 
(and the definition of the sets $A_k$).}
\begin{align*}
|g(x)-f(x)| 
& \le \sum_{k\ge k(x)} \sigma_k(x) \, |f*\rho_k(x) - f(x)| \\
& \le \sum_{k\ge k(x)} \sigma_k(x) \, Lr_k \\
& \le Lr_{k(x)}
  \le \phi \Big( \frac{1}{k(x)+1} \Big) 
  \le \phi (\dist(x,K))
  \, .
\end{align*}

We conclude the proof by showing that $g$ is Lipschitz and
satisfies statement~(iii). For every $h=1,2,\dots$ set
\[
g_h := f + \sum_{k=1}^{h} \sigma_k(f*\rho_k-f)
\, .
\]
Since the functions $g_h$ are Lipschitz and
converge pointwise to $g$ as $h\to+\infty$, it suffices
to show that $\Lip(g_h) \le L +\eps$ for every $h$, 
or equivalently that the distributional derivatives 
$dg_h$ satisfies 
\begin{equation}
\label{e-pippo3}
\|dg_h\|_\infty \le L+\eps
\, .
\end{equation}

Let $h$ be fixed for the rest of the proof.
We can write $g_h$ as 
\[
g = \sum_{k=	0}^h \sigma_k f_k
\]
where we have set 
$\sigma_0:= 1 - (\sigma_1+\cdots+\sigma_h)$, 
$f_0:=f$, and $f_k:=f*\rho_k$ for $0<k \le h$.
Then%
\,\footnoteb{For the second equality we use that 
$d\sigma_0 + \cdots + d\sigma_h=0$, which is obtained 
by deriving the identity $\sigma_0 + \cdots + \sigma_h=1$.%
\label{f-7.2}}
\begin{equation}
\label{e-pippo4}
dg_h 
= \sum_{k=0}^h \sigma_k \, df_k + f_k \, d\sigma_k
= \sum_{k=0}^h \sigma_k \, df_k
    + \sum_{k=1}^h (f_k -f) \, d\sigma_k
    \, .
\end{equation}

Observe now that $df_k = df *\rho_k$ where
$df$ is the distributional derivative of $f$, 
and then 
$\|df_k\|_\infty \le \|df\|_\infty \|\rho_k\|_1 \le L$;
hence the second sum in line \eqref{e-pippo4}
is a (pointwise) convex combinations of
functions with $L^\infty$-norm at most $L$, 
and therefore its $L^\infty$-norm is
at most $L$ as well. 
Thus it remains to show that 
the $L^\infty$-norm of the third sum in line
\eqref{e-pippo4} is at most $\eps$, and indeed%
\,\footnoteb{The second inequality follows from
the fact that $f_k=f*\rho_k$ and \eqref{e-pippo2.1}, 
the third one from the first inequality in \eqref{e-pippo1}.}
\[
\bigg\| \sum_{k=1}^h (f_k -f) \, d\sigma_k \bigg\|_\infty
\le \sum_{k=1}^h \| f_k -f \|_\infty  \, \| d\sigma_k \|_\infty
\le \sum_{k=1}^h Lr_k \| d\sigma_k \|_\infty 
\le \eps
\, .
\qedhere
\]
\end{proof}

\begin{proof}[Proof of Proposition~\ref{s-spancar}]
Statement~(i) is immediate, while statements (ii) and (iii)
are consequence of the following general facts, respectively:
if $\dim(W)<k$ then every $k$-vector in $W$ is null, and
if $\dim(W)=k$ then every $k$-vector in $W$ is simple.

\medskip
To prove statement~(iv), we denote by $W$ the linear subspace 
of $V$ generated by $v_1,\dots, v_k$. Clearly $\Span(v)$ is
contained in $W$; moreover $\Span(v)$ has dimension at 
least $k$ by statement~(ii) while $W$ has dimension at 
most $k$; therefore $\Span(v)$ and $W$ agree and have 
both dimension~$k$.

\medskip
To prove statement~(v), we need some additional notation.
Let $n:=\dim(V)$, 
let $\{e_i: \, i=1,\dots,n\}$ be a basis of $V$, 
and let $\{e^*_i: \, i=1,\dots,n\}$ be the corresponding 
dual basis (see footnote~\ref{f-3.1} in Section~\ref{s3}).
For every integer $k$ with $0<k\le n$ we denote by
$I(n,k)$ the set of all multi-indexes $\bfi=(i_1,\dots,i_k)$
whose coordinates are integers and satisfy
$1\le i_1 < i_2 < \dots < i_k\le n$, 
and for every such $\bfi$ we set
$e_\bfi := e_{i_1}\wedge\dots\wedge e_{i_k}$
and $e^*_\bfi := e^*_{i_1}\wedge\dots\wedge e^*_{i_k}$.%
\footnoteb{Recall that $\{e_\bfi: \, \bfi\in I(n,k)\}$ is a basis 
of $\largewedgef_k(V)$ while $\{e^*_\bfi: \, \bfi\in I(n,k)\}$
is the corresponding dual basis of $\largewedgef^k(V)$.}

It is then easy to check that for every 
$\bfi\in I(n,k)$ and $\bfj\in I(n,k-1)$
there holds
\begin{equation}
\label{e-pippo}
e_\bfi \trace e^*_\bfj =
\begin{cases}
 (-1)^{h-1} e_{i_h}
   & \text{if $\bfj=(i_1,\dots,i_{h-1},i_{h+1},\dots,i_k)$} \\
   & \text{with $h=1,\dots,k$,} \smallskip \\ 
 0 & \text{otherwise.} 
\end{cases}
\end{equation}

Let now $W$ denote the set of all
$v\trace\alpha$ with $\alpha\in \largewedge^{k-1}(V)$.
The proof of statement~(v), namely that $\Span(v)=W$, 
is divided in two steps.

\medskip
\textit{Step~1. $W$ is contained in $\Span(v)$.}
\\ 
\indent
We must show that
$\tau\trace\alpha \in \Span(v)$ 
for ever y $\alpha\in\largewedge^{k-1}(V)$.

We let $k':=\dim(\Span(v))$ and
choose the basis $\{e_i: \, i=1,\dots,n\}$ of $V$ 
so that $\{e_i: \, i=1,\dots,k'\}$ is a basis of $\Span(v)$.
By definition $\tau$ is a $k$-vector in $\Span(v)$, 
and therefore it can be written as a linear combination 
of the $e_\bfi$ with $\bfi\in I(n,k)$
such that $i_k\le k'$, while $\alpha$ can be written 
as a linear combination of $\smash{e^*_\bfj}$ with $\bfj\in I(n,k-1)$.

Thus $\tau\trace\alpha$ is a linear combination of 
the vectors $\smash{e_\bfi \trace e^*_\bfj}$ with $\bfi, \bfj$ 
as above, and therefore it suffices to prove that every 
such vector belongs to $\Span(v)$.
To this end, notice that \eqref{e-pippo}
implies that $\smash{e_\bfi \trace e^*_\bfj}$ is either $0$ or $e_i$ 
for some $i\le i_k$, and therefore it belongs $\Span(v)$
whenever $i_k\le k'$.

\medskip
\textit{Step~2. $\Span(v)$ is contained in $W$, that is, $v$ is 
a $k$-vector in $W$.}
\\ 
\indent
We let now $k':=\dim(W)$ and
choose the basis $\{e_i: \, i=1,\dots,n\}$ of $V$ 
so that $\{e_i: \, i=1,\dots,k'\}$ is a basis of $W$.
We must show that $v$ is a linear combination 
of the $e_\bfi$ with $\bfi\in I(n,k)$ such that $i_k\le k'$,
or equivalently that $\scal{v}{e^*_\bfj}=0$
for all $\bfj\in I(n,k)$ such that $j_k > k'$. 

Observe now that each of these $e^*_\bfj$ can be written
as $e^*_{\bfj'}\wedge e^*_j$ with $\bfj'\in I(n,k-1)$ and
$j>k'$. Therefore
\[
\scal{v}{e^*_\bfj} 
=\scal{v}{e^*_{\bfj'}\wedge e^*_j}
=\scal{v \trace e^*_{\bfj'}}{e^*_j}
= 0
\]
because $v \trace e^*_{\bfj'}$ belongs by definition to
$W$, and $\scal{w}{e^*_j}=0$ for every $w\in W$ and 
every $j\ge k'$ by the choice of the basis.
\end{proof}

Lemma~\ref{s-rainwatercor3} and 
Proposition~\ref{s-rainwatercor4}
are based on the following result.

\begin{parag}[Rainwater's Lemma]
\label{s-rainwater}
(See \cite{Rainwater} or \cite{Rudin}, Lemma~9.4.3).
\textit{
Let $X$ be a compact metric space, $\F$ 
a family of probability measures on $X$ 
which is convex and weak* compact, and 
$\mu$ a measure on $X$ which is singular
with respect to every $\lambda\in\F$.
Then $\mu$ is supported on a Borel set $E$ which 
is $\lambda$-null for every $\lambda\in\F$.
}
\end{parag} 

For our purposes we need the following 
variant of Rainwater's lemma:

\begin{corollary}
\label{s-rainwatercor1}
Let $X$ be a compact metric space and $\F$ a 
weak* compact family of probability measures on $X$.
Then for every measure $\mu$ on $X$ one of the 
following (mutually incompatible) alternatives holds:
\begin{itemizeb}
\item[(i)]
$\mu$ is supported on a Borel set $E$ which 
is $\lambda$-null for every $\lambda\in\F$;
\item[(ii)]
there exists a probability measure $\sigma$ 
supported on $\F$ and a Borel set $E$ such that 
the measure 
\[
\int_{\lambda\in\F} (1_E \, \lambda) \, d\sigma(\lambda)
\]
is nontrivial and absolutely continuous \wrt $\mu$.%
\footnoteb{It is easy to check that this measure is 
well-defined in the sense of \S\ref{s-measint}.}
\end{itemizeb}
\end{corollary}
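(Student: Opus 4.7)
The plan is to reduce the statement to the original Rainwater Lemma (\S\ref{s-rainwater}) by passing to the closed convex hull of $\F$, and then to extract alternative~(ii) from the failure of singularity via a Lebesgue decomposition.

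First I would introduce $\F'$ as the set of all measures of the form $\int_\F \lambda\,d\sigma(\lambda)$, where $\sigma$ ranges over Borel probability measures on $\F$. This family is convex by construction (a barycenter of barycenters is a barycenter), contains $\F$ (take $\sigma=\delta_\lambda$), and is weak* compact because it is the continuous image of the weak* compact space of probability measures on $\F$ under the barycenter map; since $X$ is compact metric, everything happens in metrizable spaces and no measurability pathology arises.

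Next I would apply Rainwater's Lemma to the convex, weak* compact family $\F'$. If $\mu$ is singular with respect to every element of $\F'$, then Rainwater provides a Borel set $E$ supporting $\mu$ that is $\lambda'$-null for every $\lambda'\in\F'\supset\F$, which is alternative~(i). Otherwise there exists $\lambda'\in\F'$ not mutually singular with $\mu$; writing $\lambda'=\int_\F \lambda\,d\sigma(\lambda)$ and considering the Lebesgue decomposition $\lambda'=\lambda'_{ac}+\lambda'_s$ with respect to $\mu$ (with $\lambda'_{ac}\neq 0$), I would choose a Borel set $E$ such that $\lambda'_s$ vanishes on $E$ and $\mu$ vanishes on $E^c$. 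Then $1_E\lambda'=\lambda'_{ac}$ is nontrivial and absolutely continuous with respect to $\mu$, while commuting the restriction to $E$ with integration in $\sigma$ yields $\int_\F (1_E\lambda)\,d\sigma(\lambda)=1_E\lambda'$: this is exactly the measure required in alternative~(ii).

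The incompatibility of (i) and (ii) follows from a direct computation: if $E_1$ supports $\mu$ and is $\lambda$-null for every $\lambda\in\F$, then for any Borel set $E$ and any probability $\sigma$ on $\F$ there holds $\int_\F (1_E\lambda)(E_1)\,d\sigma(\lambda)=\int_\F \lambda(E\cap E_1)\,d\sigma(\lambda)=0$, so the measure appearing in (ii) would be simultaneously $\mu$-absolutely continuous and $\mu$-null on the support of $\mu$, hence zero, contradicting nontriviality. The only point requiring real care is the weak* compactness of $\F'$ and the representation of its elements as barycenters, but this is a standard barycenter construction in the metrizable compact setting and does not constitute a substantial obstacle.
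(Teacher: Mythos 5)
Your proposal is correct and follows essentially the same route as the paper's proof: form the weak* compact convex family $\F'$ of barycenters $[\sigma]=\int_\F\lambda\,d\sigma(\lambda)$, apply Rainwater's Lemma to $\F'$, and in the non-singular case extract the set $E$ from the Lebesgue decomposition of $[\sigma]$ with respect to $\mu$. The only (harmless) addition is your explicit verification of the mutual incompatibility of (i) and (ii), which the paper leaves implicit.
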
 

\begin{proof}
We denote by $P(\F)$ the space of probability 
measures on the compact space $\F$,
and for every $\sigma\in P(\F)$ we denote by 
$[\sigma]$ the corresponding average of the 
elements of $\F$, that is,  
the measure on $X$ given by
\[
[\sigma] := \int_{\lambda\in\F} \lambda \, d\sigma(\lambda)
\, .
\]

We claim that the class $\F'$ of all $[\sigma]$ with
$\sigma\in P(\F)$ is convex and compact (\wrt the
weak* topology of measures on $X$).
Convexity is indeed obvious, and compactness follows
from the compactness of the space $P(\F)$
(endowed with the weak* topology of measures on $\F$)
and the continuity of the map $\sigma \mapsto [\sigma]$, 
which in turn follows from the identity
$\scal{[\sigma]}{\varphi}= \scal{\sigma}{\hat\varphi}$
where $\varphi$ is any continuous function on $X$ and 
$\hat\varphi$ is the continuous function on $\F$ defined by
$\hat\varphi(\lambda):=\scal{\lambda}{\varphi}$.

There are now two possibilities: either $\mu$ is 
singular with respect to all measures in $\F'$ or not.

In the first case Theorem~\ref{s-rainwater} implies
that $\mu$ is supported on a set $E$ which is
null \wrt all measures in $\F'$, 
and therefore also \wrt all measures in $\F$
(because $\F$ is contained in $\F'$).
Thus (i) holds.

In the second case there exists $\sigma\in P(\F)$ 
such that $\mu$ is not singular with respect to
$[\sigma]$, and therefore by the Lebesgue-Radon-Nikod\'ym
theorem there exists a set $E$ such that the restriction 
of $[\sigma]$ to $E$ is nontrivial and absolutely continuous
\wrt $\mu$. Thus (ii) holds with this $\sigma$ and this $E$.
\end{proof}

\begin{lemma}
\label{s-rainwatercor2}
Let $C=C(e,\alpha)$ be a cone in $\R^n$ with axis $e$ 
and angle $\alpha$ (cf.~\S\ref{s-cones}).
Then, for every measure $\mu$ on $\R^n$, one of the 
following (mutually incompatible) alternatives 
holds:
\begin{itemizeb}
\item[(i)]
$\mu$ is supported on a Borel set $E$ which 
is $C$-null (see~\S\ref{s-cones});
\item[(ii)]
there exists a nontrivial measure of
the form $\smash{\mu'=\int_I \, \mu_t\, dt}$
such that $\mu'$ is absolutely continuous \wrt $\mu$, 
each $\mu_t$ is the restriction of $\Haus^1$ 
to some $1$-rectifiable set $E_t$, and
\[
\Tan(E_t,x) \subset (C \cup (-C))
\quad\text{for $\mu_t$-a.e.~$x$ and a.e.~$t$.}
\]
\end{itemizeb}
\end{lemma}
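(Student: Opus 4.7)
The plan is to deduce Lemma~\ref{s-rainwatercor2} from Corollary~\ref{s-rainwatercor1} by choosing $\F$ to be a weak$^*$ compact family of probability measures naturally associated to parametrized $C$-curves.

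First I would reduce to the case where $\mu$ is supported in a fixed closed ball $\barB$: writing $\R^n$ as the increasing union of closed balls $B_k$ and splitting $\mu = \sum_k \mu_k$ with $\mu_k$ the restriction of $\mu$ to the $k$-th annular layer, if every $\mu_k$ satisfies alternative~(i) on $B_k$ then the union of the corresponding supporting sets is $C$-null and supports $\mu$ (since any $C$-curve is compact, hence contained in some $B_k$); and if some $\mu_k$ satisfies~(ii), then $\mu_k\ll\mu$ immediately gives~(ii) for $\mu$. For $\mu$ supported in $\barB$, set $L_0:=\mathrm{diam}(\barB)/\cos\alpha$ and consider
\[
\Gamma:=\big\{\gamma:[0,L_0]\to\barB : \ \gamma \text{ is }1\text{-Lipschitz},\ \dot\gamma(s)\in C\text{ for a.e.\ }s\big\},
\]
endowed with the uniform topology. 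Since $0\in C$, $\gamma$ may be constant on part of its domain; this flexibility ensures that every $C$-curve in $\barB$ (which has arc-length at most $L_0$, because $\cos\alpha$ times the arc-length is bounded by the projection of the curve onto the axis $e$) arises as $\gamma([0,L_0])$ for some $\gamma\in\Gamma$, after arc-length reparametrization and constant extension. The set $\Gamma$ is compact by Arzel\`a--Ascoli and closed under uniform limits (for uniformly Lipschitz maps, uniform convergence forces weak$^*$ convergence of derivatives in $L^\infty$, and $C$ is closed and convex), and $\gamma\mapsto\lambda_\gamma:=L_0^{-1}\,\gamma_\#\Leb^1|_{[0,L_0]}$ is a continuous map into the space of probability measures on $\barB$. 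Hence $\F:=\{\lambda_\gamma:\gamma\in\Gamma\}$ is weak$^*$ compact.

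Now I apply Corollary~\ref{s-rainwatercor1} to $\mu$ and $\F$. In alternative~(i), $\mu$ is supported on a Borel set $E\subset\barB$ with $\Leb^1(\gamma^{-1}(E))=0$ for every $\gamma\in\Gamma$; for any $C$-curve $G\subset\barB$, parametrized by a suitable $\gamma\in\Gamma$ as above, I conclude $\Haus^1(E\cap G)\le\Lip(\gamma)\,\Leb^1(\gamma^{-1}(E))=0$, so $E$ is $C$-null. In alternative~(ii), there are a probability $\sigma$ on $\F$ and a Borel set $E$ such that
\[
\nu:=\int_\F 1_E\,\lambda\,d\sigma(\lambda)=\int_\Gamma\gamma_\#\big(1_{\gamma^{-1}(E)}\Leb^1\big)\,d\tilde\sigma(\gamma)
\]
is nontrivial and absolutely continuous \wrt $\mu$, where $\tilde\sigma$ is any probability on $\Gamma$ whose pushforward under $\gamma\mapsto\lambda_\gamma$ is $\sigma$ (such a lift exists via a universally measurable section). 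For each $\gamma$, the measure $\gamma_\#(1_{\gamma^{-1}(E)}\Leb^1)$ is concentrated on the $1$-rectifiable set $E_\gamma:=\gamma(\gamma^{-1}(E))$ and is absolutely continuous \wrt $\Haus^1|_{E_\gamma}$, say equal to $h_\gamma\,\Haus^1|_{E_\gamma}$; moreover, $\Haus^1$-a.e.\ point of $\gamma([0,L_0])$ has a preimage at which $\dot\gamma$ exists, is nonzero, and lies in $C$, so the approximate tangent to $E_\gamma$ lies in $C\cup(-C)$ at $\Haus^1$-a.e.\ point. The layer-cake identity
\[
h_\gamma\,\Haus^1|_{E_\gamma}=\int_0^{+\infty}\Haus^1|_{E_{\gamma,t}}\,dt,\qquad E_{\gamma,t}:=\{x\in E_\gamma:h_\gamma(x)>t\},
\]
substituted into the expression for $\nu$, exhibits $\nu$ as $\int_{\Gamma\times[0,+\infty)}\Haus^1|_{E_{\gamma,t}}\,d(\tilde\sigma\otimes\Leb^1)(\gamma,t)$; a Fubini computation gives $\int\Haus^1(E_{\gamma,t})\,d(\tilde\sigma\otimes\Leb^1)=\nu(\R^n)<+\infty$, and reparametrizing the index set as $[0,1]$ via Remark~\ref{s-2.4}(ii) yields the form required in alternative~(ii) of the lemma.

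The main obstacle is the joint Borel measurability of the density $h_\gamma$ and the rectifiable sets $E_{\gamma,t}$ in the parameters $(\gamma,t)$, which is needed so that the integral decomposition satisfies the framework of \S\ref{s-measint}. This can be handled by the same measurable-selection techniques used elsewhere in the paper (compare the proofs of Proposition~\ref{s-basic}(v) and Lemma~\ref{s-6.9}); the verifications are routine but somewhat tedious.
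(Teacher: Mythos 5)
Your overall strategy — apply Corollary~\ref{s-rainwatercor1} to a weak* compact family of probability measures carried by parametrized $C$-curves — is exactly the one the paper uses. But there is a genuine gap in the way you set up the family $\Gamma$. You only require $\dot\gamma(s)\in C$ a.e., and since $0\in C(e,\alpha)$ this admits paths whose derivative vanishes on a set of positive measure, including constant paths. Two things then go wrong. First, the dichotomy becomes vacuous: $\F$ contains $\delta_x=\lambda_\gamma$ for every constant path $\gamma\equiv x$, so a set $E$ that is $\lambda$-null for every $\lambda\in\F$ must miss every point of $\barB$; hence alternative~(i) of the corollary can only occur for $\mu=0$, and alternative~(ii) holds for \emph{every} nontrivial $\mu$ (e.g.\ the closed convex hull of $\F$ contains $\mu/\Mass(\mu)$ itself, obtained by averaging Dirac masses). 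Second, the key claim that $\gamma_\#\big(1_{\gamma^{-1}(E)}\Leb^1\big)$ is absolutely continuous \wrt $\Haus^1|_{E_\gamma}$ is false for such $\gamma$: if $Z:=\{s:\dot\gamma(s)=0\}$ has positive measure, the area formula gives $\Haus^1(\gamma(Z))=0$ while $\gamma_\#(1_Z\Leb^1)$ has mass $\Leb^1(Z)$, so this part of the pushforward is \emph{singular} \wrt $\Haus^1$ and admits no layer-cake decomposition into restrictions of $\Haus^1$ to rectifiable sets. Concretely, for $\mu=\delta_{x_0}$ (which satisfies alternative~(i) of the lemma, since a point is $C$-null) your argument lands in case~(ii) of the corollary with $\nu=\delta_{x_0}$ and then cannot produce the required measure $\mu'=\int_I\mu_t\,dt$.

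The fix is to impose a lower bound on the speed along the cone axis, as the paper does: take $\Lip(\gamma)\le 1$ \emph{and} $\dot\gamma(s)\cdot e\ge\cos\alpha$ a.e.\ (a condition that is still closed under uniform limits, since it is equivalent to $(\gamma(s')-\gamma(s))\cdot e\ge\cos\alpha\,(s'-s)$). This excludes the degenerate paths and yields the two-sided comparison $\Haus^1|_{G_\gamma}\le\lambda_\gamma\le(\cos\alpha)^{-1}\Haus^1|_{G_\gamma}$, which is what makes both halves of the argument work: in case~(i) of the corollary, $\lambda_\gamma$-nullity of $E$ transfers to $\Haus^1(E\cap G_\gamma)=0$, and every $C$-curve meeting $B_k$ is covered by finitely many such $G_\gamma$; in case~(ii), one can simply replace $1_E\lambda_\gamma$ by $1_E\Haus^1|_{G_\gamma}$ (up to the factor $\cos\alpha$), so each fibre is already \emph{exactly} a restriction of $\Haus^1$ to a $1$-rectifiable set and your layer-cake step (with its attendant measurability worries) becomes unnecessary. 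With that modification the rest of your argument — the reduction to a fixed ball, the compactness of $\Gamma$ via Arzel\`a--Ascoli and weak* closedness of the derivative constraint, and the identification of the tangent with a direction in $C\cup(-C)$ — goes through and coincides with the paper's proof.
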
 

\begin{proof}
The idea is to apply Corollary~\ref{s-rainwatercor1}
to the measure $\mu$ and a sequence of suitably chosen 
families $\F_k$ of probability measures.

\medskip
\textit{Step~1. Construction of the families $\F_k$.}
\\ 
\indent
Given $k=1,2,\dots$, we define the following objects:
\begin{itemizeb}\leftskip 0.2 cm\labelsep=.3 cm
\item[$\G_k$] 
set of all paths $\gamma$ from $[0,1]$
to the closed ball $B(k)$ such that
$\Lip(\gamma)\le 1$ and 
$\dot\gamma(s) \cdot e \ge \cos\alpha$ 
for $\Leb^1$-a.e.~$s\in [0,1]$;
\item[$G_\gamma$]
$:=\gamma([0,1])$, image of the path $\gamma\in \G_k$;
\item[$\mu_\gamma$] 
restriction of $\Haus^1$ to the curve $G_\gamma$;
\item[$\lambda_\gamma$] 
push-forward according $\gamma$
of the Lebesgue measure on $[0,1]$;
\item[$\F_k$] 
set of all $\lambda_\gamma$ with $\gamma\in\G_k$.
\end{itemizeb}
One easily check that each $G_\gamma$ is a 
$C$-curve (see~\S\ref{s-cones}) contained in $B_k$,
and $\lambda_\gamma$ is a probability 
measure supported on $G_\gamma$ such that
\begin{equation}
\label{e-7.1}
\mu_\gamma 
\le \lambda_\gamma 
\le \frac{1}{\cos\alpha} \, \mu_\gamma 
\, .
\end{equation}
In particular $\F_k$ is a subset of the space 
$P(B_k)$ of probability measures on $B_k$.

\medskip
\textit{Step~2. Each $\F_k$ is a weak* compact subset
of $P(B_k)$.}
\\ 
\indent
This is a consequence of the following statements:
\begin{itemizeb}
\item[(a)]
the space $\G_k$ endowed with 
the supremum distance is compact;
\item[(b)] 
$\F_k$ is the image of $\G_k$ according to 
the map $\gamma\mapsto \lambda_\gamma$ and this map
is continuous (as a map from $\G_k$ to $P(B_k)$ 
endowed with the weak* topology).
\end{itemizeb}
Statement~(a) follows from the usual compactness 
for the class of all paths $\gamma:[0,1]\to B(k)$ 
with $\Lip(\gamma)\le 1$ and the fact that we can 
rewrite the second constraint in the definition 
of $\G_k$ as 
\[
(\gamma(s')-\gamma(s)) \cdot e \ge \cos\alpha \, (s'-s)
\quad\text{for every $s,s'$ with $0 \le s \le s' \le 1$,}
\]
which is clearly closed with respect to uniform convergence.
To prove statement~(b) we observe that for every $\gamma\in\G_k$ 
and every continuous test function $\varphi: B_k\to\R$ there holds
\[
\scal{\lambda_\gamma}{\varphi}
= \int_{B_k} \varphi \, d\lambda_\gamma 
= \int_{[0,1]} \varphi(\gamma(s)) \, d\Leb^1(s)
\, ,
\]
and therefore the function $\gamma \mapsto \scal{\lambda_\gamma}{\varphi}$ 
is continuous on $\G_k$.

\medskip
\textit{Step~3. Completion of the proof.}
\\ 
\indent
For every $k=1,2,\dots$ we apply 
Corollary~\ref{s-rainwatercor1} to the measure $\mu_k$
given by the restriction of $\mu$ to  $B_k$, and family $\F_k$, 
which  by Step~2 is a weak* compact subset of $P(B_k)$.

There are now two possibilities: either there exists $k$ such 
that statement~(ii) of Corollary~\ref{s-rainwatercor1} holds, 
or statement~(i) of Corollary~\ref{s-rainwatercor1} holds 
for every $k$. 

\smallskip
In the first case there exists a probability measure $\sigma$
on the space $\G_k$ and a Borel set $E$ such that the measure
\[
\int_{\G_k} (1_E  \, \lambda_\gamma) \, d\sigma(\gamma)
\] 
is nontrivial and absolutely continuous \wrt $\mu_k$, and
therefore also \wrt $\mu$. Then, using \eqref{e-7.1} we obtain 
that also the measure 
\[
\mu' 
:= \int_{\G_k} (1_E \, \mu_\gamma) \, d\sigma(\gamma)
\] 
is nontrivial and absolutely continuous \wrt $\mu$, and 
since each measure $1_E \, \mu_\gamma$ is the restriction 
of $\Haus^1$ to a subset of the $C$-curve $G_\gamma$, 
we have that $\mu'$ satisfies all the requirements
in statement~(ii), which therefore holds true.

\smallskip
In the second case
we obtain that for every $k$ the measure $\mu_k$ is
supported on a set $E_k$ contained in $B_k$  
which is null \wrt all measures in $\F_k$, 
and using the first inequality in 
\eqref{e-7.1}) we obtain that
\begin{equation}
\label{e-7.1.1}
\Haus^1(E_k\cap G_\gamma) = 0
\quad\text{for every $\gamma\in\G_k$.}
\end{equation}
Now notice that intersection of every $C$-curve 
$G$ with $B_k$ can be covered by finitely many 
curves $G_\gamma$ with $\gamma\in\G_k$,%
\footnoteb{We use that $C$-curves can be characterized 
as the sets $\gamma(J)$ where $J$ is a compact
interval in $\R$ and $\gamma:J\to\R^n$ satisfies
$\Lip(\gamma)\le 1$ and $e\cdot\dot\gamma(s) \ge \cos\alpha$
for $\Leb^1$-a.e.~$s\in J$.}
and therefore \eqref{e-7.1.1} implies $\Haus^1(E_k\cap G) = 0$.
We have thus proved that $E_k$ is $C$-null. 

We then let $E$ be the union of all $E_k$ 
and observe that $E$ is $C$-null, too, and 
$\mu$ is supported on $E$. Thus (i) holds.
\end{proof}

\begin{proof}
[Proof of Lemma~\ref{s-rainwatercor3}]
We choose a finite family of cones $\{C_i\}$ 
(in the sense of \S\ref{s-cones}) the interiors 
of which cover $\R^n\setminus\{0\}$, 
and then apply Lemma~\ref{s-rainwatercor2}
to $\mu$ and to each $C_i$. 
There are now two possibilities: either there 
exists $i$ such that statement~(ii) of 
Lemma~\ref{s-rainwatercor2} holds, or 
statement~(i) of Lemma~\ref{s-rainwatercor2} 
holds for every~$i$. 

In the first case we immediately obtain that
statement~(ii) holds.
In the second case, for every $i$ there exists
a set $E_i$ which supports $\mu$ and is $C_i$-null.
We then let $E$ be the intersection of all $E_i$
and claim that $E$ satisfies the requirements 
in statement~(i), which therefore holds true.

It is indeed obvious that $E$ supports $\mu$. 
Concerning the unrectifiability of $E$, note that
since the interiors of the cones $C_i$ cover
$\R^n\setminus\{0\}$, we can cover
every curve $G$ of class $C^1$ in $\R^n$ 
by countably many sub-arcs $G_j$, 
each one contained in a $C_i$-curve for some $i$. 
Therefore $\Haus^1(E\cap G_j)=0$ because 
$E$ is $C_i$-null. Hence $\Haus^1(E\cap G)=0$,
and we have proved that $E$ is purely unrectifiable
(cf.~\S\ref{s-rectif}).
\end{proof}

\begin{proof}
[Proof of Proposition~\ref{s-rainwatercor4}]
Let $\tilde\mu$ be the restriction of $\mu$ to the set $F$; 
thus $V(\tilde\mu,x)=V(\mu,x)$ for $\tilde\mu$-a.e.~$x$
by Proposition~\ref{s-basic}(i), and in particular
\begin{equation}
\label{e-7.4}
V(\tilde\mu,x)\cap C=\{0\}
\quad\text{for $\tilde\mu$-a.e.~$x$.}
\end{equation}
We must prove that $\tilde\mu$ is supported on 
a $C$-null set, and for this it suffices to
apply Lemma~\ref{s-rainwatercor2} (to the measure 
$\tilde\mu$ and the cone $C$) and show that, 
of the two alternatives given in that statement,
only alternative~(i) is viable.
Indeed the definition of the decomposability bundle
in \S\ref{s-decomp} and \eqref{e-7.4} imply
that for every family $\{\mu_t: \, t\in I\}$ 
in $\F_{\tilde\mu}$ there holds
$\Tan(E_t,x) \cap C=\{0\}$ for $\mu_t$-a.e.~$x$ 
and a.e.~$t$, and this contradicts alternative~(ii).
\end{proof}

We now prove Lemma~\ref{s-perturb2}.
The proof relies on the following proposition, which 
is a simplified version of a result contained in 
\cite{ACP3} %*** 
(we include a proof for the sake of completeness).

\begin{proposition}
\label{s-perturb1}
Let be given a measure $\mu$ on $\R^n$, 
a cone $C=C(e,\alpha)$ in $\R^n$, 
and a $C$-null compact set $K$ in $\R^n$.
Then for every $\eps>0$ there exists a 
smooth function $f:\R^n\to\R$ such that, 
for every $x\in\R^n$, 
\begin{itemize}
\item[(i)]
$0 \le f(x)\le \eps$;
\item[(ii)]
$0 \le D_e f(x) \le 1$ and $D_e f(x) = 1$ if $x\in K$;
\item[(iii)]
$|\dV f(x)|\le 1/\tan\alpha$ where $V:=e^\perp$.%
\footnoteb{Here $\dV f(x)$ is the derivative of $h$ at $x$ \wrt $V$
(see \S\ref{s-not}), and $|\dV f(x)|$ is its operator norm.}
\end{itemize}
\end{proposition}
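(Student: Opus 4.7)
The strategy is to build $f$ as an $e$-direction antiderivative of a smooth cutoff function supported in a thin neighborhood of $K$. Write every $x \in \R^n$ as $x = x_1 e + x_V$ with $x_1 := x \cdot e \in \R$ and $x_V := x - x_1 e \in V = e^\perp$. The key geometric lemma I would prove is: for every $\delta > 0$ there exist finitely many smooth functions $c_j : V \to \R$ with $\Lip(c_j) \le 1/\tan\alpha$ and positive widths $w_j$ with $\sum_j w_j \le \delta$, such that the tubes $T_j := \{x \in \R^n : |x_1 - c_j(x_V)| < w_j\}$ are pairwise disjoint and $K \subset \bigcup_j T_j$.

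Granting this covering lemma, I would choose smooth bumps $\psi_j : \R \to [0, 1]$ with compact support in $(-w_j, w_j)$, each equal to $1$ on a neighborhood of the compact set $\{x_1 - c_j(x_V) : x \in K \cap T_j\}$, and set $\chi(x) := \sum_j \psi_j(x_1 - c_j(x_V))$ together with
\[
f(x) := \int_{-\infty}^{x_1} \chi(t, x_V)\, dt = \sum_j \Psi_j\big(x_1 - c_j(x_V)\big), \qquad \Psi_j(s) := \int_{-\infty}^s \psi_j(t)\, dt.
\]
By disjointness of the tubes, $0 \le \chi \le 1$ everywhere and $\chi \equiv 1$ on a neighborhood of $K$, so $0 \le \partial_{x_1} f = \chi \le 1$ with equality on $K$, giving (ii). The estimate $0 \le f(x) \le \sum_j \int_\R \psi_j \le 2\sum_j w_j \le 2\delta$ yields (i) after choosing $\delta := \eps/2$. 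For (iii), the chain rule together with disjointness of tubes gives
\[
d_V f(x) = -\psi_{j(x)}\big(x_1 - c_{j(x)}(x_V)\big)\, dc_{j(x)}(x_V),
\]
where $j(x)$ is the unique index (if any) such that $x \in T_{j(x)}$; hence $|d_V f(x)| \le \Lip(c_{j(x)}) \le 1/\tan\alpha$.

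The heart of the argument, and the main obstacle, is the covering lemma itself. It exploits the $C$-nullity of $K$ as follows: since $K$ meets every $C$-curve in a $\Haus^1$-null set, it can be ``squeezed'' between Lipschitz $(n-1)$-graphs of slope at most $1/\tan\alpha$. I would prove it by combining compactness of $K$ with a Besicovitch covering to reduce to a local statement in small balls, and within each ball extract the required Lipschitz graphs through an inductive/greedy construction of the type used in \cite{ACP3}; a final mollification turns each Lipschitz $c_j$ into a smooth function without enlarging its Lipschitz constant. Pairwise disjointness of the tubes can be enforced by choosing the Lipschitz graphs with a slightly smaller slope $(1-\eta)/\tan\alpha$, leaving tubular gaps that absorb the widths $w_j$. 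The delicate quantitative interplay between the slope bound and the smallness of $\sum_j w_j$ is exactly the geometric content supplied by $C$-nullity, and constitutes the principal technical difficulty of the proof.
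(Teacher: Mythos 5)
Your reduction of the proposition to the covering lemma is essentially sound (the formula $f=\sum_j\Psi_j(x_1-c_j(x_V))$ does yield (i)--(iii) once the tubes are disjoint, smooth-graphed, of total width $\le\eps/2$, and of Lipschitz constant strictly below $1/\tan\alpha$). But the covering lemma itself --- that a compact $C$-null set is contained, for every $\delta>0$, in finitely many \emph{pairwise disjoint} tubes around globally defined Lipschitz graphs over $e^\perp$ with $\sum_j w_j\le\delta$ --- is the entire content of the proposition, and you have not proved it; the sketch ``compactness plus Besicovitch plus a greedy construction'' does not amount to an argument. The difficulty is that $C$-nullity is purely negative information ($K$ meets each individual $C$-curve in an $\Haus^1$-null set), and there is no visible mechanism by which it produces even a single transversal Lipschitz graph $c_1$ such that a definite portion of $K$ lies within a prescribed distance of it: the slices $K\cap(\R e+x_V)$ are null subsets of a line that may be uncountable Cantor-type sets varying with $x_V$, and a finite family of Lipschitz selections must track all of them simultaneously and disjointly. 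Your proposed statement is a strong ``tube-nullity'' assertion \wrt transversal graphs, considerably stronger than anything the proposition actually requires, and it is not clear that it is even true; in any case it cannot be dismissed as a technical step.

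The paper avoids any structural decomposition of $K$. Its Step~1 establishes only the much weaker fact that some open neighbourhood $A$ of $K$ satisfies $\Haus^1(A\cap G)\le\eps$ for \emph{every} $C$-curve $G$; this is proved by contradiction via an Arzel\`a--Ascoli compactness argument on the parametrizations of $C$-curves together with upper semicontinuity of Lebesgue measure under Hausdorff convergence. Then the function is defined directly by a sup-type formula, $g(x):=\sup_G\bigl(\Haus^1(A\cap G)-|x_G-x|\bigr)$, the supremum running over $C$-curves whose endpoint lies on the ray $x+\R_{\ge0}e$; the three bounds (size $\le\eps$, $D_eg=1$ on $A$, $|\dV g|\le 1/\tan\alpha$) then fall out of the definition, and a final mollification gives smoothness. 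If you want to pursue your route, you would at minimum have to either prove the covering lemma (a substantial project whose truth I would not take for granted) or weaken it to something provable --- and the paper's Step~1 is exactly such a weakening, after which the sup construction replaces your explicit tube-based antiderivative.
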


\begin{proof}
We first construct a Lipschitz function $g$ that satisfies
statements (i), (ii) and (iii) with $K$ replaced by a 
suitable open set $A$ that contains $K$, and then
we regularize $g$ by convolution to obtain $f$. 

\medskip
\textit{Step~1. There exists an open 
set $A$ such that $K\subset A$ and (cf.~\S\ref{s-cones})}
\begin{equation}
\label{e-7.5}
\Haus^1 \big( A\cap G \big) \le\eps	
\quad\textit{for every $C$-curve $G$.}
\end{equation}
\indent
More precisely, we claim  
that there exists $\delta>0$ such that 
$\Haus^1 \big( K_\delta\cap G \big) \le\eps$ for every
$C$-curve $G$, where $K_\delta$ is the set of all $x$ such that
$\dist(x,K)\le \delta$, and then it suffices to take $A$ equal 
to the interior of $K_\delta$. 

We argue by contradiction: if the claim does not hold, 
then for every $\delta>0$ there exists a $C$-curve $G_\delta$
such that $\Haus^1 \big( K_\delta\cap G_\delta \big)\ge\eps$.
Let now $R$ be the line in $\R^n$ spanned by $e$ and let
$J'$ be a compact interval such that the segment
$J:=\{te: \, t\in J'\} \subset R$ contains the projections 
of $K$ on $R$.
We can then assume that 
the projections of each $G_\delta$ on $R$ agrees with~$J$. 
Moreover, the fact that $G_\delta$ is a $C$-curve means 
that it can be parametrized by a Lipschitz path 
$\gamma_\delta:J'\to\R^n$ of the form
\[
\gamma_\delta(s) = se + \eta_\delta(s) 
\quad\text{with $\eta_\delta(s)\in V:=e^\perp$ for every $s\in J'$,}
\]
where $\eta_\delta:J'\to V$ is Lipschitz and satisfies 
\[
|\dot\eta_\delta(s)| \le \tan\alpha 
\quad\text{for a.e.~$s\in J'$.}
\]
Finally we set
$K'_\delta:= \gamma_\delta^{-1}(K_\delta)
=\gamma_\delta^{-1}(K_\delta\cap G_\delta)$.

Possibly passing to a subsequence, we can assume that, 
as $\delta\to 0$, the maps $\eta_\delta$ converge uniformly 
to some Lipschitz map $\eta_0:J'\to V$, and the 
compact sets $K'_\delta$ converge to some compact set 
$K'_0\subset J'$ in the Hausdorff distance. 

Therefore the paths $\gamma_\delta$ converge 
to $\gamma_0$ given by $\gamma_0(s):=se+\eta_0(s)$, 
the set $G_0:=\gamma_0(J')$ is a $C$-curve, and finally
$K\cap G_0$ contains $K_0:=\gamma_0(K'_0)$.
We prove next that $K_0$ has positive length, 
which contradicts the fact that $K$ is a $C$-curve.
Indeed%
\,\footnoteb{The second inequality follows from 
the upper semicontinuity of the Lebesgue measure
\wrt the Hausdorff convergence of compact sets; 
the third inequality follows from the fact that 
$\Haus^1(\gamma_0(E)) \le \Leb^1(E)/\cos\alpha$
for every set $E\subset J'$, which in turn follows
from the fact that $|\dot\eta_0(s)| \le \tan\alpha$ 
for a.e.~$s$.}
\begin{align*}
  \Haus^1(K_0) 
  \ge \Leb^1(K'_0) 
& \ge \limsup_{\delta\to 0} \Leb^1(K'_\delta) \\
& \ge \limsup_{\delta\to 0} \big( \cos\alpha \, \Haus^1(K_\delta\cap G_\delta) \big)
  \ge \eps \, \cos\alpha > 0
  \, .
\end{align*}

\medskip
\textit{Step~2. Construction of $g$.}
\\ 
\indent
For every $x\in\R^n$ we denote by $\G_x$ the class of
all $C$-curves $G=\gamma([a,b])$ whose end-point 
$x_G:=\gamma(b)$ is of the form $x_G=x+se$ for 
some $s \ge 0$, and we set
\[
g(x):= \sup_{G\in\G_x} \big( \Haus^1(A\cap G) -|x_G-x| \big)
\]
Starting from the definition one can readily checks that 
the following properties hold for every $x\in\R^n$:
\begin{itemize}
\item[(a)]
$0 \le g(x)\le \eps$ (here we use \eqref{e-7.5});
\item[(b)]
$g(x) \le g(x+se) \le g(x)+s$ for every $s>0$,  
and if the segment $[x,x+se]$ is contained in $A$ 
then $g(x+se)= g(x)+s$;
\item[(c)]
$|g(x+v) - g(v)| \le |v|/\tan\alpha$ for every $v\in V:=e^\perp$;
\end{itemize}
Statements (b) and (c) imply that $g$ is Lipschitz and
\begin{itemize}
\item[(b')]
$0 \le D_e g(x) \le 1$ for $\Leb^n$-a.e.~$x$ 
and $D_e g(x) = 1$ for $\Leb^n$-a.e.~$x\in A$;
\item[(c')]
$|\dV g(x)|\le 1/\tan\alpha$ for $\Leb^n$-a.e.~$x$.
\end{itemize}

\medskip
\textit{Step~3. Construction of $f$.}
\\ 
\indent
We take $r$ so that $0<r<\dist(K,\R^n\setminus A)$
and set $f:=g*\rho$ where $\rho$ is a mollifier with 
support contained in the ball $B(r)$.
Then statements (i), (ii) and (iii) follow from 
statements (a), (b') and (c'), respectively.
\end{proof}

\begin{proof}
[Proof of Lemma~\ref{s-perturb2}]
We fix for the time being $\eps'>0$ 
and take a smooth function $f$ that satisfies
statements~(i), (ii) and (iii) in 
Proposition~\ref{s-perturb1} with $\eps'$ 
instead of $\eps$.
Then we set
\[
g := \varphi f
\]
where $\varphi:\R^n\to [0,1]$ is a smooth cut-off
function such that $\varphi(x)=1$ for $x\in B(\bar x,r)$
and $\varphi(x)=0$ for $x\in B(\bar x,r')$.
Thus $g$ satisfies statement~(i) of
Lemma~\ref{s-perturb2}.

Moreover we can choose $\varphi$ so that
$|d\varphi(x)|\le 2/(r'-r)$ for every $x$; then
\[
dg = \varphi \, df + f \, d\varphi
\quad\text{and}\quad
\|f \, d\varphi\|_\infty \le \frac{2\eps'}{r'-r}
\, , 
\]
and therefore $g$ satisfies statements~(ii) and (iii) 
of Lemma~\ref{s-perturb2} if we take $\eps'$ 
so that $2\eps'/(r'-r)$ is smaller than 
$\eps$ and $1/\tan\alpha$.
\end{proof}

We conclude this section with two measurability
results (Lemmas~\ref{s-meas1} and \ref{s-meas2}) 
which are used in Section~\ref{s4},
and an approximation result (Lemma~\ref{s-approx2})
which yields Lemma~\ref{s-approx} as a corollary.
For the rest of this section $f$ is a Lipschitz
function on $\R^n$, and we take $\D(f,x)$ and 
$\D^*(f,x)$ as in \S\ref{s-diffbund}. 

We begin with a definition.

\begin{parag}[Deviation from linearity]
\label{s-errordef}
Given a Lipschitz function $f:\R^n\to\R$,
a point $x\in\R^n$,   
a linear subspace $V$ of $\R^n$,
a linear function $\alpha:V\to\R$, 
and $\delta>0$, 
we set 
\[
m(f,x,V,\alpha,\delta) :=
\sup_{h\in V, \, 0<|h|\le\delta}
\frac { | f(x+h) - f(x) - \alpha h | } {|h|}
\, .
\]
Thus $m(f,x,V,\alpha,\delta)$ measures the deviation 
of $f$ from the linear function $\alpha$ around $x$.
In particular we have that $f$ is differentiable 
at $x$ \wrt $V$ with derivative 
$\dV f(x)=\alpha$ if an only if for every $\eps>0$
there exists $\delta>0$ such that 
$m(f,x,V,\alpha,\delta) \le \eps$, that is,
$m(f,x,V,\alpha,\delta)$ tends to $0$ as 
$\delta\to 0$ (recall that $m$ is increasing in $\delta$).
\end{parag}

\begin{lemma}
\label{s-apdiff}
Let be given $f$, $x$ and $V$ as above, 
$W$ and $W'$ linear subspaces of $V$,
$\alpha$ and $\alpha'$ linear functions on $V$.
Then, setting $m:=m(f,x,W,\alpha,\delta)$, 
$m':=m(f,x,W',\alpha',\delta)$, we have
\[
m \le m' + |\alpha' - \alpha| + (L+|\alpha'|) d 
\, ,
\]
where $L:=\Lip(f)$, $d:=\dgr(W,W')$ is the distance 
between $W$ and $W'$ in $\Gr(\R^n)$, and the norm for linear
functionals is, as usual, the operator norm.
\end{lemma}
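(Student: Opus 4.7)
The plan is to bound $|f(x+h)-f(x)-\alpha h|/|h|$ uniformly for $h\in W$ with $0<|h|\le\delta$ by comparing $h$ with a carefully chosen nearby vector $h'\in W'$, and then to invoke $m'$ on $h'$. The natural first step is to fix $h\in W$ with $0<|h|\le\delta$ and let $h':=Ph$ where $P$ is the orthogonal projection of $\R^n$ onto $W'$. Two properties of this choice will drive everything: first, $|h'|\le|h|\le\delta$, so that $h'$ lies in the range over which the supremum defining $m'$ is taken; and second, since orthogonal projection realizes the distance from $h$ to $W'$, we have $|h-h'|=\mathrm{dist}(h,W')\le\delta(W,W')|h|\le d\,|h|$ by the definition of $\dgr$ recalled in \S\ref{s-essspan}.

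Next I would write the telescoping identity
\[
f(x+h)-f(x)-\alpha h
= \bigl[f(x+h)-f(x+h')\bigr] + \bigl[f(x+h')-f(x)-\alpha' h'\bigr] + \alpha'(h'-h) + (\alpha'-\alpha)h
\]
and estimate the four pieces in turn: the first by $L|h-h'|\le Ld|h|$ using $L=\Lip(f)$; the second by $m'|h'|\le m'|h|$, which is exactly where the property $|h'|\le\delta$ is needed; the third by $|\alpha'|\,|h-h'|\le|\alpha'|d|h|$; and the fourth by $|\alpha'-\alpha|\,|h|$. Summing, dividing by $|h|$, and taking the supremum over $0<|h|\le\delta$ with $h\in W$ gives exactly
\[
m \le m' + |\alpha'-\alpha| + (L+|\alpha'|)d.
\]

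The only subtle point is the choice of $h'$: an arbitrary $h'\in W'$ witnessing $\delta(W,W')\le d$ would only satisfy $|h'|\le(1+d)|h|$, which could exceed $\delta$ and force a factor $(1+d)$ in front of $m'$. Using the orthogonal projection simultaneously ensures $|h'|\le|h|$ and $|h-h'|\le d|h|$, and this is what makes the bound clean.
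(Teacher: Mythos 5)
Your proof is correct and is essentially the same as the paper's: the same choice of $h'$ as the orthogonal projection of $h$ onto $W'$, the same four-term decomposition, and the same term-by-term estimates. Your closing remark about why the orthogonal projection (rather than an arbitrary witness for $\delta(W,W')$) keeps $|h'|\le\delta$ is a nice observation, implicit but not spelled out in the paper.
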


\begin{proof}
We fix $h \in W$ with $|h|\le\delta$, 
and denote by $h'$ the orthogonal 
projection of $h$ on $W'$. 
Then, taking into account the definition
of $\dgr$, we have 
\begin{equation}
\label{e-mickey1}
|h'| \le |h| \le \delta
\quad\text{and}\quad
|h-h'| \le d|h|
\, .
\end{equation}
Now, writing $f(x+h) - f(x) - \alpha h$ as
$I + I\!I + I\!I\!I + I\!V$ with 
\begin{align*}
  I 
& := f(x+h) - f(x+h') 
  \, , \\
  I\!I  
& := f(x+h') - f(x) - \alpha' h' 
  \, , \displaybreak[1] \\
  I\!I\!I 
& := \alpha' h' -\alpha' h 
  \, , \\
  I\!V 
& := \alpha' h -\alpha h
  \, , 
\end{align*}
and using the estimates%
\,\footnoteb{The first and third estimates 
follows from the second inequality in \eqref{e-mickey1}
and the fact that $f$ is Lipschitz,
the second one follows from the definition 
of $m'$ and the first inequality in \eqref{e-mickey1}.}
\begin{align*}
  |I| 
& \le L|h-h'| \le Ld |h| 
  \, , \\
  |I\!I|  
& \le m'|h'| \le m'|h| 
  \, , \displaybreak[1] \\
  |I\!I\!I| 
& \le |\alpha'| \, |h'-h| \le d|\alpha'| \, |h| 
  \, , \\
  |I\!V| 
& \le |\alpha' -\alpha| \, |h| 
  \, , 
\end{align*}
we obtain 
\[
|f(x+h) - f(x) - \alpha h| 
\le \big[ m' + |\alpha' - \alpha| + (L+|\alpha'|) d \big] \, |h|
\, , 
\]
which implies the desired estimate.
\end{proof}

\begin{lemma}
\label{s-meas1}
Let $f$ be a Lipschitz function on $\R^n$, 
$E$ a Borel set in $\R^n$, and $x\mapsto V(x)$ 
a Borel map from $E$ to $\Gr(\R^n)$ such that
$V(x)$ belongs to $\D(f,x)$ for every $x$. 
For every $x\in E$ we denote by $\dV f(x)$ the derivative
of $f$ at $x$ \wrt $V(x)$, and we extend it to a linear function
on $\R^n$ by setting $\dV f(x)\, h=0$ for every $h\in V(x)^\perp$. 
Then $x\mapsto \dV f(x)$ is a Borel map  from $E$ to the dual of $\R^n$.
\end{lemma}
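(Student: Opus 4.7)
Since $(\R^n)^*$ is finite-dimensional, it suffices to prove that for each fixed $h \in \R^n$, the scalar map $x \mapsto \langle \dV f(x), h \rangle$ is Borel on $E$ (one can then take $h$ to range over a basis to conclude). The key observation is that, by the convention extending $\dV f(x)$ to be zero on $V(x)^\perp$, we have
\[
\langle \dV f(x), h \rangle = \langle \dV f(x), P_{V(x)}(h) \rangle,
\]
where $P_V : \R^n \to V$ denotes orthogonal projection onto $V$.

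The plan is then to express $\langle \dV f(x), h \rangle$ as a pointwise limit of Borel functions. Since $V(x) \in \D(f,x)$ for every $x \in E$, the derivative along $P_{V(x)}(h) \in V(x)$ exists, and
\[
\langle \dV f(x), h \rangle \;=\; \lim_{t \to 0,\, t \ne 0} \frac{f\!\left(x + t\, P_{V(x)}(h)\right) - f(x)}{t}.
\]
For each fixed $t \neq 0$, the difference quotient is a Borel function of $x$: this reduces to verifying that $x \mapsto P_{V(x)}(h)$ is Borel, which follows from the fact that the orthogonal projection map $P : \Gr(\R^n) \times \R^n \to \R^n$, $(V, v) \mapsto P_V(v)$, is continuous with respect to the Grassmannian metric $\dgr$ introduced in \S\ref{s-not} (continuity is a direct check: locally one can write projection using a continuously-varying orthonormal basis of $V$). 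Since $x \mapsto V(x)$ is Borel by hypothesis, the composition is Borel.

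Finally, by continuity of the map $s \mapsto f(x + s\, P_{V(x)}(h))$ (which follows from $f$ being Lipschitz), the limit $t \to 0$ above may be restricted to $t$ ranging over the nonzero rationals without changing its value. Hence $\langle \dV f(x), h \rangle$ is a countable pointwise limit of Borel functions of $x$, and therefore Borel measurable. The only step that requires any real work is the continuity of the projection map $P$ on the Grassmannian, but this is classical and routine, so no serious obstacle is anticipated.
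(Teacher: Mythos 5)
Your proof is correct. The underlying mechanism is the same as the paper's -- write $\dV f(x)$ as a pointwise limit, over a countable set of parameters, of Borel difference quotients taken along directions in $V(x)$ that depend Borel-measurably on $x$ -- but you produce those directions differently. The paper first reduces to the case where $\dim V(x)$ is constant and then invokes the Kuratowski--Ryll-Nardzewski measurable selection theorem to extract a Borel orthonormal frame $e_1(x),\dots,e_d(x)$ of $V(x)$, forming difference quotients along the $e_i(x)$. You instead fix $h\in\R^n$, use the extension convention to write $\scal{\dV f(x)}{h}=\scal{\dV f(x)}{P_{V(x)}h}$, and take difference quotients along $P_{V(x)}h$; measurability then only requires the continuity of $(V,v)\mapsto P_V v$ on $\Gr(\R^n)\times\R^n$, which is elementary (and unproblematic across dimension strata, since subspaces of different dimensions are at $\dgr$-distance at least $1$). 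This buys you a proof that avoids both the selection theorem and the decomposition of $E$ into pieces of constant dimension, at the cost of a small extra observation (that the extended functional kills the $V(x)^\perp$-component, so testing against $P_{V(x)}h$ suffices). One cosmetic remark: the restriction to rational $t$ does not actually need the continuity of $s\mapsto f(x+sP_{V(x)}h)$ -- since the full limit as $t\to0$ exists by differentiability, any sequence $t_k\to0$ yields the same value -- but invoking it does no harm.
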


\begin{proof}[Sketch of proof]
Possibly subdividing $E$ into finitely many Borel sets, 
we can assume that $V(x)$ has constant dimension $d$
for all $x\in E$.

Since the map $x\mapsto V(x)$, viewed as a closed-valued 
multifunction from $E$ to $\R^n$, is Borel measurable 
(cf.~\ footnote~\ref{f-4.1} in Section~\ref{s4})
we can use Kuratowski and Ryll-Nardzewski's measurable 
selection theorem (see \cite{Sri}, Theorem~5.2.1) 
to find Borel vectorfields $e_1,\dots, e_n$
defined on $E$ so that 
$\{e_1(x),\dots,e_n(x)\}$ is an orthonormal 
basis of $\R^n$ for every $x\in E$, 
and $\{e_1(x),\dots,e_d(x)\}$ a basis of $V(x)$.

Then for every $h>0$ and every $x\in E$ we consider
the linear function $T_h(x):\R^n\to\R$ defined by
\[
\scal{T_h(x)}{e_i(x)} :=
\begin{cases}
  \displaystyle\frac{f(x+he_i(x))-f(x)}{h} 
    & \text{for $i=1,\dots,d$,} \\
  0 & \text{for $i=d+1,\dots,n$.} \\
\end{cases}
\]
Using that each $e_i$ is Borel and that $f$ is continuous 
one easily verifies that $x\mapsto T_h(x)$ is a Borel
map from $E$ to the dual of $\R^n$ for every $h>0$. 
Moreover, since $f$ is differentiable \wrt $V(x)$
at each $x\in E$, $T_h(x)$ converges to $\dV f(x)$ as $h\to 0$.
Thus $x\mapsto \dV f(x)$ is the pointwise limit of
a sequence of Borel maps, and therefore it is Borel. 
\end{proof}

\begin{lemma}
\label{s-meas2}
Let $f$ be a Lipschitz function on $\R^n$. Then $\D(f,x)$ 
and $\D^*(f,x)$ are closed, nonempty subsets of 
$\Gr(\R^n)$ for every $x$. Moreover $x\mapsto\D(f,x)$ and 
$x\mapsto\D^*(f,x)$ are Borel-measurable, 
closed-valued multifunctions from $\R^n$ to $\Gr(\R^n)$.
\end{lemma}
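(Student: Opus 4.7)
The plan is to separate the pointwise properties (nonemptiness, closedness) from the measurability, and handle $\D(f,\cdot)$ first before deducing the statement for $\D^*(f,\cdot)$. Nonemptiness is immediate: $\{0\} \in \D(f,x)$ for every $x$, so $\D(f,x) \neq \emptyset$; since dimensions are bounded by $n$, the maximum in $\dim$ is attained and $\D^*(f,x) \neq \emptyset$. For closedness of $\D(f,x)$, I would take $V_k \in \D(f,x)$ with $V_k \to V$ in $\Gr(\R^n)$ and, after extending the derivatives $\alpha_k := d_{V_k}f(x)$ to linear functionals on $\R^n$ by setting them to $0$ on $V_k^\perp$ (so $|\alpha_k| \le L := \Lip(f)$), extract a subsequence converging to some $\alpha$. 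Lemma~\ref{s-apdiff} applied with $W=V$, $W'=V_k$ gives
$$m(f,x,V,\alpha,\delta) \le m(f,x,V_k,\alpha_k,\delta) + |\alpha_k - \alpha| + (L + |\alpha_k|)\,\dgr(V,V_k),$$
and for any $\eps>0$, choosing $k$ so large that the last two terms are below $\eps/2$ and then $\delta$ so small that $m(f,x,V_k,\alpha_k,\delta) < \eps/2$ yields $V \in \D(f,x)$ with derivative $\alpha|_V$. Closedness of $\D^*(f,x) = \D(f,x) \cap \Gr(\R^n, d^*)$, where $d^* := \max\{\dim V : V \in \D(f,x)\}$, is then immediate.

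For Borel measurability of $\D(f,\cdot)$, the strategy is to show its graph is Borel in $\R^n \times \Gr(\R^n)$ and then to invoke the characterization of Borel measurability for closed-valued multifunctions into the compact metric space $\Gr(\R^n)$ (\cite{Sri}, Section~5.1). Using the deviation-from-linearity quantity from \S\ref{s-errordef}, set
$$\phi(x,V,\alpha,\delta) := \sup_{h \in V,\, 0 < |h| \le \delta} \frac{|f(x+h) - f(x) - \alpha h|}{|h|}$$
(with $\phi := 0$ when $V = \{0\}$), for $x \in \R^n$, $V \in \Gr(\R^n)$, $\alpha \in (\R^n)^*$ with $|\alpha| \le L$, and $\delta > 0$. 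I would establish joint Borel measurability of $\phi$ by picking a countable dense set $H \subset \R^n \setminus \{0\}$ and using the orthogonal projection $\pi_V$ to rewrite $\phi(x,V,\alpha,\delta) = \sup_{h \in H} \psi_h(x,V,\alpha,\delta)$, where $\psi_h$ equals $|f(x+\pi_V(h)) - f(x) - \alpha\,\pi_V(h)|/|\pi_V(h)|$ when $0 < |\pi_V(h)| \le \delta$ and $0$ otherwise; each $\psi_h$ is jointly Borel because $(V,h) \mapsto \pi_V(h)$ is continuous, and the countable supremum recovers the original sup by continuity of the ratio. Restricting $(\alpha,\delta)$ to a countable dense set—valid by the $1$-Lipschitz dependence of $\phi$ on $\alpha$ and its monotonicity in $\delta$—shows that $g(x,V) := \inf_{\alpha,\delta}\phi(x,V,\alpha,\delta)$ is Borel; a compactness argument on the ball $\{|\alpha| \le L\}$ (producing a limiting linear functional from a minimizing sequence as in the closedness proof) gives $V \in \D(f,x) \iff g(x,V) = 0$, so the graph is $g^{-1}(0)$.

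For $\D^*(f,\cdot)$, the integer-valued function $d^*(x) := \max\{\dim V : V \in \D(f,x)\}$ is Borel because $\{x : d^*(x) \ge k\}$ equals $\{x : \D(f,x) \cap \Gr(\R^n,k) \neq \emptyset\}$, which is Borel by the measurability of $\D$. Writing $\D^*(f,x) = \D(f,x) \cap \Gr(\R^n, d^*(x))$ exhibits $\D^*$ as the intersection of $\D$ with the finite-valued multifunction $x \mapsto \Gr(\R^n, d^*(x))$ (constant on each Borel level set of $d^*$), and this intersection preserves Borel measurability.

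The main obstacle I anticipate is the rigorous verification that $\phi$ is jointly Borel: the supremum domain varies with $V$, which is what forces the projection-based reduction to a countable family, and this step must be carried out carefully so that the continuity and monotonicity arguments indeed reduce the problem to a Borel countable operation. The subsequent passage from Borel graph to Borel measurability of the multifunction (in the Hausdorff-distance sense demanded by the paper) is standard for compact-valued multifunctions into compact metric spaces, but should be cited precisely.
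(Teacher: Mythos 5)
Your proposal is correct and follows essentially the same route as the paper's proof: both characterize $\D(f,x)$ as the zero set of $g(x,V)=\inf_{\alpha,\delta}m(f,x,V,\alpha,\delta)$, reduce the infimum and supremum to countable families via Lemma~\ref{s-apdiff} (Lipschitz dependence on $\alpha,V$ and monotonicity in $\delta$) to get Borel measurability of the graph, and then invoke the standard passage from a Borel graph of a closed-valued multifunction to Borel measurability. Your added details (joint measurability of $m$ via projections onto $V$, the explicit compactness argument for closedness, and the level-set treatment of $\D^*$ via $d^*(x)$) are sound fillings of steps the paper leaves as a sketch.
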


\begin{proof}[Sketch of proof]
We set $L:= \Lip(f)$, denote by $B$ the set of all 
linear functions $\alpha$ on $\R^n$ with $|\alpha|\le L$, 
and by $G$ the graph of the multifunction $x\mapsto\D(f,x)$, 
namely the set of all $(x,V)\in \R^n\times\Gr(\R^n)$ 
such that $V\in \D(f,x)$.
We then define the function $g$ on $\R^n\times\Gr(\R^n)$
given by
\[
g(x,V) := \inf_{\delta>0, \, \alpha\in B} m(f,x,V,\alpha,\delta)
\]
where $m(f,x,V,\alpha,\delta)$ is given in \S\ref{s-errordef}.

For every $\delta>0$ the function $m$ is Borel measurable 
in the variables $x,V,\alpha$, and by Lemma~\ref{s-apdiff} it is 
Lipschitz in the variables $\alpha,V$ with a Lipschitz constant 
independent of $\delta$.
Using this fact one can easily prove that $g$ is Lipschitz in 
the variable $V$ and that the infimum that defines $g$ can be 
replaced by the infimum over a countable dense family
of couples $\delta,\alpha$, which means that $g$ is the infimum
of a countable family of Borel measurable functions, and
therefore it is Borel measurable itself.

Moreover $V$ belongs to $\D(f,x)$ if and only if $g(x,V)=0$
(cf.~\S\ref{s-errordef}), which means that $G=g^{-1}(0)$.
Since $g$ is continuous in $V$ then $\D(f,x)$ is closed for all $x$, 
and since $g$ is Borel measurable then $G$ is a Borel set.
Thus $x\mapsto\D(f,x)$ is a closed-valued multifunction 
with Borel graph, which implies by a standard argument that 
$x\mapsto\D(f,x)$ is Borel measurable.

Finally, the claim concerning $x\mapsto\D^*(f,x)$ 
can be easily obtained from the claim on $x\mapsto\D(f,x)$
(we omit the details).
\end{proof}

\begin{lemma}
\label{s-approx2}
Let $f$ be a Lipschitz function on $\R^n$, 
$\mu$ a measure on $\R^n$,
and $V:\R^n\to\Gr(\R^n)$ Borel map such that 
$V(x)$ belongs to $\D(f,x)$ for $\mu$-a.e.~$x$. 
Then for every $\eps>0$ there exist a compact set $K$ in $\R^n$ 
and a function $g:\R^n\to\R$ of class $C^1$ such that:
\begin{itemizeb}
\item[(i)]
$\mu(\R^n\setminus K)\le \eps$;
\item[(ii)]
$\| g - f \|_\infty \le \eps$;
\item[(iii)]
$\Lip(g) \le \Lip(f) + \eps$;
\item[(iv)]
$| \dV g(x) - \dV f(x) | \le\eps$
for every $x \in K$.
\end{itemizeb}
\end{lemma}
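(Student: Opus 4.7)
The plan is to combine a Lusin--Egorov reduction with a Whitney-type partition-of-unity construction.

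First, using Lusin's and Egorov's theorems together with the Borel measurability of $x\mapsto \dV f(x)$ established in Lemma~\ref{s-meas1}, I would find a compact set $K\subset \R^n$ with $\mu(\R^n\setminus K)\le \eps$ on which: the subspace $V(x)$ depends continuously on $x$; the linear functional $\dV f(x)$, extended to $(\R^n)^*$ by zero on $V(x)^\perp$, depends continuously on $x$; and the first-order expansion of $f$ is uniform, i.e.\ there is $\delta_0>0$ such that $|f(x+h)-f(x)-\dV f(x)h|\le (\eps/4)|h|$ for all $x\in K$, $h\in V(x)$, $|h|\le \delta_0$. By Tietze's theorem I extend $\dV f|_K$ to a continuous vectorfield $\alpha:\R^n\to(\R^n)^*$ with $|\alpha(x)|\le \Lip(f)$.

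Second, I would perform a Whitney decomposition of $A:=\R^n\setminus K$ into balls $B_i=B(x_i,r_i)$ with $r_i$ comparable to $\dist(x_i,K)$, accompanied by a smooth partition of unity $\{\phi_i\}$ on $A$ satisfying $|d\phi_i|\le C/r_i$ and $\sum_i\phi_i=1$ on $A$. For each $i$ I pick $z_i\in K$ with $|z_i-x_i|\le 2\dist(x_i,K)$ and form the local linearization $L_i(x):=f(z_i)+\alpha(z_i)(x-z_i)$. Then I set $g(x):=\sum_i \phi_i(x)L_i(x)$ on $A$ and $g(x):=f(x)$ on $K$. Property (i) is immediate from the choice of $K$. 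Property (ii) follows because for $x\in A$ near $K$ each $L_i(x)$ differs from $f(x)$ by at most $L\cdot\dist(x,K)$ plus a remainder controlled by the uniform Taylor expansion, so $\|g-f\|_\infty\le \eps$ after a minor shrinking. Property (iii) comes from $dg=\sum\phi_i\alpha(z_i)+\sum d\phi_i L_i$: the first sum has norm at most $\Lip(f)$ (convex combination of the $\alpha(z_i)$), while the second sum, rewritten using $\sum d\phi_i=0$ as $\sum d\phi_i (L_i-L_j)$ for a reference index $j$, is of size $O(\eps)$ by the Whitney spacing and Lipschitz continuity of $f$.

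The hard part will be showing that $g$ is $C^1$ on all of $\R^n$, and in particular that $dg$ extends continuously from $A$ to $K$ with the correct values. A direct computation shows that as $x\in A$ tends to $x_0\in K$, continuity of $dg$ at $x_0$ with $dg(x_0)=\alpha(x_0)$ reduces to the Whitney-type estimate
\[
|f(z)-f(x_0)-\alpha(x_0)(z-x_0)|=o(|z-x_0|)
\]
uniformly for $z,x_0\in K$ with $z\to x_0$. Decomposing $z-x_0=u+w$ with $u\in V(x_0)$ and $w\in V(x_0)^\perp$, the uniform Taylor expansion gives the bound $(\eps/4)|u|+L|w|$, so the estimate is equivalent to $|w|/|z-x_0|\to 0$, meaning the tangent directions of $K$ at $x_0$ must lie in $V(x_0)$. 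To enforce this I would further shrink $K$ via a Besicovitch-type density argument that exploits the defining property of the decomposability bundle: since $\mu$ is decomposable into rectifiable measures with tangents in $V(\mu,\cdot)$, one can show that at $\mu$-a.e.\ point the ``tangent measure directions'' are concentrated on $V(\mu,x)$, and Egorov makes this uniform on a compact subset of full $\mu$-measure up to $\eps$. Once $K$ is so refined one obtains $g\in C^1(\R^n)$ with $dg|_K=\alpha|_K$, giving (iv) on the nose.
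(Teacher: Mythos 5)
There is a genuine gap, and it sits exactly where you locate ``the hard part''. Your construction sets $g=f$ on $K$ and tries to make $g$ of class $C^1$ with full differential $dg(x_0)=\alpha(x_0)$ at every $x_0\in K$. For any $C^1$ function agreeing with $f$ on $K$ this forces the Whitney compatibility condition $f(z)-f(x_0)-\alpha(x_0)(z-x_0)=o(|z-x_0|)$ for $z,x_0\in K$, and, as you compute, this needs the orthogonal component $w$ of $z-x_0$ relative to $V(x_0)$ to satisfy $|w|=o(|z-x_0|)$. The proposed fix --- shrinking $K$ so that its ``tangent directions'' lie in $V(\mu,x)$ --- cannot work. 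First, the lemma is stated for an \emph{arbitrary} Borel selection $V(x)\in\D(f,x)$, not for the decomposability bundle, so the decomposition of $\mu$ into rectifiable pieces with tangents in $V$ is not available. Second, even for $V=V(\mu,\cdot)$ the claim is false: if $\mu$ is supported on a purely unrectifiable set then $V(\mu,x)=\{0\}$ $\mu$-a.e.\ (Proposition~\ref{s-basic}(iv)), the orthogonal component is $w=z-x_0$ itself, and no compact $K$ with $\mu(K)>0$ can satisfy $|z-x_0|=o(|z-x_0|)$. In that case your scheme would require $f|_K$ to be flat in the Whitney sense, which fails already for $f(x)=x_1$. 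The underlying misstep is that conclusion (iv) only constrains the restriction of $dg(x)$ to $V(x)$; prescribing the full differential on $K$ is both unnecessary and, in general, impossible. A secondary but also fatal issue is (iii): gluing the linearizations $L_i$ with a Whitney partition of unity gives $\bigl|\sum_i d\phi_i(x)\,(L_i(x)-L_j(x))\bigr|\lesssim C(n)\,\Lip(f)$, not $O(\eps)$ --- Lipschitz continuity of $f$ alone controls $|L_i-L_j|$ only at scale $L r_i$, so you get $\Lip(g)\le C(n)\Lip(f)$ rather than $\Lip(f)+\eps$.

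The paper's proof avoids both problems by never extending from $K$ at all. It first uses Lusin/Egorov to produce finitely many disjoint compacta $K_i$ (covering all but $\eps$ of the mass) on which $V(\cdot)$ and $\alpha(\cdot)$ are nearly constant, equal to $V_i$, $\alpha_i$, and the Taylor remainder is uniform at a fixed scale $\delta$. On each piece it sets $f_i:=f*\rho_i$, where $\rho_i$ is the normalized indicator of an \emph{anisotropic} slab $B'\times B''$ with $B'\subset V_i$ of radius $r'$ and $B''\subset V_i^\perp$ of radius $\eps r'/L$. Because the mollifier is thin in the directions transverse to $V_i$, the derivative of $f_i$ along $V_i$ at points of $K_i$ sees only the good directional behaviour of $f$ (the transverse deviation contributes $L\cdot(\eps r'/L)=\eps r'$, negligible against $r'$), giving $|\dV f_i(x)-\dV f(x)|\le C\eps$ on $K_i$; and convolution never increases the Lipschitz constant, so (iii) holds with no Whitney loss. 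Finally the $f_i$ are glued with a partition of unity that equals $1$ on a neighbourhood of each $K_i$, so $g=f_i$ near $K_i$ and (iv) transfers directly. If you want to salvage your approach, the key idea to import is this replacement of pointwise Whitney extension by directionally adapted smoothing.
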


\begin{remark}
In the special case where $V(x)$ does not depend on $x$
we can take $g := f *\rho$ with a suitable 
function $\rho$ with integral equal to $1$.
More precisely, when $n=2$ and $V$ is the line $\R\times\{0\}$, 
we can take as $\rho$ the characteristic function of the rectangle 
$[-r,r]\times [-r^2,r^2]$, renormalized so to have integral 
equal to $1$ and with $r$ sufficiently small.
This is the idea behind Step~5 in the proof below.
\end{remark}

\begin{proof}
We set $L:= \Lip(f)$ and denote  by $E$ be the set of all 
$x\in\R^n$ such that $V(x) \in \D(f,x)$. Then it follows 
from Lemma~\ref{s-meas2} that $E$ is a Borel set.

For every $x$ in $E$ we extend the linear function 
$\dV f(x)$ to a linear function $\alpha(x)$ on $\R^n$
by setting $\alpha(x) \, h := 0$ for every 
$h\in V(x)^\perp$; thus $|\alpha(x)| = |\dV f(x)| \le L$.
Note that the map $x\mapsto\alpha(x)$ is a Borel measurable
map from $E$ to the dual of $\R^n$ by Lemma~\ref{s-meas1}.

The rest of the proof is divided in several steps.

\medskip
\textit{Step~1. 
There exist $\delta >0$ and finitely many pairwise disjoint
compact sets $K_i$ with the following properties:
\begin{itemizeb}
\item[(a)]
$\mu (\R^n\setminus K) \le \eps$ where $K$ is the union of 
all $K_i$ (thus statement~(i) holds);
\end{itemizeb}
and for every $i$,
\begin{itemizeb}
\item[(b)]
$\dgr(V(x),V(x')) \le \eps/L$ for every $x,x'\in K_i$;
\item[(c)]
$|\alpha(x)-\alpha(x')| \le \eps$ for every $x,x'\in K_i$;
\item[(d)]
$m(f,x,V(x),\alpha(x),\delta) \le \eps$ 
for every $x\in K_i$.
\end{itemizeb}}
\indent
For every $x\in E$ the function $f$ is differentiable 
\wrt $V(x)$ with derivative $\alpha(x)$, and therefore
there exists $\delta >0$, depending on $x$, such that 
the estimate in (d) holds
(cf.\ \S\ref{s-errordef}).
Since moreover $\mu(\R^n\setminus E)=0$,
we can find a subset $E'$ of $E$
such that $\mu(\R^n\setminus E') \le \eps/2$ 
and the estimate in (d) 
holds with the same $\delta$ for all $x\in E'$.
This value of $\delta$ is the one we choose. 

Next we partition $E'$ into finite a number $N$ 
of Borel sets $E_i$ 
such that the oscillations of the maps $x\mapsto V(x)$ 
and $x\mapsto\alpha(x)$ on each $E_i$ are less that 
$\eps/L$ and $\eps$, respectively. 
Finally for every $i$ we take a compact set $K_i$ 
contained in $E_i$ such that
$\mu(E_i\setminus K_i) \le \eps/(2N)$.
It is now easy to check that statements (a--d)
hold.

\medskip
\textit{Step~2. For every $i$ we choose $x_i\in K_i$ 
and set $V_i:=V(x_i)$ and  $\alpha_i:=\alpha(x_i)$. 
Then for every $x\in K_i$ there holds
$m(f,x,V_i,\alpha_i,\delta) \le 4\eps$.}
\\ 
\indent
We obtain this estimate by applying
Lemma~\ref{s-apdiff} together with 
the estimates in statements~(b), (c) and (d) 
and the fact that $|\alpha(x)|\le L$.

\medskip
\textit{Step~3. 
Given $h\in\R^n$ and an index $i$, we write 
$h=h'+h''$ with $h'\in V_i$ and $h''\in V_i^\perp$.
Then for every $x\in K_i$ and every $h$ 
with $|h'|\le\delta$ there holds}
\begin{equation}
\label{e-pluto2}
|f(x+h) - f(x) - \alpha_i h'| \le 4\eps |h'| + L |h''|
\, .
\end{equation}
\indent
The estimate in Step~2 yields 
$|f(x+h')- f(x) - \alpha_i h'| \le 4\eps |h'|$, 
and using that 
$|f(x+h)-f(x+h')| \le L|h''|$
we obtain \eqref{e-pluto2}.

\medskip
\textit{Step~4. Let $\rho$ be a positive function 
on $\R^n$ with integral $1$ and support contained 
in the ball $B(r)$.
Then $f*\rho$ is a function of class $C^1$ that satisfies
\begin{itemizeb}
\item[(e)]
$\|f - f*\rho \|_\infty \le Lr$; 
\item[(f)]
$\| d(f*\rho) \|_\infty \le L$.
\end{itemizeb}}
\indent
Statement~(e) is obtained by a simple computation
taking into account that $f$ is Lipschitz and 
that the support of $\rho$ is contained in $B(r)$.

The distributional derivative $d(f*\rho) = df * \rho$, 
being the convolution of an $L^\infty$ and an $L^1$ function,
is bounded  and continuous, which means that $f*\rho$ is
of class $C^1$ and has bounded derivative.
Moreover 
$\| d(f *\rho)\|_\infty \le \| df\|_\infty \|\rho\|_1 = L$, 
and statement~(f) is proved.

\medskip
\textit{Step~5. For every $i$ and every $r>0$
there exists a positive function $\rho_i$ with
integral $1$ and support contained in $B(r)$ such that 
$f_i := f*\rho_i$ satisfies the following property:
for every $x\in K_i$
the restriction of the linear function 
$df_i(x) - \alpha_i$ to the subspace $V_i$ 
has norm at most $M\eps$, 
where the constant $M$ depends only on $n$.}
\\ 
\indent
We assume that $k:=\dim(V_i)>0$, otherwise there is
nothing to prove.
We then take $r'>0$ and denote by $B'$ the ball with 
center $0$ and radius $r'$ contained in $V_i$, and 
by $B''$ the ball with center
$0$ and radius $r'':= \eps r'/L$ contained in $V_i^\perp$.
We then identify $\R^n$ with the product $V_i\times V_i^\perp$
and set
\[
\rho_i := c \, 1_{B'\times B''}
\quad\text{with}\ 
c:=\frac{1}{\Leb^k(B')\, \Leb^{n-k}(B'')}
\, .
\]

We claim that if $r'\le\delta/2$ then $f_i:=f*\rho_i$ satisfies
\begin{equation}
\label{e-pluto4}
\big| f_i(x+h) - f_i(x) - \alpha_i h \big| 
\le M\eps|h|
\end{equation}
for every $x \in K_i$, every $h\in V_i$ with $|h|\le r'$,
and a suitable $M$.
This inequality shows that $f_i$ has the property required in 
Step~5. 

We fix $x$ and $h$ as above.
A simple computation yields
\begin{equation}
\label{e-pluto5}
f_i (x+h) - f_i (x) -\alpha_i h  
= \int_{\R^n} e(z) \, (\rho_i(h-z) - \rho_i(-z)) \, d\Leb^n(z)
\end{equation}
where $e(z) := f(x+z) - f(x) - \alpha_iz$ for every $z\in\R^n$.
We observe now that estimate \eqref{e-pluto2} yields
\begin{align}
\label{e-pluto5.0}
|e(z)| \le 4\eps|z'| + L|z''|
\end{align}
for every $z\in\R^n$ such that $|z'|\le\delta$, 
where $z'$ and $z''$ come from the decomposition
$z=z'+z''$ with $z'\in V_i$ and $z''\in V_i^\perp$
(cf.\ Step~3).
Therefore, in order to use \eqref{e-pluto5.0} to 
estimate the integral in \eqref{e-pluto5}, 
we must check that
$|z'|\le\delta$ for every $z$ such that 
$\rho_i(h-z) - \rho_i(-z) \ne 0$.
Indeed, taking into account the definition of $\rho_i$ 
and the fact that $h$ belongs to $V_i$, we obtain
\begin{equation}
\label{e-pluto5.1}
\rho_i(h-z) - \rho_i(-z) =
\begin{cases}
  \pm c & \text{if $z'\in (B'+h)\triangle B'$ and $z''\in B''$,} \\
  0 & \text{otherwise,}
\end{cases}
\end{equation}
and therefore if $\rho_i(h-z) - \rho_i(-z) \ne 0$ then 
$z'$ belongs to the symmetric difference $(B'+h)\triangle B'$;
in particular $|z'| \le r'+|h| \le 2r'\le \delta$, as required. 

Then, denoting by $c_h$ the volume of the unit ball 
in $\R^h$ for $h=0,1,\dots$, we obtain%
\,\footnoteb{The first inequality follows from \eqref{e-pluto5}
and \eqref{e-pluto5.0}, for the second one we use that 
$|z'|\le 2r'$ and $|z''|\le r''$ whenever
$\rho_i(h-z) - \rho_i(-z)\ne 0$ (cf.~\eqref{e-pluto5.1}); 
for the third one we use that $r''=\eps r'/L$, 
formula \eqref{e-pluto5.1}, and the definition of $c$;
for the fourth one we use that the volume of $B'$ is 
$c_k(r')^k$ and the volume of 
$(B'+h)\triangle B'$ is at most $2c_{k-1}(r')^{k-1}|h|$.}
\begin{align*}
\big| f_i (x + h) & - f_i(x) -\alpha_i h  \big| 
      \\
& \le \int_{\R^n} \big[ 4\eps|z'| + L|z''| \big] 
      \, \big| \rho_i(h-z) - \rho_i(-z) \big| \, d\Leb^n(z)
      \\
& \le \big[ 8 \eps r'+ L r'' \big]  
      \int_{\R^n} |\rho_i(h-z) - \rho_i(-z)| \, d\Leb^n(z) 
      \\
& \le 9\eps r \, 
       \frac{\Leb^k\big( (B'+h)\triangle B' \big)}{\Leb^k(B')} \, 
  \le \frac{18 \, c_{k-1}}{c_k} \eps|h|
      \, .
\end{align*}
We have thus proved \eqref{e-pluto4} with $M$ equal to the 
maximum of $18 \, c_{k-1} / c_k$ over all $k=1,\dots,n$.

\medskip
\textit{Step~6. Take $M$ and $f_i$ as in Step~5. 
Then for every $x\in K_i$ there holds}
\begin{equation}
\label{e-pluto8}
| \dV f_i(x) - \dV f(x) | \le (M+3)\eps
\, .
\end{equation}
\indent
Taking into account \S\ref{s-errordef}
and the fact that $\dV f(x)$ agrees 
with $\alpha(x)$ on $V(x)$
we rewrite \eqref{e-pluto8} as
\begin{equation}
\label{e-pluto8.1}
m(df_i(x),0,V(x),\alpha(x),1) \le (M+3)\eps
\, ,
\end{equation}
and the estimate in Step~5 as
\begin{equation}
\label{e-pluto8.2}
m(df_i(x),0,V_i,\alpha_i,1) \le M\eps
\, . 
\end{equation}
We then obtain \eqref{e-pluto8.1}
from \eqref{e-pluto8.2} by applying 
Lemma~\ref{s-apdiff} together with the 
following estimates:
$\dgr(V(x),V_i) \le \eps/L$ (statement~(b)),
$|\alpha(x) - \alpha_i| \le \eps$ (statement~(c)), 
and $|\alpha_i|\le L$.

\medskip
\textit{Step~7. Construction of the function $g$.} 
\\ 
\indent
Since the compact sets $K_i$ are pairwise disjoint, 
there exists a smooth partition of unity
$\{\sigma_i\}$ of $\R^n$ such that the functions
$\sigma_i$ take the value $1$ 
on some neighbourhood of $K_i$, and are constant
outside some compact set.%
\footnoteb{Take as $\{\sigma_i\}$ a smooth 
partition of unity of $\R^n$ subject to the open 
cover $\{A_i\}$ constructed as follows:
$U$ is a bounded open set that contains 
all $K_i$, $C_1$ be the union of $K_1$ and the 
complement of $U$, and $C_i:=K_i$ for every $i> 1$, 
and finally $A_i$ is the complement of the union 
of all $K_j'$ with $j\ne i$.}
Thus the derivatives $d\sigma_i$ have
compact support and therefore are bounded, 
and
\[
m := \max\Big\{1 ; \, \sum_i \|d\sigma_i \|_\infty \Big\}
\]
is  a finite number.
Now we take $f_i=f*\rho_i$ as in Step~5, 
where $\rho_i$ supported in the ball $B(r)$ 
with $r:= \eps/ (mL)$, and set
\[
g:= \sum_i \sigma_if_i 
\, .
\]
The function $g$ is clearly of class $C^1$.
We prove next that $g$ 
satisfies statements~(ii), (iii) and (iv).

Note that statement~(e) and the choice of $r$ and $m$ yield
\begin{equation}
\label{e-pluto10}
|f_i(x)-f(x)| 
\le Lr 
= \frac{\eps}{m}
\le \eps
\quad\text{for every $x\in\R^d$,}
\end{equation}
and since $g(x)$ is a convex combination of the numbers 
$f_i(x)$, it must satisfies $|g(x)-f(x)| \le \eps$ as well, 
which proves statement~(ii).

Given $x\in K$, take $i$ such that $x\in K_i$, 
and note that $g=f_i$ on the neighbourhood of
$K_i$ where $\sigma_i=1$; hence \eqref{e-pluto8} 
becomes $| \dV g(x) - \dV f(x)| \le (M+3)\eps$, 
which is the inequality in statement~(iv)
with $(M+3)\eps$ instead of $\eps$\dots

It remains to prove statement~(iii), namely that
that $|dg(x)|\le L+\eps$ for every~$x$. 
This estimate is an immediate consequence 
of the identity%
\,\footnoteb{Here we use that $\sum_i d\sigma_i(x)=0$, 
cf.\ footnote~\ref{f-7.2}.}
\[
dg(x) 
=\sum_i \sigma_i(x) \, df_i(x) 
      + \sum_i (f_i(x) - f(x)) \, d\sigma_i(x) 
\, , 
\]
and the inequalities
$|df_i(x)|\le L$ (statement~(f)),
$|f_i(x) - f(x)|\le \eps/m$ (by \eqref{e-pluto10}), 
and $\sum_i |d\sigma_i(x)| \le  m$
(by the choice of $m$).
\end{proof}

\begin{proof}[Proof of Lemma~\ref{s-approx}]
We first construct a sequence of approximating functions 
$f_n$ of class $C^1$ that satisfy requirements~(i), 
(ii) and (iii) using Lemma~\ref{s-approx2} above, 
and then regularize these functions by convolution
to make them smooth.
\end{proof}

%%%%%%%%%%%%%%%%%%%%%%%%%%%%%%%%%%%%%%%%%%%%%%%%%%%%%%%%%%%%%%%%%
%
%	BIBLIOGRAPHY
%
%%%%%%%%%%%%%%%%%%%%%%%%%%%%%%%%%%%%%%%%%%%%%%%%%%%%%%%%%%%%%%%%%

\bibliographystyle{plain}

\begin{thebibliography}{99}
\setlength{\itemsep}{3pt}
\newcommand{\aut}[1]{\textsc{#1}}

\bibitem{Alb}
\aut{Alberti, Giovanni.}
\newblock Rank one property for derivatives of functions 
with bounded variation.
\newblock \textit{Proc. Roy. Soc. Edinburgh Sect. A} 
123 (1993), 239--274.

\bibitem{ACP1}
\aut{Alberti, Giovanni; Cs\"ornyei, Marianna; Preiss, David.}
\newblock Structure of null sets in the plane and applications. 
\newblock \textit{European Congress of Mathematics. Proceedings of 
the 4th Congress (4ECM, Stockholm, June 27-July 2, 2004)}, 
pp. 3--22. Edited by A.~Laptev. 
European Mathematical Society (EMS), Z\"urich 2005.

\bibitem{ACP2}
\aut{Alberti, Giovanni; Cs\"ornyei, Marianna; Preiss, David.}
\newblock Differentiability of Lipschitz functions, 
structure of null sets, and other problems. 
\newblock \textit{Proceedings of the international congress of 
mathematicians (ICM 2010, Hyderabad, India, August 19-27, 2010)}. 
Volume 3 (invited lectures), pp.~1379--1394. 
Edited by R.~Bhatia et al. 
\newblock Hindustan Book Agency, New Delhi, 
and World Scientific, Hackensack (New Jersey), 2010.

\bibitem{ACP3}
\aut{Alberti, Giovanni; Cs\"ornyei, Marianna; Preiss, David.}
\newblock Structure of null sets, differentiability of Lipschitz 
functions, and other problems. 
\newblock Paper in preparation.
%***

\bibitem{Bate}
\aut{Bate, David.}
\newblock Structure of measures in Lipschitz differentiability spaces. 
\newblock \textit{J. Amer. Math. Soc.}, to appear.
\newblock \href{http://arxiv.org/abs/1208.1954}{ArXiv:1208.1954}

\bibitem{BCM}
\aut{Bernot, Marc; Caselles, Vicent; Morel, Jean-Michel.}
\newblock \textit{Optimal transportation networks. Models and theory.} 
\newblock Lecture Notes in Mathematics, 1955. 
Springer, Berlin, 2009.

\bibitem{Cheeger}
\aut{Cheeger, Jeff.} 
\newblock Differentiability of Lipschitz functions on metric measure spaces. 
\newblock \textit{Geom. Funct. Anal.} 9 (1999), 428--517.

%\bibitem{CJ}
%\aut{Cs\"ornyei, Marianna; Jones, Peter W.}
%Paper in preparation.

\bibitem{CJ}
\aut{Jones, Peter W.}
\newblock Product formulas for measures and applications to analysis and geometry.
\newblock Talk given at the conference ``Geometric and algebraic structures 
in mathematics'', Stony Brook University, May 2011.
\newblock\url{http://www.math.sunysb.edu/Videos/dennisfest/}

%\bibitem{DeMe}
%\aut{Dellacherie, Claude; Meyer, Paul-Andr\'e.}
%\newblock {\it Probabilities and potential.}
%\newblock North-Holland Mathematics Studies, 29. 
%North-Holland, Amsterdam-New York, 1978.

\bibitem{DoMa}
\aut{Dor\'e, Michael; Maleva, Olga.}
\newblock A compact universal differentiability 
set with Hausdorff dimension one.
\newblock \textit{Israel J. Math.} 191 (2012), 889--900.
%\newblock \href{http://arxiv.org/abs/1004.2151}{ArXiv:1004.2151}

\bibitem{Fe}
\aut{Federer, Herbert.}
\newblock \textit{Geometric measure theory.}
\newblock Grundlehren der mathematischen Wissenschaften, 153.
Springer, Berlin-New York 1969. 
\newblock Reprinted in the series Classics in Mathematics. 
Springer, Berlin-Heidelberg 1996.

\bibitem{Kechris}
\aut{Kechris, Alexander S.}
\newblock \textit{Classical descriptive set theory.} 
\newblock Graduate texts in mathematics, 156. 
Springer, New York, 1995.

\bibitem{Keith}
\aut{Keith, Stephen.} 
\newblock A differentiable structure for metric measure spaces. 
\newblock \textit{Adv. Math.} 183 (2004), 271--315. 

\bibitem{BK}
\aut{Kirchheim, Bernd.} 
\newblock \textit{Deformations with finitely many gradients 
and stability of quasiconvex hulls.} 
\newblock \textit{C. R. Acad. Sci. Paris S\'er. I Math.} 332 (2001), 289--294. 

%\bibitem{BK2}
%\aut{Kirchheim, Bernd.} 
%\newblock \textit{Rigidity and geometry of microstructures.} 
%\newblock Habilitation thesis, University of Leipzig, 2003. 

%\bibitem{KonigSeever}
%\aut{K\"onig, Heinz; Seever, G.L.}
%\newblock The abstract F. and M. Riesz theorem. 
%\newblock \textit{Duke Math. J.} 36 (1969), 791--797.

\bibitem{KP}
\aut{Krantz, Steven~G.; Parks, Harold~R.}
\newblock \textit{Geometric integration theory.}
\newblock Cornerstones. Birkh\"auser, Boston 2008.

\bibitem{LPT}
\aut{Lindenstrauss, Joram; Preiss, David; Ti{\v s}er, Jaroslav.}
\newblock \textit{Fr{\'e}chet differentiability of Lipschitz functions 
and porous sets in Banach spaces.} 
\newblock Annals of Mathematics Studies, 179. 
Princeton University Press, Princeton, 2012.

\bibitem{MP}
\aut{Maleva, Olga; Preiss, David.} 
\newblock \textit{Directional upper derivatives and the chain rule 
formula for locally Lipschitz functions on Banach spaces.} 
\newblock \textit{Trans. Amer. Math. Soc.}, to appear. 

%\bibitem{Ma}
%\aut{Mattila, Pertti.}
%\newblock \textit{Geometry of sets and measures in 
%Euclidean spaces. Fractals and rectifiability.} 
%\newblock Cambridge Studies in Advanced Mathematics, 44. 
%Cambridge University Press, Cambridge, 1995.

\bibitem{Mathe}
\aut{M\'ath\'e, Andr\'as.}
\newblock Paper in preparation.
\newblock

%\bibitem{MR}
%\aut{Mor\'an, Manuel; Rey, Jos\'e-Manuel.}
%\newblock Geometry of self-similar measures.
%\newblock \textit{Ann. Acad. Sci. Fenn. Math.} 22 (1997), 365--386. 

\bibitem{Mo}
\aut{Morgan, Frank.}
\newblock \textit{Geometric measure theory. 
A beginner's guide. Fourth edition.} 
\newblock Elsevier/Academic Press, Amsterdam, 2009.

\bibitem{PaS}
\aut{Paolini, Emanuele; Stepanov, Eugene.}
\newblock Structure of metric cycles and normal one-dimensional currents. 
\newblock \textit{J. Funct. Anal.} 264 (2013), 1269--1295. 

\bibitem{Preiss}
\aut{Preiss, David.}
\newblock Differentiability of Lipschitz functions on Banach spaces. 
\newblock \textit{J. Funct. Anal.} 91 (1990), 312--345. 

\bibitem{PS}
\aut{Preiss, David; Speight, Gareth.}
\newblock Differentiability of Lipschitz functions in Lebesgue null sets.
\newblock \textit{Invent. Math.}, to appear.
\newblock \href{http://arxiv.org/abs/1304.6916}{ArXiv:1304.6916}

%\bibitem{PT}
%\aut{Preiss, David; Ti{\v s}er, Jaroslav.}
%\newblock Points of non-differentiability of typical Lipschitz functions.
%\newblock \textit{Real Anal. Exchange} 20 (1994/95), 219�-226.

\bibitem{Rainwater}
\aut{Rainwater, John.}
\newblock A note on the preceding paper. 
\newblock \textit{Duke Math. J.} 36 (1969), 799--800.

\bibitem{Rudin}
\aut{Rudin, Walter.}
\newblock \textit{Function theory in the unit ball of $\mathbb{C}^n$.}
\newblock Grundlehren der mathematischen Wissenschaften, 241.
Springer, Berlin-New York 1980. 
\newblock Reprinted in the series Classics in Mathematics. 
Springer, Berlin-Heidelberg 2008.

\bibitem{Si}
\aut{Simon, Leon.}
\newblock \textit{Lectures on geometric measure theory.}
\newblock Proceedings of the Centre for Mathematical Analysis 3.
Australian National University, Canberra, 1983.

\bibitem{Smirnov} 
\aut{Smirnov, Stanislav.} 
\newblock Decomposition of solenoidal vector charges into elementary 
solenoids, and the structure of normal one-dimensional currents. 
\newblock \textit{Algebra i Analiz}, 5 (1993), 206--238.
Translation in \textit{St. Petersburg Math. J.} 5 (1994), 841--867.

\bibitem{Sri}
\aut{Srivastava, Shashi Mohan.}
\newblock \textit{A course on Borel sets.} 
\newblock Graduate texts in mathematics, 180. 
Springer, New York, 1998.

\bibitem{Za}
\aut{Zahorski, Zygmunt.}
\newblock Sur l'ensemble des points de non-d{\'e}rivabilit{\'e} 
d'une fonction continue. 
\newblock \textit{Bull. Soc. Math. France} 74 (1946), 147--178.

\end{thebibliography}

%%%%%%%%%%%%%%%%%%%%%%%%%%%%%%%%%%%%%%%%%%%%%%%%%%%%%%%%%%%%%%%%%
%
%	AFFILIATIONS
%
%%%%%%%%%%%%%%%%%%%%%%%%%%%%%%%%%%%%%%%%%%%%%%%%%%%%%%%%%%%%%%%%%

\vskip .5 cm

{\parindent = 0 pt\begin{footnotesize}

G.A.
\\
Dipartimento di Matematica, 
Universit\`a di Pisa
\\
largo Pontecorvo~5, 
56127 Pisa, 
Italy 
\\
e-mail: {\tt galberti1@dm.unipi.it}

\medskip
A.M.
\\
Max-Planck-Institut f\"ur Mathematik
in den Naturwissenschaften
\\
Inselstrasse~22,
04103 Leipzig,
Germany 
\\
e-mail: {\tt andrea.marchese@mis.mpg.de}

\end{footnotesize}
}

\end{document}